\documentclass[11pt]{amsart}

\usepackage[top=3cm, left=3cm, right=3cm]{geometry}
\usepackage{amsmath}
\usepackage{amssymb}
\usepackage{amsfonts}
\usepackage{mathrsfs}
\allowdisplaybreaks
\usepackage[colorlinks]{hyperref}
\hypersetup{
linkcolor=blue,
citecolor=blue,
urlcolor=blue
}


\theoremstyle{definition}
\newtheorem{definition}{Definition}[section]
\newtheorem{remark}[definition]{Remark}

\newtheorem{lemma}[definition]{Lemma}
\newtheorem{proposition}[definition]{Proposition}
\newtheorem{corollary}[definition]{Corollary}
\newtheorem{theorem}[definition]{Theorem}

\newcommand{\SL}{\operatorname{SL}}
\newcommand{\Vol}{\operatorname{Vol}}

\newcommand{\Ad}{\operatorname{Ad}}
\newcommand{\ad}{\operatorname{ad}}

\newcommand{\SO}{\operatorname{SO}}
\newcommand{\diam}{\operatorname{diam}}
\newcommand{\Stab}{\operatorname{Stab}}

\begin{document}

\thanks{RF was supported by SNF grant 200021--182089. CZ acknowledges support by the Institute of Modern Analysis-A Frontier Research Centre of Shanghai.}

\author[R. Fregoli]{Reynold Fregoli}
\address{Department of Mathematics, University of Z\"urich, Switzerland}
\email{reynold.fregoli@math.uzh.ch}

\author[C. Zheng]{Cheng Zheng}
\address{School of Mathematical Sciences, Shanghai Jiao Tong University, China}
\email{zheng.c@sjtu.edu.cn}

\subjclass[2020]{Primary: 37A17; Secondary: 11J83}

\title[shrinking-target problem in the space of unimodular lattices]{A shrinking-target problem in the space of unimodular lattices in the three dimensional Euclidean space}
\maketitle{}

\begin{abstract}
In this paper, we study the shrinking-target problem with target at infinity induced by the injectivity radius function under the action of a regular diagonalizable flow on $\SL_3(\mathbb R)/\SL_3(\mathbb Z)$. In particular, we establish an explicit formula for the Hausdorff dimension of the subset of points $p$ whose orbit approaches the cusp infinitely often with a rate $\gamma\geq0$.
\end{abstract}

\section{Introduction and main result}\label{intro}
\subsection{Introduction}


Let $(X,d)$ be a metric space. The aim of a shrinking-target problem on $X$ is, generally speaking, to determine the size of the set $S\subset X$ of points whose orbit under a given flow approaches a fixed target $T\subset X$ infinitely often with a desired rate. If the space $X$ is equipped with an invariant Borel measure $\mu$, a shrinking-target problem may be studied in two distinct aspects: the measure-theoretical and the dimensional aspects. In the first case, the goal of the problem reduces to computing the measure of the subset $S$, usually by establishing a null-conull law. Questions of this nature are closely related to various Khintchine-type theorems in the metric theory of Diophantine approximation \cite{K24,K26} and have extensively been studied in the past few decades \cite{AM09,GK17,KO21,KY19,KM99,KZ18,S82,T08}. In homogeneous spaces, the measure-theoretical aspect was addressed in full generality by Kleinbock and Margulis \cite{KM99}, who proved that both a null-conull law and a logarithm law hold. 

The dimensional aspect of a shrinking-target problem, on the other hand, aims to describe sets $S$ that are not detected by the measure $\mu$, by, e.g., computing their Hausdorff dimension or by showing that $S\neq\emptyset$. This line of investigation was initiated by Hill and Velani \cite{HV95} and was further developed in different settings \cite{FMSU15,HV97,LWWX14,HR11,SW13,U02}. This body of work is closely related to Jarn\'ik-Besicovitch-type theorems in the metric theory of Diophantine approximation \cite{B34,J31}. In this setting, the analysis becomes more delicate and often requires more information about the interaction between the algebraic and geometric structures on the space $X$ and the dynamics of the flow (e.g. the metric used to define neighbourhoods of the target, counting results, etc.).

Within this framework, we study the dimensional aspect of the shrinking-target problem induced by the injectivity radius function $\eta$ on the space $\SL_3(\mathbb R)/\SL_3(\mathbb Z)$, with respect to the action of a regular diagonalizable flow $\{a_{t}\}_{t\in\mathbb{R}}$. Let us be more precise. Let $G:=\SL_3(\mathbb R)$ and let $\Gamma:=\SL_3(\mathbb Z)$. Let $X=G/\Gamma$. The injectivity radius function $\eta$ on $X:=G/\Gamma$ is defined as $$\eta(x):=\inf_{v\in\Stab(x)\setminus\left\{e\right\}}d_G(v,e)\;(x\in X),$$ where $\Stab(x)$ is the stabilizer of $x$ and $d_G$ is the metric on $G$ induced by a norm $\|\cdot\|_{\mathfrak g}$ on the Lie algebra of $G$. It is well-known \cite[Theorem 1.12]{R72} that the behaviour of the injectivity radius function $\eta$ reflects the divergence of a sequence of points $\left\{p_n\right\}_{n\in\mathbb N}$ in $X$, in the sense that $p_n$ diverges to infinity if and only if $\eta(p_n)\to 0$. Therefore, the injectivity radius function may be used to define a chain of neighborhoods of infinity of the form $$U^{\eta}_{r}:=\left\{x\in X:\eta(x)<r\right\} (r>0).$$
For a regular $\mathbb R$-diagonalizable flow $\left\{a_t\right\}_{t\in\mathbb R}$ on $X$ and a fixed decreasing function $r(t)$, we will be studying the Hausdorff dimension of the subset of points $p$ in $X=G/\Gamma$ whose orbit under the flow $\left\{a_t\right\}_{t\in\mathbb R}$ enters the neighbourhoods $U^{\eta}_{r(t)}$ for an unbounded sequence of times $t$.

For comparison, it will be useful to keep in mind the analogous shrinking-target problem on $X$ induced by the systole function $\delta$. By identifying $X=G/\Gamma$ with the space of unimodular lattices in $\mathbb R^3$, one can define $\delta$ on $X$ as
$$\delta(g\Gamma)=\inf_{v\in g\mathbb Z^3\setminus\left\{0\right\}}\|v\|.$$
By Mahler's compactness criterion, a sequence of points $p_n\in X$ diverges if and only if $\delta(p_n)\to 0$. In light of this, the function $\delta$ may also be used to define a chain of neighborhoods of infinity of the form $$U^{\delta}_{r}:=\left\{x\in X:\delta(x)<r\right\}\; (r>0),$$ inducing a different shrinking-target problem with target at infinity. It is well-known that the function $\delta$ connects the metric theory of Diophantine approximation with certain dynamical systems on $X$, through the so-called Dani correspondence \cite{D85}. In view of this connection, the shrinking-target problem for the function $\delta$ has been analyzed in depth (see, e.g.,\cite{KM99}).   

The main motivation for us to consider the shrinking-target problem induced by the injectivity radius function $\eta$, instead of the systole, is that the former provides a greater amount of information on orbit structures in the space $X$. For instance, it is quite complicated to use the information embedded in the function $\delta$ in order to characterize the periodicity of an orbit. On the other hand, the injectivity radius function $\eta$ carries information about the stabilizer $\Stab(x)$ of a point $x\Gamma$, which, in turn, controls dynamical properties of orbits starting from $x$. For example, the orbit of $x$ under a group action $H\subset G$ is periodic if and only if $H\cap\Stab(x)$ is a lattice in $H$. By studying the co-volume of $H\cap\Stab(x)$ in $H$ or the distances between points in $\Stab(x)$, one is able to grasp the geometric properties of the periodic orbit $H\cdot x$. This phenomenon appears in various instances and plays an important role in the study of the sparse equidistribution problem. By considering the analogous shrinking-target problem in rank-one homogeneous spaces, we can, e.g., prove that certain sparse horocycles are always densely distributed in homogeneous subspaces without any conditions on initial points. One can refer to \cite{Z21,Z16,Z19} for more details. 

On rank-one homogeneous spaces and negatively curved manifolds, various shrinking-target problems have been studied both in the measure-theoretical and dimensional aspects \cite{HP01,HP,HP02,MP93,Z19} (including the decay of the injectivity radius). In higher-rank homogeneous spaces, only the measure-theoretical aspect has been addressed \cite{KM99} and little is known in the dimensional aspect. The only exception is perhaps \cite{D92}, where Dodson studied a shrinking-target problem for certain singular $\mathbb R$-diagonalizable flows on $\SL_n(\mathbb R)/\SL_n(\mathbb Z)$ in an implicit way. With this paper, we aim to further contribute to this picture, by addressing the dimensional aspect of the shrinking-target problem induced by $\eta$ in the higher-rank homogeneous space $\textup{SL}_{3}(\mathbb{R})/\textup{SL}_{3}(\mathbb{Z})$.

Overall, there are rich structures hidden behind the particular shrinking-target problem that we consider in this paper, which reveal previously unknown dynamical properties of regular one-parameter diagonal flows. We expect that the investigation carried out here will lay out a foundation to further study problems related to the injectivity radius function on homogeneous spaces, such its interaction with higher-rank diagonal groups and, e.g., the shape of sets on which it decays exceptionally fast.
 
\subsection{Main Result and Organization of the Paper}

Let $\left\{a_t\right\}_{t\in\mathbb R}$ be a one-parameter $\mathbb R$-diagonalizable subgroup in $G$. We will assume that $\left\{a_t\right\}$ is regular, in the sense that the centralizer of $\left\{a_t\right\}_{t\in\mathbb{R}}$ is a Cartan subgroup $D$ in $G$. We denote by $A$ the connected component of $D$ containing the identity, by $\mathfrak a$ the Lie algebra of $D$, and by $\mathfrak g$ the Lie algebra of $G$.  

The group $A$ acts on the Lie algebra $\mathfrak g$ of $G$ via the adjoint representation $\Ad:G\to\Ad(G)$, and, consequently, $\mathfrak g$ may be decomposed as a direct sum of root spaces $$\mathfrak g=\mathfrak g_0\oplus\sum_{\alpha\in\Phi}\mathfrak g_\alpha,$$ where $\Phi$ is the set of roots of $A$ and $\mathfrak g_0=\mathfrak a$. Denote by $N_G(A)$ and $C_G(A)$ the normalizer and the centralizer of $A$ respectively. Let $W=N_G(A)/C_G(A)$ be the Weyl group of $A$. For the diagonalizable subgroup $\left\{a_t\right\}_{t\in\mathbb R}$ we write $$a_t=\exp(t\cdot X_0)\;(t\in\mathbb R)$$ for some $X_0\in\mathfrak g$. Since $\left\{a_t\right\}_{t\in\mathbb R}$ is regular, we have $\alpha(X_0)\neq0$ for all roots $\alpha\in\Phi$.

We denote by $N_-$ and $N_+$ the stable and unstable horospherical subgroups relative to $\left\{a_t\right\}_{t\in\mathbb R}$ respectively, i.e., the groups
$$N_{-}=\left\{g\in G: a_{t}ga_{-t}\to e,\ t\to +\infty\right\}\quad\mbox{and}\quad N_{+}=\left\{g\in G: a_{t}ga_{-t}\to e,\ t\to -\infty\right\}.$$
Note that $N_-$ is conjugate to the group of lower unipotent matrices, and $N_+$ is conjugate to the group of upper unipotent matrices. We choose the roots contained in the Lie algebra $\mathfrak n_+$ of $N_+$ as positive roots, and the roots contained in the Lie algebra $\mathfrak n_-$ of $N_-$ as negative roots. We denote by $\Phi_+$ and $\Phi_-$ the subsets of positive roots and negative roots respectively, and by $\Delta=\left\{\alpha_0,\beta_0\right\}$ the subset of simple roots in $\Phi_+$.

\begin{definition}\label{d11}
Let $\gamma\geq0$. We will say that a point $p\in X$ is Diophantine of type $\gamma$ if there exists a constant $C>0$ such that for any $t>0$ $$\eta(a_t\cdot p)\geq Ce^{-\gamma t}.$$ We will denote by $S_\gamma$ the subset of points of Diophantine of type $\gamma$, and $S_\gamma^c$ the complement of $S_\gamma$ in $X$.
\end{definition}

This definition is analogous to that given in \cite{Z19}. In this context, we use the word Diophantine to draw an analogy with the shrinking-target problem induced by the function $\delta$, for which the decay rate appearing in Definition \ref{d11} translates directly to Diophantine properties of vectors under suitable identifications. 

We now state the main theorem of this paper, where we denote by $\dim_H$ the Hausdorff dimension.

\begin{theorem}\label{mthm}
Let $\left\{a_t\right\}_{t\in\mathbb R}$ be a regular $\mathbb{R}$-diagonalizable flow on $G/\Gamma$ and let $U\subset N_{+}\Gamma/\Gamma$ be an open set. If $0\leq\gamma<(\alpha_0+\beta_0)(X_0)$, we have
$$\dim_H(S_\gamma^c\cap U)=3-\frac{2\gamma}{(\alpha_0+\beta_0)(X_0)}.$$
If $\gamma\geq (\alpha_0+\beta_0)(X_0)$, we have $S_\gamma^c\cap U=\emptyset$.
\end{theorem}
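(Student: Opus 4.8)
The plan is to establish matching upper and lower bounds for $\dim_H(S_\gamma^c \cap U)$, reducing the problem to a shrinking-target statement on the unstable horospherical subgroup $N_+$, which in $\SL_3$ is two-dimensional. The key geometric observation is that $\eta(a_t p)$ decaying at rate $\gamma$ forces the stabilizer of $a_t p$ to contain a short element, and pushing this back by $a_{-t}$ translates into a Diophantine-type condition on the $N_+$-coordinate of $p$: a unipotent $u(x) \in N_+$ with $x = (x_1, x_2)$ (or however the two coordinates of $\mathfrak n_+$ are parametrized by the two simple root directions $\alpha_0$, $\beta_0$) must lie within $\asymp e^{-((\alpha_0+\beta_0)(X_0) - \gamma)t}$ of a lattice point, infinitely often in $t$. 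Thus $S_\gamma^c \cap U$ is, up to the local product structure $N_- A N_+$ and the fact that $U \subset N_+\Gamma/\Gamma$, a $\limsup$ set of shrinking neighborhoods of the "rational" directions in $N_+$, and its dimension should be that of a classical Jarník–Besicovitch set.

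For the \textbf{upper bound}, I would first translate the condition: $p \in S_\gamma^c$ means there is a sequence $t_n \to \infty$ with $\eta(a_{t_n} p) < e^{-\gamma t_n}/n$ (say), i.e. $\Stab(a_{t_n} p)$ meets the ball of radius $e^{-\gamma t_n}$ in $G$. Conjugating back by $a_{-t_n}$ and using the root-space decomposition, the element $a_{-t_n} v a_{t_n} \in \Stab(p)$ has components that are expanded in the $\mathfrak n_-$ directions by factors up to $e^{\alpha_0(X_0) t_n}$ etc.; controlling when such an element can still be "small enough" to be compatible with $p \in U$ forces the $N_+$-coordinate of $p$ to be close to a rational point with bounded denominator relative to $e^{t_n}$. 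Then a standard covering argument — cover $U$ at scale $e^{-((\alpha_0+\beta_0)(X_0)-\gamma)t}$ around each admissible rational direction, count these rationals (there are $\asymp e^{2t}$ of them up to the relevant height), sum the $s$-volumes of the covering balls, and optimize $s$ — yields $\dim_H \le 3 - \frac{2\gamma}{(\alpha_0+\beta_0)(X_0)}$, the extra "$3$" versus "$2$" coming from the two transverse directions $N_- A$ which are unconstrained. When $\gamma \ge (\alpha_0+\beta_0)(X_0)$ the neighborhoods no longer shrink, the $\limsup$ set becomes all of $U$ intersected with something forced to be empty by a pigeonhole/Minkowski argument (no lattice point can be that close unless it is zero), so $S_\gamma^c \cap U = \emptyset$; more precisely, one shows directly that for $\gamma$ in this range every $p \in N_+\Gamma/\Gamma$ satisfies the lower bound in Definition \ref{d11}.

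For the \textbf{lower bound}, I would build an explicit Cantor-type subset of $S_\gamma^c \cap U$ of the prescribed dimension by a mass-distribution / Frostman argument: construct a family of nested boxes in $U$ (together with the free $N_-A$ directions) at a sequence of scales $e^{-((\alpha_0+\beta_0)(X_0)-\gamma)t_k}$ centered at carefully chosen rational $N_+$-directions, arranged so that every point of the resulting Cantor set has its $a_t$-orbit returning to $U^\eta_{r(t)}$ along $t = t_k$. The combinatorics — how many child boxes of the next generation fit inside a parent box — must be tuned so the natural measure on the Cantor set satisfies a Frostman condition with exponent $3 - \frac{2\gamma}{(\alpha_0+\beta_0)(X_0)}$; this requires knowing there are enough well-separated rational directions at each height, which is again an elementary lattice-point count in $\mathbb Z^2$.

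The main obstacle I expect is the \textbf{bookkeeping of the two distinct simple roots} $\alpha_0$ and $\beta_0$: unless the flow is chosen so that $\alpha_0(X_0) = \beta_0(X_0)$, the neighborhoods $U^\eta_r$ pull back to anisotropic (non-round) boxes in $N_+$, and the shortest stabilizer element that realizes $\eta(a_t p)$ may lie in $\mathfrak g_{\alpha_0}$, in $\mathfrak g_{\beta_0}$, or in $\mathfrak g_{\alpha_0+\beta_0}$ depending on $p$ and $t$. Identifying which root direction governs the decay — and checking that it is always the sum $\alpha_0 + \beta_0$ that controls the critical exponent (this is why $(\alpha_0+\beta_0)(X_0)$, not $\min\{\alpha_0(X_0),\beta_0(X_0)\}$, appears) — is the heart of the argument and the place where the injectivity radius function behaves genuinely differently from the systole $\delta$. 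Making the covering/Cantor constructions respect this anisotropy, rather than working with balls, is where the technical care is needed; once the correct box geometry is pinned down, the Hausdorff-dimension computation follows the classical Jarník–Besicovitch template.
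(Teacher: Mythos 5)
Your overall strategy (upper bound by covering plus a count of rational approximants, lower bound by a Cantor/Frostman construction, emptiness at the critical exponent via an integrality argument) is the same as the paper's, but the proposal contains a structural error and skips the two places where the real work happens. First, the dimension count is wrong at the source: in $\SL_3(\mathbb R)$ the unstable horospherical group $N_+$ is \emph{three}-dimensional, with root spaces $\mathfrak g_{\alpha_0}$, $\mathfrak g_{\beta_0}$ \emph{and} $\mathfrak g_{\alpha_0+\beta_0}$, not two-dimensional with coordinates indexed by the simple roots. The ``$3$'' in $3-2\gamma/(\alpha_0+\beta_0)(X_0)$ is $\dim N_+$ itself; since $U\subset N_+\Gamma/\Gamma$, there are no unconstrained transverse directions in Theorem \ref{mthm} at all ($N_-A$ is $5$-dimensional and only enters in Corollary \ref{cor:mcor}, where the answer becomes $8-2\gamma/(\alpha_0+\beta_0)(X_0)$). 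With your setup — a $2$-dimensional covering plus free directions — the exponent arithmetic does not produce the stated formula; the correct computation covers anisotropic boxes with sides $\asymp d_q^{-\alpha_0(X_0)/\sigma'}, d_q^{-\beta_0(X_0)/\sigma'}, d_q^{-(\alpha_0+\beta_0)(X_0)/\sigma'}$ (with $\sigma'=(\alpha_0+\beta_0)(X_0)-\gamma$) by cubes of the smallest side, and the ``$2$'' comes from the count $\asymp l^2$ of rational points of denominator at most $l$.

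Second, the reduction to ``a limsup set of shrinking neighborhoods of rational directions in $N_+$'' misses most of the actual approximation structure. The short stabilizer element $v$ of $a_tp$ is conjugate into $N_\nu$ by some $h$, and classifying $h$ via the Bruhat decomposition $G=\bigsqcup DN_-w_iN_+$ produces \emph{five} distinct Diophantine sets: besides neighborhoods of rational points of $N_+\Gamma/\Gamma$, one gets rational points living in the Weyl-translated cells $w_2N_+\Gamma/\Gamma$ and $w_3N_+\Gamma/\Gamma$ (with denominator windows $e^{\alpha_0(a_t)-\gamma t}$, resp. $e^{\beta_0(a_t)-\gamma t}$, not $e^{(\alpha_0+\beta_0)(a_t)-\gamma t}$), and shrinking \emph{tubes} around translates $yN_{\alpha_0}q$, $yN_{\beta_0}q$ of one-parameter subgroups through rational points — each case needing its own covering and counting lemma, and it must be checked that every case yields the same dimension bound. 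Finally, the counting is not ``an elementary lattice-point count in $\mathbb Z^2$'': the relevant denominator is the dynamical quantity $d_q=e^{\nu(a_q)}$, which for $q$ as in Theorem \ref{thm51} equals $bq^2/\gcd(q,bp_1+ap_2)$; the upper-bound counts then require divisor-sum estimates, and the lower bound needs both the asymptotic $|S_K(U\Gamma/\Gamma,l/2,l)|\sim l^2\mu_{N_+}(U)$ \emph{and} the mutual separation of these rational points at scale $\asymp l^{-1}$ in each root direction — both of which the paper extracts from the mixing/equidistribution of expanding translates of $N_+$-orbits, not from elementary arithmetic. Your emptiness argument is vague but aims at the right mechanism (denominators of rational points in $N_+\Gamma/\Gamma$ are positive integers, hence bounded below).
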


As a corollary, we deduce the following result.

\begin{corollary}\label{cor:mcor}
Let $\left\{a_t\right\}_{t\in\mathbb R}$ be a regular $\mathbb{R}$-diagonalizable flow on $G/\Gamma$ and let $U\subset G/\Gamma$ be an open set. If $0\leq\gamma<(\alpha_0+\beta_0)(X_0)$, we have
$$\dim_H(S_\gamma^c\cap U)=8-\frac{2\gamma}{(\alpha_0+\beta_0)(X_0)}.$$
If $\gamma\geq (\alpha_0+\beta_0)(X_0)$, we have $S_\gamma^c\cap U=\emptyset$.
\end{corollary}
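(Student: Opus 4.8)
The plan is to deduce Corollary \ref{cor:mcor} from Theorem \ref{mthm} by a standard fibering argument, using the local product structure $G \cong N_- \cdot A \cdot N_+$ near the identity and the fact that Hausdorff dimension behaves additively under such products for the sets in question. First I would observe that it suffices to prove the statement for $U$ a small open box of the form $\Omega_- \cdot \Omega_0 \cdot \Omega_+ \cdot \Gamma/\Gamma$, where $\Omega_\pm \subset N_\pm$ and $\Omega_0 \subset A$ are small open neighbourhoods of the identity, since any open set in $G/\Gamma$ can be covered by countably many such boxes and Hausdorff dimension is countably stable, while conversely any such box is contained in a given open $U$ after translation. Here $\dim N_- = \dim N_+ = 3$ and $\dim A = 2$, accounting for the total dimension $8 = 3 + 2 + 3$ of $G$.

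The key point is to understand how the set $S_\gamma^c$ interacts with this product decomposition. Writing a point $p = n_- a n_+ \Gamma$, I would argue that membership of $p$ in $S_\gamma$ is essentially governed by the $N_+$-coordinate $n_+$, up to a bounded perturbation that does not affect the exponential decay rate: conjugating by $a_t$ contracts $N_-$ and fixes $A$ (both only improving or leaving unchanged the injectivity radius along the forward orbit by a bounded factor, or at worst shifting the relevant time by a bounded amount), so $\eta(a_t n_- a n_+ \Gamma)$ and $\eta(a_t n_+ \Gamma)$ differ by at most a multiplicative constant uniformly in $t \geq 0$. Consequently $S_\gamma^c \cap \Omega_- \Omega_0 \Omega_+ \Gamma/\Gamma$ is, up to the natural identification, the product $\Omega_- \times \Omega_0 \times (S_\gamma^c \cap \Omega_+ \Gamma/\Gamma)$, where the last factor is exactly the set whose dimension is computed in Theorem \ref{mthm}. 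I would then invoke the standard inequalities $\dim_H(A \times B) \geq \dim_H A + \dim_H B$ and, since $\Omega_- \times \Omega_0$ is a smooth manifold chart (hence has equal Hausdorff and upper box dimension), $\dim_H(A \times B) \leq \dim_H A + \dim_B B$, which together give $\dim_H(\Omega_- \times \Omega_0 \times F) = 5 + \dim_H F$ for any $F \subset \Omega_+ \Gamma/\Gamma$. Applying Theorem \ref{mthm} with its value $3 - \tfrac{2\gamma}{(\alpha_0+\beta_0)(X_0)}$ for $\dim_H(S_\gamma^c \cap U)$ in the $N_+$-slice yields $8 - \tfrac{2\gamma}{(\alpha_0+\beta_0)(X_0)}$, and the emptiness claim for $\gamma \geq (\alpha_0+\beta_0)(X_0)$ transfers verbatim since the $N_+$-slice is already empty.

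The main obstacle I anticipate is making the reduction ``$\eta$ along the forward orbit depends only on the $N_+$-coordinate up to bounded error'' fully rigorous, in particular handling the $A$-coordinate: translating by $a \in \Omega_0$ commutes with $a_t$ but does rescale the lattice, so one must check that over the compact set $\overline{\Omega_0}$ the injectivity radius is distorted by a uniformly bounded factor and that the time-reparametrisation induced is bounded, neither of which affects the exponential rate $\gamma$ in Definition \ref{d11}. A secondary technical point is that $N_\pm \Gamma/\Gamma$ need not be embedded, so one should work in a small enough box that the product map $\Omega_- \times \Omega_0 \times \Omega_+ \to G/\Gamma$ is a bi-Lipschitz embedding onto its image; this is possible because the injectivity radius of $G/\Gamma$ is positive on compact sets. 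Once these bounded-distortion facts are in place, the dimension computation is immediate from Theorem \ref{mthm} and the product formulas cited above.
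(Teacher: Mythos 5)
Your proposal is correct and follows essentially the same route as the paper: reduce to a small product box $V_-V_0V_+\Gamma/\Gamma$, note that the Diophantine type is invariant under left multiplication by $N_-A$ (because $a_t(n_-a)a_{-t}$ stays in a compact set for $t\geq 0$, so $\eta$ is only distorted by a bounded factor — this is the paper's Remark \ref{rmk:remainingdim}), use the local product homeomorphism (Lemma \ref{lem:transversality}) and the Hausdorff dimension formula for a product with an open Euclidean set, and then invoke Theorem \ref{mthm}. No substantive differences.
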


The overall strategy to prove Theorem \ref{mthm} resembles that of \cite{Z19}. However, in a higher-rank homogeneous space the problem appears much more complicated. In this paper, not only we have to upgrade our method considerably compared to \cite{Z19}, but we also have to add some new ingredients.

We start by introducing the notion of rational point and that of denominator. Rational points are, broadly speaking, points with maximal decay rate for the injectivity radius. The name comes from the fact that, on the expanding submanifold, these points admit a rational $\Gamma$-representative. By applying the mixing property of the diagonalizable flow $\{a_{t}\}_{t\in\mathbb R}$, we establish an asymptotic formula for the number of rational points whose denominator lies between $l/2$ and $l$. We also find an explicit formula for the denominator of rational points on $N_{+}\Gamma/\Gamma$.

Further, we prove that most points $p\in S_{\gamma}^{c}$ satisfy a property known as $\gamma$-condition (see \cite{Z19}). Roughly speaking, the $\gamma$-condition asserts that to any point $p\in S_{\gamma}^{c}$ admitting a sequence of times $t_{n}\to +\infty$ for which $\eta(a_{t_{n}}p)\leq e^{-t_{n}\gamma}$, we may associate a sequence of elements $v_{n}$ in the stabilizer of $a_{t_{n}}p$, tending to $e$ with a rate dictated by $\gamma$. By using this sequence $v_{n}$ and the Bruhat decomposition, we define a "standard way" to isolate a rational point $q_{n}'\in X$ at a bounded distance from the point $a_{t_{n}}p$ for each index $n$. We then contract both the points $a_{t_{n}}p$  and $q_{n}'$ through the action of $a_{-t_{n}}$, thus finding a sequence of rational points $q_{n}$ approaching $p$ in $N_{+}\Gamma/\Gamma$ with a speed determined both by $\gamma$ and the contraction rate of the flow $\{a_{t}\}_{t\in\mathbb{R}}$. It will be apparent that this approximation process occurs in $6$ different ways. Therefore, to each rational point $q$ we attach $6$ different types of shrinking neighbourhoods.

By using coverings of shrinking neighbourhoods, we will be able to give an upper bound for the Hausdorff dimension of the set $S_{\gamma}^{c}$. To prove lower bounds, we will construct Cantor-type subsets of $S_{\gamma}^{c}$, by using the asymptotic formula for the number of rational points mentioned above, together with results from \cite{M87} and \cite{U91}.

The paper is organized as follows:
\begin{enumerate}
\item[$\bullet$] In \S2, we list some notation and preliminary results needed in the remaining sections. 

\item[$\bullet$] In \S3, we introduce the notion of rational point and give several characterizations of rational points in $G/\Gamma$. We also prove that the rational points in $N_+\Gamma/\Gamma$ coincide with the $\Gamma$-cosets of rational matrices in $N_+\Gamma$.

\item[$\bullet$] In \S4, we introduce the notion of denominator of a rational point. By applying the mixing property of $\left\{a_t\right\}_{t\in\mathbb R}$, we also establish an asymptotic formula for the number of rational points whose denominator lies between $l/2$ and $l$ for sufficiently large $l>0$.

\item[$\bullet$] In \S5, we find an explicit formula for the denominator of a rational point lying in $N_+\Gamma/\Gamma$. From \S4 we know that such a rational point corresponds to the $\Gamma$-coset of a rational matrix. We will see, however, that the formula for the denominator established in this section does not entirely correspond to the usual denominator of a rational vector or matrix.

\item[$\bullet$] In \S6, we study Diophantine points of type $\gamma$ in $G/\Gamma$. We prove that most points in $S_{\gamma}^{c}$ satisfy the $\gamma$-condition. As mentioned above, we will see that the subset $S_\gamma^c\cap U$ is of mixed type and contains six Diophantine sets, which will be studied case by case.

\item[$\bullet$] In \S7, we determine for which values of the parameter $\gamma$ the set $S_{\gamma}^{c}$ is non-empty.

\item[$\bullet$] In \S8, we apply the results proved in the previous sections to compute an upper bound for the Hausdorff dimension of the set $S_\gamma^c\cap U$. We will do this by studying different Diophantine sets separately. In the computations, the formula for the denominators of rational points proved in \S5 will play an important role in establishing various counting results.

\item[$\bullet$] In \S9, we construct a Cantor-type subset contained in $S_{\gamma}^c\cap U$ and estimate from below its Hausdorff dimension. This will give us a lower bound for the Hausdorff dimension of $S_{\gamma}^c\cap U$. Here we will make use of the asymptotic formula derived in \S4.

\item[$\bullet$] In \S10, we combine the previous results to prove Theorem \ref{mthm} and Corollary \ref{cor:mcor}.
\end{enumerate}

\section{Preliminaries and Reductions}\label{pre}

To simplify our arguments, in the rest of the paper we will assume that $\left\{a_t\right\}_{t\in\mathbb R}$ is a regular one-parameter diagonal subgroup in $G$. In this case, the Cartan subgroup $D$ of $\left\{a_{t}\right\}$ is the full diagonal group and $A$ is the connected component of $D$ containing the identity. Moreover, $\left\{a_t\right\}_{t\in\mathbb R}$ can be written explicitly as $$a_t=\left(\begin{array}{ccc} e^{\lambda_1 t} & 0 & 0\\0 & e^{\lambda_2 t} & 0\\ 0 & 0 & e^{\lambda_3 t}\end{array}\right)\quad\mbox{with}\quad X_0=\left(\begin{array}{ccc} \lambda_1 & 0 & 0\\0 & \lambda_2 & 0\\ 0 & 0 & \lambda_3\end{array}\right),$$ where $\lambda_i\;(i=1,2,3)$ are different constants. Without loss of generality, we may further assume that $\lambda_1>\lambda_2>\lambda_3$. Note that the stable and unstable horospherical subgroups of $\left\{a_t\right\}_{t\in\mathbb R}$ are the group $N_-$ of lower unipotent matrices and the group $N_+$ of upper unipotent matrices respectively $$N_-=\left(\begin{array}{ccc} 1 & 0 & 0\\ * & 1 & 0\\ * & * & 1\end{array}\right)\quad \textup{and}\quad N_+=\left(\begin{array}{ccc} 1 & * & *\\0 & 1 & *\\ 0 & 0 & 1\end{array}\right).$$

The Weyl group $W=N_G(A)/C_G(A)$ is generated by two elements $s_1$ and $s_2$ of order 2 which correspond to two reflections in the Lie algebra $\mathfrak a$ of $A$, and there is a unique element $\bar w$ in $W$ whose word length with respect to the generating set $\left\{s_1,s_2\right\}$ is the longest. The adjoint action of $A$ via $\Ad$ induces an action of $W$ on the set of roots $\Phi$, and the element $\bar w$, with respect to this action, sends the positive roots in $\Phi_+$ to the negative roots in $\Phi_-$. In other words, $\bar w\cdot N_+=N_-$ and $\bar w\cdot N_-=N_+$.

With the above notation, the following result holds.

\begin{proposition}[Bruhat Decomposition]
\label{prop:Bruhat}
For any choice of representatives $\left\{w_i\right\}_{i\in I}$ of the Weyl group $W$ in $G$, we have
$$G=\bigsqcup_{i\in I} DN_-w_iN_-=\bigsqcup_{i\in I} DN_-w_iN_+,$$
where the union is disjoint.
\end{proposition}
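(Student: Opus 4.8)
The plan is to derive this from the standard Bruhat decomposition of $\SL_3(\mathbb{R})$ with respect to the Borel subgroup $B = DN_+$ (upper triangular), which asserts that $G = \bigsqcup_{i\in I} B w_i B = \bigsqcup_{i\in I} DN_+ w_i N_+$, with the cells indexed by the Weyl group and independent of the choice of coset representatives $w_i$. This is classical (e.g.\ it holds for any reductive group over a field, and in particular for $\SL_n(\mathbb{R})$), so I would cite it rather than reprove it. From there, the two claimed forms follow by conjugating one of the two unipotent factors by the long Weyl element $\bar w$ and absorbing the resulting Weyl representatives back into the indexing set.

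First I would record the precise form I am invoking: for any representatives $\{w_i\}_{i\in I}$ of $W$ in $N_G(A)$, $G = \bigsqcup_{i\in I} DN_+ w_i N_+$, disjointly, and the decomposition is unchanged if each $w_i$ is multiplied on the left by an element of $D$ (since $D$ normalizes $N_+$ and $DN_+ = B$ is a group). Next, to obtain the form $G = \bigsqcup_i DN_- w_i N_+$: write $N_- = \bar w N_+ \bar w^{-1}$, using $\bar w \cdot N_+ = N_-$ from the discussion preceding the proposition. Then $DN_- = D\bar w N_+ \bar w^{-1}$, and since $\bar w$ normalizes $D$ we get $DN_- w_i N_+ = \bar w D N_+ (\bar w^{-1} w_i) N_+ = \bar w \cdot \big(DN_+ w_i' N_+\big)$ where $w_i' := \bar w^{-1} w_i$ ranges over a complete set of representatives of $W$ as $w_i$ does (left multiplication by $\bar w^{-1}$ permutes $W$). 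Hence $\bigsqcup_i DN_- w_i N_+ = \bar w \cdot \bigsqcup_i DN_+ w_i' N_+ = \bar w \cdot G = G$, and disjointness is preserved because $\bar w$ acts by a bijection of $G$. For the form $G = \bigsqcup_i DN_- w_i N_-$, I would similarly conjugate the \emph{other} unipotent factor: starting from $G = \bigsqcup_i DN_- w_i N_+$, replace $w_i N_+$ by $w_i N_+ = (w_i \bar w)(\bar w^{-1} N_+ \bar w) = (w_i \bar w) N_-$, so $DN_- w_i N_+ = DN_- (w_i\bar w) N_-$, and $w_i \bar w$ again runs over all of $W$; thus $G = \bigsqcup_i DN_- w_i N_-$ with the new representatives, hence with any representatives by the representative-independence of the Bruhat cells.

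The only genuinely delicate point is the \emph{independence of the choice of representatives}, which underlies every step above: one must know both that $DN_+ w N_+$ depends only on the class of $w$ in $W = N_G(A)/C_G(A)$, and that the cell $DN_\pm w N_\pm$ genuinely stays the same when $w$ is replaced by $w' = dw$ with $d \in D$ (immediate, since $Dd = D$) or, more subtly, by a different $N_G(A)$-representative of the same Weyl class — but in $\SL_3$ one has $C_G(A) = D$, so the Weyl class and the $D$-coset coincide and there is nothing extra to check. I would spell this out in one line. A secondary bookkeeping point is that the various substitutions $w_i \mapsto \bar w^{-1} w_i$, $w_i \mapsto w_i \bar w$ are bijections of $W$; this is trivial but should be stated so that the disjoint unions genuinely reindex to the original ones. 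I expect no real obstacle here: the whole proof is a short transport-of-structure argument once the classical Bruhat decomposition for the Borel $DN_+$ is taken as input, and the main ``work'' is simply choosing which unipotent factor to conjugate by $\bar w$ in each of the two desired normal forms.
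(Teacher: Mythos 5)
Your proof is correct and follows essentially the same route as the paper's: take the classical Bruhat decomposition for one Borel form as the cited input, then pass to the other forms by inserting a representative $\tilde w$ of the long Weyl element (which swaps $N_+$ and $N_-$) and reindexing the Weyl representatives, noting that this is a bijection of $W$. The only cosmetic difference is that you start from the Borel subgroup $DN_+$ and therefore need one extra conversion step, whereas the paper quotes $G=\bigsqcup_i DN_-w_iN_-$ directly from Borel's book and performs a single right-multiplication by $\tilde w$.
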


\begin{proof}
The first equality corresponds to the usual version of the Bruhat Decomposition (see \cite[Section 14.12]{Bo91}). To prove the second equality, let $\tilde w$ be a representative of $\bar w$ in $G$. From the action of $\bar w$ on $\Phi$, we know that $\tilde wN_+\tilde w^{-1}=N_-$ and $\tilde wN_-\tilde w^{-1}=N_+$. Then, $$G=G\cdot\tilde w=\bigsqcup_{i\in I}DN_-(w_i\tilde w)(\tilde w^{-1}N_-\tilde w)=\bigsqcup_{i\in I}DN_-(w_i\tilde w)N_+$$ and $\left\{w_i\tilde w\right\}_{i\in I}$ is another set of representatives of $W$. This proves the claim.
\end{proof}

In the sequel, we will choose the representatives $w_i$ as follows: 

\begin{align}
 & w_1=\left(\begin{array}{ccc} 1 & 0 & 0\\ 0& 1 & 0\\ 0& 0& 1\end{array}\right),\quad w_2=\left(\begin{array}{ccc} 1 & 0 & 0\\ 0& 0 & -1\\ 0& 1& 0\end{array}\right),\quad w_3=\left(\begin{array}{ccc} 0 & -1 & 0\\ 1& 0 & 0\\ 0& 0& 1\end{array}\right),\nonumber \\
 & w_4=\left(\begin{array}{ccc} 0 & 0 & 1\\ 1& 0 & 0\\ 0& 1& 0\end{array}\right),\quad w_5=\left(\begin{array}{ccc} 0 & 1 & 0\\ 0& 0 & 1\\ 1& 0& 0\end{array}\right),\quad w_6=\left(\begin{array}{ccc} 0 & 0 & 1\\ 0& -1 & 0\\ 1& 0& 0\end{array}\right).\nonumber
\end{align}
We remark that $\left\{w_i\right\}_{i\in I}\subset\Gamma$.

We conclude this section with the following "transversality" lemma, which will be used several times throughout the paper. For any subgroup $H<G$ and any positive real number $r$, we will denote by $B_{H}(r)$ the set $\{h\in H:d_{G}(h,e)<r\}$. Analogously, for any $x\Gamma\in G/\Gamma$, we will denote by $B_{G/\Gamma}(x\Gamma,r)$ the set $\{y\Gamma\in G/\Gamma:d_{G/\Gamma}(y\Gamma,x\Gamma)<r\}$. Here, $d_{G}$ denotes a right-invariant metric on $G$ inducing the topology and $d_{G/\Gamma}$ is the induced metric on the quotient.

\begin{lemma}
\label{lem:transversality}
Let $H_{1},H_{2}<G$ be subgroups such that $\overline{H}_{1}\cap \overline{H}_{2}=\{e\}$ and assume that $H_{2}$ has a closed right $\Gamma$-orbit. Then, for any relatively compact open set $K\subset H_{2}$, there exists $r>0$ such that the map $(h_{1},h_{2}\Gamma)\mapsto h_{1}h_{2}\Gamma$ from $B_{H_{1}}(r)\times K\Gamma/\Gamma$ to $B_{H_{1}}(r)K\Gamma/\Gamma$ is a homeomorphism.

\begin{proof}
Surjectivity and continuity are obvious. We only need to show that the map is injective and open. We start with injectivity. Fix a point $x\Gamma\in K\Gamma/\Gamma$. Since $H_{2}$ has a closed right $\Gamma$-orbit, there exists $\delta>0$, depending only on $x\Gamma$, such that the map $h_{2}\mapsto h_{2}x\Gamma$ is a homeomorphism between $B_{H_{2}}(\delta)$ and an open set in $K\Gamma/\Gamma$ containing a ball $B_{G/\Gamma}(x\Gamma,\delta')\cap K\Gamma/\Gamma$ for some $\delta'>0$. Since $K\Gamma/\Gamma$ is relatively compact, we can find $\delta$ and $\delta'$ that work for every point $x\Gamma$ in $K\Gamma/\Gamma$. Equally, there exists a radius $\varepsilon>0$ such that the map $g\mapsto gx\Gamma$ from $B_{G}(\varepsilon)$ to $G/\Gamma$ is a homeomorphism between $B_{G}(\varepsilon)$ and an open set in $G/\Gamma$ around $x\Gamma$. Once again, by the relative compactness of $K$, we may assume that $\varepsilon$ works for every point in $K\Gamma/\Gamma$. We may also assume that $\delta<\varepsilon$. Now, fix $r<\min\{\delta',\varepsilon\}/2$ and suppose by contradiction that the map in the lemma is not injective. Then, for some $h_{1},h_{1}'\in B_{H_{1}}(r)$ and $k,k'\in K$ one has 
$$h_{1}k\Gamma=h_{1}'k'\Gamma.$$
Since $h_{1}'^{-1}h_{1}\in B_{G}(2r)$, we have $d(k\Gamma,k'\Gamma)<2r$. Then, from $2r<\delta'$ we deduce that there exists $h_{2}\in B_{H_{2}}(\delta)$ such that $h_{2}k\Gamma=k'\Gamma$. Given that $2r<\varepsilon$ and $\delta<\varepsilon$, it must be $h_{2}=h_{1}'^{-1}h_{1}$, showing $h_{1}'=h_{1}$.
 We conclude by proving that the inverse of the map in the statement is continuous. Let $\varrho>0$ and let $h_{1}k\Gamma$ with $h_{1}\in B_{H_{1}}(r)$ and $k\in K$ be fixed. Consider a point $h_{1}'k'\Gamma$ with $h_{1}'\in B_{H_{1}}(r)$ and $k'\in K$ such that $d_{G}(h_{1}'k'\Gamma,h_{1}k\Gamma)<\varrho$. Assuming that $\varrho$ is small enough, there exists $g\in B_{G}(\varrho)$ such that
$$gh_{1}'k'\Gamma=h_{1}k\Gamma.$$
Since the right $\Gamma$-orbit of the group $H_{2}$ is closed, for small values of $\varrho$, it must also be $h_{1}^{-1}gh_{1}'\in H_{2}\cap B_{G}(2r+\varrho)$. Pick any sequence $\varrho_{n}\to 0$ and elements $k_{n}',h_{1,n}',$ and $g_{n}$ as above. By using the fact that $h_{1}^{-1}h_{1,n}'$ belongs to a compact set, we may extract a sub-sequence $h_{1}^{-1}g_{n_{k}}h_{1,n_{k}}'$ of elements in $H_{2}$ tending to some fixed element in $H_{1}$ (i.e., the limit of the sequence $h_{1}^{-1}h_{1,n_{k}}'$). Since $\overline{H}_{1}\cap \overline{H}_{2}=\{e\}$, we deduce that both $h_{1}^{-1}h_{1,n_{k}}'$ and $h_{1}^{-1}g_{n_{k}}h_{1,n_{k}}'$ must tend to $e$. Given that this is true for arbitrary sequences of radii $\varrho_{n}\to 0$, and elements $k_{n}'$ and $h_{1,n}',$ we deduce the continuity of the inverse map in both components.
\end{proof}

\end{lemma}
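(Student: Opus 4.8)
The plan is to prove the "transversality" Lemma \ref{lem:transversality} in two halves: injectivity plus openness in one direction, and continuity of the inverse map in the other. Surjectivity and continuity of the forward map $(h_1,h_2\Gamma)\mapsto h_1h_2\Gamma$ are immediate from the definition of the image set and from continuity of group multiplication on $G/\Gamma$, so the real content is confined to the map being a bijection onto its image together with the continuity of $(\cdot)^{-1}$.

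For injectivity, first I would exploit that $H_2$ has a closed right $\Gamma$-orbit to set up uniform "injectivity radii" over the relatively compact set $K\Gamma/\Gamma$. Concretely: (i) there is $\delta>0$ so that for every $x\Gamma\in K\Gamma/\Gamma$ the map $h_2\mapsto h_2 x\Gamma$ is a homeomorphism of $B_{H_2}(\delta)$ onto an open neighbourhood of $x\Gamma$ in $K\Gamma/\Gamma$, and in fact that neighbourhood contains $B_{G/\Gamma}(x\Gamma,\delta')\cap K\Gamma/\Gamma$ for some uniform $\delta'>0$; (ii) there is $\varepsilon>0$ so that $g\mapsto gx\Gamma$ is a homeomorphism of $B_G(\varepsilon)$ onto a neighbourhood of $x\Gamma$ in $G/\Gamma$, uniformly in $x\Gamma\in K\Gamma/\Gamma$; the uniformity in both cases comes from relative compactness of $K\Gamma/\Gamma$ by a standard covering argument. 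Shrinking if necessary, assume $\delta<\varepsilon$ and fix $r<\tfrac12\min\{\delta',\varepsilon\}$. Now if $h_1 k\Gamma=h_1'k'\Gamma$ with $h_1,h_1'\in B_{H_1}(r)$ and $k,k'\in K$, then $h_1'^{-1}h_1\in B_G(2r)$ moves $k\Gamma$ to $k'\Gamma$, hence $d_{G/\Gamma}(k\Gamma,k'\Gamma)<2r<\delta'$; by (i) there is a unique $h_2\in B_{H_2}(\delta)$ with $h_2 k\Gamma=k'\Gamma$, and since $2r<\varepsilon$ and $\delta<\varepsilon$ the uniqueness from (ii) forces $h_2=h_1'^{-1}h_1$. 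Thus $h_1'^{-1}h_1\in H_1\cap H_2$. At this point one needs $\overline{H}_1\cap\overline{H}_2=\{e\}$, which certainly gives $H_1\cap H_2=\{e\}$, so $h_1=h_1'$ and then $k\Gamma=k'\Gamma$; injectivity follows.

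For continuity of the inverse, I would argue by sequences. Fix $h_1 k\Gamma$ in the image and take $h_{1,n}'k_n'\Gamma\to h_1 k\Gamma$ with $h_{1,n}'\in B_{H_1}(r)$, $k_n'\in K$. For each $n$ choose $g_n\in B_G(\varrho_n)$, $\varrho_n\to 0$, with $g_n h_{1,n}'k_n'\Gamma = h_1 k\Gamma$; rewriting, $(h_1^{-1}g_n h_{1,n}')(h_{1,n}'^{-1}k_n'\Gamma) $... more precisely one gets that $h_1^{-1}g_n h_{1,n}'$ carries $k_n'\Gamma$ into $k\Gamma$, so for $\varrho_n$ small the closedness of the $\Gamma$-orbit of $H_2$ forces $h_1^{-1}g_n h_{1,n}'\in H_2\cap B_G(2r+\varrho_n)$. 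Since $h_1^{-1}h_{1,n}'$ lives in the compact set $\overline{B_{H_1}(2r)}$, pass to a subsequence along which $h_1^{-1}h_{1,n_k}'\to \xi\in\overline{H}_1$; then $h_1^{-1}g_{n_k}h_{1,n_k}'\to\xi$ as well (because $g_{n_k}\to e$), and $h_1^{-1}g_{n_k}h_{1,n_k}'\in H_2$ forces $\xi\in\overline{H}_2$. By the hypothesis $\overline{H}_1\cap\overline{H}_2=\{e\}$ we get $\xi=e$, hence $h_{1,n_k}'\to h_1$ and then $k_{n_k}'\Gamma\to k\Gamma$ from the equation and continuity. Since this holds along every subsequence with the same limit, the full sequences converge, giving continuity of the inverse in both components; openness of the forward map is the same statement.

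The main obstacle I anticipate is the bookkeeping needed to make the "injectivity radii" $\delta,\delta',\varepsilon$ genuinely uniform over $K\Gamma/\Gamma$ and to make sure all the inequalities ($2r<\delta'$, $2r<\varepsilon$, $\delta<\varepsilon$) are arranged consistently, together with the care required in the continuity-of-inverse step to invoke closedness of the $H_2\Gamma$-orbit only after the relevant elements have been shown to be genuinely close to the identity (so that the "local product structure" near the orbit applies). The topological input — that a closed $H_2\Gamma$-orbit gives a local product neighbourhood $B_{H_2}(\delta)\times(\text{transversal})$ — is standard but should be stated cleanly, since it is precisely what converts the abstract hypothesis $\overline H_1\cap\overline H_2=\{e\}$ into the concrete homeomorphism claimed.
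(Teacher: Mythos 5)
Your proposal is correct and follows essentially the same route as the paper's own proof: uniform local injectivity radii over the relatively compact set $K\Gamma/\Gamma$ obtained from the closed $H_2\Gamma$-orbit, the forcing of $h_1'^{-1}h_1$ into $H_1\cap H_2=\{e\}$ for injectivity, and a subsequence argument landing in $\overline{H}_1\cap\overline{H}_2=\{e\}$ for continuity of the inverse. Your version is if anything slightly more explicit than the paper's at the two points where the hypothesis $\overline{H}_1\cap\overline{H}_2=\{e\}$ is actually invoked.
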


\section{Rational points}\label{rationals}
In this section, we introduce the notion of rational points in $G/\Gamma$ and prove some properties which will be used in the next few sections. For any $x\in G/\Gamma$, we will write $\Stab(x)$ for the stabilizer of $x$. Note that if $x=g\Gamma$, then $\Stab(x)=g\Gamma g^{-1}$. For any subset $\left\{a_i\right\}_{i\in I}\subset G$ where $I$ is an index set, we will write $\langle a_i\rangle_{i\in I}$ for the group generated by the set $\left\{a_i\right\}_{i\in I}$.

\begin{definition}\label{d31}
Let $\nu$ be the lowest\footnote{By lowest root we mean the only root $\nu$ such that $\nu+\alpha<0$ for all $\alpha\in\Phi\setminus\left\{\nu\right\}$.} root in $\Phi_-$ and $N_\nu$ the subgroup with Lie algebra $\mathfrak g_\nu$. We say that a point $p\in X=G/\Gamma$ is rational if $\Stab(p)\cap N_\nu\neq\left\{e\right\}$.
\end{definition}

This definition might sound unusual to the reader. However, it will be apparent that the points described in Definition \ref{d31} lying in $N_{+}\Gamma/\Gamma$ all have a $\Gamma$-representative with rational components (see Lemma \ref{l32}), whence the name. It can also be seen that this condition is equivalent to the one given in Definition \ref{d31} for points in $N_{+}\Gamma/\Gamma$, but we will not make use of this fact in the sequel. It should also be noted that an adaptation of the proof of Lemma \ref{l32} shows that Definition \ref{d31} is equivalent to the co-compactness of the lattice $\Stab(p)\cap N_{-}$ in $N_{-}$ for a point $p\in G/\Gamma$, which was our original definition of rational point. We will leave the proof of this fact to the interested reader. 

We now aim to give a characterization of rational points in $G/\Gamma$. The upshot of this section will be the following result.

\begin{proposition}\label{p31}
A point $p\in G/\Gamma$ is rational if and only if $p\in AN_-\Gamma/\Gamma$.
\end{proposition}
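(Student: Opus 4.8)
The plan is to prove both implications by analyzing how the conjugate $g\Gamma g^{-1}$ meets the one-parameter unipotent subgroup $N_\nu$, where $\nu$ is the lowest root. Concretely, with our explicit choice of flow we have $\lambda_1>\lambda_2>\lambda_3$, the positive roots correspond to the superdiagonal entries, and the lowest root $\nu$ is the one whose root space $\mathfrak g_\nu$ consists of matrices with a single nonzero entry in the bottom-left corner; thus $N_\nu$ is the group of matrices $u(s)=I+sE_{31}$. So rationality of $p=g\Gamma$ means precisely that $g\gamma g^{-1}=u(s)$ for some nonzero $s$ and some $\gamma\in\Gamma\setminus\{e\}$, equivalently $g^{-1}u(s)g\in\Gamma$ for some $s\neq 0$.

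For the forward direction, I would start from a nonidentity element $v=u(s)\in\Stab(p)\cap N_\nu$, so $\gamma:=g^{-1}vg\in\Gamma$. The element $\gamma$ is unipotent (conjugate to $u(s)$), nontrivial, and lies in $\Gamma$. Now I want to show $p\in AN_-\Gamma/\Gamma$, i.e. that $g$ can be written as $a n_- \delta$ with $a\in A$, $n_-\in N_-$, $\delta\in\Gamma$. The idea is to use the Bruhat decomposition (Proposition \ref{prop:Bruhat}) in the form $G=\bigsqcup_i DN_-w_iN_-$: write $g=dn_-w_in_-'$. The element $\gamma=g^{-1}vg$ being in $\Gamma$ and unipotent, together with the fact that $v$ lies in the minimal root group $N_\nu$ (which is central-ish in $N_-$: it is the commutator subgroup / the "deepest" part of the lower unipotent group, and in particular $N_\nu$ is normalized by $N_-$ and by $D$, and fixed by the longest element up to inverse), forces the Weyl component $w_i$ to be trivial, hence $g\in DN_-$, hence $p\in DN_-\Gamma/\Gamma = AN_-\Gamma/\Gamma$ (using that $D/A$ is finite and absorbed into a choice of $\Gamma$-representative, or more carefully that $D\subset AN_-\Gamma$ — this needs a small check). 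The mechanism for killing $w_i$: if $g=dn_-w_in_-'$ with $w_i\neq e$, then $g^{-1}N_\nu g$ contains an element of $\Gamma$, but conjugating the lower-triangular unipotent one-parameter group $N_\nu$ by $w_i^{-1}$ moves it to a different root group $N_{\alpha}$ (with $\alpha$ possibly positive), and then further conjugation by $n_-'$, $d$ produces a one-parameter unipotent subgroup of $G$ all of whose nontrivial elements would have to avoid $\Gamma$ unless it is the original $N_\nu$ up to the $N_-D$-stabilizer — so intersecting with $\Gamma$ nontrivially pins down $w_i=e$. I expect the cleanest route is: $g^{-1}vg\in\Gamma$ implies $g^{-1}N_\nu^{(s)}g\cap\Gamma\neq\{e\}$; take the Zariski closure and use that a one-parameter unipotent subgroup defined over $\mathbb{Q}$ with a nontrivial integer point is itself defined over $\mathbb{Q}$ — then match root groups.

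For the reverse direction, suppose $p=an_-\Gamma/\Gamma$ with $a\in A$, $n_-\in N_-$. I want to produce a nontrivial element of $\Stab(p)\cap N_\nu$. Since $N_\nu$ is the lowest root group, it is normalized by $A$ and, crucially, by all of $N_-$ (it sits inside the center of $N_-$ — for $\mathrm{SL}_3$, $[N_-,N_-]=N_\nu$ and $N_\nu$ is central in $N_-$). Pick the standard integral generator $\gamma_0=I+E_{31}\in N_\nu\cap\Gamma$. Then $(an_-)\gamma_0(an_-)^{-1} = a(n_-\gamma_0 n_-^{-1})a^{-1}$. Because $N_\nu$ is central in $N_-$, $n_-\gamma_0 n_-^{-1}=\gamma_0$, and then $a\gamma_0 a^{-1}=I+\nu(a)E_{31}\in N_\nu$ (abusing notation for the character), which is a nontrivial element of $N_\nu$. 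Wait — but we need this element to be in $\Stab(p)=an_-\Gamma(an_-)^{-1}$, and it is, since $\gamma_0\in\Gamma$. So $\Stab(p)\cap N_\nu\ni a\gamma_0 a^{-1}\neq e$, giving rationality. This direction is short.

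The main obstacle, and the part deserving the most care, is the forward direction: rigorously using the Bruhat decomposition to eliminate the Weyl-group component. The subtlety is that $\Stab(p)\cap N_\nu$ being nontrivial gives a single group element, and I must leverage the arithmetic (the element lies in $g\Gamma g^{-1}$, i.e. $g^{-1}(\cdot)g\in\Gamma$) to get a \emph{rational} one-parameter subgroup and then compare root groups under the $DN_-w_iN_-$ coordinates. I would handle this by: (i) observing $g^{-1}N_\nu g$ is a one-parameter unipotent $\mathbb{R}$-subgroup meeting $\Gamma$ nontrivially, hence (by taking Zariski closure, using that the closure is defined over $\mathbb{Q}$ and unipotent one-dimensional) it equals $h^{-1}N_\nu h$ for some $h\in G(\mathbb{Q})$ related to $g$; (ii) then $gh^{-1}$ normalizes $N_\nu$; (iii) the normalizer of the lowest root group $N_\nu$ in $G$ is the parabolic $P^- = DN_-$ (lower Borel) — here one checks that $\Ad(g)$ preserving the line $\mathfrak g_\nu$ forces $g$ into the lower Borel, since $\mathfrak g_\nu$ is the unique minimal root line and any $g$ fixing it must fix the whole flag $\mathfrak g_\nu\subset\mathfrak g_\nu\oplus\mathfrak g_{\alpha}\subset\ldots$ of "deep" subspaces, hence lies in $P^-$; (iv) conclude $g\in P^-\cdot h^{-1} = DN_-\cdot(\text{rational}) \subset DN_-\Gamma$, wait — need $h^{-1}\in\Gamma$, which requires a bit more: one wants $h$ to be integral, achievable by clearing denominators / using that $N_\nu\cap\Gamma$ is a lattice in $N_\nu$ so a rational conjugate that contains an integer point can be adjusted by $\Gamma$. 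So $g\in DN_-\Gamma$, and finally $D=A\cdot(\text{finite 2-group of sign-diagonal matrices})\subset AN_-\Gamma$ because each sign matrix is in $\Gamma$; hence $p=g\Gamma\in AN_-\Gamma/\Gamma$. I expect the write-up to present this via an explicit matrix computation in $\mathrm{SL}_3$ rather than abstract parabolic theory, since the group is small and the root $\nu$ is concrete, which makes steps (iii)–(iv) into a routine but slightly lengthy linear-algebra verification.
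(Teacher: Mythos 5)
Your reverse direction is exactly the paper's: $AN_-$ normalizes $N_\nu$ (indeed $N_\nu$ is central in $N_-$), so conjugating the integral generator of $N_\nu\cap\Gamma$ by $an_-$ produces a nontrivial element of $\Stab(p)\cap N_\nu$. For the forward direction you take a genuinely different route. The paper first reduces to $g\in N_+$ via $G/\Gamma=AN_-N_+\Gamma/\Gamma$ (Lemma~\ref{l21}), then proves $g\in G(\mathbb Q)$ by a Galois-descent argument (Lemma~\ref{l32}), and only then converts rationality of the matrix into a lower-triangular $\Gamma$-representative. You instead observe that $g^{-1}N_\nu g$ is the Zariski closure of a cyclic group of integer matrices, hence a one-dimensional unipotent $\mathbb Q$-subgroup, and then aim to conjugate it back to $N_\nu$ and invoke $N_G(N_\nu)=DN_-$ (the paper's Lemma~\ref{l31}) together with $DN_-\Gamma=AN_-\Gamma$. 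This is attractive because it bypasses the Galois argument and the reduction to $N_+$ entirely; the normalizer computation and the $D=A\cdot(D\cap\Gamma)$ step are fine.

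The one step that is not yet a proof is the passage from ``$g^{-1}N_\nu g=h^{-1}N_\nu h$ with $h\in G(\mathbb Q)$'' to ``$h$ may be taken in $\Gamma$''. Clearing denominators does not do this: it would only give $g\in DN_-\cdot G(\mathbb Q)$, which is a dense and therefore useless condition, and the mere fact that $N_\nu\cap\Gamma$ is a lattice meeting your rational subgroup does not by itself produce an integral conjugator. What does work, and what the paper proves for exactly this purpose, is the flag-basis lemma (Lemma~\ref{l33}, Corollary~\ref{cor31}): write the generator of $\Lie(g^{-1}N_\nu g)$ as a rank-one rational nilpotent $vw^{T}$ with $w^{T}v=0$ and $v,w$ primitive integral vectors; the rational flag $\left\{0\right\}\subsetneq\mathbb Q v\subsetneq w^{\perp}\subsetneq\mathbb Q^{3}$ admits an adapted basis of $\mathbb Z^{3}$, and arranging that basis as the columns of $\delta^{-1}$ gives $\delta\in\Gamma$ with $\delta(vw^{T})\delta^{-1}\in\mathbb R E_{31}\setminus\left\{0\right\}$, i.e. $\delta(g^{-1}N_\nu g)\delta^{-1}=N_\nu$, whence $g\in DN_-\delta\subset AN_-\Gamma$. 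With that lemma supplied your argument closes (and is arguably tidier than the paper's); without it there is a genuine gap at precisely the point you flagged as needing more care.
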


The proof of this proposition requires a few intermediate steps. We start with some auxiliary lemmas.

\begin{lemma}\label{l21}
We have that $G/\Gamma=AN_-N_+\Gamma/\Gamma.$
\end{lemma}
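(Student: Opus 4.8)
The statement is equivalent to the inclusion $G\subseteq AN_-N_+\Gamma$ (the reverse inclusion being trivial). The plan is to run the Bruhat decomposition of Proposition~\ref{prop:Bruhat} through the quotient and to check that every Bruhat cell is absorbed into $AN_-N_+\Gamma$. Since $\bigcup_{i\in I}DN_-w_iN_+\Gamma=G\Gamma=G$, it suffices to show $DN_-w_iN_+\Gamma\subseteq AN_-N_+\Gamma$ for each $i$. Using that $w_i\in\Gamma$, rewrite $w_iN_+\Gamma=(w_iN_+w_i^{-1})\Gamma=:U_i\Gamma$, so that $DN_-w_iN_+\Gamma=DN_-U_i\Gamma$. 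Now decompose the full diagonal group as $D=AF$, where $F$ is the finite group of diagonal matrices with entries in $\{\pm1\}$ and determinant $1$; note $F\subset\Gamma$, and, being diagonal with $\pm1$ entries, $F$ normalizes $N_-$ and preserves every root subgroup (hence normalizes each $U_i$). Therefore $DN_-U_i\Gamma=AFN_-U_i\Gamma=AN_-U_i(F\Gamma)=AN_-U_i\Gamma$. Thus the whole lemma reduces to the claim that $U_i=w_iN_+w_i^{-1}\subseteq N_-N_+$ for every $i$: granting this, $AN_-U_i\Gamma\subseteq AN_-N_-N_+\Gamma=AN_-N_+\Gamma$, and taking the union over $i$ gives $G\subseteq AN_-N_+\Gamma$.

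It then remains to verify $U_i\subseteq N_-N_+$. One clean way is to invoke the standard structure of unipotent radicals under Weyl conjugation: for any representative $w$ of a Weyl element, $wN_+w^{-1}=(wN_+w^{-1}\cap N_-)\cdot(wN_+w^{-1}\cap N_+)$ (see \cite[\S14]{Bo91}), which is contained in $N_-N_+$ by inspection. Alternatively, and more self-containedly, one argues directly: a short LU-type computation shows that, inside $G=\SL_3(\mathbb R)$, one has $N_-N_+=\{g:g_{11}=1\text{ and }g_{11}g_{22}-g_{12}g_{21}=1\}$; on the other hand each $U_i$ is the product of the three root subgroups indexed by $w_i(\Phi_+)$, and for each of the six explicit matrices $w_i$ listed above one checks at once that every element of $U_i$ has diagonal $(1,1,1)$ and upper-left $2\times2$ block uni-triangular, hence satisfies these two equations. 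Either route gives the claim and completes the proof.

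\textbf{Main obstacle.} The only genuine content is the inclusion $U_i\subseteq N_-N_+$; everything else is bookkeeping around the Bruhat decomposition and the passage from the full diagonal group $D$ to its identity component $A$, which is painless precisely because the finitely many sign matrices making up $D/A$ lie in $\Gamma$ and normalize $N_-$ and the $U_i$. I would expect the write-up to spend its few lines on the six-case check (or the citation to \cite{Bo91}) together with the elementary absorption identities $FN_-=N_-F$, $FU_i=U_iF$, and $AN_-N_-N_+=AN_-N_+$.
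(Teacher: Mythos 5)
Your proof is correct and follows essentially the same route as the paper's: Bruhat decomposition, absorption of $w_i$ into $\Gamma$, decomposition of the $w_i$-conjugate of a horospherical subgroup into its $N_-$- and $N_+$-parts, and $D=A\cdot(D\cap\Gamma)$ with $D\cap\Gamma$ normalizing everything in sight. The only cosmetic differences are that the paper starts from the cells $DN_-w_iN_-$ (so it conjugates $N_-$ rather than $N_+$) and simply quotes the semidirect decomposition $w_iN_-w_i^{-1}=F_iH_i$ with $F_i\subset N_-$, $H_i\subset N_+$, where you verify the inclusion $w_iN_+w_i^{-1}\subseteq N_-N_+$ by citation or by the six-case leading-minor check.
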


\begin{proof}
By Proposition \ref{prop:Bruhat}, we know that $$G=\bigsqcup_{i\in I}DN_-w_i N_-,$$ where the subset $\left\{w_i\right\}_{i\in I}$ is the set of representatives of the Weyl group $W$ defined above. Note that for each $w_i$, the subgroup $w_iN_-w_i^{-1}$ is a semi-direct product of two subgroups $F_i$ and $H_i$ in $N_-$ and $N_+$ respectively, i.e.,
$$w_iN_-w_i^{-1}=F_i\cdot H_i.$$
Therefore, we have 
\begin{align*}
DN_-w_iN_-\Gamma=&DN_-(w_iN_-w_i^{-1})\cdot w_i\Gamma\subset DN_-\cdot F_iH_i\cdot w_i\Gamma\subset DN_-N_+\Gamma.
\end{align*}
So $G/\Gamma=DN_-N_+\Gamma/\Gamma$. Finally, since $$D=A\cdot (D\cap\Gamma)$$ and $D\cap\Gamma$ normalizes $N_-$ and $N_+$, we have $G/\Gamma=AN_-N_+\Gamma/\Gamma$, as required.
\end{proof}

\begin{lemma}\label{l31}
Let $\nu$ be the lowest root in $\Phi_-$. Then, the normalizer of the subgroup $N_\nu$ in $G$ is equal to $DN_-$.
\end{lemma}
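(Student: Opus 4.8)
The plan is to make $N_\nu$ explicit and then compute the normalizer directly via the adjoint action, exploiting that $N_\nu$ is one-dimensional. With the chosen orderings ($\lambda_1>\lambda_2>\lambda_3$, positive roots in $\mathfrak n_+$) the lowest root is $\nu=-(\alpha_0+\beta_0)$, so $\mathfrak g_\nu=\mathbb R E_{31}$ (with $E_{ij}$ the matrix units) and, since $E_{31}^2=0$, $N_\nu=\{I+tE_{31}:t\in\mathbb R\}$. Because $N_\nu=\exp\mathfrak g_\nu$ is connected and one-dimensional, for $g\in G$ one has $gN_\nu g^{-1}=\exp(\Ad(g)\mathfrak g_\nu)$, and two one-parameter subgroups agree iff their Lie algebras do; hence $g\in N_G(N_\nu)$ if and only if $\Ad(g)E_{31}=gE_{31}g^{-1}\in\mathbb R E_{31}$.

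For the inclusion $DN_-\subseteq N_G(N_\nu)$: conjugation of $I+tE_{31}$ by $\diag(d_1,d_2,d_3)$ multiplies the $(3,1)$-entry by $d_3 d_1^{-1}$, so $D$ normalizes $N_\nu$; and for $n\in N_-$ one checks $nE_{31}n^{-1}=E_{31}$ (the third column of a lower unipotent matrix is $e_3$ and the first row of its inverse is $e_1^{\mathsf T}$), i.e.\ $N_\nu$ is central in $N_-$, so $N_-$ even centralizes $N_\nu$. Thus $DN_-\subseteq N_G(N_\nu)$.

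For the reverse inclusion, note that $gE_{31}g^{-1}$ is the rank-one matrix given by the outer product of the third column of $g$ with the first row of $g^{-1}$. For this to be a scalar multiple of $E_{31}=e_3e_1^{\mathsf T}$ we need the third column of $g$ to be proportional to $e_3$ and the first row of $g^{-1}$ to be proportional to $e_1^{\mathsf T}$. The first condition reads $g_{13}=g_{23}=0$; the second, upon inverting (a matrix whose first row is $(\ast,0,0)$ has inverse with first row $(\ast,0,0)$), reads $g_{12}=g_{13}=0$. Together these force $g$ to be lower triangular with $\det g=1$, i.e.\ $g\in DN_-$. Combining the two inclusions yields $N_G(N_\nu)=DN_-$.

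I expect no serious obstacle here: the only points requiring care are correctly identifying which matrix position the lowest root $\nu$ corresponds to under the fixed conventions, and verifying that the two vanishing conditions on $g$ and $g^{-1}$ are jointly equivalent to lower-triangularity. (Alternatively one could argue structurally, noting $DN_-$ is the lower Borel, observing $N_+\not\subseteq N_G(N_\nu)$, and checking that neither of the two maximal parabolics containing the Borel normalizes $N_\nu$; but the direct computation above is shorter and fits the explicit matrix setup of the paper.)
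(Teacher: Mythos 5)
Your proof is correct, but it takes a different route from the paper's. The paper proves the hard inclusion by invoking the Bruhat decomposition (Proposition~\ref{prop:Bruhat}): it writes a normalizing element as $g=an_{1}w_{i}n_{2}$ with $a\in D$, $n_{1},n_{2}\in N_{-}$, uses exactly the facts you establish in your first inclusion ($D$ normalizes $N_{\nu}$ and $N_{-}$ centralizes it) to reduce to $w_{i}N_{\nu}w_{i}^{-1}=N_{\nu}$, and then observes that only the identity Weyl element fixes the root $\nu$. You instead compute $\Ad(g)E_{31}=(ge_{3})(e_{1}^{\mathsf T}g^{-1})$ directly and read off lower-triangularity from the two proportionality conditions; this is self-contained (it does not need Bruhat at all) and the rank-one outer-product observation is clean, including the small inversion step showing that the condition on the first row of $g^{-1}$ transfers to $g$. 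The paper's argument is the one that generalizes painlessly to $\SL_{n}$ or to an arbitrary root space of a reductive group, and it recycles machinery already set up in \S2, whereas yours is shorter in absolute terms and avoids the (mild) burden of checking all six Weyl representatives. Both are complete; the only care points in yours — identifying $\nu=-(\alpha_{0}+\beta_{0})$ with the $(3,1)$ entry under the paper's conventions, and noting that the scalar in $\Ad(g)E_{31}=cE_{31}$ is nonzero so that both factors of the outer product are forced to be proportional to the respective coordinate vectors — are handled correctly.
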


\begin{proof}
Let $g$ be an element in $G$ which normalizes $N_\nu$. By Proposition \ref{prop:Bruhat}, there exists a representative $w_i$ of the Weyl group $W$ such that $$g=an_1w_in_2$$ for some $a\in D$ and $n_1,n_2\in N_-$. Since $a$ normalizes $N_\nu$ and $n_1,n_2$ commute with $N_\nu$, it follows from $gN_\nu g^{-1}=N_\nu$ that $$w_i N_\nu w_i^{-1}=N_\nu.$$ This implies that $w_i=e$ is the representative of identity in $W$, and hence $g\in DN_-$.
\end{proof}

Henceforth, we will use the notation $H(\mathbb Q)$ to indicate $H\cap \textup{SL}_{3}(\mathbb Q)$ for any subgroup $H<G$.

\begin{lemma}\label{l32}
If $p=g\Gamma$ is a rational point and $g\in N_+$, then $g\in G(\mathbb Q)$.
\end{lemma}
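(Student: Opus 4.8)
The plan is to exploit the structure of the stabilizer of a rational point together with Lemma \ref{l31}. Since $p = g\Gamma$ is rational with $g \in N_+$, by Definition \ref{d31} there exists a nontrivial element $u \in \Stab(p) \cap N_\nu = g\Gamma g^{-1} \cap N_\nu$. Write $u = g\gamma g^{-1}$ for some $\gamma \in \Gamma \setminus \{e\}$. The key observation is that $\gamma$ is a nontrivial integer matrix, so by considering the subgroup it generates inside $\Gamma$ one obtains enough algebraic constraints to force the entries of $g$ to be rational.

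First I would set up coordinates. Here $N_\nu$ is the one-parameter subgroup of $N_-$ generated by the lowest root space $\mathfrak g_\nu$; concretely, since $\lambda_1 > \lambda_2 > \lambda_3$, the lowest root is the $(3,1)$-entry root, so $N_\nu = \{I + t E_{31} : t \in \mathbb R\}$. Then $u = g\gamma g^{-1}$ being a nontrivial element of $N_\nu$ means $g\gamma g^{-1} = I + t_0 E_{31}$ for some $t_0 \neq 0$; in particular $\gamma$ is conjugate (via $g \in N_+$) to a nontrivial lower unipotent of this special form. The plan is to analyze this single matrix equation $g\gamma = (I + t_0 E_{31})g$ with $g$ upper unipotent (so $g$ has three free parameters, the $(1,2), (1,3), (2,3)$ entries) and $\gamma \in \SL_3(\mathbb Z)$ nontrivial. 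Expanding this gives a system of polynomial equations in the entries of $g$ with integer coefficients coming from $\gamma$; one then argues that a nontrivial solution forces the free entries of $g$ to be rational.

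A cleaner route, which I would actually pursue, is to use Lemma \ref{l31}: the normalizer $N_G(N_\nu) = DN_-$. The group $\Stab(p) \cap N_\nu$ is a nontrivial subgroup of the one-dimensional group $N_\nu$; more importantly, I would look at $\Lambda := g^{-1}(\Stab(p)\cap N_\nu)g \subset \Gamma$, which is a nontrivial subgroup of $\Gamma$ whose conjugate by $g$ lands in $N_\nu$. Since $g \in N_+ \subset N_G(\text{something})$... — actually the cleanest is: $g N_\nu g^{-1}$ and $N_\nu$ are both subgroups of the normalizer relationship, and one can write $g\gamma g^{-1} \in N_\nu$, so $\gamma \in g^{-1}N_\nu g$. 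Now $g^{-1}N_\nu g$ is a one-parameter unipotent subgroup of $G$ (a conjugate of $N_\nu$), and it contains the nontrivial integer point $\gamma$. A one-parameter unipotent subgroup containing a nontrivial rational (indeed integer) point is defined over $\mathbb Q$ — this is the standard fact that the Zariski closure of $\langle \gamma \rangle$, being the unique one-parameter unipotent through $\gamma$, is a $\mathbb Q$-subgroup. Hence $g^{-1}N_\nu g$ is defined over $\mathbb Q$. Comparing with the fact that $N_\nu$ itself is defined over $\mathbb Q$, and using that $g \in N_+$ conjugates one to the other, I would extract rationality of the entries of $g$ by matching the Lie algebras: $\Ad(g^{-1})\mathfrak g_\nu$ is a rational line in $\mathfrak g$, and writing out $\Ad(g^{-1})E_{31}$ explicitly in terms of the three entries of $g$ shows that rationality of this vector is equivalent to rationality of those three entries (after normalizing, e.g., by the always-rational $E_{31}$-coefficient, which is $1$).

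The main obstacle I anticipate is the last step: one must verify that rationality of the line $\Ad(g^{-1})\mathfrak g_\nu$ genuinely pins down \emph{all} of $g$'s free parameters and not just some combination of them. This requires writing $\Ad(g^{-1}) E_{31}$ for a general upper unipotent $g$ with entries $x = g_{12}, y = g_{13}, z = g_{23}$ and checking that the resulting vector in $\mathfrak g$ has coordinates that are polynomials in $x, y, z$ from which $x, y, z$ can each be solved rationally — a direct but slightly delicate $3\times 3$ computation. A secondary point to be careful about is the invocation of "a one-parameter unipotent subgroup containing a nontrivial integer point is defined over $\mathbb Q$": since $\gamma$ is unipotent with integer (hence rational) entries, $\log \gamma$ is a nilpotent matrix with rational entries, so the one-parameter subgroup $\exp(t \log\gamma)$ is the unique such group through $\gamma$ and is manifestly defined over $\mathbb Q$; this needs only that $\gamma \neq e$ is unipotent, which follows since $\gamma$ is conjugate into $N_\nu$. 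Everything else is routine.
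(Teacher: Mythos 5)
Your proposal is correct, but it takes a genuinely different route from the paper. The paper's proof also starts from $g\gamma g^{-1}\in N_\nu\setminus\{e\}$ and the Zariski-density of $\langle\gamma\rangle$ in the conjugated root group, but then argues by descent: for a field automorphism $\sigma$ fixing $\mathbb Q$ it deduces $\overline{\sigma(g)\langle\gamma\rangle\sigma(g)^{-1}}^{Zar}=N_\nu$, so $h=g\sigma(g)^{-1}\in N_+$ normalizes $N_\nu$, and Lemma \ref{l31} forces $h\in DN_-\cap N_+=\{e\}$, i.e.\ $\sigma(g)=g$ for all $\sigma$. Your argument instead extracts rationality directly: $\log\gamma=t_0\,\Ad(g^{-1})E_{31}$ is a rational nilpotent matrix, its $(3,1)$-entry is $t_0\neq 0$, hence $\Ad(g^{-1})E_{31}=(\log\gamma)/t_0$ is itself rational, and the explicit outer-product formula
$$g^{-1}E_{31}g=\begin{pmatrix} xz-y & x(xz-y) & y(xz-y)\\ -z & -xz & -yz\\ 1 & x & y\end{pmatrix}$$
(for $g$ with entries $x=g_{12}$, $y=g_{13}$, $z=g_{23}$) recovers $x,y,z$ from the $(3,2)$, $(3,3)$, and $(2,1)$ entries — so the "delicate" step you flagged does go through. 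Your route is more elementary and constructive: it needs neither Lemma \ref{l31} nor any Galois-theoretic input, only the rationality of the logarithm of an integral unipotent matrix. It also sidesteps a genuine soft spot in the paper's write-up: $\operatorname{Gal}(\mathbb R/\mathbb Q)$ is trivial (the identity is the only field automorphism of $\mathbb R$), so the paper's descent argument really has to be run with automorphisms of $\mathbb C$ over $\mathbb Q$ applied to a complex point of $N_+$; your computation avoids this issue entirely. What the paper's approach buys in exchange is coordinate-freeness — it generalizes verbatim to $\SL_n$ without redoing the adjoint computation — whereas yours requires the (easy but explicit) $3\times 3$ verification.
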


\begin{proof}
Let $\nu$ be the lowest root in $\Phi_-$. Since $p$ is a rational point in $G/\Gamma$, the intersection $$\Stab(p)\cap N_{\nu}=(g\Gamma g^{-1})\cap N_{\nu}$$ contains an element $x\neq e$. Let $\gamma$ in $\Gamma\setminus\left\{e\right\}$ such that $ g\gamma g^{-1}=x$. Then, we have $$g\langle\gamma\rangle g^{-1}=\langle x\rangle.$$ Given that any element in $N_\nu\setminus\left\{e\right\}$ generates a Zariski dense subgroup in $N_\nu$, we also have 
\begin{equation}\label{eq1}
\overline{g\langle\gamma\rangle g^{-1}}^{Zar}=N_\nu,
\end{equation}
where $\overline{S}^{Zar}$ denotes the Zariski closure of a set $S$.

Now, let $\sigma\in\textup{Gal}(\mathbb R/\mathbb Q)$. Since $N_\nu$ is defined over $\mathbb Q$ and $\gamma$ is an integral matrix, by applying $\sigma$ to Equation~\eqref{eq1}, we have $$\overline{\sigma(g)\langle\gamma\rangle \sigma(g)^{-1}}^{Zar}=N_\nu.$$ Let $h=g\cdot\sigma(g)^{-1}$. Since $g\in N_+$ and $N_+$ is defined over $\mathbb Q$, we have that $\sigma(g)\in N_+$ and hence $h\in N_+$. It follows that $$h N_\nu h^{-1}=h\left(\overline{\sigma(g)\langle\gamma\rangle \sigma(g)^{-1}}^{Zar}\right)h^{-1}=\overline{g\langle\gamma\rangle g^{-1}}^{Zar}=N_\nu.$$ By Lemma~\ref{l31}, this implies that
$$h\in DN_-\cap N_+=\left\{e\right\}.$$ Hence, $\sigma(g)=g$ for all $\sigma\in\textup{Gal}(\mathbb R/\mathbb Q)$ and thus $g\in G(\mathbb Q)$, as required.
\end{proof}

Now, we deviate slightly and consider some results about lattices and flags in $\mathbb R^n$. We recall that a flag $\mathcal F$ in $\mathbb R^n$ is an ascending chain of vector subspaces of $\mathbb R^n$ $$P_0\subsetneq P_1\subsetneq P_2\subsetneq\cdots\subsetneq P_n=\mathbb R^n$$ such that $\dim P_i=i$ for $0\leq i\leq n$. Let $\Lambda$ be a lattice in $\mathbb R^n$. We say that a flag $\mathcal F$ is a $\Lambda$-flag if for any $0\leq i\leq n$ the intersection $\Lambda\cap P_i$ generates the subspace $P_i$ or, in other words, if one can find a basis of $P_i$ in $\Lambda$.

\begin{lemma}\label{l33}
Let $\Lambda$ be a lattice in $\mathbb R^n$ and let $\mathcal F:=P_0\subsetneq P_1\subsetneq P_2\subsetneq\cdots\subsetneq P_n=\mathbb R^n$ be a $\Lambda$-flag. Then, there exist vectors $v_1,v_2,...,v_n\in\Lambda$ such that $v_i\in P_i$ $(1\leq i\leq n)$ and $\left\{v_1,v_2,...,v_n\right\}$ is a basis of $\Lambda$ as a $\mathbb Z$-module.
\end{lemma}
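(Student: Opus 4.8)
The plan is to prove Lemma~\ref{l33} by induction on $n$, building the basis from the top of the flag downward. The base case $n=1$ is immediate: since $\mathcal{F}$ is a $\Lambda$-flag, $\Lambda\cap P_1=\Lambda$ is generated by a single vector $v_1$, which we take as the basis. For the inductive step, suppose the statement holds for all lattices in $\mathbb{R}^{n-1}$ and all flags of length $n-1$ therein.

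First I would restrict attention to the sublattice $\Lambda':=\Lambda\cap P_{n-1}$, which is a lattice of full rank in the hyperplane $P_{n-1}\cong\mathbb{R}^{n-1}$ (full rank precisely because $\mathcal{F}$ is a $\Lambda$-flag). The truncated flag $P_0\subsetneq P_1\subsetneq\cdots\subsetneq P_{n-1}$ is then a $\Lambda'$-flag in $P_{n-1}$: indeed for $i\le n-1$ we have $\Lambda'\cap P_i=\Lambda\cap P_i$, which generates $P_i$ by hypothesis. By the inductive hypothesis there exist $v_1,\dots,v_{n-1}\in\Lambda'$ with $v_i\in P_i$ and $\{v_1,\dots,v_{n-1}\}$ a $\mathbb{Z}$-basis of $\Lambda'$. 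It then remains to produce $v_n\in\Lambda$, not lying in $P_{n-1}$, such that $\{v_1,\dots,v_n\}$ generates all of $\Lambda$.

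The key step is to choose $v_n$ correctly: consider the quotient map $\pi\colon\mathbb{R}^n\to\mathbb{R}^n/P_{n-1}\cong\mathbb{R}$. The image $\pi(\Lambda)$ is a discrete subgroup of $\mathbb{R}$, and it is nonzero and in fact a lattice in $\mathbb{R}$ because $\Lambda$ has full rank in $\mathbb{R}^n$ while $\ker(\pi|_\Lambda)=\Lambda\cap P_{n-1}=\Lambda'$ has rank $n-1$; hence $\pi(\Lambda)\cong\mathbb{Z}$ and $\Lambda/\Lambda'$ is infinite cyclic. Pick $v_n\in\Lambda$ with $\pi(v_n)$ a generator of $\pi(\Lambda)$. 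Then I claim $\{v_1,\dots,v_n\}$ is a $\mathbb{Z}$-basis of $\Lambda$: given any $w\in\Lambda$, write $\pi(w)=m\,\pi(v_n)$ for some $m\in\mathbb{Z}$ (possible since $\pi(v_n)$ generates $\pi(\Lambda)$); then $w-m v_n\in\ker(\pi)\cap\Lambda=\Lambda'$, so $w-m v_n=\sum_{i=1}^{n-1}c_i v_i$ with $c_i\in\mathbb{Z}$ by the inductive hypothesis, giving $w=\sum_{i=1}^{n-1}c_i v_i+m v_n$. Linear independence over $\mathbb{Z}$ (indeed over $\mathbb{R}$) follows since $v_1,\dots,v_{n-1}$ span $P_{n-1}$ and $v_n\notin P_{n-1}$. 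Finally $v_n\in P_n=\mathbb{R}^n$ trivially, so all the membership conditions $v_i\in P_i$ hold.

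I do not expect any serious obstacle here; this is a standard argument about the interaction of a lattice with a compatible flag, essentially the existence of an adapted (``triangular'') basis. The only point requiring a little care is verifying that $\pi(\Lambda)$ is a nontrivial discrete subgroup of $\mathbb{R}$ — i.e., that $\Lambda$ is not entirely contained in $P_{n-1}$, which is exactly the content of $P_n=\mathbb{R}^n$ together with $\Lambda$ being a full lattice — and confirming that the truncated flag remains a $\Lambda'$-flag, which uses the defining property of a $\Lambda$-flag at each level. Both are routine once stated explicitly.
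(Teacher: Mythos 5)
Your proof is correct and follows essentially the same route as the paper: induction on $n$, restriction to the sublattice $\Lambda\cap P_{n-1}$ with the truncated flag, and completion of the basis by a vector whose image generates the (discrete, rank-one) projection of $\Lambda$ modulo $P_{n-1}$. The only cosmetic differences are that you use the quotient map $\mathbb R^n\to\mathbb R^n/P_{n-1}$ where the paper uses the orthogonal projection onto $P_{n-1}^{\perp}$, and you verify the generating property by the explicit subtraction argument rather than the paper's minimal-volume remark — both are fine.
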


\begin{proof}
We proceed by induction on $n$. The case $n=1$ is trivial. Assume $n>1$ and let $\Delta=P_{n-1}\cap\Lambda$. Then, $\Delta$ is a lattice in $P_{n-1}$ and $$\left\{0\right\}=P_0\subsetneq P_1\subsetneq P_2\subsetneq...\subsetneq P_{n-1}\cong\mathbb R^{n-1}$$ is a $\Delta$-flag in $P_{n-1}$. By the inductive hypothesis, we can find a $\mathbb{Z}$-basis $\left\{v_1,v_2,...,v_{n-1}\right\}$ of $\Delta$ such that $v_i\in P_i$ for $i=1,2,...,n-1$. Let $U=P_{n-1}^\perp$ be the orthogonal complement of $P_{n-1}$ in $\mathbb R^n$ and denote by $\pi_U$ the orthogonal projection onto $U$. Then, $\pi_U(\Lambda)$ must be a discrete subgroup in $U$, since otherwise, we would find infinitely many distinct vectors of $\Lambda$ inside a ball around a fundamental domain of $\Delta$. In view of this, we can find a vector $v_n\in\Lambda$ such that $\pi_U(v_n)$ is a generator of $\pi_U(\Lambda)$. Then, the set $\left\{v_1,v_2,...,v_n\right\}$ is a basis of $\Lambda$, as it generates a simplex of minimal volume.
\end{proof}

By taking $\Lambda=\mathbb Z^n\subset\mathbb R^n$ in Lemma~\ref{l33}, we deduce the following.

\begin{corollary}\label{cor31}
Let $\left\{0\right\}=P_0\subsetneq P_1\subsetneq P_2\subsetneq...\subsetneq P_n=\mathbb R^n$ be a rational flag in $\mathbb R^n$ (i.e., each $P_i$ is a vector space defined over $\mathbb Q$ and $\dim P_i=i$). Then, there exist integral vectors $v_1,v_2,...,v_n\in\mathbb Z^n$ such that $v_i\in P_i$ and $\left\{v_1,v_2,...,v_n\right\}$ is a basis of $\mathbb Z^n$.
\end{corollary}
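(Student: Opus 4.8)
The plan is to deduce this corollary directly from Lemma~\ref{l33} by specializing that lemma to the lattice $\Lambda=\mathbb Z^n$. The only thing that needs to be checked is that a rational flag, in the sense of the corollary, is automatically a $\mathbb Z^n$-flag in the sense used in Lemma~\ref{l33}; that is, that $\mathbb Z^n\cap P_i$ spans $P_i$ for every $0\le i\le n$.

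First I would verify this claim by a clearing-of-denominators argument. Since $P_i$ is defined over $\mathbb Q$, it admits a basis $\{q_1,\dots,q_i\}$ consisting of vectors with rational coordinates. Multiplying each $q_j$ by a common denominator of its coordinates produces integral vectors $m_1,\dots,m_i\in\mathbb Z^n\cap P_i$ which still form a basis of $P_i$ over $\mathbb R$. Hence $\mathbb Z^n\cap P_i$ contains $i=\dim P_i$ linearly independent vectors of $P_i$, so it generates $P_i$. As this holds for all $i$, the rational flag $\{0\}=P_0\subsetneq P_1\subsetneq\cdots\subsetneq P_n=\mathbb R^n$ is a $\mathbb Z^n$-flag.

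Then I would simply apply Lemma~\ref{l33} with $\Lambda=\mathbb Z^n$: it produces vectors $v_1,\dots,v_n\in\mathbb Z^n$ with $v_i\in P_i$ for $1\le i\le n$ and $\{v_1,\dots,v_n\}$ a $\mathbb Z$-basis of $\mathbb Z^n$, which is exactly the assertion of the corollary.

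Since the substance of the statement is entirely contained in Lemma~\ref{l33}, there is no genuine obstacle here; the only point requiring (elementary) care is the passage from ``$P_i$ is defined over $\mathbb Q$'' to ``$P_i$ is spanned by $\mathbb Z^n\cap P_i$'', carried out by the denominator-clearing step above.
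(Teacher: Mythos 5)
Your proposal is correct and follows exactly the paper's route: the paper deduces the corollary by taking $\Lambda=\mathbb Z^n$ in Lemma~\ref{l33}, leaving implicit the denominator-clearing check that a rational flag is a $\mathbb Z^n$-flag, which you spell out. No issues.
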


Now we come back to study the rational points in $G/\Gamma$.
\begin{lemma}\label{l34}
The set of rational points in $N_+\Gamma/\Gamma$ is equal to $AN_-\Gamma/\Gamma\cap N_+\Gamma/\Gamma$.
\end{lemma}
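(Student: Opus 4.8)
The plan is to establish the two inclusions separately: the inclusion $AN_-\Gamma/\Gamma\cap N_+\Gamma/\Gamma\subseteq\{\text{rational points}\}$ is elementary, and the reverse inclusion rests entirely on Lemma \ref{l32} together with the flag results proved above (Corollary \ref{cor31}). Before starting I would record the relevant structure of the lowest-root subgroup: in the coordinates fixed in \S\ref{pre} (with $\lambda_1>\lambda_2>\lambda_3$) the lowest root $\nu$ is the one supported on the $(3,1)$-entry, so $N_\nu$ is the subgroup of matrices that agree with the identity off the $(3,1)$-slot. The facts I would use are that $N_\nu$ is the center of $N_-$, that $A$ normalizes $N_\nu$ with $A$-conjugation preserving $N_\nu\setminus\{e\}$ (such a conjugation merely rescales the $(3,1)$-entry by a nonzero factor $e^{\nu(X_0)t}$), and that $N_\nu\cap\Gamma\neq\{e\}$, e.g. it contains the integral matrix $v_0:=I+E_{31}$ (the identity with an extra $1$ in the $(3,1)$-slot).

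For the first inclusion I would take a point $x=an_-\Gamma$ with $a\in A$, $n_-\in N_-$ and simply exhibit a nontrivial element of $\Stab(x)\cap N_\nu$. Since $v_0\in\Gamma$, the element $an_-v_0(an_-)^{-1}$ lies in $\Stab(x)$, and because $v_0$ is central in $N_-$ it equals $av_0a^{-1}$, which is a nontrivial element of $N_\nu$ by the remarks above; hence $x$ is rational in the sense of Definition \ref{d31}. (This in fact shows that every point of $AN_-\Gamma/\Gamma$ is rational, which is more than needed.) For the reverse inclusion I would start from a rational point $x=g\Gamma$ with $g\in N_+$, invoke Lemma \ref{l32} to get $g\in G(\mathbb Q)$, hence $g^{-1}\in G(\mathbb Q)$, and then look for $\gamma\in\Gamma$ making $g\gamma$ lower triangular — equivalently, a $\mathbb Z$-basis $v_1,v_2,v_3$ of $\mathbb Z^3$ with $gv_1\in\mathbb R e_3$ and $gv_2\in\mathbb R e_2+\mathbb R e_3$. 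Such a basis exists by Corollary \ref{cor31} applied to the flag $\{0\}\subsetneq g^{-1}(\mathbb R e_3)\subsetneq g^{-1}(\mathbb R e_2+\mathbb R e_3)\subsetneq\mathbb R^3$, which is rational because $g^{-1}$ is; after possibly flipping the sign of the unconstrained basis vector, the matrix $\gamma$ with columns $v_3,v_2,v_1$ can be taken in $\SL_3(\mathbb Z)$. Then $g\gamma$ is lower triangular of determinant $1$, so its diagonal entries are nonzero and $g\gamma\in DN_-$; hence $x=g\gamma\Gamma\in DN_-\Gamma/\Gamma=AN_-\Gamma/\Gamma$, the last equality because $D=A(D\cap\Gamma)$ and $D\cap\Gamma\subseteq\Gamma$ normalizes $N_-$, as recalled in the proof of Lemma \ref{l21}.

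I do not expect a genuine obstacle: the weight is carried by Lemma \ref{l32} and Corollary \ref{cor31}. The two points that need a little care are the bookkeeping in the correspondence ``lower-triangular basis matrix $\leftrightarrow$ adapted basis for the standard partial flag'' — one must match the one-dimensional piece of the flag with the \emph{last} column of $g\gamma$ — and the final reduction from $DN_-$ to $AN_-$, i.e. absorbing the sign ambiguity of the diagonal part into $D\cap\Gamma$.
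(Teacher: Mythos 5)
Your proof is correct and follows essentially the same route as the paper: the easy inclusion is obtained by conjugating a generator of $N_\nu\cap\Gamma$ by $an_-$ (using that $a$ normalizes $N_\nu$ and that $N_\nu$ is central in $N_-$), and the reverse inclusion combines Lemma \ref{l32} with Corollary \ref{cor31} applied to the rational flag $\{0\}\subsetneq g^{-1}(\mathbb R e_3)\subsetneq g^{-1}(\mathbb R e_2+\mathbb R e_3)\subsetneq\mathbb R^3$, which is exactly the paper's flag $P_3\subsetneq P_2\subsetneq P_1$ described via kernels of the rows of $g$. Your explicit attention to the column ordering and the sign adjustment needed to land in $\SL_3(\mathbb Z)$ is a welcome clarification of details the paper leaves implicit.
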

\begin{proof}
Suppose that $g\Gamma\in AN_-\Gamma/\Gamma\cap N_+\Gamma/\Gamma$. Then $$g\Gamma=an_-\Gamma$$ for some $a\in A$ and $n_-\in N_-$. Let $x\in\Gamma\cap N_\nu\neq\left\{e\right\}$. Since $a$ normalizes $N_\nu$ and $n_-$ commutes with $N_\nu$, we have $$an_-x(an_-)^{-1}\in\Stab(g\Gamma)\cap N_\nu$$ and $\Stab(g\Gamma)\cap N_\nu\neq\left\{e\right\}$. By definition, this implies that $g$ is a rational point in $N_+\Gamma/\Gamma$.

Now suppose that $g\Gamma$ is a rational point in $N_+\Gamma/\Gamma$ $(g\in N_+)$. We need to show that $g\Gamma\in AN_-\Gamma/\Gamma$. By Lemma~\ref{l32}, we know that $g$ is a rational matrix. Write $$g=\left(\begin{array}{c} \alpha_1^T \\ \alpha_2^T \\ \alpha_3^T\end{array}\right)$$ where $\alpha_1,\alpha_2,\alpha_3$ are column vectors in $\mathbb Q^3$. We claim that one can find an integral matrix $\gamma$ of determinant 1 $$\gamma=\left(v_1,v_2,v_3\right)$$ such that $g\gamma$ is a lower triangular matrix. This would then imply that $g\Gamma\in DN_-\Gamma/\Gamma$ and $$g\Gamma\in DN_-\Gamma/\Gamma=A\cdot(D\cap\Gamma)N_-\Gamma/\Gamma=AN_-\Gamma/\Gamma$$ as required. 

We have $$g\gamma=(\alpha_i^T\cdot v_j)_{1\leq i,j\leq 3}.$$
Hence, in order to prove that $g\gamma\in DN_-$ for some $\gamma\in\Gamma$, it suffices to show that there is a basis $\left\{v_1,v_2,v_3\right\}$ of $\mathbb Z^3$ such that for each $j\in\left\{1,2,3\right\}$, the vector $v_j$ lies inside the rational vector space $P_j$ defined by $$\alpha_i^T\cdot v=0\mbox{ for }i=1,2,...,j-1.$$ This follows directly from Corollary~\ref{cor31} as $P_3\subsetneq P_2\subsetneq P_1$ is a rational flag in $\mathbb R^3$. 
\end{proof}

We are now ready to prove Proposition \ref{p31}. If $p=g\Gamma\in AN_-\Gamma/\Gamma$, by Lemma~\ref{l31}, one has that $\Gamma\cap N_{\nu}$ is conjugate to $g\Gamma g^{-1}\cap N_{\nu}$, hence, the latter is non-trivial and $p$ is rational. Conversely, let $p\in G/\Gamma$ be a rational point. By Lemma~\ref{l21}, we know that there exist $a\in A$, $n_-\in N_-$, and $n_+\in N_+$, such that $$p=an_-n_+\Gamma.$$ Since $an_-$ normalizes $N_\nu$ (see Lemma \ref{l31}), the point $n_+\Gamma$ is also rational in $G/\Gamma$. Then, Lemma~\ref{l32} implies $n_+\in G(\mathbb Q)$. By Lemma~\ref{l34}, we conclude that $n_+\Gamma\in AN_-\Gamma$, and $p=an_-n_+\Gamma\in AN_-\Gamma$, as required.

\section{Denominators and counting rational points}\label{counting}
In this section, we introduce the notion of denominator of a rational point (see Definition \ref{d31}) and count the number of rational points in a small open set with denominator in a given interval. We  denote by $\exp:\mathfrak g\to G$ the exponential map and by $\log: G\to\mathfrak g$ the logarithm map. We fix a norm $\|\cdot\|_{\mathfrak g}$ on the Lie algebra $\mathfrak g$ of $G$, assuming, without loss of generality, that this norm is induced by norms $\|\cdot\|_{\mathfrak g_\alpha}$ on $\mathfrak g_\alpha$ $(\alpha\in\Phi)$ and a norm  $\|\cdot\|_{\mathfrak a}$ on $\mathfrak a$. 

We will also write $f\lesssim g$ (or $g\gtrsim f$) if there exists a constant $C>0$ such that $$f\leq C\cdot g$$ and $f\sim g$ if $f\lesssim g$ and $g\lesssim f$. We will specify the constants in the context if necessary.

\begin{definition}\label{d41}
Let $\nu$ be the lowest root in $\Phi_-$ and let $N_\nu$ the subgroup in $G$ with Lie algebra $\mathfrak g_{\nu}$. Let $p$ be a rational point in $G/\Gamma$. The denominator $p$ is defined as $$d_p:=\inf_{v\in\Stab(p)\cap N_\nu\setminus\left\{e\right\}}\|\log v\|_{\mathfrak g_\nu}.$$
\end{definition}

In the sequel, to simplify the notation, we will interchangeably write $\alpha(a)$ to denote $\alpha(Y)$ for any $\alpha\in\Phi$ and $a\in A$ such that $a=\exp(Y)$ $(Y\in\mathfrak a)$. In particular, we will write $$\alpha(a_t):=\alpha(t\cdot X_0)$$ for any $\alpha\in\Phi$ and $t\in\mathbb R$.

\begin{lemma}\label{l41}
Let $p\in G/\Gamma$ be a rational point and suppose that $p=an_-\Gamma$ for some $a\in A$ and $n_-\in N_-$. Then, the element $a$ is unique and we have $d_{p}=\lambda e^{\nu(a)}$, where $\nu$ is the lowest root in $\Phi_-$ and $\lambda$ is a constant only depending on $\|\cdot\|_{\mathfrak g_{\nu}}$. \end{lemma}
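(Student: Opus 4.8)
The plan is to unwind the definition of the denominator using the description of $\Stab(p)\cap N_\nu$ that comes from the hypothesis $p = an_-\Gamma$. First I would establish uniqueness of $a$. By Proposition~\ref{p31} (or directly Lemma~\ref{l31}), the coset $p$ lying in $AN_-\Gamma/\Gamma$ is rational; if $an_-\Gamma = a'n_-'\Gamma$ with $a,a'\in A$ and $n_-,n_-'\in N_-$, then $(a'n_-')^{-1}(an_-) \in \Gamma$, and this element lies in $DN_-$. Intersecting $DN_-$ with $\Gamma$ and using that $A\cap (D\cap\Gamma)$ together with the structure of $D = A\cdot(D\cap\Gamma)$ pins down the $A$-component, one sees $a$ is determined (the ambiguity in $a$ is only by $D\cap\Gamma\cap A = \{e\}$, since $A$ is the identity component). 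I'd write this out carefully: the key point is that $N_-$ is unipotent so an element of $\Gamma\cap DN_-$ has its $D$-part again integral, forcing $a = a'$.

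Next, the main computation. Write $p = an_-\Gamma$ and note that conjugation by $an_-$ normalizes $N_\nu$ since $a$ normalizes $N_\nu$ (as $A$ is diagonal and $N_\nu$ is a root subgroup) and $n_-$ centralizes $N_\nu$ — here I'd cite Lemma~\ref{l31} and the fact that $\nu$ is the lowest root, so $\mathfrak g_\nu$ commutes with $\mathfrak n_-$. Therefore
$$
\Stab(p)\cap N_\nu = (an_-)(\Gamma\cap N_\nu)(an_-)^{-1} = a(\Gamma\cap N_\nu)a^{-1},
$$
the last equality because $n_-$ centralizes $N_\nu$. Since $N_\nu$ is one-dimensional and $\Gamma\cap N_\nu$ is a nontrivial discrete (hence infinite cyclic) subgroup, let $u = \exp(Y_0)$ generate $\Gamma\cap N_\nu$ with $Y_0\in\mathfrak g_\nu$ the shortest such vector. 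Then every nonzero element of $\Stab(p)\cap N_\nu$ is of the form $a\exp(kY_0)a^{-1} = \exp(k\,\Ad(a)Y_0) = \exp(k\,e^{\nu(a)}Y_0)$ for $k\in\mathbb Z\setminus\{0\}$, using $\Ad(a)Y_0 = e^{\nu(a)}Y_0$. Hence
$$
d_p = \inf_{k\in\mathbb Z\setminus\{0\}} \|k\,e^{\nu(a)}Y_0\|_{\mathfrak g_\nu} = e^{\nu(a)}\|Y_0\|_{\mathfrak g_\nu},
$$
so the claim holds with $\lambda := \|Y_0\|_{\mathfrak g_\nu}$. One should observe that $\|Y_0\|_{\mathfrak g_\nu}$ depends only on the norm $\|\cdot\|_{\mathfrak g_\nu}$ and on $\Gamma\cap N_\nu$, but the latter is the \emph{same} cyclic group for every rational $p$ (it is $w\cdot(\Gamma\cap N_{+,\text{simple}})$ translated appropriately — concretely $\Gamma\cap N_\nu$ is independent of $p$), so $\lambda$ is a genuine constant; I'd make this independence explicit, perhaps noting $\Gamma \cap N_\nu = \{\exp(k Y_0): k \in \mathbb Z\}$ with $Y_0$ the primitive integer generator of the rational line $\mathfrak g_\nu$.

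The main obstacle, I expect, is the bookkeeping around uniqueness of $a$ and the claim that $\lambda$ is truly a uniform constant rather than something depending on $p$. The uniqueness part is a clean computation with $DN_-\cap\Gamma$ once one is careful that an integral element of $DN_-$ is (upper/lower) block-triangularizable with integral diagonal, but it deserves a line or two. The constancy of $\lambda$ hinges on the observation that $\Stab(p)\cap N_\nu$, after conjugating away $n_-$, only sees $\Gamma\cap N_\nu$ scaled by $e^{\nu(a)}$ — so the "shape" of the cyclic lattice in the line $\mathfrak g_\nu$ is always that of $\Gamma\cap N_\nu$, and its shortest vector has a fixed norm. Everything else is the eigenvalue computation $\Ad(a)|_{\mathfrak g_\nu} = e^{\nu(a)}$, which is immediate from the root space decomposition, plus the commutation relations between $N_\nu$, $A$, and $N_-$.
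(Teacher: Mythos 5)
Your proposal is correct and follows essentially the same route as the paper's proof: uniqueness of $a$ via the observation that $(\tilde a\tilde n_-)^{-1}an_-\in\Gamma$ forces the $A$-component to be an integral (hence trivial) element of $A$, and the denominator formula via conjugating a generator of $\Gamma\cap N_\nu$ by $an_-$, using that $n_-$ centralizes $N_\nu$ and $\Ad(a)$ acts on $\mathfrak g_\nu$ by $e^{\nu(a)}$. The extra care you take about the infimum over the cyclic group and the $p$-independence of $\lambda=\|Y_0\|_{\mathfrak g_\nu}$ is implicit in the paper's one-line computation.
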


\begin{proof}
By Proposition~\ref{p31}, we can always write $p=an_-\Gamma$ for $a\in A$ and $n_-\in N_-$. Suppose that $p=\tilde a\tilde n_-\Gamma$ for some $\tilde a\in A$ and $\tilde n_-\in N_-$. Then $$(\tilde a\tilde n_-)^{-1}an_-\in\Gamma$$ and $\tilde a^{-1}a$ is an integral matrix in $A$. This implies that $\tilde a^{-1}a=e$, that is $\tilde a=a$.

 Let $N_\nu$ be the subgroup with Lie algebra $\mathfrak g_\nu$ and let $x\in N_\nu\cap\Gamma\setminus\left\{e\right\}$ be a generator of $N_\nu\cap\Gamma$. Then, by Definition~\ref{d41}, one has $$d_p=\inf_{v\in\Stab(p)\cap N_\nu\setminus\left\{e\right\}}\|\log v\|_{\mathfrak g_\nu}=\|\log(an_-x(an_-)^{-1})\|_{\mathfrak g_\nu}=\|x\|_{\mathfrak g_{\nu}}e^{\nu(a)}$$ as required.
\end{proof}

\begin{remark}
From now on, we will assume $\lambda=1$, by fixing an appropriate norm $\|\cdot\|_{\mathfrak g_{\nu}}$ on $\mathfrak g_{\nu}$. The element $a$ in the equation $p=an_-\Gamma$ will be called the polar-component of the rational point $p$, and will be denoted by $a_p$.
\end{remark}

Let $U\subset N_+$ be a small open subset in $N_+$. We denote by $S(U\Gamma/\Gamma,C_1,C_2)$ the subset of rational points in $U\Gamma/\Gamma$ whose denominators lie between $C_1$ and $C_2$ $(C_1<C_2)$. In what follows, we aim to estimate, under some constraints, the number of rational points $p$ in $S(U\Gamma/\Gamma,C_1,C_2)$.

We start by introducing some notation. We observe that
$$\mathfrak a=\ker(\nu)\oplus\mathbb R\cdot X_0,$$
where $a_{t}=\exp(tX_{0})$. We let $\pi_{\ker(\nu)}$ be the projection map from $\mathfrak a$ onto the subspace $\ker(\nu)$, and for $x\in\mathfrak a$ we write $$x=x_{\nu^\perp}+x_\nu$$ where $x_{\nu^\perp}\in\ker(\nu)$ and $x_\nu\in\mathbb R\cdot X_0$. For any two compact subsets $K_1\subset\ker(\nu)$ and $K_2\subset\mathbb R\cdot X_0$, we define $$\mathfrak a_{K_1,K_2}=\left\{x\in\mathfrak a: x=x_{\nu^\perp}+x_{\nu},\;x_{\nu^\perp}\in K_1,\;x_{\nu}\in K_2\right\}$$
and
$$S_{K_{1}}(U\Gamma/\Gamma,A,B)=\left\{p\in U\Gamma/\Gamma: p\in S(U\Gamma/\Gamma,A,B),\;\pi_{\ker(\nu)}(a_p)\in K_{1}\right\}.$$
Note that $|S_{K_{1}}(U\Gamma/\Gamma,A,B)|\leq |S(U\Gamma/\Gamma,A,B)|$, and for any compact exhaustion $\left\{K_{n}\right\}$ of $\ker\nu$ one has
$$\lim_{n\to\infty}|S_{K_{n}}(U\Gamma/\Gamma,A,B)|=|S(U\Gamma/\Gamma,A,B)|.$$

Proving the following result will be the goal of this section.

\begin{proposition}\label{p41}
Let $\nu$ be the lowest root in $\Phi_-$. Let $U\subset N_+$ be a small open bounded subset in $N_+$, with boundary of measure zero, and let $K$ be a compact subset in $\ker(\nu)$. Then, for all sufficiently large $l>0$, we have that $$|S_K(U\Gamma/\Gamma,l/2,l)|\sim l^2\cdot\mu_{N_+}(U),$$ where $\mu_{N_+}(n)$ is the Haar measure on $N_{+}$ and the implicit constants depend only on $K$, $G$, and $\Gamma$.
\end{proposition}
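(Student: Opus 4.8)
The plan is to rewrite the counting problem on $N_+\Gamma/\Gamma$ as a counting problem for lattice points in a moving family of sets inside a fixed fundamental domain, and then apply the mixing property of $\{a_t\}_{t\in\mathbb R}$ to obtain the asymptotics. First I would use Lemma \ref{l41} and the Bruhat/normalizer analysis of \S3: every rational point $p\in U\Gamma/\Gamma$ has a unique polar component $a_p\in A$ with $d_p = e^{\nu(a_p)}$, so the condition $d_p\in(l/2,l]$ becomes $\nu(a_p)\in(\log(l/2),\log l]$, and the condition $\pi_{\ker\nu}(a_p)\in K$ cuts out a compact ``slab'' $\mathfrak a_{K,[\log(l/2),\log l]}$ in $\mathfrak a$. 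Writing $l = e^{s}$, the slab moves, as $s\to\infty$, by translation along $\mathbb R\cdot X_0$, i.e. by the flow $a_t$ with $t$ proportional to $s/\nu(X_0)$. Using $p\in AN_-\Gamma/\Gamma\cap N_+\Gamma/\Gamma$ (Proposition \ref{p31}, Lemma \ref{l34}) one can parametrise each such rational point by the triple $(a_p, n_-, n_+)$ subject to $a_p n_- \Gamma = n_+\Gamma$ with $n_+\in U$; equivalently, $n_+^{-1} a_p n_- \in \Gamma$.

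The next step is to turn this into a statement about a single orbit hitting a shrinking-and-translating box. Concretely, I would fix a reference rational point / reference $\Gamma$-coset and observe that the set $S_K(U\Gamma/\Gamma, l/2, l)$ is in bijection (via $p\mapsto a_{-t}\cdot$(something)) with the set of $\Gamma$-translates landing in $a_t\cdot(B_{N_+}(U)\cdot(\text{small }N_-N_\nu\text{ piece}))$ for the appropriate $t=t(l)$. The transversality Lemma \ref{lem:transversality}, applied to $H_1 = N_+$ (or a complementary horospherical piece) and $H_2 = $ the group with closed $\Gamma$-orbit, guarantees that the relevant product map is a homeomorphism onto its image for small enough radius, so that the count is exactly the number of lattice points of $\Gamma$ in a box $\mathcal B_l \subset G$, where $\mathcal B_l = a_{t(l)}\,\Omega$ for a fixed relatively compact $\Omega$ (a product of small balls in $N_+$, $\ker\nu$, and $\mathbb R X_0$ of total ``length'' $\log 2$ in the $X_0$-direction) with boundary of measure zero. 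The volume of $\mathcal B_l$ is independent of $l$ (since $a_t$ preserves Haar measure on $G$), but it is spread out: it expands by $e^{\alpha(a_t)}$ in each positive root direction and contracts by the same in negative ones, while $\nu$ being the lowest root means the $N_\nu$-direction contracts fastest.

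Then I would invoke equidistribution of expanding translates / mixing: the number of $\gamma\in\Gamma$ with $\gamma \in \mathcal B_l$, suitably normalised, is asymptotic to $\Vol(\mathcal B_l)$ times a density coming from Haar measure on $G/\Gamma$, because the translate $a_{t(l)}\Omega\,\Gamma/\Gamma$ equidistributes in $G/\Gamma$ as $t(l)\to\infty$ (this is where the mixing of $\{a_t\}$ enters, via a thickening argument in the contracting direction — a standard Margulis-type ``unfolding'' of the indicator of $\Omega$ against bump functions transverse to the expanding foliation). Tracking the normalisations, the $\ell^\infty$-box in the $N_+$ variable of measure $\mu_{N_+}(U)$ and the $\mathbb R X_0$-interval of length $\log 2 = \log l - \log(l/2)$ combine to give the factor $l^2$: more precisely the two simple-root directions contributing to the expansion along the $N_+$ leaf, together with the length-$\log 2$ window, produce a count $\asymp e^{(\text{sum of relevant roots})\cdot t(l)}\mu_{N_+}(U)$, and by the relation between $t(l)$ and $\log l$ dictated by $\nu(X_0)$ this is $\sim l^2\mu_{N_+}(U)$, with implied constants depending only on the fixed data $K$, $G$, $\Gamma$ (and the chosen norms, which we have normalised). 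The main obstacle I expect is precisely this bookkeeping: making the bijection between rational points and $\Gamma$-lattice points in $\mathcal B_l$ exact (no over- or under-counting, using Lemma \ref{lem:transversality} to control the ``width'' uniformly over $K$), and verifying that the boundary effects are negligible so that mixing gives an honest asymptotic $\sim$ rather than just $\asymp$ — this requires the hypothesis that $\partial U$ has measure zero and a sandwiching of $\Omega$ between slightly larger and smaller sets with the same property.
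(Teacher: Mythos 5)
Your proposal follows the same basic mechanism as the paper's proof: rewrite the condition $l/2\leq d_p\leq l$, $\pi_{\ker\nu}(a_p)\in K$ (via Lemma \ref{l41}) as the statement that $a_\tau\cdot p$, with $\nu(a_\tau)=-\ln l$, lies in the fixed compact target $\exp(\mathfrak a_{K,I_0})N_-\Gamma/\Gamma$; use the transversality Lemma \ref{lem:transversality} to get disjointness of small thickenings; and feed everything into the equidistribution of the expanding translates $a_\tau\cdot U\Gamma/\Gamma$ (Lemma \ref{lem:equidistr}, a consequence of mixing). The exponent $2$ does come from the contraction $e^{2\nu(a_\tau)}=l^{-2}$ in the two relevant root directions, as you say at the end.

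There is, however, one concrete inconsistency you should fix, and one point where the paper's formulation is genuinely cleaner. You assert that the count equals $|\Gamma\cap\mathcal B_l|$ with $\mathcal B_l=a_{t(l)}\Omega$ of $l$-independent volume, and simultaneously that this count is asymptotic to $\Vol(\mathcal B_l)$ times a fixed density and that it grows like $l^2\mu_{N_+}(U)$; these three statements are mutually incompatible. The set of relevant lattice elements $\gamma=n_+^{-1}a_pn_-$ is of the form $U^{-1}\exp(\mathfrak a_{K,J_l})\mathcal F$ (with $J_l$ the slab where $e^{\nu}\in[l/2,l]$ and $\mathcal F$ a fundamental domain for $\Gamma\cap N_-$ in $N_-$, needed to make your bijection exact since $n_-$ is only determined modulo $\Gamma\cap N_-$); its Haar volume grows like $e^{2\nu(a_p)}\sim l^2$ through the Jacobian of the $N_+AN_-$ decomposition — it is not a single left $a_t$-translate of a fixed set. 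The paper sidesteps both the volume bookkeeping and the $\Gamma\cap N_-$ ambiguity by never leaving $G/\Gamma$: to each $p\in S_K(U\Gamma/\Gamma,l/2,l)$ it attaches the box $B_{\delta,\tau}\cdot p=a_{-\tau}B(\delta)a_\tau\cdot p\subset N_+\Gamma/\Gamma$ of $N_+$-measure $\mu_{N_+}(B(\delta))e^{2\nu(a_\tau)}=\mu_{N_+}(B(\delta))/l^2$, proves these boxes are pairwise disjoint, and sandwiches their union between $\{u\in\tilde U:a_\tau u\Gamma\in W\}$ and $\{u\in B(\epsilon_0)U:a_\tau u\Gamma\in W\}$ for the fixed target $W=B(\delta)\exp(\mathfrak a_{K,I_0})N_-\Gamma/\Gamma$; Lemma \ref{lem:equidistr} applied to $\chi_W$ then yields matching upper and lower bounds. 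Replacing your constant-volume claim with this two-sided sandwich (or with the correct volume computation for $U^{-1}\exp(\mathfrak a_{K,J_l})\mathcal F$), and fixing the fundamental domain for $\Gamma\cap N_-$, turns your sketch into the paper's proof.
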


In order to prove Proposition \ref{p41}, we will make use of the following lemma
\begin{lemma}
\label{lem:equidistr}
Let $W\subset G/\Gamma$ be an open bounded subset, with boundary of measure zero, and let $\chi_{W}$ denote the characteristic function of $W$. Then, we have $$\lim_{t\to\infty}\frac1{\mu_{N_+}(U)}\int_{U}\chi_W(a_t\cdot nx)d\mu_{N_+}(n)=\int_{G/\Gamma}\chi_W d\mu_{G/\Gamma},$$ where $\mu_{N_+}(n)$ is the Haar measure on $N_{+}$ and $\mu_{G/\Gamma}$ is the invariant probability measure on $G/\Gamma$.
\end{lemma}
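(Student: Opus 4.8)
The plan is to deduce Lemma~\ref{lem:equidistr} from the equidistribution of expanding horospherical translates, which in turn follows from the mixing property of the flow $\{a_t\}_{t\in\mathbb{R}}$ on $L^2(G/\Gamma)$. Since $N_+$ is precisely the unstable horospherical subgroup of $\{a_t\}$, this is the classical ``thickening'' or ``unfolding'' argument of Margulis (as developed by Kleinbock--Margulis and others). First I would observe that it suffices to prove the statement for $\chi_W$ replaced by an arbitrary $f\in C_c(G/\Gamma)$, and then recover the characteristic function of $W$ by sandwiching $\chi_W$ between continuous functions from above and below, using that $\partial W$ has measure zero so that $\mu_{G/\Gamma}(\partial W)=0$ makes the approximation error vanish. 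Similarly, $U\subset N_+$ can be taken to be a small box around $e$ (or a translate thereof); one reduces to this case by covering $U$ with such boxes, and translating by a fixed $n_0\in N_+$ just replaces the base point $x$ by $n_0 x$, which does not affect the conclusion.

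The core step is the following: for $f\in C_c(G/\Gamma)$, a fixed $x\in G/\Gamma$, and a small relatively compact neighbourhood $U\subset N_+$ of $e$,
\[
\lim_{t\to\infty}\frac{1}{\mu_{N_+}(U)}\int_U f(a_t n x)\,d\mu_{N_+}(n)=\int_{G/\Gamma}f\,d\mu_{G/\Gamma}.
\]
To prove this, I would fix a small parameter $\varepsilon>0$ and write $a_t n x = a_t n a_{-t}\cdot a_t x$; as $t\to+\infty$, the element $a_t n a_{-t}$ ranges over the set $a_t U a_{-t}$, which expands in $N_+$. Choosing a bump function supported on a small product neighbourhood $B_{N_-}(\varepsilon)B_A(\varepsilon)a_tUa_{-t}$ (a ``thickening'' of the expanded piece of $N_+$ in the transverse directions $N_-A$), one uses the local product structure near $e$ together with Lemma~\ref{lem:transversality} to factor the Haar measure on $G$ as a product of the Haar measures on $N_-$, $A$, and $N_+$ up to a multiplicative error $1+O(\varepsilon)$. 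Averaging $f$ against this thickened bump function and applying $a_t$-mixing (the correlation $\langle a_t \cdot \phi, \psi\rangle \to \big(\int \phi\big)\big(\int\psi\big)$ for $\phi,\psi\in L^2$) gives that the thickened average converges to $\int f\,d\mu_{G/\Gamma}$ up to $O(\varepsilon)$. Finally, one removes the thickening in the $N_-A$-directions by uniform continuity of $f$: since $B_{N_-}(\varepsilon)B_A(\varepsilon)$ shrinks with $\varepsilon$, the thickened average and the genuine average $\frac{1}{\mu_{N_+}(U)}\int_U f(a_t n x)\,d\mu_{N_+}(n)$ differ by $O_f(\varepsilon)$ uniformly in $t$. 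Letting $t\to\infty$ and then $\varepsilon\to 0$ concludes the argument.

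The main obstacle, and the only genuinely delicate point, is carrying out the thickening estimate uniformly: one needs the comparison between $\int_U f(a_t n x)\,d\mu_{N_+}(n)$ and its $N_-A$-thickened version to hold with an error $O(\varepsilon)$ that is independent of $t$, which requires controlling the Jacobian of the map $(n_-,a,n_+)\mapsto n_- a n_+$ near the identity and using that conjugation by $a_t$ contracts $N_-$ and fixes $A$ (so the thickening in those directions does not grow). This is standard but must be set up with care; it is exactly the place where regularity of the flow (all roots nonzero on $X_0$) and the choice of $N_+$ as the \emph{full} unstable horospherical subgroup enter. I would also note that mixing for $\{a_t\}$ on $G/\Gamma$ holds because $G=\SL_3(\mathbb{R})$ is semisimple with no compact factors and $\Gamma$ is a lattice (Howe--Moore), so no additional hypothesis is needed. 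Everything else — the reduction to $C_c$ functions, the approximation of $\chi_W$ using $\mu_{G/\Gamma}(\partial W)=0$, and the reduction to small boxes $U$ — is routine measure-theoretic bookkeeping.
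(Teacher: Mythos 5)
Your proposal is correct and follows essentially the same route as the paper: the paper also deduces the statement from mixing of $\{a_t\}$ via equidistribution of expanding $N_+$-translates (citing \cite[Proposition 2.2.1]{KM96} for exactly the thickening argument you sketch) and then passes from $C_c(G/\Gamma)$ to $\chi_W$ by the same approximation using $\mu_{G/\Gamma}(\partial W)=0$. The only difference is that you unfold the proof of the cited Kleinbock--Margulis proposition rather than quoting it.
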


\begin{proof}
It is well-known that the one-parameter diagonal flow $\left\{a_t\right\}_{t\in\mathbb R}$  is mixing on $G/\Gamma$, i.e., for any $f,g\in L^2(G/\Gamma)$ it holds
$$\lim_{t\to\infty}\int_{G/\Gamma}f(a_t\cdot x)g(x)d\mu_{G/\Gamma}(x)=\int_{G/\Gamma}f(x)d\mu_{G/\Gamma}(x)\cdot\int_{G/\Gamma}g(x)d\mu_{G/\Gamma}(x).$$ Then, by \cite[Propostion 2.2.1]{KM96}), it follows that for any $x\in G/\Gamma$, any open bounded subset $U\subset N_+$, and any compactly supported continuous function $f$ on $G/\Gamma$ $$\lim_{t\to\infty}\frac1{\mu_{N_+}(U)}\int_{U}f(a_t\cdot nx)d\mu_{N_+}(n)=\int_{G/\Gamma}fd\mu_{G/\Gamma}.$$
The proof follows by an approximation argument.
\end{proof}

The remainder of this section will be dedicated to prove Proposition \ref{p41}. Let $\nu$ be the lowest root in $\Phi_-$. For any $l>1$, define $\tau:=\tau(l)>0$ such that $$\nu(a_\tau)=-\ln l.$$ Let $p$ be a rational point in $N_+\Gamma/\Gamma$. By Proposition~\ref{p31} and Lemma~\ref{l41}, we may write $$p=a_pn_-\Gamma$$ where $a_p$ is the polar-component of $p$ and $n_-\in N_-$.  Then, we have
\begin{align*}
p\in S_K(U\Gamma/\Gamma,l,l/2)\iff& p\in U\Gamma/\Gamma,\;l/2\leq d_p\leq l,\textrm{ and }\pi_{\ker(\nu)}(a_p)\in K\\
\iff& p=a_pn_-\Gamma\in U\Gamma/\Gamma,\; l/2\leq e^{\nu(a_p)}\leq l,\textrm{ and }\pi_{\ker(\nu)}(a_p)\in K\\
\iff& a_\tau\cdot p=(a_{\tau}a_p)n_-\Gamma\in a_{\tau}\cdot U\Gamma/\Gamma,\\
& -\ln 2\leq\nu(a_{\tau}a_p)\leq 0,\textrm{ and }\pi_{\ker(\nu)}(a_{\tau}a_p)\in K\\
\iff& a_{\tau}\cdot p\in a_{\tau}\cdot U\Gamma/\Gamma\textrm{ and } a_{\tau}\cdot p\in\exp(\mathfrak a_{K, I_0})N_{-}\Gamma/\Gamma,
\end{align*}
where $I_0$ is the following compact interval in the Lie algebra $\mathbb R\cdot X_0$ of $\left\{a_t\right\}_{t\in\mathbb R}$ $$I_0:=\left\{x\in\mathbb R\cdot X_0: -\ln2\leq\nu(x)\leq0\right\}.$$ This implies that $$a_\tau\cdot S_K(U\Gamma/\Gamma,l/2,l)=a_{\tau}\cdot U\Gamma/\Gamma\cap\exp(\mathfrak a_{K,I_0})N_{-}\Gamma/\Gamma.$$ Since $\exp(\mathfrak a_{K,I_0})N_{-}\Gamma/\Gamma$ is a compact subset in $G/\Gamma$, by Lemma \ref{lem:transversality}, there exist $\delta>0$ and a small neighborhood of identity $B(\delta)$ in $N_+$ $$B(\delta)=\left\{\exp(x): x=\sum_{\alpha\in\Phi_+} x_\alpha, x_\alpha\in\mathfrak g_\alpha, \|x_\alpha\|_{\mathfrak g}<\delta\right\}$$ such that $$B(\delta)\times\exp(\mathfrak a_{K,I_0})N_{-}\Gamma/\Gamma\to B(\delta)\exp(\mathfrak a_{K,I_0})N_{-}\Gamma/\Gamma$$ is a homeomorphism.
We conclude that for any two rational points $p,q$ in $S_K(U\Gamma/\Gamma,l/2,l)$ the subsets $$B(\delta)\cdot a_{\tau}\cdot p\textrm{ and }B(\delta)\cdot a_{\tau}\cdot q$$ are disjoint. Consequently, also the sets $B_{\delta,\tau}\cdot p$ and $B_{\delta,\tau}\cdot q$ are disjoint, where $$B_{\delta,\tau}=a_{-\tau}\cdot B(\delta)\cdot a_{\tau}.$$ 

Now, we can estimate the number of points in $S_K(U\Gamma/\Gamma, l,l/2)$. First, we prove an upper bound for $|S_K(U\Gamma/\Gamma,l,l/2)|$. Fix a sufficiently small number $\epsilon_0>0$ such that $$\mu_{N_+}(U)\leq\mu_{N_+}(B(\epsilon_0)\cdot U)\leq 2\mu_{N_+}(U).$$ Then for sufficiently large $l>0$, we have $$B_{\delta,\tau}\subset  B(\epsilon_0)\textup{ and }B_{\delta,\tau}\cdot S_K(U\Gamma/\Gamma,l,l/2)\subset B(\epsilon_0)\cdot U\Gamma/\Gamma.$$ Since $\left\{B_{\delta,\tau}\cdot p:p\in S_K(U\Gamma/\Gamma, l/2, l)\right\}$ is a collection of disjoint subsets, by the mixing property of $\left\{a_t\right\}_{t\in\mathbb R}$, we deduce that 
\begin{align*}
|S_K(U\Gamma/\Gamma,l/2,l)|\mu_{N_+}(B_{\delta,\tau})\leq&\int_{B(\epsilon_0)\cdot U} \chi_{B(\delta)\exp(\mathfrak a_{K,I_0})N_{-}\Gamma/\Gamma}(a_{\tau}u\Gamma)d\mu_{N_+}(u)\\
\sim&\mu_{N_+}(B(\epsilon_0)\cdot U)\cdot\mu_{G/\Gamma}(B(\delta)\exp(\mathfrak a_{K,I_0})N_{-}\Gamma/\Gamma)\\
\sim&\mu_{N_+}(U)\cdot\mu_{N_+}(B(\delta))
\end{align*}
as $\tau\to\infty$. Note that $\mu_{N_+}(B_{\delta,\tau})=\mu_{N_+}(B(\delta)) e^{2\nu(a_\tau)}=\mu_{N_+}(B(\delta))/{l^2}$. So $$|S_K(U\Gamma/\Gamma,l/2,l)|\lesssim l^2\cdot\mu_{N_+}(U),$$ where the implicit constant depends only on $K$, $G$ and $\Gamma$.

Now, we proceed to prove a lower bound for $|S_K(U\Gamma/\Gamma, l/2, l)|$. We fix a sufficiently small number $\epsilon_0>0$ and an open subset $\tilde U$ inside $U$ such that $$B(\epsilon_0)\cdot\tilde U\subset U\textup{ and }\mu_{N_+}(\tilde U)\geq\frac12\mu_{N_+}(U).$$ Then for sufficiently large $l>0$ we have $B_{\delta,\tau}\subset B(\epsilon_0)$. We also have
\begin{align*}
&u\in\tilde U\textup{ and }\chi_{B(\delta)\exp(\mathfrak a_{K,I_0})N_{-}\Gamma/\Gamma}(a_{\tau}u\Gamma)=1\\
\implies& u\in\tilde U\textup{ and }u\Gamma\in (a_{-\tau}B(\delta)a_{\tau})\cdot a_{-\tau}\exp(\mathfrak a_{K,I_0})N_{-}\Gamma/\Gamma\\
\implies& u\in\tilde U\textup{ and there exists }p\in S_K(U\Gamma/\Gamma,l/2,l)\textup{ such that }u\in B_{\delta,\tau}\cdot p.
\end{align*}
By the mixing property of $\left\{a_t\right\}_{t\in\mathbb R}$, this implies that
\begin{align*}
|S_K(U\Gamma/\Gamma,l/2,l)|\mu_{N_+}(B_{\delta,\tau})\geq&\int_{\tilde U} \chi_{B(\delta)\exp(\mathfrak a_{K,I_0})N_{-}\Gamma/\Gamma}(a_{\tau}u\Gamma)d\mu_{N_+}(u)\\
\sim&\mu_{N_+}(\tilde U)\cdot\mu_{G/\Gamma}(B(\delta)\exp(\mathfrak a_{K,I_0})N_{-}\Gamma/\Gamma)\\
\sim&\mu_{N_+}(U)\cdot\mu_{N_+}(B(\delta))
\end{align*}
as $\tau\to\infty$. Note that $\mu_{N_+}(B_{\delta,\tau})=\mu_{N_+}(B(\delta)) e^{2\nu(a_\tau)}=\mu_{N_+}(B(\delta))/{l^2}$. So $$|S_K(U\Gamma/\Gamma,l/2,l)|\gtrsim l^2\cdot\mu_{N_+}(U),$$ where the implicit constant depends only on $K$, $G$ and $\Gamma$. This completes the proof of the proposition.

\section{A formula for denominator of rational points in $N_{+}\Gamma/\Gamma$}\label{denominators}

In this section, we deduce an explicit formula for the denominator of rational points lying in $N_+\Gamma/\Gamma$, which will be crucial in various counting results in \S\ref{estimates}. The formula that we aim to prove reads as follows.

\begin{theorem}\label{thm51}
Let $p=g\Gamma$ $(g\in N_+)$ be a rational point, with $$g=\left(\begin{array}{ccc}1 & \frac ab & 0\\ 0 & 1 & 0\\ 0 & 0 & 1\end{array}\right)\left(\begin{array}{ccc}1 & 0 & \frac{p_1}q\\ 0 & 1 & \frac{p_2}q\\ 0 & 0 & 1\end{array}\right),$$ where $\gcd(a,b)=1$ and $\gcd(p_1,p_2,q)=1$. Then, we have $$d_p=\frac{bq^2}{d},$$ where $d=\gcd(bq, aq, bp_1+ap_2)=\gcd(q, bp_1+ap_2)$.
\end{theorem}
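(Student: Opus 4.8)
The plan is to reduce everything to the polar component $a_{p}$ of $p$ and then invoke Lemma~\ref{l41}. By Proposition~\ref{p31} we may write $p=a_{p}n_{-}\Gamma$ with $a_{p}\in A$, $n_{-}\in N_{-}$, and under the normalisation $\lambda=1$ fixed above Lemma~\ref{l41} gives $d_{p}=e^{\nu(a_{p})}$, where $\nu$ is the lowest root of $\Phi_{-}$, i.e.\ the functional $\diag(y_{1},y_{2},y_{3})\mapsto y_{3}-y_{1}$ on $\mathfrak a$. Writing $a_{p}=\diag(\mu_{1},\mu_{2},\mu_{3})$ this reads $d_{p}=\mu_{3}/\mu_{1}$. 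The idea I would exploit is that, although the naive way to compute $a_{p}$ is to produce the integral matrix $\tilde\gamma$ of Lemma~\ref{l34} with $g\tilde\gamma\in AN_{-}$, the $\mu_{i}$ can instead be read off the lattice $L:=g\mathbb{Z}^{3}$. Indeed, both $a_{p}$ and $n_{-}$ stabilise the flag $\langle e_{3}\rangle\subset\langle e_{2},e_{3}\rangle\subset\mathbb{R}^{3}$, so $L\cap\langle e_{3}\rangle=a_{p}n_{-}(\mathbb{Z}e_{3})$ and $L\cap\langle e_{2},e_{3}\rangle=a_{p}n_{-}(\mathbb{Z}e_{2}\oplus\mathbb{Z}e_{3})$; denoting by $V_{1},V_{2}$ the covolumes of these two sublattices in $\langle e_{3}\rangle$ and $\langle e_{2},e_{3}\rangle$ (with the induced Euclidean metrics), a one-line computation gives $V_{1}=\mu_{3}$ and $V_{2}=\mu_{2}\mu_{3}$. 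Since $\mu_{1}\mu_{2}\mu_{3}=\det a_{p}=1$, this yields the intrinsic formula $d_{p}=\mu_{3}/\mu_{1}=V_{1}V_{2}$.

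Next I would compute $V_{1}$ and $V_{2}$ directly from $g$. Multiplying out the two factors, $g$ has first row $(1,\ a/b,\ (bp_{1}+ap_{2})/(bq))$ and second row $(0,\ 1,\ p_{2}/q)$. For $V_{1}$: a vector $gv$ with $v\in\mathbb{Z}^{3}$ lies in $\langle e_{3}\rangle$ exactly when its first two coordinates vanish, and solving the two resulting linear equations forces $v\in\mathbb{Q}\cdot(-p_{1},-p_{2},q)$; since $\gcd(p_{1},p_{2},q)=1$, the integral solutions are precisely $\mathbb{Z}\cdot(-p_{1},-p_{2},q)$, and $g(-p_{1},-p_{2},q)=q\,e_{3}$, whence $V_{1}=q$. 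For $V_{2}$: $gv\in\langle e_{2},e_{3}\rangle$ precisely when $w\cdot v=0$, where $w=(bq,\ aq,\ bp_{1}+ap_{2})$, so $L\cap\langle e_{2},e_{3}\rangle=g\Sigma$ with $\Sigma:=\{v\in\mathbb{Z}^{3}:w\cdot v=0\}$. Put $d:=\gcd(w)$; then $w/d$ is primitive, so $\mathbb{Z}^{3}/\Sigma\cong\mathbb{Z}$ and $\Sigma$ is a direct summand of $\mathbb{Z}^{3}$, which lets me choose a $\mathbb{Z}$-basis $\{v_{1},v_{2}\}$ of $\Sigma$ with $v_{1}\times v_{2}=\pm w/d$. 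Using $\det g=1$ and the cross-product identity $(gv_{1})\times(gv_{2})=(g^{-1})^{T}(v_{1}\times v_{2})$, I get $V_{2}=|(g^{-1})^{T}w|/d$; since $g^{-1}$ has first row $(1,-a/b,-p_{1}/q)$ and second row $(0,1,-p_{2}/q)$, a direct calculation gives $(g^{-1})^{T}w=(bq,0,0)^{T}$, hence $V_{2}=bq/d$.

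Putting the pieces together yields $d_{p}=V_{1}V_{2}=q\cdot(bq/d)=bq^{2}/d$. For the last assertion, $\gcd(bq,aq)=q$ because $\gcd(a,b)=1$, so $d=\gcd(bq,aq,bp_{1}+ap_{2})=\gcd(q,bp_{1}+ap_{2})$, which completes the formula. I expect the only genuine obstacle to be the first paragraph: identifying $d_{p}=\mu_{3}/\mu_{1}$ with the product $V_{1}V_{2}$ of the covolumes of the flag-intersection lattices of $L$ — which is exactly what frees us from having to exhibit $\tilde\gamma$ — together with the evaluation of $V_{2}=\operatorname{covol}(g\Sigma)$ through the adjugate/cross-product identity; everything after that is bookkeeping.
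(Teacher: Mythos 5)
Your argument is correct, and it reaches the formula by a genuinely different route from the paper. The paper's proof is constructive: it builds, via Corollary~\ref{cor31}, an explicit unimodular matrix $\gamma=(v_1,v_2,v_3)$ adapted to the rational flag $P_1\subset P_2$ (with third column $(-p_1,-p_2,q)^T$ and middle column a primitive vector of $P_2\cap\mathbb Z^3$), reads $d_p$ off the $(1,1)$-entry of the lower-triangular matrix $g\gamma$, and then evaluates the resulting inner product $u^T\cdot v$ (with $v=(bq,aq,bp_1+ap_2)^T$) as $d=\gcd(v)$ by computing the covolume of $\Delta=P_2\cap\mathbb Z^3$ and the distance from $u$ to $P_2$. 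You instead avoid exhibiting $\gamma$ altogether: you express $d_p=\mu_3/\mu_1$ intrinsically as the product $V_1V_2$ of the covolumes of $g\mathbb Z^3\cap\langle e_3\rangle$ and $g\mathbb Z^3\cap\langle e_2,e_3\rangle$ (using that $a_pn_-$ preserves the standard flag and that $\det a_p=1$), and then evaluate $V_1=q$ directly and $V_2=bq/d$ via the saturatedness of $\Sigma=w^\perp\cap\mathbb Z^3$ and the identity $(gv_1)\times(gv_2)=\det(g)\,(g^{-1})^T(v_1\times v_2)$. The underlying lattice geometry is the same — your $w$ is the paper's $v$, your $(-p_1,-p_2,q)$ is their third column, and your covolume of $\Sigma$ is essentially their $\Vol(\Delta)=\|v\|/d$ — but your organization buys a cleaner, coordinate-free reduction (no need to complete the flag to a basis of $\mathbb Z^3$), at the cost of having to justify the identity $d_p=V_1V_2$ and the cross-product/adjugate step, both of which you handle correctly. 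The only cosmetic caveat is that one should fix signs (take $b,q>0$) so that the covolumes come out as $q$ and $bq/d$ rather than their absolute values, which the paper does implicitly as well.
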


\begin{proof}
By Lemma~\ref{l32} and Lemma~\ref{l34}, we know that a point $p=g\Gamma$ $(g\in N_+)$ is rational if and only if the entries of $g$ are rational and $$p\in AN_-\Gamma\cap N_+\Gamma.$$ If $p=g\Gamma=an_-\Gamma$ for some $a\in A$ and $n_-\in N_-$, where $$a=\left(\begin{array}{ccc} e^{t_1} & 0 & 0\\ 0 & e^{t_2} & 0\\ 0 & 0 & e^{t_3}\end{array}\right),$$ by Lemma~\ref{l41}, the denominator $d_p$ of $p$ is equal to $e^{\nu(a)}=e^{t_3-t_1}$.

On the other hand, we may write $$g=\left(\begin{array}{ccc}1 & \frac ab & 0\\ 0 & 1 & 0\\ 0 & 0 & 1\end{array}\right)\left(\begin{array}{ccc}1 & 0 & \frac{p_1}q\\ 0 & 1 & \frac{p_2}q\\ 0 & 0 & 1\end{array}\right)$$ where $\gcd(a,b)=1$ and $\gcd(p_1,p_2,q)=1$. Thus, there must be a matrix $\gamma$ in $\SL_3(\mathbb Z)$ $$\gamma=\left(\begin{array}{ccc}a_1 & b_1 & c_1\\ a_2 & b_2 & c_2\\ a_3 & b_3 & c_3\end{array}\right),$$ such that $g\cdot\gamma$ is a lower triangular matrix, i.e. $$g\cdot\gamma=\left(\begin{array}{ccc}1 & \frac ab & 0\\ 0 & 1 & 0\\ 0 & 0 & 1\end{array}\right)\left(\begin{array}{ccc}1 & 0 & \frac{p_1}q\\ 0 & 1 & \frac{p_2}q\\ 0 & 0 & 1\end{array}\right)\cdot\left(\begin{array}{ccc}a_1 & b_1 & c_1\\ a_2 & b_2 & c_2\\ a_3 & b_3 & c_3\end{array}\right)=\left(\begin{array}{ccc} * & 0 & 0\\ * & * & 0\\ * & * & *\end{array}\right).$$

To prove Theorem \ref{thm51}, we will follow the strategy used to prove Lemmas \ref{l33} and \ref{l34}. We consider the rational flag $$\mathcal F:=\left\{0\right\}\subsetneq P_1\subsetneq P_2\subsetneq\mathbb R^3$$ defined as follows: $$P_1:\begin{cases} q\cdot x+p_1\cdot z=0 \\ q\cdot y+p_2\cdot z=0\end{cases}$$ $$P_2: bq\cdot x+aq\cdot y+(bp_1+ap_2)z=0,$$
and we aim to find a basis 
$$\left\{(c_1,c_2,c_3)^{T},(b_1,b_2,b_3)^{T},(a_1,a_2,a_3)^{T}\right\}$$
of $\mathbb{Z}^{3}$ with vectors in this flag. 
The vector $(c_1,c_2,c_3)^T$ must be primitive in the rational line $P_1$. We choose $$(c_1,c_2,c_3)^T:=(-p_1,-p_2,q)^T.$$ Then, we have
\begin{align*}
g\cdot\gamma=&\left(\begin{array}{ccc}1 & \frac ab & 0\\ 0 & 1 & 0\\ 0 & 0 & 1\end{array}\right)\left(\begin{array}{ccc}1 & 0 & \frac{p_1}q\\ 0 & 1 & \frac{p_2}q\\ 0 & 0 & 1\end{array}\right)\cdot\left(\begin{array}{ccc}a_1 & b_1 & -p_1\\ a_2 & b_2 & -p_2\\ a_3 & b_3 & q\end{array}\right)\\
=&\left(\begin{array}{ccc}1 & \frac ab & 0\\ 0 & 1 & 0\\ 0 & 0 & 1\end{array}\right)\left(\begin{array}{ccc}a_1+a_3\cdot\frac{p_1}q & b_1+b_3\cdot\frac{p_1}q & 0\\ a_2+a_3\cdot\frac{p_2}q & b_2+b_3\cdot\frac{p_2}q & 0\\ a_3 & b_3 & q\end{array}\right)
\end{align*}
Next, we consider the rational plane $P_2$. Since $(c_1,c_2,c_3)^T$ is already a primitive integral point in $P_2$, we simply choose the vector $(b_1,b_2,b_3)^T$ to be another primitive integral vector in $P_2$ such that $\left\{(b_1,b_2,b_3)^T,(c_1,c_2,c_3)^T\right\}$ forms a basis of $P_2\cap\mathbb Z^3$. We also choose the vector $(a_1,a_2,a_3)^T\in\mathbb Z^3$ so that $\left\{(c_1,c_2,c_3),(b_1,b_2,b_3),(a_1,a_2,a_3)\right\}$ is a basis of $\mathbb Z^3$ with positive orientation. This is possible in view of Lemma \ref{l34}. Consequently, we have 
\begin{align*}
g\cdot\gamma=&\left(\begin{array}{ccc}1 & \frac ab & 0\\ 0 & 1 & 0\\ 0 & 0 & 1\end{array}\right)\left(\begin{array}{ccc}1 & 0 & \frac{p_1}q\\ 0 & 1 & \frac{p_2}q\\ 0 & 0 & 1\end{array}\right)\cdot\left(\begin{array}{ccc}a_1 & b_1 & -p_1\\ a_2 & b_2 & -p_2\\ a_3 & b_3 & q\end{array}\right)\\
=&\left(\begin{array}{ccc}1 & \frac ab & 0\\ 0 & 1 & 0\\ 0 & 0 & 1\end{array}\right)\left(\begin{array}{ccc}a_1+a_3\cdot\frac{p_1}q & b_1+b_3\cdot\frac{p_1}q & 0\\ a_2+a_3\cdot\frac{p_2}q & b_2+b_3\cdot\frac{p_2}q & 0\\ a_3 & b_3 & q\end{array}\right)\\
=&\left(\begin{array}{ccc}(a_1+a_3\cdot\frac{p_1}q)+\frac ab(a_2+a_3\cdot\frac{p_2}q) & 0 & 0\\ a_2+a_3\cdot\frac{p_2}q & b_2+b_3\cdot\frac{p_2}q & 0\\ a_3 & b_3 & q\end{array}\right).
\end{align*}
Then, the denominator $d_p$ of $p$ is equal to 
\begin{align*}
d_p=&\left|\frac{q}{(a_1+a_3\cdot\frac{p_1}q)+\frac ab(a_2+a_3\cdot\frac{p_2}q)}\right|\\
=&\left|\frac{bq^2}{a_1\cdot bq+a_2\cdot aq+a_3\cdot(bp_1+ap_2)}\right|.
\end{align*}

We now aim to find an alternative expression for $$a_1\cdot bq+a_2\cdot aq+a_3\cdot(bp_1+ap_2).$$ Set $u=(a_1,a_2,a_3)^T$ and $v=(bq, aq, bp_1+ap_2)^T$. Then, $$a_1\cdot bq+a_2\cdot aq+a_3\cdot(bp_1+ap_2)=u^T\cdot v.$$ Note that the vector $v$ is orthogonal to the rational plane $P_2$ and, hence, to the vectors $(b_1,b_2,b_3)$ and $(c_1,c_2,c_3)$. For convenience, we denote by $\Delta$ the discrete subgroup $P_2\cap\mathbb Z^3$ generated by $(b_1,b_2,b_3)$ and $(c_1,c_2,c_3)$, and by $d$ the greatest common divisor of $(bq, aq, bp_1+ap_2)$.

First, we compute the co-volume $\Vol(\Delta)$ of $\Delta$ in $P_2$. Let $\rho:\mathbb R^3\to\mathbb R$ be the map defined by $$\rho(x)=x^T\cdot v.$$ By Bezout's theorem, $\rho$ defines a surjective homomorphism, which we denote by $\sigma$, from $\mathbb Z^3$ to $d\cdot\mathbb Z$, such that $\sigma(v)=\|v\|^2$. By definition, we have $\ker\sigma=\Delta$ and $$\tilde\Delta:=\sigma^{-1}(\|v\|^2\cdot\mathbb Z)=\mathbb Z\cdot v\oplus\Delta.$$ We know that $\tilde\Delta=\sigma^{-1}(\|v\|^2\cdot\mathbb Z)$ is a subgroup of finite index in $\mathbb Z^3$ and its co-volume in $\mathbb R^3$ is given by $$\|v\|\cdot\Vol(\Delta)=\Vol(\tilde\Delta)=[\mathbb Z^3:\tilde\Delta]=[d\cdot\mathbb Z:\|v\|^2\cdot\mathbb Z]=\frac{\|v\|^2}d,$$
where the third equality follows from the fact that $\mathbb Z^{3}/\tilde{\Delta}$ is isomorphic to $\textup{Im}(\sigma)/\|v\|^{2}\mathbb{Z}$. Thus, we have $$\Vol(\Delta)=\frac{\|v\|}d.$$

Note that $\Delta$ coincides with $\mathbb{Z}^{3}\cap P_2$, and $u$ and $\Delta$ generate $\mathbb Z^3$ as a $\mathbb Z$-module. Since $\Vol(\mathbb Z^3)=1$, one can conclude that the distance between $u$ and the plane $P_2$ is equal to $1/\Vol(\Delta)=\frac d{\|v\|}$. As $v$ is orthogonal to $P_2$, we deduce that $$u^T\cdot v=\frac d{\|v\|}\cdot\|v\|=d.$$ Hence, $$d_p=\frac{bq^2}{d}.$$ We remark that here we have used some of the arguments at page 2 of \cite{AES16}.
\end{proof}

\section{Diophantine points and Diophantine approximation}\label{Diophantine}

In this section, we will study the different properties of Diophantine points defined in \S\ref{intro}. Note that, once we have proved Theorem~\ref{mthm} for $\gamma>0$, the case $\gamma=0$ will follow immediately, as for any open subset $U\subset N_{+}\Gamma/\Gamma$ and for any $\gamma>0$ one has $$\dim_H(S_\gamma^c\cap U)\leq\dim_H(S_0^c\cap U)\leq\dim_H N_{+}\Gamma/\Gamma=3.$$ Therefore, in the following, we will assume that $\gamma>0$.

\subsection{Preliminaries}
It is well-known that the fundamental domains of $G/\Gamma$ can be written as Siegel domains in $G$ \cite{B69,BH62}. Let $\Phi$ be the root system of $G$, let $\Phi_+$ the set of positive roots, and let $\Phi_-$ the set of negative roots as defined in \S\ref{intro} and \S\ref{pre}. Let $\Delta=\left\{\alpha_0,\beta_0\right\}$ be the set of simple roots. Let also $K_0=\SO(3)$ be the standard maximal compact subgroup in $G$. For any $t\in\mathbb R$, we define $$A_t=\left\{a\in A:\alpha_0(a)\geq t,\;\beta_0(a)\geq t\right\}.$$ Then, a Siegel domain is a subset of $G$ of the form $$\mathcal S_{t,\Omega}=K_0\cdot A_t\cdot\Omega$$ for some $t\in\mathbb R$ and some compact subset $\Omega$ in $N_-$. By \cite{B69,BH62}, we know that there exist $t_0\in\mathbb R$ and a compact subset $\Omega_0$ in $N_-$ such that $$G/\Gamma=\mathcal S_{t_0,\Omega_0}\cdot\Gamma.$$ Note that by the definition of $A_{t_0}$, the subset $$\left\{ana^{-1}:a\in A_{t_0}, n\in\Omega_0\right\}$$ is a compact subset in $N_-$. Hence, there exists a compact subset $L_0\subset G$ depending only on $K_0$, $t_0$, and $\Omega_0$, such that $$K_0\cdot\left\{ana^{-1}:a\in A_{t_0}, n\in\Omega_0\right\}\subset L_0\textup{ and }\mathcal S_{t_0,\Omega_0}\subset L_0\cdot A_{t_0}.$$

In the following, we will fix a constant $\kappa>1$ depending only on $G$, a sufficiently small neighborhood of identity $B(r_0)$ of radius $r_0>0$ in $G$ and a neighborhood $\mathcal B(r_0)$ of 0 in $\mathfrak g$ such that\vspace{2mm}
\begin{enumerate}
\item the exponential map $\exp:\mathcal B(r_0)\to B(r_0)$ is a diffeomorphism;\vspace{2mm}
\item for any $x\in\mathcal B(r_0)$ we have $$\frac1\kappa\cdot d_G(\exp(x),e)\leq\|x\|_{\mathfrak g}\leq\kappa\cdot d_G(\exp(x),e);$$\vspace{2mm}
\item for any $x\in\mathfrak g$, with $x=x_0+\sum_{\alpha\in\Phi}x_\alpha\;(x_0\in\mathfrak g_0, x_\alpha\in\mathfrak g_\alpha)$, we have $$\|x_0\|_{\mathfrak g},\;\|x_\alpha\|_\mathfrak g\leq\kappa\cdot\|x\|_{\mathfrak g}\quad(\forall\alpha\in\Phi).$$
\end{enumerate}
As in \S\ref{intro}, we write $$\mathfrak n_-=\sum_{\alpha\in\Phi_-}\mathfrak g_\alpha\textup{ and }\mathfrak n_+=\sum_{\alpha\in\Phi_+}\mathfrak g_\alpha.$$

We now state and prove two lemmas that will provide a characterisation of non-Diophantine points.

\begin{lemma}\label{l61}
Let $p\in G/\Gamma$ and suppose that $$\eta(p)<r$$ for a sufficiently small constant $r<r_0$. Then, there exists a constant $C>0$ depending only on $G/\Gamma$ and an element $v\in\Stab(p)\setminus\left\{e\right\}$ such that\vspace{2mm}
\begin{enumerate}
\item $d_G(v,e)<C\cdot r$;\vspace{2mm}
\item $v$ is primitive\footnote{By a primitive element of $\Stab(p)$, we mean an element $g$ that is not the power of any other element $h$ in $\Stab(p)$.} in $\textup{Stab}(p)\cap\Gamma$ and is conjugate to an element of $N_\nu$, where $\nu$ is the lowest root in $\Phi_-$ and $N_\nu=\exp(\mathfrak g_\nu)$.
\end{enumerate}
\end{lemma}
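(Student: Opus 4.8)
The plan is to use Mahler-type compactness reasoning together with the Siegel domain description of $G/\Gamma$. First I would reduce to a convenient representative: write $p = g\Gamma$ with $g \in \mathcal{S}_{t_0,\Omega_0}$, so that $g = k a n$ with $k \in K_0$, $a \in A_{t_0}$, and $n \in \Omega_0$. Since $\eta(p) < r$, the definition of the injectivity radius supplies an element $\gamma_0 \in \Gamma \setminus \{e\}$ with $d_G(g\gamma_0 g^{-1}, e) < r$; conjugating, $v_0 := g\gamma_0 g^{-1} \in \Stab(p)\setminus\{e\}$ is close to $e$. The first goal is to control $d_G(v_0,e)$ in terms of $r$ up to a uniform constant — this is essentially immediate from the definition — and then to improve $v_0$ to a \emph{primitive} element of $\Stab(p)\cap\Gamma$. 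Primitivity is handled by replacing $\gamma_0$ with a generator of the cyclic (or at least: taking a shortest element in the) subgroup it lies in; since $g\gamma_0^k g^{-1}$ for $|k|\geq 2$ cannot be shorter once we are inside $B(r_0)$ where $\exp$ is bi-Lipschitz (condition (2) on $\kappa$), the shortest such conjugate is automatically primitive in $\Stab(p)\cap\Gamma$, and its distance to $e$ is still $\lesssim r$.

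The substantive point is clause (2): that this primitive $v$ is conjugate to an element of $N_\nu$, i.e. that $\gamma_0$ (equivalently $v$) is unipotent with a single Jordan block aligned to the lowest root space. Here I would argue as follows. Working in the Siegel coordinates $g = k a n$, the element $a a_{t_0}^{-1}$-conjugation keeps $\Omega_0$ inside a fixed compact set of $N_-$, so modulo the compact part we may assume $g = a$ with $a \in A_{t_0}$, i.e. $a$ "large". The element $\gamma_0 \in \Gamma$ has $a \gamma_0 a^{-1}$ small. Decompose $\gamma_0 = \gamma_0^- \gamma_0^0 \gamma_0^+$ (or more carefully use the root space decomposition of $\log$): the component of $a\gamma_0 a^{-1}$ in $\mathfrak{g}_\alpha$ is $e^{\alpha(a)}$ times the $\mathfrak{g}_\alpha$-component of $\log$-type data of $\gamma_0$, and the $\mathfrak{a}$-component is unchanged. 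Since $\alpha(a) > 0$ for $\alpha \in \Phi_+$ and $\alpha(a)$ is large (as $a \in A_{t_0}$ with $t_0$ governing $\alpha_0,\beta_0$), for $a\gamma_0 a^{-1}$ to be small the $\mathfrak{g}_\alpha$-components of $\gamma_0$ for $\alpha \in \Phi_+ \cup \{0\}$ must vanish, so $\gamma_0 \in N_-$, and in fact $\gamma_0$ is unipotent. One then wants: the only integer unipotent matrices that can have a small enough conjugate by such $a$ are those conjugate into $N_\nu$. This is where the specific structure of $\SL_3$ and the ordering $\lambda_1 > \lambda_2 > \lambda_3$ enters: $\nu(a) = \lambda_3 - \lambda_1$ scaled by the diagonal of $a$ is the \emph{most contracting} direction for $\Ad(a)$ acting on $\mathfrak{n}_-$, i.e. $\nu$ is the most negative root, so a nonzero component of $\gamma_0$ in any other negative root space $\mathfrak{g}_{\alpha}$ ($\alpha \in \Phi_- \setminus \{\nu\}$) would force $|\alpha(a)|$ small, contradicting $a \in A_{t_0}$ with $t_0$ suitably negative/large in absolute value. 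Hence the only surviving component is in $\mathfrak{g}_\nu$, and since $\gamma_0$ is a nontrivial integer element it must genuinely have that component, so $\gamma_0$ — after a bounded (compact-part) conjugation — lies in $N_\nu$.

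The main obstacle, I expect, is making the last step fully rigorous: an arbitrary $\gamma_0 \in N_-(\mathbb{Z})$ is not literally in $N_\nu$, it has a superdiagonal structure, and what is true is only that \emph{some} conjugate by a bounded element lands in $N_\nu$. One has to be careful that the conjugating element can be taken from a fixed compact set (depending only on $G/\Gamma$) rather than one depending on $p$, so that the bound $d_G(v,e) \lesssim r$ survives with a uniform constant $C$. I would handle this by a compactness/continuity argument: the set of pairs $(a, \gamma_0)$ with $a \in A_{t_0}$, $\gamma_0 \in N_-(\mathbb{Z})\setminus\{e\}$, and $d_G(a\gamma_0 a^{-1},e) < r_0$ projects, after quotienting by the relevant bounded conjugations and using discreteness of $\Gamma$, to a controlled family; discreteness of $\Gamma$ forces the "non-$\nu$" components of $\gamma_0$ to be exactly $0$ once $r$ is small enough (an integer whose rescaling by a large factor is $< 1$ must vanish), which is the clean way to see it — there is no genuine limiting argument needed, just the observation that small integers are zero. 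This also pins down primitivity compatibly. The constant $C$ then depends only on $t_0$, $\Omega_0$, $K_0$, and $\kappa$, i.e. only on $G/\Gamma$, as claimed.
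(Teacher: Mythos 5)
Your reduction to a Siegel-domain representative $g=kan$ and your argument that the short returning element $\gamma_0$ must lie in $N_-$ both match the paper. But the central step of your clause-(2) argument is wrong. You claim that a nonzero component of $\gamma_0$ in a negative root space $\mathfrak g_\alpha$ with $\alpha\in\Phi_-\setminus\{\nu\}$ "would force $|\alpha(a)|$ small, contradicting $a\in A_{t_0}$". There is no such contradiction: $A_{t_0}$ only bounds $\alpha_0(a)$ and $\beta_0(a)$ from \emph{below}, not from above. When $\eta(p)<r$ one only knows that \emph{at least one} of $\alpha_0(a),\beta_0(a)$ is large (of order $\log(1/r)$); the other may remain bounded. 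If, say, $\beta_0(a)$ is huge while $\alpha_0(a)$ stays near $t_0$, then an integral $\gamma_0=\exp(x_{-\beta_0}+x_\nu)$ with $x_{-\beta_0}\neq0$ has $a\gamma_0a^{-1}$ as small as you like, so the shortest returning element need not have all non-$\nu$ components zero. Your subsequent conclusion ("the only surviving component is in $\mathfrak g_\nu$") and the ensuing discussion of conjugating into $N_\nu$ are therefore built on a false premise, and the approach of extracting $v$ from the \emph{shortest} element of $\Stab(p)$ cannot be pushed through as stated.

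The paper avoids this entirely by not outputting the shortest element. It uses the shortest element $v_1\in\Gamma\cap N_-$ only to deduce a lower bound on $(\alpha_0+\beta_0)(a)$: since $v_1$ is a nonzero integral matrix, at least one root component of $\log v_1$ has norm bounded below, and combining this with $\alpha_0(a),\beta_0(a)\geq t_0$ gives
$$e^{-(\alpha_0+\beta_0)(a)}=d_G(av_2a^{-1},e)\leq C_2\, d_G(av_1a^{-1},e),$$
where $v_2$ is a fixed generator of $\Gamma\cap N_\nu$. The lemma's element is then taken to be $v=(kan)v_2(kan)^{-1}$: it is short because of the displayed inequality and the compactness of $L_0$, it is conjugate to an element of $N_\nu$ by construction, and it is primitive because $v_2$ generates $\Gamma\cap N_\nu$. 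If you want to salvage your route you would have to separately analyze the case where only one simple root of $a$ is large and show that the resulting rank-one unipotent is still conjugate (by a controlled element) into $N_\nu$ and primitive; it is much cleaner to follow the paper and compare against the distinguished generator $v_2$ instead.
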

\begin{proof}
From the  above discussion, we know that if $p=k\cdot a\cdot n\Gamma$ for some $k\in K_0, a\in A_{t_0}$, and $n\in\Omega_0$, then $$k(ana^{-1})\in L_0\textup{ and }p=k(ana^{-1})\cdot a\Gamma.$$ Since $L_0$ is a compact subset in $G$, there exists a constant $C_1>0$ depending only on $L_0$ such that $$\eta(a\Gamma)\leq C_1\cdot\eta(p)<C_1\cdot r.$$ Let $v_1\in\Gamma\setminus\left\{e\right\}$ such that 
\begin{align}\label{eq2}
d_G(av_1a^{-1},e)=\eta(a\Gamma)<C_1\cdot r.
\end{align}
Since $r$ is arbitrarily small and $v_{1}\in\Gamma$, Inequality (\ref{eq2}) shows that at least one among $\alpha_0(a)$ and $\beta_0(a)$ must tend to infinity and that $v_1\in N_-$.

Let $v_2$ be a generator of the discrete subgroup $\Gamma\cap N_{\nu}$ in $N_\nu$. Then, we have $$av_2 a^{-1}=e^{-\alpha_0(a)-\beta_0(a)}v_2$$ and hence, $$d_G(av_2a^{-1},e)=e^{-\alpha_0(a)-\beta_0(a)}.$$ On the other hand, if we write $$v_1=\exp(x_{-\alpha_0}+x_{-\beta_0}+x_{-\alpha_0-\beta_0})\in N_-=\exp(\mathfrak n_-)$$ for $x_{-\alpha_0}\in\mathfrak g_{-\alpha_0}, x_{-\beta_0}\in\mathfrak g_{-\beta_0}, x_{-\alpha_0-\beta_0}\in\mathfrak g_{-\alpha_0-\beta_0}$, we deduce $$d_G(av_1a^{-1},e)\geq\frac1{\kappa^2}\max\left\{e^{-\alpha_0(a)}\|x_{-\alpha_0}\|_{\mathfrak g},e^{-\beta_0(a)}\|x_{-\beta_0}\|_{\mathfrak g},e^{-\alpha_0(a)-\beta_0(a)}\|x_{-\alpha_0-\beta_0}\|_{\mathfrak g}\right\}.$$ By the definition of $A_{t_0}$ and the fact that $v_1\in\Gamma\setminus\left\{e\right\}$, we can conclude that there exists a constant $C_2>0$ depending only on $G$ and $\Gamma$ such that $$d_G(av_2a^{-1},e)\leq C_2\cdot d_G(av_1a^{-1},e).$$ Now let $h=k(ana^{-1})\in L_0$. Then, there exists a constant $C_3>0$, depending only on $L_0$, such that $$\|\Ad(h)(x)\|_{\mathfrak g}\leq C_3\cdot\|x\|_{\mathfrak g}$$ for any $x\in\mathfrak g$. If an element $\exp(x)\in G$ $(x\in\mathfrak g)$ is sufficiently close to identity $e$, we have $$d_G(h\exp(x)h^{-1},e)\leq\kappa\|\Ad(h)(x)\|_{\mathfrak g}\leq\kappa C_3\|x\|_{\mathfrak g}\leq \kappa^2C_3\cdot d_G(\exp(x),e).$$ From here, we can deduce that, if $r$ is sufficiently small,
\begin{align*}
d_G(hav_2a^{-1}h^{-1},e)\leq\kappa^2C_3\cdot d_G(av_2a^{-1},e)\leq\kappa^2C_3C_2d_G(av_1a^{-1},e)<\kappa^2C_3C_2C_1\cdot r
\end{align*}
and $hav_2a^{-1}h^{-1}=(kan)v_2(kan)^{-1}\in\Stab(p)$. Set $v=hav_2a^{-1}h^{-1}$. Then, $v$ satisfies $(1)$ with $C=\kappa^2C_3C_2C_1$. Note that if, by contradiction, $v=u^{m}$, with $u\in\textup{Stab}(p)$ and $m\in\mathbb{N}$ ($m\geq 2$), we would have $\Ad(kan)u\in N_{\nu}\cap\Gamma$, with $(\Ad(kan)u)^{m}=v_{2}$. However, this is impossible, as $v_{2}$ generates $N_{\nu}\cap\Gamma$. This shows $(2)$, completing the proof of the lemma.
\end{proof}

\begin{lemma}\label{l62}
Let $p\in G/\Gamma$ and $\gamma>0$. Suppose that $p$ is not Diophantine of type $\gamma$ and that $\Stab(p)\cap N_-=\left\{e\right\}$. Then, there exist a sequence $\left\{t_n\right\}\to\infty$, and a sequence $v_n\in\Stab(a_{t_n}\cdot p)\setminus\left\{e\right\}$, and a constant $\kappa'$ only depending on $\kappa$, $\gamma$, and $(\alpha_{0}+\beta_{0})(X_{0})$, such that\vspace{2mm}
\begin{enumerate}
\item $v_n\in B(r_0)$, and if we write $$v_n=\exp(x_{n,-}+x_{n,0}+x_{n,+})$$ for $x_{n,-}\in\mathfrak n_-$, $x_{n,0}\in\mathfrak g_0$ and $x_{n,+}\in\mathfrak n_+$, we have $$\frac1{\kappa'}e^{-\gamma t_n}\leq\|x_{n,-}\|_{\mathfrak g}\leq\kappa' e^{-\gamma t_n},\|x_{n,0}\|_{\mathfrak g}\leq\kappa' e^{-\gamma t_n},\mbox{ and }\|x_{n,+}\|_{\mathfrak g}\leq\kappa' e^{-\gamma t_n};$$\vspace{2mm}
\item $v_n$ is primitive and is conjugate to an element of $N_\nu\cap\Gamma$, where $\nu$ is the lowest root in $\Phi_-$.
\end{enumerate}
\end{lemma}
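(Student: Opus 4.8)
The plan is to turn the failure of the Diophantine condition into a family of short stabilizer elements of the right shape by a ``pull back, then release'' argument: from $p\notin S_\gamma$ one extracts times $\sigma_n\to\infty$ at which the injectivity radius is very small, reads off (via Lemma~\ref{l61}) a short element $v^{(n)}$ of $\Stab(a_{\sigma_n}p)$ that is a conjugate of a generator of $N_\nu\cap\Gamma$ and computes its root-space decomposition explicitly, transports it back to time $0$, and then flows it forward again, stopping at the instant $t_n$ at which its $\mathfrak n_-$-part has size $e^{-\gamma t_n}$.

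First I would note that, since $\eta$ and $t\mapsto e^{\gamma t}$ are bounded below on compact intervals, $p\notin S_\gamma$ is equivalent to $\inf_{t>T}\eta(a_tp)e^{\gamma t}=0$ for every $T$, so one may choose $\sigma_n\to\infty$, growing as fast as desired, with $\eta(a_{\sigma_n}p)e^{\gamma\sigma_n}<2^{-n}$. Applying Lemma~\ref{l61} to $q_n:=a_{\sigma_n}p$ yields $v^{(n)}\in\Stab(q_n)\cap\Gamma$, primitive and $G$-conjugate to the generator $v_2=\exp(E_{31})$ of $N_\nu\cap\Gamma$. Writing $q_n=k_na_nn_n\Gamma$ in Siegel form ($k_n\in K_0$, $a_n\in A_{t_0}$, $n_n\in\Omega_0$) and using that $N_-$ centralises $N_\nu$ (so $\Ad(n_n)E_{31}=E_{31}$) together with $\Ad(a_n)E_{31}=e^{\nu(a_n)}E_{31}$, one obtains $\log v^{(n)}=e^{\nu(a_n)}u_n$ with $u_n:=\Ad(k_n)E_{31}$, $\|u_n\|_{\mathfrak g}\asymp 1$ by compactness of $K_0$, and $e^{\nu(a_n)}\asymp\eta(q_n)$. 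Setting $w^{(n)}:=a_{-\sigma_n}v^{(n)}a_{\sigma_n}\in\Stab(p)\setminus\{e\}$, the hypothesis $\Stab(p)\cap N_-=\{e\}$ forces $w^{(n)}\notin N_-$, so $\log w^{(n)}=e^{\nu(a_n)}\Ad(a_{-\sigma_n})u_n$ has a nonzero component outside $\mathfrak n_-$.

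For $\tau\in[0,\sigma_n]$ set $v_n^{\tau}:=a_{-\tau}v^{(n)}a_{\tau}\in\Stab(a_{\sigma_n-\tau}p)$, so $\log v_n^{\tau}=e^{\nu(a_n)}\Ad(a_{-\tau})u_n$; as $\tau$ grows the $\mathfrak n_-$-part of $u_n$ is expanded (with dominant rate $|\nu(X_0)|=(\alpha_0+\beta_0)(X_0)$), the $\mathfrak g_0$-part is unchanged, and the $\mathfrak n_+$-part is contracted. I would pick $\tau_n\in(0,\sigma_n)$ and $t_n:=\sigma_n-\tau_n$ so that the $\mathfrak n_-$-component of $\log v_n^{\tau_n}$ has norm a fixed small multiple of $e^{-\gamma t_n}$. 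Granting this choice, at $t_n$ the $\mathfrak n_-$-component has size $\asymp e^{-\gamma t_n}$, the $\mathfrak g_0$-component equals its constant value $e^{\nu(a_n)}\|(u_n)_0\|_{\mathfrak g}\lesssim e^{\nu(a_n)}\asymp\eta(q_n)\le e^{-\gamma\sigma_n}\le e^{-\gamma t_n}$, and the $\mathfrak n_+$-component is bounded by its value at $\tau=0$, again $\lesssim e^{-\gamma t_n}$; hence $v_n:=v_n^{\tau_n}$ satisfies part (1) of the lemma (and $v_n\in B(r_0)$ for large $n$), and, being $G$-conjugate to $v^{(n)}$ and hence to $v_2$, is primitive and conjugate to an element of $N_\nu\cap\Gamma$, which is part (2). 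One also checks that $\tau_n$ is of order $\log n$, so $t_n=\sigma_n-\tau_n\to\infty$ because $\sigma_n$ grows much faster.

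The step I expect to be the main obstacle is producing such a $\tau_n$ inside $(0,\sigma_n)$ with all constants independent of $n$ and of $p$. The $\mathfrak n_-$-component of $\log v_n^{\tau}$ starts (at $\tau=0$) of size $\lesssim e^{\nu(a_n)}\ll e^{-\gamma\sigma_n}$ and must reach a fixed multiple of $e^{-\gamma t_n}=e^{-\gamma\sigma_n}e^{\gamma\tau}$; since it grows at the exponential rate $(\alpha_0+\beta_0)(X_0)>\gamma$ once its lowest-root coordinate $(u_n)_\nu$ is switched on, the required $\tau_n$ exists in $(0,\sigma_n)$ provided $(u_n)_\nu$ is not too small — roughly, bounded below by a negative power of $2^n$. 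This is exactly where discreteness of $\Stab(p)$ enters: if the stable coordinates of $u_n=\Ad(k_n)E_{31}$ were that small, then $w^{(n)}$ (or an appropriate conjugate of it) would tend to $e$ inside the discrete group $\Stab(p)$, which is impossible; moreover, because $\|u_n\|_{\mathfrak g}\asymp 1$, the coordinates of $u_n$ along the other negative roots $-\alpha_0,-\beta_0$ cannot carry this ``largeness'' (their expansion factors are too weak), so the stable part of $u_n$ is genuinely carried by $(u_n)_\nu$. Making this chain of estimates precise and uniform — combined with the elementary relation $\|(u_n)_0\|_{\mathfrak g}^2\lesssim\|(u_n)_+\|_{\mathfrak g}\,\|(u_n)_-\|_{\mathfrak g}$ valid for the nilpotent $u_n$ of $\mathfrak{sl}_3$ (which controls the $\mathfrak g_0$-part in terms of the rest) — is the heart of the proof; the remaining points (the upper bounds, $v_n\in B(r_0)$, primitivity, conjugacy type, $t_n\to\infty$) are routine once $\tau_n$ is in hand.
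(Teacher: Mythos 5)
Your overall strategy is the right one and runs parallel to the paper's: extract short primitive stabilizer elements via Lemma~\ref{l61}, move them along the flow, and stop at a time where the relevant norm is comparable to $e^{-\gamma t}$, using discreteness of $\Stab(p)$ together with $\Stab(p)\cap N_-=\{e\}$. The upper bounds on $x_{n,0}$ and $x_{n,+}$ and part (2) are indeed routine in your setup. However, the step you yourself flag as the heart of the proof is not correctly resolved, and the sketch you give of it contains errors. First, your claim that $\tau_n$ is of order $\log n$ is false: since your $F_n(\tau):=\|(\log v_n^\tau)_-\|_{\mathfrak g}\,e^{\gamma(\sigma_n-\tau)}$ starts at $\lesssim 2^{-n}$ and has logarithmic derivative bounded by $(\alpha_0+\beta_0)(X_0)$, the first crossing of a fixed level $c$ satisfies $\tau_n\gtrsim n$, and a priori $\tau_n$ could be comparable to $\sigma_n$; so $t_n=\sigma_n-\tau_n\to\infty$ does \emph{not} follow from ``$\sigma_n$ grows much faster.'' Second, your mechanism for guaranteeing the crossing — a lower bound on $(u_n)_\nu$, with the assertion that the $-\alpha_0,-\beta_0$ coordinates ``cannot carry the largeness'' — is not a proof and implicitly assumes $\gamma<(\alpha_0+\beta_0)(X_0)$, which is not a hypothesis of the lemma. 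Moreover, the natural endpoint bound $F_n(\sigma_n)\gtrsim\eta(p)$ only yields a threshold $c$ depending on $p$, whereas the lemma requires $\kappa'$ to depend only on $\kappa$, $\gamma$, and $(\alpha_0+\beta_0)(X_0)$; uniformity matters later, when the sets $\mathcal E_i$ are defined with universal constants.

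Both missing points are repaired by the same discreteness argument, which is what the paper actually uses. If $F_n(\tau)<c$ for all $\tau\in[0,\sigma_n-T]$ (for a fixed $c$ and $T$), then $a_Tw^{(n)}a_{-T}$ lies in a fixed ball around $e$, so the $w^{(n)}\in\Stab(p)\setminus\{e\}$ take finitely many values; along a subsequence $w^{(n)}=w$ is constant, and $a_{\sigma_n}wa_{-\sigma_n}\to e$ with $\sigma_n\to\infty$ forces $w\in N_-$, contradicting the hypothesis. (Note $w^{(n)}$ does not ``tend to $e$'' as you write — it merely stays bounded; the contradiction comes from pigeonholing a fixed element and observing that only elements of $N_-$ are contracted by forward conjugation.) Applied with $T=0$ this gives the crossing at a universal level $c$, and applied with arbitrary $T$ it gives $t_n\to\infty$. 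The paper organizes this slightly differently and more cleanly: it sets $\tau_n=\inf\{t\geq 0: d_G(a_tu_na_{-t},e)\leq e^{-\gamma t}\}$ for $u_n\in\Stab(p)$ (the set is nonempty because the original time works), shows $\tau_n\to\infty$ by the pigeonhole argument above, obtains \emph{equality} of the full distance with $e^{-\gamma\tau_n}$ at the stopping time, and then derives the lower bound on $\|x_{n,-}\|_{\mathfrak g}$ by stepping back a fixed time $A$ with $e^{A\gamma}\geq 3\kappa^2$: at time $\tau_n-A$ the total norm exceeds $e^{A\gamma}e^{-\gamma\tau_n}$ while the $\mathfrak g_0$- and $\mathfrak n_+$-parts are still $\leq 2\kappa^2e^{-\gamma\tau_n}$, so the $\mathfrak n_-$-part was $\geq\kappa^2e^{-\gamma\tau_n}$ there and can only shrink by the bounded factor $e^{-(\alpha_0+\beta_0)(AX_0)}$ in time $A$. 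This yields the uniform $\kappa'$ directly and dispenses with your unproven auxiliary claims about $(u_n)_\nu$ and the inequality $\|(u_n)_0\|_{\mathfrak g}^2\lesssim\|(u_n)_+\|_{\mathfrak g}\|(u_n)_-\|_{\mathfrak g}$, neither of which is needed.
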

\begin{proof}
By definition, if $p$ is not Diophantine of type $\gamma$, then for any $\epsilon>0$ there exists a sequence $t_n\to\infty$ such that $$\eta(a_{t_n}\cdot p)\leq\epsilon e^{-\gamma t_n}.$$ We may choose $\epsilon=1/C$, where $C>0$ is the constant in part $(1)$ of Lemma~\ref{l61}. By part $(2)$ of Lemma~\ref{l61}, for each $t_n$, there exists an element $u_n\in\Stab(p)\setminus\left\{e\right\}$ such that\vspace{2mm}
\begin{enumerate}
\item $a_{t_n}u_na_{t_n}^{-1}\in\Stab(a_{t_n}\cdot p)$;\vspace{2mm}
\item $d_G(a_{t_n}u_na_{t_n}^{-1},e)\leq C\cdot \epsilon e^{-\gamma t_n}=e^{-\gamma t_n}$;\vspace{2mm}
\item $u_n$ is primitive in $\textup{Stab}(p)$ and is conjugate to an element of $N_\nu$, where $\nu$ is the lowest root in $\Phi_-$.
\end{enumerate}
Since $u_n\notin N_-$, the condition $$d_G(a_tu_na_t^{-1},e)\leq e^{-\gamma t}$$ cannot hold for arbitrary large $t\in\mathbb R_+$. This implies that the elements in the collection $\left\{u_n\right\}_{n\in\mathbb N}$ are pairwise different. For each $u_n$, we define $$\tau_{n}=\inf\left\{t\geq0: d_G(a_{t}u_na_{-t},e)\leq e^{-\gamma t}\right\}.$$ We claim that $\tau_n\to\infty.$ Suppose by contradiction that $\tau_n\to\tau_0$ for some $\tau_0\geq0$, after passing to a subsequence. Then, we have $$d_G(a_{\tau_n}u_na_{-\tau_n},e)\leq e^{-\gamma\tau_n}$$ and for sufficiently large $n\in\mathbb N$ $$d_G(a_{\tau_0}u_n a_{-\tau_0},e)\leq 10 e^{-\gamma\tau_0}.$$ This implies that $\left\{u_n\right\}_{n\in\mathbb N}$ is a bounded subset in the discrete subset $\Stab(p)$, which contradicts the condition that $\left\{u_n\right\}_{n\in\mathbb N}$ are pairwise different. Now by replacing $t_n$ with $\tau_{n}$, we can assume that $$t_n=\inf\left\{t\geq0: d_G(a_{t}u_na_{-t},e)\leq e^{-\gamma t}\right\}\to\infty$$ and $$d_G(a_{t_n}u_na_{-t_n},e)=e^{-\gamma t_n}.$$ For each $u_n$, we write $$u_n=\exp(y_{n,-}+y_{n,0}+y_{n,+}),$$ where $y_{n,-}\in\mathfrak n_-$, $y_{n,0}\in\mathfrak g_0$ and $y_{n,+}\in\mathfrak n_+$. Then, $$a_{t_n}u_na_{t_n}^{-1}=\exp(x_{n,-}+x_{n,0}+x_{n,+})$$ where $x_{n,-}=\Ad(a_{t_n})y_{n,-}\in\mathfrak n_-,x_{n,0}=\Ad(a_{t_n})y_{n,0}\in\mathfrak g_0,x_{n,+}=\Ad(a_{t_n})y_{n,+}\in\mathfrak n_+$. It follows that that
\begin{align*}
&\|x_{n,-}\|_{\mathfrak g},\|x_{n,0}\|_{\mathfrak g},\|x_{n,+}\|_{\mathfrak g}\\
\leq&\kappa\|x_{n,-}+x_{n,0}+x_{n,+}\|_{\mathfrak g}\\
\leq&\kappa^2 d_G(a_{t_n}u_na_{t_n}^{-1},e)\\
=&\kappa^2 e^{-\gamma t_n}.
\end{align*}
Finally, since $\left\{a_t\right\}_{t\in\mathbb R}$ expands $\mathfrak n_+$, centralizes $\mathfrak g_0$, and contracts $\mathfrak n_-$, and
$$t_n=\inf\left\{t\geq0: d_G(a_{t}u_na_{-t},e)\leq e^{-\gamma t}\right\}\to +\infty$$ we have that
for $A\geq 1$ with $e^{A\gamma}\geq 3\kappa^{2}$, it holds
$$\|\Ad(a_{t_n-A})(y_{n,0}+y_{n,+})\|_{\mathfrak g}\leq \|x_{n,0}+x_{n,+}\|_{\mathfrak g}\leq 2\kappa^{2}e^{-\gamma t_{n}}$$
and
$$\|\Ad(a_{t_n-A})(y_{n,-}+y_{n,0}+y_{n,+})\|_{\mathfrak g}\geq e^{-\gamma(t_{n}-A)}\geq e^{A\gamma}e^{-\gamma t_{n}},$$
whence
$$e^{(\alpha_{0}+\beta_{0})(A)}\|\Ad(a_{t_n})(y_{n,-})\|_{\mathfrak g}\geq \|\Ad(a_{t_n-A})(y_{n,-})\|_{\mathfrak g}\geq \kappa^{2}e^{-\gamma t_{n}}.$$
Thus,
$$\|x_{n,-}\|_{\mathfrak g}\geq\frac1{\kappa'} e^{-\gamma t_n},$$
with $\kappa'$ depending on $\kappa$, $\gamma$, and $(\alpha_{0}+\beta_{0})(X_{0})$. We complete the proof of the lemma by setting $v_n=a_{t_n}u_na_{t_n}^{-1}$.
\end{proof}

In view of the statement of Lemma \ref{l62}, we give the following definition.

\begin{definition}\label{d61}
Let $p\in G/\Gamma$, $\gamma,t>0$ and $v\in\Stab(a_t\cdot p)$. Let $\kappa '$ be the constant in part $(1)$ of Lemma \ref{l62}. We say that $t$ and $v$ satisfy the $\gamma$-condition if\vspace{2mm}
\begin{enumerate}
\item $v$ is primitive in $\textup{Stab}(p)\cap\Gamma$ and is conjugate to an element of $N_\nu$, where $\nu$ is the lowest root in $\Phi_-$;\vspace{2mm}
\item assuming $v=\exp(v_-+v_0+v_+)$, where $v_-\in\mathfrak n_-,v_0\in\mathfrak g_0$ and $v_+\in\mathfrak n_+$, we have $$\frac1{\kappa'} e^{-\gamma t_n}\leq\|v_-\|_{\mathfrak g}\leq\kappa' e^{-\gamma t},\;\|v_0\|_{\mathfrak g}\leq\kappa' e^{-\gamma t},\;\|v_+\|_{\mathfrak g}\leq\kappa' e^{-\gamma t}.$$
\end{enumerate}
\end{definition}

We can now reformulate Lemma~\ref{l62} as follows.
\begin{proposition}\label{p61}
Let $p\in G/\Gamma$ and $\gamma>0$. Suppose that $p$ is not Diophantine of type $\gamma$ and that $\Stab(p)\cap N_-=\left\{e\right\}$. Then, there exist a sequence $t_n\to\infty$ and a sequence $v_n\in\Stab(a_{t_n}\cdot p)\setminus\left\{e\right\}$ such that\vspace{2mm}
\begin{enumerate}
\item $t_n$ and $v_n$ satisfy the $\gamma$-condition;\vspace{2mm}
\item $a_{-t_n}\cdot v_n\cdot a_{t_n}$ and $a_{-t_m}\cdot v_m\cdot a_{t_m}$ are two different elements in $\Stab(p)$ for any $m\neq n\in\mathbb N$.
\end{enumerate}
\end{proposition}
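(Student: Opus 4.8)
The plan is to essentially repackage Lemma~\ref{l62} in the language of Definition~\ref{d61}. Since $p$ is not Diophantine of type $\gamma$ and $\Stab(p)\cap N_-=\{e\}$, Lemma~\ref{l62} directly provides a sequence $t_n\to\infty$ and elements $v_n\in\Stab(a_{t_n}\cdot p)\setminus\{e\}$ such that each $v_n$ lies in $B(r_0)$, is primitive, is conjugate to an element of $N_\nu\cap\Gamma$, and (writing $v_n=\exp(x_{n,-}+x_{n,0}+x_{n,+})$ with $x_{n,\pm}\in\mathfrak n_\pm$, $x_{n,0}\in\mathfrak g_0$) satisfies the two-sided bound $\frac1{\kappa'}e^{-\gamma t_n}\leq\|x_{n,-}\|_{\mathfrak g}$ together with $\|x_{n,-}\|_{\mathfrak g},\|x_{n,0}\|_{\mathfrak g},\|x_{n,+}\|_{\mathfrak g}\leq\kappa' e^{-\gamma t_n}$. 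Comparing with Definition~\ref{d61}, I see that conditions (1) and (2) there are exactly the assertions that $t_n$ and $v_n$ satisfy the $\gamma$-condition. Hence part (1) of the proposition is immediate. (One should note that the "primitive in $\Stab(p)\cap\Gamma$" clause makes sense because $v_n$ being conjugate of an element of $N_\nu\cap\Gamma$, together with the stabilizer being $g\Gamma g^{-1}$, places $v_n$ in a $\Gamma$-conjugate lattice; the primitivity is inherited from the construction in Lemma~\ref{l61}, where the corresponding element $u_n\in\Stab(p)$ is primitive and $v_n=a_{t_n}u_na_{-t_n}$ is its conjugate.)

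For part (2), I would argue exactly as in the proof of Lemma~\ref{l62}: set $u_n:=a_{-t_n}v_n a_{t_n}\in\Stab(p)$. These are the elements produced in the course of that proof, where it is shown that they are pairwise distinct. The key reason is that $u_n\notin N_-$ (this uses the hypothesis $\Stab(p)\cap N_-=\{e\}$, so $u_n$ has a nonzero component outside $\mathfrak n_-$, i.e. in $\mathfrak g_0\oplus\mathfrak n_+$), so the inequality $d_G(a_t u_n a_{-t},e)\leq e^{-\gamma t}$ forces $t$ to lie in a bounded interval; if two of the $u_n$ coincided, the corresponding times $t_n$ would have to coincide as well — more precisely, the argument showing $\tau_n\to\infty$ in Lemma~\ref{l62} shows that distinct times force distinct group elements, since a bounded collection of elements of the discrete set $\Stab(p)$ that all satisfy the relevant distance bound at comparable times would have to be finite, contradicting $t_n\to\infty$. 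So the $u_n=a_{-t_n}v_n a_{t_n}$ are automatically pairwise distinct.

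Thus the proof is really just an unwinding of definitions plus a pointer back to the internal mechanics of Lemma~\ref{l62}. There is no serious obstacle; the only thing to be careful about is making sure the statement of Definition~\ref{d61} matches the output of Lemma~\ref{l62} verbatim (in particular that the same constant $\kappa'$ is used, which it is, by the phrasing "Let $\kappa'$ be the constant in part (1) of Lemma~\ref{l62}"), and that the primitivity and conjugacy clauses survive the conjugation by $a_{t_n}$ — they do, since conjugation by $a_{t_n}$ is an automorphism of $G$ preserving both $\Stab(a_{t_n}p)=a_{t_n}\Stab(p)a_{-t_n}$ and the $N_\nu$-conjugacy class (as $a_{t_n}$ normalizes $N_\nu$). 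I would write the proof in one short paragraph invoking Lemma~\ref{l62} for existence of $(t_n,v_n)$ with the norm bounds and primitivity/conjugacy, observing these are precisely the $\gamma$-condition, and a second short paragraph recalling from the proof of Lemma~\ref{l62} that the associated elements $a_{-t_n}v_na_{t_n}\in\Stab(p)$ are pairwise distinct because $t_n\to\infty$ while $u_n\notin N_-$ confines each such element to a compact time-window.
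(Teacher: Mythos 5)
Your proposal is correct and matches the paper exactly: the paper offers no separate proof of Proposition~\ref{p61}, introducing it with the words ``We can now reformulate Lemma~\ref{l62} as follows,'' so the content is precisely the unwinding you describe -- part (1) is Definition~\ref{d61} applied to the output of Lemma~\ref{l62}, and part (2) is the pairwise-distinctness of the elements $u_n=a_{-t_n}v_na_{t_n}$ established inside the proof of Lemma~\ref{l62} from the fact that $u_n\notin N_-$ confines the admissible times to a bounded set. No gap.
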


\subsection{Diophantine approximation}

In this subsection, we aim to characterize non-Diophantine points on $N_{+}\Gamma/\Gamma$ in terms of proximity to rational points and/or "rational objects". This is made precise in the next proposition, which constitutes the main result of this subsection. Hereafter, we will write 
\begin{align*}
B(r_1,r_2,r_3):=&\left\{n\in N_+: n=\exp(x_1+x_2+x_3),x_1\in\mathfrak g_{\alpha_0},x_2\in\mathfrak g_{\beta_0},x_3\in\mathfrak g_{\alpha_0+\beta_0},\right.\\ &\quad\left.\|x_1\|_{\mathfrak g}\leq r_1,\|x_2\|_{\mathfrak g}\leq r_2,\|x_3\|_{\mathfrak g}\leq r_3\right\}.
\end{align*}

\begin{proposition}\label{p62}
Let $p\in G/\Gamma$ and let $\gamma>0$. Suppose that $p$ is not Diophantine of type $\gamma$ and that $\Stab(p)\cap N_-=\left\{e\right\}$. Then, there exist constants $C,\kappa''\geq 1$ depending only on $G$, $(\alpha_{0}+\beta_{0})(X_{0})$, and $\gamma$, a sequence $t_n\to\infty$ of real numbers, and a sequence $q_n\in G/\Gamma$ of rational points such that one of the following Diophantine conditions holds for all pairs $(t_n,q_n)$:\vspace{2mm}
\begin{enumerate}
\item $p\in B\left(0,Ce^{\beta_0(a_{-t_n})},Ce^{(\alpha_0+\beta_0)(a_{-t_n})}\right)w_3^{-1}\cdot q_n$ and $$\frac1{\kappa''}e^{\beta_0(a_{t_n})-\gamma t_n}\leq d_{q_n}\leq\kappa'' e^{\beta_0(a_{t_n})-\gamma t_n};$$\vspace{2mm} 
\item $p\in B\left(Ce^{\alpha_0(a_{-t_n})},0,Ce^{(\alpha_0+\beta_0)(a_{-t_n})}\right)w_2^{-1}\cdot q_n$ and $$\frac1{\kappa''}e^{\alpha_0(a_{t_n})-\gamma t_n}\leq d_{q_n}\leq\kappa'' e^{\alpha_0(a_{t_n})-\gamma t_n};$$\vspace{2mm}
\item $p\in B\left(Ce^{\alpha_0(a_{-t_n})},Ce^{\beta_0(a_{-t_n})},Ce^{(\alpha_0+\beta_0)(a_{-t_n})}\right)\cdot q_n$ and $$\frac1{3\kappa''}e^{(\alpha_0+\beta_0)(a_{t_n})-\gamma t_n}\leq d_{q_n}\leq\kappa'' e^{(\alpha_0+\beta_0)(a_{t_n})-\gamma t_n};$$\vspace{2mm}
\item $p\in B_n\cdot q_n$, with 
\begin{align*}
B_n=&\bigg\{x\in N_+: x\in B\left(0,Ce^{\beta_0(a_{-t_n})},Ce^{(\alpha_0+\beta_0)(a_{-t_n})}\right)\cdot y, y\in N_{\alpha_0},\\
&\left.\frac1{3\kappa''}e^{-\gamma t_n}e^{\beta_0(a_{t_n})}\leq d_G(y,e)\cdot d_{q_n}\leq\kappa'' e^{-\gamma t_n}e^{\beta_0(a_{t_n})}\right\}
\end{align*}
and
$$d_{q_n}\leq\kappa''e^{(\alpha_0+\beta_0)(a_{t_n})-\gamma t_n};$$\vspace{2mm}
\item $p\in B_n\cdot q_n$, with   
\begin{align*}
B_n=&\bigg\{x\in N_+: x\in B\left(Ce^{\alpha_0(a_{-t_n})},0,Ce^{(\alpha_0+\beta_0)(a_{-t_n})}\right)\cdot y,y\in N_{\beta_0},\\
&\left.\frac1{3\kappa''}e^{-\gamma t_n}e^{\alpha_0(a_{t_n})}\leq d_G(y,e)\cdot d_{q_n}\leq\kappa'' e^{-\gamma t_n}e^{\alpha_0(a_{t_n})}\right\}.
\end{align*}
and $$d_{q_n}\leq\kappa''e^{(\alpha_0+\beta_0)(a_{t_n})-\gamma t_n}.$$ 
\end{enumerate}
\end{proposition}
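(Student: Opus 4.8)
The plan is to start from Proposition~\ref{p61}, which already provides a sequence $t_{n}\to\infty$ and primitive elements $v_{n}\in\Stab(a_{t_{n}}\cdot p)$ satisfying the $\gamma$-condition, with $v_{n}=\exp(v_{n,-}+v_{n,0}+v_{n,+})$ small of size $\asymp e^{-\gamma t_{n}}$ and $v_{n}$ conjugate to an element of $N_{\nu}\cap\Gamma$. Since $v_{n}$ is conjugate to an element of $N_{\nu}$, by Lemma~\ref{l31} the group $\langle v_{n}\rangle$ lies in $g_{n}N_{\nu}g_{n}^{-1}$ for some $g_{n}$, and the normalizer of $N_{\nu}$ is $DN_{-}$; combining this with the Bruhat decomposition $N_{+}\subset\bigsqcup_{i}DN_{-}w_{i}N_{-}$ (Proposition~\ref{prop:Bruhat}) applied to a representative $\tilde g_{n}\in N_{+}$ of $p$, one sees that the ``rational direction'' singled out by $v_{n}$ corresponds to one of the six Weyl elements $w_{i}$. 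This is exactly the source of the six cases: the position of the unstable part $v_{n,+}$ of $v_{n}$ inside $\mathfrak n_{+}=\mathfrak g_{\alpha_{0}}\oplus\mathfrak g_{\beta_{0}}\oplus\mathfrak g_{\alpha_{0}+\beta_{0}}$ (which of these three components are nonzero, and of what relative size) determines which Bruhat cell the relevant conjugate of $v_{n}$ falls into after the ``standard'' reduction, and hence which $w_{j}$ we must insert.

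Concretely, for each $n$ I would write a representative of $a_{t_{n}}\cdot p$ in $N_{+}$, conjugate $v_{n}$ by it to get the corresponding $u_{n}\in\Stab(p)\cap\Gamma$, and then use the Bruhat decomposition to factor: the element $v_{n}$ close to the identity, together with the subgroup $N_{\nu}$ it is conjugate into, produces an element $\gamma_{n}\in\Gamma$ lying in one of the six cells $DN_{-}w_{j}N_{-}$. Applying $\gamma_{n}$ to the $\Gamma$-representative of $a_{t_{n}}\cdot p$ and reading off the $DN_{-}$-part produces a rational point $q_{n}'$ (rational by Proposition~\ref{p31}, since its representative lands in $AN_{-}\Gamma$) at bounded $G$-distance from $a_{t_{n}}\cdot p$, with the displacement lying in $B(\delta)w_{j}^{-1}$ for the appropriate $j$; when $v_{n}$ has two nonzero unstable components of comparable size we instead get the ``mixed'' neighbourhoods $B_{n}$ of cases (4) and (5), where one factor runs over $N_{\alpha_{0}}$ or $N_{\beta_{0}}$. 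I would then contract by $a_{-t_{n}}$: since $\{a_{t}\}$ expands $\mathfrak n_{+}$, the bounded ball $B(\delta)$ around $a_{t_{n}}\cdot p$ contracts to a ball of the anisotropic shape $B(Ce^{\alpha_{0}(a_{-t_{n}})},Ce^{\beta_{0}(a_{-t_{n}})},Ce^{(\alpha_{0}+\beta_{0})(a_{-t_{n}})})$ (or a partial version thereof, depending on which $w_{j}$ absorbs which directions), giving $p\in B(\cdots)w_{j}^{-1}\cdot q_{n}$ with $q_{n}=a_{-t_{n}}\cdot q_{n}'$, still rational because $a_{-t_{n}}$ normalizes $N_{\nu}$.

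For the denominator estimates I would use Lemma~\ref{l41}: $d_{q}=e^{\nu(a_{q})}$ where $a_{q}$ is the polar component, and then track how the polar component transforms under the reduction and the $a_{-t_{n}}$-contraction. The generator $v_{n}$ of the relevant $N_{\nu}\cap\Stab(a_{t_{n}}p)$ has $\|v_{n,-}\|\asymp e^{-\gamma t_{n}}$, and the size of $v_{n,-}$ (together with the size of the unstable component that gets ``swapped'' in by $w_{j}$) is exactly $d_{q_{n}'}^{-1}$ up to constants, by the computation underlying Theorem~\ref{thm51}; pushing through $a_{-t_{n}}$ multiplies $\nu(a_{q})$ by the contraction exponents, producing the factors $e^{\beta_{0}(a_{t_{n}})-\gamma t_{n}}$, $e^{\alpha_{0}(a_{t_{n}})-\gamma t_{n}}$, $e^{(\alpha_{0}+\beta_{0})(a_{t_{n}})-\gamma t_{n}}$ appearing in cases (1), (2), (3) respectively, and one-sided bounds in (4), (5). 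Finally I would invoke part (2) of Proposition~\ref{p61}, that the $a_{-t_{n}}v_{n}a_{t_{n}}$ are pairwise distinct in $\Stab(p)$, to guarantee that infinitely many distinct pairs $(t_{n},q_{n})$ arise, and a pigeonhole over the six configurations to pass to a subsequence for which a single case holds for all $n$.

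The main obstacle I anticipate is organizing the case analysis cleanly: one must identify precisely which of the six Weyl representatives corresponds to which pattern of vanishing/comparable components of $v_{n,+}$, and in cases (4) and (5) carefully isolate the intermediate unipotent subgroup $N_{\alpha_{0}}$ or $N_{\beta_{0}}$ over which the ``extra'' parameter $y$ ranges, while keeping the anisotropic ball sizes and the two-sided versus one-sided denominator bounds consistent. A secondary difficulty is verifying that at each step the modified representative still lands in $N_{+}$ modulo $\Gamma$ (so that $q_{n}\in N_{+}\Gamma/\Gamma$ after contraction, which is implicit in the shape of the $B(\cdots)w_{j}^{-1}\cdot q_{n}$ statements) and that the constant $C$ can be chosen uniformly in $n$ — this relies on the transversality Lemma~\ref{lem:transversality} and the fact that the reduction takes place inside a fixed compact piece of $G/\Gamma$ coming from the Siegel domain description, exactly as in the proof of Lemma~\ref{l61}.
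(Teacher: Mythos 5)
Your skeleton matches the paper's: start from Proposition~\ref{p61}, use the uniqueness of the conjugator $h\in\mathscr F$ into $N_{\nu}$ (the paper's Lemma~\ref{l64}, resting on Lemma~\ref{l31} and the Bruhat decomposition) to assign a Weyl type to each $v_{n}$, produce a nearby rational point, contract by $a_{-t_{n}}$, and pigeonhole over finitely many configurations. However, there are two genuine gaps in how you propose to generate the five cases. First, you assert that the six cases ``are exactly'' the six Weyl elements; in fact the types $w_{4},w_{5},w_{6}$ must be \emph{eliminated}: for those types the conjugator forces $v_{n}\in N_{+}$, and then $d_{G}(v_{n},e)=d_{G}(a_{t_{n}}x_{n}a_{-t_{n}},e)\geq d_{G}(x_{n},e)\geq\eta(p)>0$ for $x_{n}=a_{-t_{n}}v_{n}a_{t_{n}}\in\Stab(p)\cap N_{+}$, contradicting $d_{G}(v_{n},e)\lesssim e^{-\gamma t_{n}}\to 0$ (Lemmas~\ref{l65} and~\ref{l66}). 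Types $w_{3}$ and $w_{2}$ give cases (1) and (2), and \emph{all three} of cases (3), (4), (5) come from type $w_{1}$ alone. Your proposal contains no mechanism for discarding $w_{4},w_{5},w_{6}$, so as written it would produce spurious cases.

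Second, and more seriously, you attribute the split into cases (3)--(5) to the pattern of the \emph{unstable} components $v_{n,+}$, whereas the paper's Lemma~\ref{l69} splits according to which component of the \emph{stable} part $v_{n,-}\in\mathfrak g_{-\alpha_{0}}\oplus\mathfrak g_{-\beta_{0}}\oplus\mathfrak g_{-\alpha_{0}-\beta_{0}}$ attains the size $\asymp e^{-\gamma t_{n}}$ guaranteed by the $\gamma$-condition (only the sum is controlled from below). Writing $h^{-1}=\exp(t_{\alpha_{0}}e_{\alpha_{0}}+t_{\beta_{0}}e_{\beta_{0}}+t_{\alpha_{0}+\beta_{0}}e_{\alpha_{0}+\beta_{0}})$ and $x=x_{-\alpha_{0}-\beta_{0}}e_{\nu}$, one has $v_{-\beta_{0}}=-t_{\alpha_{0}}x_{-\alpha_{0}-\beta_{0}}e_{-\beta_{0}}$ and $v_{-\alpha_{0}}=t_{\beta_{0}}x_{-\alpha_{0}-\beta_{0}}e_{-\alpha_{0}}$; when $v_{-\alpha_{0}-\beta_{0}}$ is the dominant component the conjugator $h$ is bounded and one lands in case (3) with a two-sided bound on $d_{q_{n}}=e^{(\alpha_{0}+\beta_{0})(a_{t_{n}})}|x_{-\alpha_{0}-\beta_{0}}|$, but when $v_{-\beta_{0}}$ (resp.\ $v_{-\alpha_{0}}$) dominates, only the product $|t_{\alpha_{0}}x_{-\alpha_{0}-\beta_{0}}|$ (resp.\ $|t_{\beta_{0}}x_{-\alpha_{0}-\beta_{0}}|$) is pinned down, $h$ itself need not be bounded, and this unbounded $N_{\alpha_{0}}$ (resp.\ $N_{\beta_{0}}$) factor is precisely the parameter $y$ in $B_{n}$, with the two-sided control transferred to $d_{G}(y,e)\cdot d_{q_{n}}$ and only a one-sided bound surviving on $d_{q_{n}}$. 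Your proposal, by locating the dichotomy in $v_{n,+}$ and assuming the displacement always lies in a bounded ball, cannot produce the sets $B_{n}$ or explain why the denominator bounds in (4) and (5) are one-sided. (A small further slip: the denominator of the intermediate rational point equals $\|\Ad(h)\log v_{n}\|_{\mathfrak g_{\nu}}$ by primitivity and Lemma~\ref{l41}, not its reciprocal, and Theorem~\ref{thm51} plays no role here.)
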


Before proceeding to prove Proposition \ref{p62}, we recall that by Proposition \ref{prop:Bruhat} $$G=\bigsqcup_{i\in I} DN_-w_iN_+,$$ where $\left\{w_i\right\}_{i\in I}$ is the set of representatives of the Weyl group $W$ in $G$ given by $$w_1=\left(\begin{array}{ccc} 1 & 0 & 0\\ 0& 1 & 0\\ 0& 0& 1\end{array}\right),\quad w_2=\left(\begin{array}{ccc} 1 & 0 & 0\\ 0& 0 & -1\\ 0& 1& 0\end{array}\right),\quad w_3=\left(\begin{array}{ccc} 0 & -1 & 0\\ 1& 0 & 0\\ 0& 0& 1\end{array}\right),$$

$$w_4=\left(\begin{array}{ccc} 0 & 0 & 1\\ 1& 0 & 0\\ 0& 1& 0\end{array}\right),\quad w_5=\left(\begin{array}{ccc} 0 & 1 & 0\\ 0& 0 & 1\\ 1& 0& 0\end{array}\right),\quad w_6=\left(\begin{array}{ccc} 0 & 0 & 1\\ 0& -1 & 0\\ 1& 0& 0\end{array}\right).$$

For our purpose, we will further simplify $w_iN_+$ for each $w_i$ ($1\leq i\leq 6$) as follows: $w_1N_+=N_+$ and
\begin{align*}
w_2N_+&=w_2\left(\begin{array}{ccc}1 & * & *\\ 0 & 1 & *\\ 0 & 0 &1\end{array}\right)=w_2\left(\begin{array}{ccc}1 & 0 & 0\\ 0 & 1 & *\\ 0 & 0 &1\end{array}\right)\left(\begin{array}{ccc}1 & * & *\\ 0 & 1 & 0\\ 0 & 0 &1\end{array}\right)\\
&=\left(\begin{array}{ccc}1 & 0 & 0\\ 0 & 1 & 0\\ 0 & * &1\end{array}\right)\cdot w_2\left(\begin{array}{ccc}1 & * & *\\ 0 & 1 & 0\\ 0 & 0 &1\end{array}\right)
\end{align*}
\begin{align*}
w_3N_+&=w_3\left(\begin{array}{ccc}1 & * & *\\ 0 & 1 & *\\ 0 & 0 &1\end{array}\right)=w_3\left(\begin{array}{ccc}1 & * & 0\\ 0 & 1 & 0\\ 0 & 0 &1\end{array}\right)\left(\begin{array}{ccc}1 & 0 & *\\ 0 & 1 & *\\ 0 & 0 &1\end{array}\right)\\
&=\left(\begin{array}{ccc}1 & 0 & 0\\ * & 1 & 0\\ 0 & 0 &1\end{array}\right)\cdot w_3\left(\begin{array}{ccc}1 & 0 & *\\ 0 & 1 & *\\ 0 & 0 &1\end{array}\right)
\end{align*}
\begin{align*}
w_4N_+&=w_4\left(\begin{array}{ccc}1 & * & *\\ 0 & 1 & *\\ 0 & 0 &1\end{array}\right)=w_4\left(\begin{array}{ccc}1 & 0 & *\\ 0 & 1 & *\\ 0 & 0 &1\end{array}\right)\left(\begin{array}{ccc}1 & * & 0\\ 0 & 1 & 0\\ 0 & 0 &1\end{array}\right)\\
&=\left(\begin{array}{ccc}1 & 0 & 0\\ * & 1 & 0\\ * & 0 &1\end{array}\right)\cdot w_4\left(\begin{array}{ccc}1 & * & 0\\ 0 & 1 & 0\\ 0 & 0 &1\end{array}\right)
\end{align*}
\begin{align*}
w_5N_+&=w_5\left(\begin{array}{ccc}1 & * & *\\ 0 & 1 & *\\ 0 & 0 &1\end{array}\right)=w_5\left(\begin{array}{ccc}1 & * & *\\ 0 & 1 & 0\\ 0 & 0 &1\end{array}\right)\left(\begin{array}{ccc}1 & 0 & 0\\ 0 & 1 & *\\ 0 & 0 &1\end{array}\right)\\
&=\left(\begin{array}{ccc}1 & 0 & 0\\ 0 & 1 & 0\\ * & * &1\end{array}\right)\cdot w_5\left(\begin{array}{ccc}1 & 0 & 0\\ 0 & 1 & *\\ 0 & 0 &1\end{array}\right)
\end{align*}
\begin{align*}
w_6N_+&=w_6\left(\begin{array}{ccc}1 & * & *\\ 0 & 1 & *\\ 0 & 0 &1\end{array}\right)=\left(\begin{array}{ccc}1 & 0 & 0\\ * & 1 & 0\\ * & * &1\end{array}\right)\cdot w_6.
\end{align*}
For simple roots $\alpha_0,\beta_0\in\Delta$, we will denote the subgroups in $G$ corresponding to $\mathfrak g_{\alpha_0}$ and $\mathfrak g_{\beta_0}$ by $$N_{\alpha_0}=\left(\begin{array}{ccc}1 & * & 0\\ 0 & 1 & 0\\ 0 & 0 &1\end{array}\right),\;\mathfrak g_{\alpha_0}=\left(\begin{array}{ccc}0 & * & 0\\ 0 & 0 & 0\\ 0 & 0 &0\end{array}\right)$$
$$N_{\beta_0}=\left(\begin{array}{ccc}1 & 0 & 0\\ 0 & 1 & *\\ 0 & 0 &1\end{array}\right),\;\mathfrak g_{\beta_0}=\left(\begin{array}{ccc}0 & 0 & 0\\ 0 & 0 & *\\ 0 & 0 &0\end{array}\right)$$ and for $\alpha_0+\beta_0\in\Delta$ by $$N_{\alpha_0+\beta_0}=\left(\begin{array}{ccc}1 & 0 & *\\ 0 & 1 & 0\\ 0 & 0 &1\end{array}\right),\;\mathfrak g_{\alpha_0+\beta_0}=\left(\begin{array}{ccc}0 & 0 & *\\ 0 & 0 & 0\\ 0 & 0 &0\end{array}\right).$$ Similarly, we write $$N_{-\alpha_0}=\left(\begin{array}{ccc}1 & 0 & 0\\ * & 1 & 0\\ 0 & 0 &1\end{array}\right),\;\mathfrak g_{-\alpha_0}=\left(\begin{array}{ccc}0 & 0 & 0\\ * & 0 & 0\\ 0 & 0 &0\end{array}\right)$$
$$N_{-\beta_0}=\left(\begin{array}{ccc}1 & 0 & 0\\ 0 & 1 & 0\\ 0 & * &1\end{array}\right),\;\mathfrak g_{-\beta_0}=\left(\begin{array}{ccc}0 & 0 & 0\\ 0 & 0 & 0\\ 0 & * &0\end{array}\right)$$
$$N_{-\alpha_0-\beta_0}=\left(\begin{array}{ccc}1 & 0 & 0\\ 0 & 1 & 0\\ * & 0 &1\end{array}\right),\;\mathfrak g_{-\alpha_0-\beta_0}=\left(\begin{array}{ccc}0 & 0 & 0\\ 0 & 0 & 0\\ * & 0 &0\end{array}\right).$$ Note that $\nu=-\alpha_0-\beta_0$ is the lowest root in $\Phi_-$. From these computations, we deduce that
\begin{align}
G=DN_-N_+&\sqcup DN_-w_2N_{\alpha_0}N_{\alpha_0+\beta_0}\sqcup DN_-w_3N_{\beta_0}N_{\alpha_0+\beta_0}\label{eq:BD}\\
&\sqcup DN_-w_4N_{\alpha_0}\sqcup DN_-w_5N_{\beta_0}\sqcup DN_-w_6.\nonumber
\end{align}
We set 
\begin{align*}
\mathscr F&:=N_+\sqcup w_2N_{\alpha_0}N_{\alpha_0+\beta_0}\sqcup w_3N_{\beta_0}N_{\alpha_0+\beta_0}
\sqcup w_4N_{\alpha_0}\sqcup w_5N_{\beta_0}\sqcup w_6\\
&:=\mathscr F_1\sqcup\mathscr F_2\sqcup\mathscr F_3\sqcup\mathscr F_4\sqcup\mathscr F_5\sqcup\mathscr F_6,
\end{align*}
whence $G=DN_-\cdot\mathscr F$. Note that $\mathscr F$ is the collection of representatives for the space of right-cosets of $DN_-$.

The proof of Proposition \ref{p62} relies on the existence of sequences $(t_{n},v_{n})$ satisfying the $\gamma$-condition (see Proposition \ref{p61}). Part $(1)$ in the definition of $\gamma$-condition, ensures that each element $v_{n}$ is conjugated to a generator of the group $N_{\nu}\cap\Gamma$. By using the decomposition $G=DN_{-}\mathscr{F}$, we aim to introduce a classification of vectors $v_{n}$ depending on "how" they are conjugated to a generator of $N_{\nu}\cap\Gamma$. This "conjugacy type" will then translate into different Diophantine properties for the point $p$.

\begin{lemma}\label{l64}
Let $x\in G\setminus\left\{e\right\}$ and suppose that $x$ is conjugate to an element in $N_\nu$ where $\nu$ is the lowest root in $\Phi_-$. Then, there exists a unique element $h$ in $\mathscr F$ such that $$hxh^{-1}\in N_{\nu}.$$
\end{lemma}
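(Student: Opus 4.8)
The strategy is to use the decomposition $G = DN_-\cdot\mathscr{F}$ together with the fact that $DN_-$ is precisely the normalizer of $N_\nu$ (Lemma \ref{l31}). First I would record the key structural fact: since $x$ is conjugate to an element of $N_\nu$, we may fix $g\in G$ with $gxg^{-1}\in N_\nu\setminus\{e\}$. Write $g = dn_-h$ with $d\in D$, $n_-\in N_-$, and $h\in\mathscr{F}$, using the fact that $\mathscr{F}$ is a complete set of representatives for the right cosets $DN_-\backslash G$. Since $d$ and $n_-$ both normalize $N_\nu$ (indeed $D$ normalizes every root subgroup and $N_-$ centralizes $N_\nu$, as $N_\nu\subset N_-$ is the lowest root group and $\mathfrak{n}_-$ is $2$-step nilpotent with $\mathfrak{g}_\nu$ central), we get $hxh^{-1} = (dn_-)^{-1}(gxg^{-1})(dn_-)\in N_\nu$. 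This proves existence.

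For uniqueness, suppose $h,h'\in\mathscr{F}$ both satisfy $hxh^{-1}\in N_\nu$ and $h'xh'^{-1}\in N_\nu$. Then I would argue that $h'h^{-1}$ normalizes $N_\nu$: writing $y := hxh^{-1}\in N_\nu\setminus\{e\}$ and $y' := h'xh'^{-1}\in N_\nu\setminus\{e\}$, we have $(h'h^{-1})y(h'h^{-1})^{-1} = y'$. Now $y$ generates a Zariski-dense subgroup of $N_\nu$ (as in the proof of Lemma \ref{l32}, since $N_\nu$ is one-dimensional unipotent), and conjugation by $h'h^{-1}$ maps $\overline{\langle y\rangle}^{\mathrm{Zar}} = N_\nu$ onto $\overline{\langle y'\rangle}^{\mathrm{Zar}} = N_\nu$. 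Hence $h'h^{-1}\in N_G(N_\nu) = DN_-$ by Lemma \ref{l31}. But then $h$ and $h'$ lie in the same right coset $DN_-h = DN_-h'$, and since $\mathscr{F}$ contains exactly one representative of each such coset, $h = h'$.

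The only delicate point is verifying that every element of $\mathscr{F}$ makes sense as a coset representative and that the two facts invoked — namely $N_G(N_\nu) = DN_-$ and "a nontrivial element of $N_\nu$ is Zariski-dense in $N_\nu$" — are available. The former is exactly Lemma \ref{l31}; the latter is the observation used inside the proof of Lemma \ref{l32}, which applies here verbatim since $N_\nu = \exp(\mathfrak{g}_\nu)$ is a one-parameter unipotent subgroup. The statement $G = DN_-\cdot\mathscr{F}$ with $\mathscr{F}$ a transversal follows from the Bruhat decomposition \eqref{eq:BD} exactly as displayed right before the lemma, so no new computation is needed.

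**Main obstacle.** I expect the main (though minor) subtlety to be the commutation claim $N_- \subset C_G(N_\nu)$, i.e. that elements of $N_-$ genuinely centralize $N_\nu$ rather than merely normalize it; this is where the specific structure of $\mathfrak{sl}_3$ — that $\mathfrak{g}_\nu = \mathfrak{g}_{-\alpha_0-\beta_0}$ is the center of the nilpotent Lie algebra $\mathfrak{n}_-$ — is used. Once that is in hand, both existence and uniqueness are formal consequences of Lemma \ref{l31} and the coset-transversal property of $\mathscr{F}$.
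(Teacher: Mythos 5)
Your proof is correct and follows essentially the same route as the paper: existence via the decomposition $G=DN_-\cdot\mathscr F$ and the fact that $DN_-$ normalizes $N_\nu$, and uniqueness via the Zariski-density of $\langle y\rangle$ in the one-dimensional group $N_\nu$ combined with Lemma \ref{l31} and the transversal property of $\mathscr F$. The extra observation you flag (that $N_-$ actually centralizes $N_\nu$) is true and is also what the paper uses elsewhere, but only the normalizing property is needed here.
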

\begin{proof}
Since $x$ is conjugate to an element in $N_{\nu}$, there exists $g\in G$ such that $$gxg^{-1}\in N_{\nu}.$$ By (\ref{eq:BD}), there exist $a\in D$, $n\in N_-$, and $h\in\mathscr F$ such that $$g=an\cdot h.$$ Since $an$ normalizes $N_\nu$, we conclude that $$hxh^{-1}\in(an)^{-1}N_\nu(an)=N_\nu.$$ Now, suppose that there exists another element $\tilde h\in\mathscr F$ such that $\tilde hx\tilde h^{-1}\in N_\nu$. Since $\dim N_{\nu}=1$, we have $$\overline{h\langle x\rangle h^{-1}}^{Zar}=N_{\nu}\quad\textup{and}\quad\overline{\tilde h\langle x\rangle\tilde h^{-1}}^{Zar}=N_{\nu}$$ which implies that $h\tilde h^{-1}$ is in the normalizer of $N_\nu$. By Lemma~\ref{l31}, we have $h\tilde h^{-1}\in DN_-$. As $\mathscr F$ is a set of representatives of right-cosets of $DN_-$, we conclude that $h=\tilde h$, which proves the uniqueness.
\end{proof}

In view of Lemma \ref{l64}, we give the following definition.

\begin{definition}\label{d62}
Suppose that $x\in G\setminus\left\{e\right\}$ is conjugate to an element in $N_\nu$, and let $h\in\mathscr F$ such that $hxh^{-1}\in N_{\nu}$. If $h\in\mathscr F_i$ $(1\leq i\leq 6)$, then we say that $x$ is of type $w_i$, where $w_i$ is the representative of Weyl group corresponding to $\mathscr F_i$.
\end{definition}

Now, we proceed to analyze the different Diophantine properties of points of type $w_{i}$. We start with the following simple lemma about points of type $w_{4},w_{5},w_{6}$.

\begin{lemma}\label{l65}
Let $p\in G/\Gamma$, $\gamma,t>0$ and $v\in\Stab(a_t\cdot p)$. Suppose that $t$ and $v$ satisfy the $\gamma$-condition and $v$ is of type $w_i$ with $i=4,5,6$. Then, $v\in N_+$.
\end{lemma}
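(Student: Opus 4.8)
The plan is to unwind the definition of \emph{type} and reduce the claim to a one-line computation with the Weyl representatives. By Definition \ref{d62}, the hypothesis that $v$ is of type $w_i$ (which already presupposes that $v$ is conjugate to an element of $N_\nu$) means that there exists $h\in\mathscr F_i$ with $hvh^{-1}\in N_\nu$, i.e. $v=h^{-1}nh$ for some $n\in N_\nu$. For $i=4,5,6$, the definitions $\mathscr F_4=w_4N_{\alpha_0}$, $\mathscr F_5=w_5N_{\beta_0}$, $\mathscr F_6=w_6$ show that $h=w_im$ with $m$ an element of $N_+$ (for $i=6$ one simply has $m=e$). Hence $v=m^{-1}\bigl(w_i^{-1}nw_i\bigr)m$, and since $N_+$ is a subgroup of $G$, the conclusion $v\in N_+$ reduces to the inclusion $w_i^{-1}N_\nu w_i\subseteq N_+$ for $i=4,5,6$.

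To verify this inclusion I would use the explicit matrices from \S\ref{pre}: writing a general element of $N_\nu$ as $n=I+cE_{31}$, a direct matrix multiplication gives $w_4^{-1}nw_4=I+cE_{23}\in N_{\beta_0}$, $w_5^{-1}nw_5=I+cE_{12}\in N_{\alpha_0}$, and $w_6^{-1}nw_6=I+cE_{13}\in N_{\alpha_0+\beta_0}$. (Conceptually, this is just the statement that the Weyl element $w_i^{-1}$ carries the lowest root $\nu=-\alpha_0-\beta_0$ to the positive root $\beta_0$, $\alpha_0$, $\alpha_0+\beta_0$ respectively, so that $w_i^{-1}N_\nu w_i=N_{w_i^{-1}\cdot\nu}$ is a root subgroup inside $N_+$.) Since $N_{\alpha_0},N_{\beta_0},N_{\alpha_0+\beta_0}\subseteq N_+$, we get $w_i^{-1}N_\nu w_i\subseteq N_+$, hence $v=m^{-1}(w_i^{-1}nw_i)m\in N_+$, as desired.

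There is no genuine obstacle here; the only step requiring care is the bookkeeping of the factor $m$. The content of the lemma is precisely that, among the six Weyl representatives, it is exactly $w_4,w_5,w_6$ that move $\nu$ into $\Phi_+$ (whereas $w_1,w_2,w_3$ keep it in $\Phi_-$), which is why for $i=4,5,6$ the $\mathscr F_i$-factor $m$ is forced to lie entirely in $N_+$, with no ``negative'' directions hidden. I would also point out that the $\gamma$-condition is not actually used in the proof beyond its part $(1)$ --- namely that $v$ is conjugate to an element of $N_\nu$ --- which is already built into the notion of type; the size estimates of part $(2)$ are irrelevant to this statement. They do become relevant afterwards: combined with the conclusion $v\in N_+$ and the lower bound $\|v_-\|_{\mathfrak g}\ge\frac1{\kappa'}e^{-\gamma t}>0$ from the $\gamma$-condition, this lemma rules out $v$ being of type $w_4,w_5,w_6$ at all, which is presumably how it is invoked in the sequel.
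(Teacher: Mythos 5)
Your proof is correct and follows essentially the same route as the paper: write $h=w_im$ with $m\in N_+$ (or $m=e$ for $i=6$), check that $w_i^{-1}N_\nu w_i$ is the root subgroup $N_{\beta_0}$, $N_{\alpha_0}$, $N_{\alpha_0+\beta_0}$ respectively, and conclude $v=m^{-1}(w_i^{-1}nw_i)m\in N_+$ since $N_+$ is a group. Your side remarks (that only part $(1)$ of the $\gamma$-condition is used, and how the lemma feeds into the exclusion of types $w_4,w_5,w_6$) are also accurate.
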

\begin{proof}
First, we consider the case $i=4$. By definition, we know that $v$ is conjugate to an element in $N_{\nu}$ and there exists $h\in w_4N_{\alpha_0}$ such that $hvh^{-1}\in N_{\nu}$. Let $h=w_4 n$ where $n\in N_{\alpha_0}$. Then we have
\begin{align*}
hvh^{-1}\in N_{\nu}\implies v\in n^{-1}N_{\beta_0}n\subset N_+.
\end{align*}
The proof for the case $i=5$ is similar. For $i=6$ we have $$w_6 vw_6^{-1}\in N_{\nu}.$$ Since $w_6^{-1}N_\nu w_6\subset N_+$, we conclude that $v\in N_+$.
\end{proof}

In the next lemma we will see that types $w_{4},w_{5},w_{6}$ play no role in the proof of Proposition \ref{p62}.

\begin{lemma}\label{l66}
Let $p\in G/\Gamma$, and let $t_n\to\infty$ and $v_n\in\Stab(a_{t_n}\cdot p)\setminus\left\{e\right\}$ such that $t_n$ and $v_n$ satisfy the $\gamma$-condition for all $n$. Then, there are only finitely many indices $n$ such that $v_{n}$ is of type $w_{i}$ with $i=4,5,6$.
\end{lemma}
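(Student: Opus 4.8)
The plan is to show that \emph{no} index $n$ can have $v_n$ of type $w_i$ with $i\in\{4,5,6\}$; the statement as phrased then follows a fortiori. The mechanism is a direct incompatibility between Lemma~\ref{l65} and the lower bound on the $\mathfrak n_-$-component that is built into the $\gamma$-condition.

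First I would fix an index $n$ for which $v_n$ is of type $w_i$, $i\in\{4,5,6\}$, and note that the type is well-defined: part (1) of the $\gamma$-condition (Definition~\ref{d61}) guarantees that $v_n$ is conjugate to an element of $N_\nu$, so Lemma~\ref{l64} and Definition~\ref{d62} apply. Lemma~\ref{l65} then yields $v_n\in N_+$. Writing $v_n=\exp(x_{n,-}+x_{n,0}+x_{n,+})$ with $x_{n,-}\in\mathfrak n_-$, $x_{n,0}\in\mathfrak g_0$, $x_{n,+}\in\mathfrak n_+$ as in the $\gamma$-condition, the fact that $v_n\in N_+=\exp(\mathfrak n_+)$ forces $x_{n,-}=0$ (and $x_{n,0}=0$), since $\log$ carries the unipotent group $N_+$ into $\mathfrak n_+$ and this agrees with the restricted logarithm on $B(r_0)\cap N_+$ by uniqueness.

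Finally I would invoke part (2) of the $\gamma$-condition, which asserts $\|x_{n,-}\|_{\mathfrak g}\geq\frac1{\kappa'}e^{-\gamma t_n}$. As $\gamma>0$ and $t_n$ is finite, the right-hand side is strictly positive, so $x_{n,-}\neq 0$, contradicting the previous step. Hence no such index $n$ exists, and in particular only finitely many do.

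There is no genuine obstacle in this argument: it is a formal consequence of Lemma~\ref{l65} together with the explicit shape of the $\gamma$-condition, and no estimate is involved. The only points requiring attention are verifying that the type of $v_n$ is well-defined (which is exactly what conjugacy to an element of $N_\nu$ and the uniqueness in Lemma~\ref{l64} provide) and keeping the $\mathfrak n_-$- and $\mathfrak n_+$-components straight when deducing $x_{n,-}=0$ from $v_n\in N_+$. The weaker ``finitely many'' phrasing is harmless, since the argument in fact produces ``none''.
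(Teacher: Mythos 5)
Your proof is correct, but it follows a genuinely different route from the paper's. The paper argues by contradiction on an infinite subsequence: after invoking Lemma~\ref{l65} to get $v_n\in N_+$, it conjugates back to $x_n=a_{-t_n}v_na_{t_n}\in\Stab(p)\cap N_+$, uses the discreteness of the stabilizer to get $d_G(x_n,e)\geq\eta(p)>0$, notes that conjugation by $a_{t_n}$ does not contract $N_+$ so that $d_G(v_n,e)\geq\eta(p)$, and contradicts the \emph{upper} bound $d_G(v_n,e)\leq\kappa'e^{-\gamma t_n}\to0$ from the $\gamma$-condition. That argument needs $t_n\to\infty$, hence only yields ``finitely many.'' You instead exploit the \emph{lower} bound $\|v_-\|_{\mathfrak g}\geq\frac1{\kappa'}e^{-\gamma t}$ that is built into part (2) of Definition~\ref{d61}, which is flatly incompatible with $v_n\in N_+$ (whence $v_-=0$) for any single index; this gives the stronger conclusion that no index of type $w_4,w_5,w_6$ exists, and is both shorter and free of metric estimates. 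The trade-off is that your argument leans entirely on the lower bound for the $\mathfrak n_-$-component being part of the $\gamma$-condition (it is, both in Definition~\ref{d61} and in the conclusion of Lemma~\ref{l62}, where it is established by the infimum-time argument), whereas the paper's proof would survive even if that lower bound were dropped from the definition. Your care in checking that the type is well-defined via Lemma~\ref{l64} and that the restricted logarithm of an element of $B(r_0)\cap N_+$ lands in $\mathfrak n_+$ addresses the only delicate points.
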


\begin{proof}
Suppose that $t_n$ and $v_n$ satisfy the $\gamma$-condition and that (without loss of generality) all vectors $v_n\in\Stab(a_{t_n}\cdot p)\setminus\left\{e\right\}$ are of type $w_i$ for some $i=4,5,6$. By Lemma~\ref{l65}, it follows that $v_n\in N_+$ for all $n$. Let $x_n=a_{-t_n}v_na_{t_n}\in\Stab(p)$. Then, $x_n$ is also in $N_+$ and $$d_G(x_n,e)\geq\eta(p).$$ This implies that $$d_G(v_n,e)=d_G(a_{t_n}x_na_{-t_n},e)\geq d_G(x_n,e)\geq\eta(p)>0,$$ which contradicts the fact that $$d_G(v_n,e)\leq\kappa' e^{-\gamma t_n}\to0.$$
\end{proof}

The next string of three Lemmas (\ref{l67}, \ref{l68}, \ref{l69}) deals with sequences $v_{n}$ of types $w_{1},w_{2},w_{3}$. The combination of these three results will give Proposition \ref{p62}. In the proofs of these lemmas, we will frequently make use of the following simple fact.

\begin{lemma}\label{l63}
There exists a constant $C>0$ depending only on $G$ such that if $\alpha,\beta\in\Phi$ and $[\mathfrak g_\alpha,\mathfrak g_\beta]\neq 0$, we have that for any $u\in\mathfrak g_\alpha$ and $v\in\mathfrak g_\beta$ $$\|[u,v]\|_{\mathfrak g}\geq C\|u\|_{\mathfrak g}\cdot\|v\|_{\mathfrak g}.$$
\end{lemma}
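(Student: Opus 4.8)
The plan is to exploit the special feature of $G=\SL_3(\mathbb R)$ that every root space $\mathfrak g_\alpha$ with $\alpha\in\Phi$ is one-dimensional, which collapses the asserted inequality into a statement about the bilinearity of the Lie bracket together with a minimum over finitely many pairs of roots.

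First I would fix, for each $\alpha\in\Phi$, a nonzero vector $e_\alpha\in\mathfrak g_\alpha$. Given $\alpha,\beta\in\Phi$ with $[\mathfrak g_\alpha,\mathfrak g_\beta]\neq0$ and arbitrary $u\in\mathfrak g_\alpha$, $v\in\mathfrak g_\beta$, one-dimensionality lets me write $u=s\,e_\alpha$ and $v=t\,e_\beta$ for scalars $s,t\in\mathbb R$; bilinearity of the bracket then gives $[u,v]=st\,[e_\alpha,e_\beta]$, and the hypothesis $[\mathfrak g_\alpha,\mathfrak g_\beta]\neq0$ forces $[e_\alpha,e_\beta]\neq0$. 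Taking the norm $\|\cdot\|_{\mathfrak g}$ and using that any norm is homogeneous on each line ($\mathbb R e_\alpha$, $\mathbb R e_\beta$, and $\mathbb R[e_\alpha,e_\beta]$), I obtain
$$\|[u,v]\|_{\mathfrak g}=|s|\,|t|\,\|[e_\alpha,e_\beta]\|_{\mathfrak g}=\frac{\|[e_\alpha,e_\beta]\|_{\mathfrak g}}{\|e_\alpha\|_{\mathfrak g}\,\|e_\beta\|_{\mathfrak g}}\,\|u\|_{\mathfrak g}\,\|v\|_{\mathfrak g}=:C_{\alpha,\beta}\,\|u\|_{\mathfrak g}\,\|v\|_{\mathfrak g},$$
where $C_{\alpha,\beta}>0$ depends only on $\alpha$, $\beta$, and the fixed norm. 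I would then set
$$C:=\min_{\substack{\alpha,\beta\in\Phi\\ [\mathfrak g_\alpha,\mathfrak g_\beta]\neq0}}C_{\alpha,\beta}>0,$$
a minimum over the finitely many admissible pairs of roots, which yields $\|[u,v]\|_{\mathfrak g}\geq C\,\|u\|_{\mathfrak g}\,\|v\|_{\mathfrak g}$ with $C$ depending only on $G$, as required.

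There is essentially no genuine obstacle here; the only remark worth making is that the argument is insensitive to where $[\mathfrak g_\alpha,\mathfrak g_\beta]$ lands — whether in another root space $\mathfrak g_{\alpha+\beta}$ or, when $\beta=-\alpha$, inside $\mathfrak a$ — since all that is used is that the two \emph{domains} $\mathfrak g_\alpha$ and $\mathfrak g_\beta$ are one-dimensional. Equivalently, one could phrase the whole proof coordinate-free: the bilinear map $\mathfrak g_\alpha\times\mathfrak g_\beta\to\mathfrak g$, $(u,v)\mapsto[u,v]$, between finite-dimensional normed spaces is, when nonzero, rank one in each variable, hence bounded below by a positive multiple of $\|u\|_{\mathfrak g}\|v\|_{\mathfrak g}$; the stated hypothesis rules out the degenerate case.
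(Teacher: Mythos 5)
Your proof is correct and follows essentially the same route as the paper: both arguments reduce to the one-dimensionality of the root spaces, which makes the bracket a nonvanishing bilinear map on lines, and then take a positive lower bound over the finitely many admissible pairs of roots (the paper phrases this via compactness of the unit sphere, which in dimension one is just your explicit scalar computation).
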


\begin{proof}
Without loss of generality, we may assume that the vectors $u$ and $v$ have length $1$. Then, it suffices to show that the quantity $\|[u,v]\|_{\mathfrak g}$ is bounded below on the unit sphere. Since the vector spaces $\mathfrak g_\alpha$ and $\mathfrak g_\beta$ are one dimensional and we are assuming $[\mathfrak g_\alpha,\mathfrak g_\beta]\neq 0$, it must be $[u,v]\neq 0$ for all $u\in\mathfrak g_\alpha$ and $v\in\mathfrak g_\beta$. The result follows from the compactness of the sphere. 
\end{proof}

\begin{lemma}\label{l67}
Let $p\in G/\Gamma$, $\gamma,t>0$ and $v\in\Stab(a_t\cdot p)$. Suppose that $t$ and $v$ satisfy the $\gamma$-condition and that $v$ is of type $w_3$. Then, there exist a constant $C>0$ depending only on $G$ and $\bar n\in B\left(0,Ce^{\beta_0(a_{-t})},Ce^{(\alpha_0+\beta_0)(a_{-t})}\right)$ such that $(w_3\bar n)\cdot p$ is a rational point in $G/\Gamma$ with $$\frac1{\kappa'}e^{\beta_0(a_t)-\gamma t}\leq d_{w_3\bar n\cdot p}\leq\kappa' e^{\beta_0(a_t)-\gamma t}.$$
\end{lemma}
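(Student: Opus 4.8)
The hypothesis is that $v$ is of type $w_3$, so by Definition \ref{d62} there exists a unique $h\in\mathscr F_3=w_3N_{\beta_0}N_{\alpha_0+\beta_0}$ with $hvh^{-1}\in N_\nu$; write $h=w_3\bar n$ with $\bar n\in N_{\beta_0}N_{\alpha_0+\beta_0}$. The plan is to show two things: first, that $\bar n$ lies in the asserted box $B\bigl(0,Ce^{\beta_0(a_{-t})},Ce^{(\alpha_0+\beta_0)(a_{-t})}\bigr)$; second, that $(w_3\bar n)\cdot p$ is rational with denominator of the claimed size. The point $(w_3\bar n)\cdot p = h\cdot p$ has $h v h^{-1}\in N_\nu$ in its stabilizer (since $v\in\Stab(a_t\cdot p)$ and we can conjugate the stabilizer), and $hvh^{-1}\neq e$ because $v\neq e$; hence $\Stab(h\cdot p)\cap N_\nu\neq\{e\}$, so $h\cdot p$ is rational by Definition \ref{d31}. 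That part is essentially formal.

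**Computing $\bar n$ and the denominator.** The real content is to extract both the size of $\bar n$ and the denominator $d_{h\cdot p}$ from the $\gamma$-condition on $v$. Writing $v=\exp(v_-+v_0+v_+)$ with the norm bounds $\tfrac1{\kappa'}e^{-\gamma t}\le\|v_-\|_{\mathfrak g}\le\kappa' e^{-\gamma t}$ and $\|v_0\|_{\mathfrak g},\|v_+\|_{\mathfrak g}\le\kappa' e^{-\gamma t}$, I would first identify which component of $v_-\in\mathfrak n_-$ is dominant. Conjugation by $h=w_3\bar n$ must kill everything except the $\mathfrak g_\nu$-part; since $w_3$ sends $\mathfrak g_{-\alpha_0}$ to $\mathfrak g_\nu$-ish directions (one must track where $w_3$ sends each root, using the explicit matrix for $w_3$ and the fact that $w_3$ is the reflection in $\alpha_0$), the component of $v$ that survives conjugation is governed by $x_{-\alpha_0}$, the $\mathfrak g_{-\alpha_0}$-component of $v_-$. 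The $\gamma$-condition forces $\|x_{-\alpha_0}\|_{\mathfrak g}\asymp e^{-\gamma t}$ (using Lemma \ref{l63}: the $\mathfrak g_{-\alpha_0}$-component can't be too small or else the lower bound on $\|v_-\|$ fails after checking the other two negative-root components are controlled — actually one has to argue that either $x_{-\alpha_0}$ or one of the others dominates; for type $w_3$ it must be $x_{-\alpha_0}$, and a contradiction with the type or with Lemma \ref{l63} rules out the other cases). Then $d_{h\cdot p}=\|\log(hvh^{-1})\|_{\mathfrak g_\nu}$ (Definition \ref{d41}, with $\lambda=1$), and $hvh^{-1}$'s $\mathfrak g_\nu$-component, after computing $\mathrm{Ad}(w_3\bar n)$ on $x_{-\alpha_0}$ plus lower-order cross terms from the commutators of $\bar n$ with the other components of $v$, comes out $\asymp e^{\beta_0(a_t)}\|x_{-\alpha_0}\|_{\mathfrak g}\asymp e^{\beta_0(a_t)-\gamma t}$ — which is exactly the claimed bound, with the constant $\kappa'$. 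The entries of $\bar n$ are read off from the requirement that $\mathrm{Ad}(w_3\bar n)v$ has vanishing $\mathfrak g_{-\beta_0},\mathfrak g_{\alpha_0},\mathfrak g_{\alpha_0+\beta_0},\mathfrak g_{\beta_0},\mathfrak g_0$ components; solving this system expresses the two parameters of $\bar n$ as ratios of components of $v$, and using $\|v_-\|,\|v_0\|,\|v_+\|\lesssim e^{-\gamma t}$ together with $\|x_{-\alpha_0}\|\gtrsim e^{-\gamma t}$ (so the denominators in these ratios are bounded below) gives $\|\log\bar n\|$ bounded, and then conjugating by $a_{-t}$ — note $h\cdot p = w_3\bar n\cdot p$ but the box is stated in terms of $a_{-t}$-contracted radii because ultimately one wants the picture downstairs — rescales the $\beta_0$-parameter by $e^{\beta_0(a_{-t})}$ and the $(\alpha_0+\beta_0)$-parameter by $e^{(\alpha_0+\beta_0)(a_{-t})}$. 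Here I should be careful: the statement says $\bar n\in B(0,Ce^{\beta_0(a_{-t})},Ce^{(\alpha_0+\beta_0)(a_{-t})})$ directly, so in fact one needs $\|\log\bar n$'s $\mathfrak g_{\beta_0}$-part$\|\lesssim e^{\beta_0(a_{-t})}$ and similarly for $\mathfrak g_{\alpha_0+\beta_0}$; this should fall out because the $\beta_0$-parameter of $\bar n$ is a ratio whose numerator is a component of $x_{n,-}$ or $x_{n,0}$ of size $\lesssim e^{-\gamma t}$ and whose denominator is $\|x_{-\alpha_0}\|\asymp e^{-\gamma t}$ after accounting for the $a_t$-weights — I'd need to track the exponential weights carefully to see the $e^{\beta_0(a_{-t})}$ emerge, and this bookkeeping is where the main risk of error lies.

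**Main obstacle.** The hard part is the explicit linear-algebra bookkeeping: computing $\mathrm{Ad}(w_3\bar n)$ acting on a general $v=\exp(v_-+v_0+v_+)$, identifying exactly which coordinates of $\bar n$ are forced and with what magnitude, and checking that the leftover $\mathfrak g_\nu$-component has size $\asymp e^{\beta_0(a_t)}\|x_{-\alpha_0}\|$. One must also justify that $x_{-\alpha_0}$ (rather than $x_{-\beta_0}$ or $x_{-\alpha_0-\beta_0}$) is the dominant negative component for type-$w_3$ vectors — this is presumably where the "type" hypothesis genuinely enters, perhaps by noting that if $x_{-\alpha_0}$ were too small then the element couldn't be conjugated into $N_\nu$ by an element of $\mathscr F_3$, or the resulting $h$ would have unboundedly large entries contradicting uniqueness combined with a compactness/closedness argument. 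I expect the cleanest route is: (i) parametrize $\bar n=\exp(s\,E_{\beta_0}+u\,E_{\alpha_0+\beta_0})$ and $w_3$-conjugate $v$; (ii) set the five unwanted components to zero and solve for $s,u$ as explicit rational functions of the components of $v$; (iii) feed in the $\gamma$-condition bounds to get $|s|\lesssim e^{\beta_0(a_{-t})}$, $|u|\lesssim e^{(\alpha_0+\beta_0)(a_{-t})}$ (using that $a_t$-weights convert the $e^{-\gamma t}$ bounds into the stated form) and $\|\log(hvh^{-1})\|_{\mathfrak g_\nu}\asymp e^{\beta_0(a_t)-\gamma t}$; (iv) conclude rationality of $h\cdot p$ from $\Stab(h\cdot p)\cap N_\nu\ni hvh^{-1}\neq e$, and $d_{h\cdot p}=\|\log(hvh^{-1})\|_{\mathfrak g_\nu}$ by Lemma \ref{l41} and Definition \ref{d41}, noting that primitivity of $v$ transfers to $hvh^{-1}$ being a generator of $\Stab(h\cdot p)\cap N_\nu$ so the infimum in the denominator is attained at $hvh^{-1}$.
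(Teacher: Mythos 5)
Your overall strategy is the paper's: write $h=w_3n$ with $n\in N_{\beta_0}N_{\alpha_0+\beta_0}$, solve for the parameters of $n$ from the requirement that conjugation lands in $N_\nu$, bound them via Lemma \ref{l63}, pass to $\bar n=a_{-t}na_t$, and read off the denominator from primitivity. But there is a concrete error in the root bookkeeping that your argument hinges on. Since $w_3$ is the reflection in $\alpha_0$ (it swaps the first two coordinates), one has $w_3^{-1}N_\nu w_3=N_{-\beta_0}$, so the condition $hvh^{-1}\in N_\nu$ is equivalent to $\Ad(n)\log(v)=x\in\mathfrak g_{-\beta_0}$: the component that survives is the $\mathfrak g_{-\beta_0}$ one, not $\mathfrak g_{-\alpha_0}$ as you assert (that is the type-$w_2$ case, Lemma \ref{l68}, where the exponent $\alpha_0(a_t)$ appears). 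Your proposed "dominance" argument for $x_{-\alpha_0}$ is also not how this lemma works: writing $\Ad(n^{-1})=e^{\ad(u)}$ with $u\in\mathfrak g_{\beta_0}\oplus\mathfrak g_{\alpha_0+\beta_0}$ and expanding $\log(v)=x+\ad(u)x+\tfrac12\ad^2(u)x$, one finds that $v_{-\alpha_0}$ and $v_{-\alpha_0-\beta_0}$ vanish \emph{identically}, so $v_-=x$ and the $\gamma$-condition gives $\|x\|_{\mathfrak g}\asymp e^{-\gamma t}$ with no case analysis at all (the trichotomy you describe is what happens for type $w_1$ in Lemma \ref{l69}). The same expansion gives $v_0=[u_{\beta_0},x]$ and $v_{\alpha_0}=[u_{\alpha_0+\beta_0},x]$, and Lemma \ref{l63} then bounds $u$ by a constant.

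The second problem is your accounting of the factor $e^{\beta_0(a_t)}$. The $\mathfrak g_\nu$-component of $\Ad(w_3n)\log(v)$ has norm $\|x\|\asymp e^{-\gamma t}$, with no $e^{\beta_0(a_t)}$ attached; likewise the parameters of $n$ are merely $O(1)$, not $O(e^{\beta_0(a_{-t})})$, so the $\gamma$-condition does not "convert" anything into the stated radii. Both exponential factors enter only through the final conjugation by $a_{-t}$: one sets $\bar n=a_{-t}na_t$, which contracts the $\beta_0$- and $(\alpha_0+\beta_0)$-parameters of $n$ by exactly $e^{\beta_0(a_{-t})}$ and $e^{(\alpha_0+\beta_0)(a_{-t})}$, and the denominator transforms by $d_{w_3\bar n\cdot p}=e^{\nu(w_3a_{-t}w_3^{-1})}\,d_{w_3na_t\cdot p}$ with $\nu(w_3a_{-t}w_3^{-1})=\beta_0(a_t)$ (via Lemma \ref{l41}, since $w_3a_{-t}w_3^{-1}\in A$). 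Your write-up instead attributes $e^{\beta_0(a_t)}$ to the size of $\log(hvh^{-1})$ itself and lands on the stated bound only because you already know the target. Relatedly, the element of $\Stab(w_3\bar n\cdot p)\cap N_\nu$ is $w_3\bar n(a_{-t}va_t)\bar n^{-1}w_3^{-1}$, not $hvh^{-1}$ with $v\in\Stab(a_t\cdot p)$; keeping the two base points $a_t\cdot p$ and $p$ separate is exactly what produces the extra diagonal conjugation above.
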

\begin{proof}
By Definition~\ref{d61}, $v$ is conjugate to a generator in $N_\nu$ where $\nu$ is the lowest root in $\Phi_-$ and there exists $h\in w_3N_{\beta_0}N_{\alpha_0+\beta_0}$ such that $$hvh^{-1}\in N_\nu.$$ Let $h=w_3 n$ where $n\in N_{\beta_0}N_{\alpha_0+\beta_0}$. Then, we have
\begin{align*}
hvh^{-1}\in N_\nu&\implies \Ad(n)v=nvn^{-1}\in w_3^{-1} N_\nu w_3=N_{-\beta_0}\\
&\implies\Ad(n)\log(v)\in\mathfrak g_{-\beta_0}.
\end{align*}
Now, let $x=\Ad(n)\log(v)\in\mathfrak g_{-\beta_0}$ and $\Ad(n^{-1})=e^{\ad(u)}$, where $u=u_{\beta_0}+u_{\alpha_0+\beta_0}\in\mathfrak g_{\beta_0}\oplus\mathfrak g_{\alpha_0+\beta_0}$. It is an easy exercise to check that for such a vector $u$ the endomorphism $\ad(u)$ has order $3$. It follows that
\begin{align}
\label{eq:1}
x+\ad(u)(x)+\frac1{2!}\ad^2(u)(x)=\log(v).
\end{align}
Let $\log(v)=v_0+\sum_{\alpha\in\Phi}v_\alpha$, where $v_0\in\mathfrak g_0$ and $v_\alpha\in\mathfrak g_\alpha$. Then, we have
\begin{align}\label{eq3}
x+[u_{\beta_0}+u_{\alpha_0+\beta_0},x]+\frac1{2}[u_{\beta_0}+u_{\alpha_0+\beta_0},[u_{\beta_0}+u_{\alpha_0+\beta_0},x]]=v_0+\sum_{\alpha\in\Phi}v_\alpha.
\end{align}
By comparing both sides of Equation~\eqref{eq3}, we find 
\begin{equation}
\label{eq:2}
\begin{cases}v_{-\alpha_0-\beta_0}=0\\v_{-\alpha_0}=0\\v_{-\beta_0}=x \\ v_0=[u_{\beta_0},x]\\ v_{\alpha_0}=[u_{\alpha_0+\beta_0},v]\\v_{\beta_0}=\frac12[u_{\beta_0}[u_{\beta_0},x]]\\v_{\alpha_0+\beta_0}=\frac12[u_{\beta_0}[u_{\alpha_0+\beta_0},x]]+\frac12[u_{\alpha_0+\beta_0}[u_{\beta_0},x]] \end{cases}
\end{equation}
Since $t$ and $v$ satisfy the $\gamma$-condition, we deduce by the previous equalities that $$\frac{1}{\kappa'}e^{-\gamma t}\leq\|x\|_{\mathfrak g}\leq\kappa' e^{-\gamma t}$$
and
$$\|[u_{\beta_0},x]\|_{\mathfrak g}\leq\kappa' e^{-\gamma t},\;\|[u_{\alpha_0+\beta_0},x]\|_{\mathfrak g}\leq\kappa' e^{-\gamma t}.$$ By Lemma~\ref{l63}, this implies that
\begin{equation}
\label{eq:nbar}
\|u_{\beta_0}\|_{\mathfrak g}, \|u_{\alpha_0+\beta_0}\|_{\mathfrak g}\leq C
\end{equation}
for some constant $C>0$ depending only on $G$ and $\kappa'$, and hence $d_G(n,e)\leq C.$ 

Since $\Stab(w_3n a_t\cdot p)\cap N_{\nu}\neq\left\{e\right\}$, by Definition~\ref{d31}, $w_3n a_t\cdot p$ is a rational point in $G/\Gamma$. Now, given that $w_{3}$ normalizes $D$, by Lemma \ref{p31}, $w_3(a_{-t}na_t)\cdot p$ is also a rational point (multiplying by a diagonal element preserves rationality). Set $\bar n=a_{-t}na_t$. Then, we have that $w_3\bar n\cdot p$ is a rational point in $G/\Gamma$ and, by (\ref{eq:nbar}), $\bar n\in B\left(0,Ce^{\beta_0(a_{-t})},Ce^{(\alpha_0+\beta_0)(a_{-t})}\right)$.

Let us compute the denominator of the rational point $w_3\bar n\cdot p$. We know that $v$ is primitive, therefore $\Ad(h)v$ is a generator of the discrete subgroup $\textup{Stab}(hp)\cap N_{\nu}$. It follows that
$$d_{w_3na_t\cdot p}=\|\Ad(w_{3}n)\log(v)\|_{\mathfrak g_{\nu}}=\|\Ad(n)\log(v)\|_{\mathfrak g_{\beta_{0}}}=\|x\|_{\mathfrak g},$$ 
where we used (\ref{eq:2}).
Since
$$\frac1{\kappa'}e^{-\gamma t}\leq\|x\|_{\mathfrak g}\leq\kappa' e^{-\gamma t}$$ and the denominator of $w_3\bar n\cdot p$ is equal to $d_{w_3na_t\cdot p}\cdot e^{\nu(w_3a_{-t}w_3^{-1})}$, we deduce $$\frac1{\kappa'}e^{\beta_0(a_t)-\gamma t}\leq d_{w_3\bar n\cdot p}\leq\kappa' e^{\beta_0(a_t)-\gamma t}.$$ This completes the proof of the lemma.
\end{proof}

\begin{lemma}\label{l68}
Let $p\in G/\Gamma$, $\gamma,t>0$ and $v\in\Stab(a_t\cdot p)$. Suppose that $t$ and $v$ satisfy $\gamma$-condition and $v$ is of type $w_2$. Then there exist a constant $C>0$ depending only on $G$ and $\bar n\in B(Ce^{\alpha_0(a_{-t})},0,Ce^{(\alpha_0+\beta_0)(a_{-t})})$ such that $(w_2\bar n)\cdot p$ is a rational point in $G/\Gamma$ with $$\frac1{\kappa'}e^{\alpha_0(a_t)-\gamma t}\leq d_{w_2\bar n\cdot p}\leq\kappa' e^{\alpha_0(a_t)-\gamma t}.$$
\end{lemma}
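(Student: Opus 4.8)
The plan is to transcribe the proof of Lemma \ref{l67} almost verbatim, with $w_2$ in place of $w_3$, the negative simple root $-\alpha_0$ in place of $-\beta_0$, and the abelian subgroup $N_{\alpha_0}N_{\alpha_0+\beta_0}$ in place of $N_{\beta_0}N_{\alpha_0+\beta_0}$. Since $v$ is of type $w_2$, Definition \ref{d62} gives $h=w_2n$ with $n\in N_{\alpha_0}N_{\alpha_0+\beta_0}$ and $hvh^{-1}\in N_\nu$; a direct matrix computation with the chosen representative shows $w_2^{-1}N_\nu w_2=N_{-\alpha_0}$, so $\Ad(n)\log v\in\mathfrak g_{-\alpha_0}$. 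Put $x:=\Ad(n)\log v$ and write $\Ad(n^{-1})=e^{\ad(u)}$ with $u=u_{\alpha_0}+u_{\alpha_0+\beta_0}\in\mathfrak g_{\alpha_0}\oplus\mathfrak g_{\alpha_0+\beta_0}$. Exactly as in Lemma \ref{l67}, $\ad(u)^{3}$ annihilates $x$ (because $2\alpha_0+\beta_0$ and $2(\alpha_0+\beta_0)$ are not roots of $G$), so
$$\log v=x+[u,x]+\tfrac12[u,[u,x]].$$

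Expanding this identity into root components, one reads off that the $\mathfrak g_{-\alpha_0}$-component of $\log v$ is $x$, its $\mathfrak g_0$-component is $[u_{\alpha_0},x]$, and its $\mathfrak g_{\beta_0}$-component is $[u_{\alpha_0+\beta_0},x]$, the remaining components lying in $\mathfrak g_{\alpha_0}\oplus\mathfrak g_{\alpha_0+\beta_0}$. Since the $\mathfrak n_-$-part of $\log v$ equals $x$ alone, the $\gamma$-condition gives $\tfrac1{\kappa'}e^{-\gamma t}\le\|x\|_{\mathfrak g}\le\kappa'e^{-\gamma t}$, while the bounds on the $\mathfrak g_0$- and $\mathfrak n_+$-parts give $\|[u_{\alpha_0},x]\|_{\mathfrak g},\|[u_{\alpha_0+\beta_0},x]\|_{\mathfrak g}\le\kappa'e^{-\gamma t}$. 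As $[\mathfrak g_{\alpha_0},\mathfrak g_{-\alpha_0}]\neq0$ and $[\mathfrak g_{\alpha_0+\beta_0},\mathfrak g_{-\alpha_0}]\neq0$, Lemma \ref{l63} yields $\|u_{\alpha_0}\|_{\mathfrak g},\|u_{\alpha_0+\beta_0}\|_{\mathfrak g}\le C$ for a constant $C$ depending only on $G$ and $\kappa'$, hence $d_G(n,e)\le C$.

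To conclude, observe that $w_2 n v n^{-1}w_2^{-1}\in\Stab(w_2 n a_t\cdot p)\cap N_\nu\setminus\{e\}$, so $w_2 n a_t\cdot p$ is rational by Definition \ref{d31}; since $w_2$ normalizes $D$, the point $w_2\bar n\cdot p=(w_2a_{-t}w_2^{-1})(w_2 n a_t\cdot p)$ with $\bar n:=a_{-t}na_t$ is rational by Proposition \ref{p31}. The bound $d_G(n,e)\le C$ together with the contraction rates of $\{a_t\}_{t\in\mathbb R}$ on $\mathfrak g_{\alpha_0}$ and $\mathfrak g_{\alpha_0+\beta_0}$ gives $\bar n\in B\big(Ce^{\alpha_0(a_{-t})},0,Ce^{(\alpha_0+\beta_0)(a_{-t})}\big)$. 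Since $v$ is primitive, $\Ad(w_2 n)\log v$ generates $\Stab(w_2 n a_t\cdot p)\cap N_\nu$, so $d_{w_2 n a_t\cdot p}=\|\Ad(w_2 n)\log v\|_{\mathfrak g_\nu}=\|x\|_{\mathfrak g}$, and by Lemma \ref{l41} the denominator of $w_2\bar n\cdot p$ equals $d_{w_2 n a_t\cdot p}\cdot e^{\nu(w_2a_{-t}w_2^{-1})}$. The short character computation $\nu(w_2a_{-t}w_2^{-1})=\alpha_0(a_t)$, which again follows from $w_2^{-1}N_\nu w_2=N_{-\alpha_0}$, then gives $\tfrac1{\kappa'}e^{\alpha_0(a_t)-\gamma t}\le d_{w_2\bar n\cdot p}\le\kappa'e^{\alpha_0(a_t)-\gamma t}$, as claimed.

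The only part requiring genuine care is the root-component bookkeeping in the identity $\log v=x+[u,x]+\tfrac12[u,[u,x]]$: one must verify that the $\gamma$-condition bounds fall precisely on the brackets $[u_{\alpha_0},x]$ and $[u_{\alpha_0+\beta_0},x]$, i.e.\ on pairs $(\alpha,\beta)$ with $[\mathfrak g_\alpha,\mathfrak g_\beta]\neq0$, so that Lemma \ref{l63} applies. Everything else is a routine adaptation of the $w_3$ argument with $-\beta_0$ systematically replaced by $-\alpha_0$.
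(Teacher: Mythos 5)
Your proposal is correct and follows exactly the route the paper intends: the paper's own proof of this lemma is literally the one sentence ``The proof is analogous to that of Lemma~\ref{l67}'', and your transcription carries out that adaptation faithfully, with the key substitutions $w_{2}^{-1}N_{\nu}w_{2}=N_{-\alpha_{0}}$, $u\in\mathfrak g_{\alpha_{0}}\oplus\mathfrak g_{\alpha_{0}+\beta_{0}}$, and $\nu(w_{2}a_{-t}w_{2}^{-1})=\alpha_{0}(a_{t})$ all checking out. The root-component bookkeeping you flag as the delicate point is also right: the brackets $[u_{\alpha_{0}},x]\in\mathfrak g_{0}$ and $[u_{\alpha_{0}+\beta_{0}},x]\in\mathfrak g_{\beta_{0}}$ are precisely where the $\gamma$-condition bounds land, so Lemma~\ref{l63} applies as claimed.
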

\begin{proof}
The proof is analogous to that of Lemma~\ref{l67}. 
\end{proof}

The most complex case in this subsection is the following lemma.
\begin{lemma}\label{l69}
Let $p\in G/\Gamma$, $\gamma,t>0$ and $v\in\Stab(a_t\cdot p)$. Suppose that $t$ and $v$ satisfy the  $\gamma$-condition and that $v$ is of type $w_1$. Then, there are constants $C,\kappa''\geq 1$, only depending on $G$, $\gamma$, and $(\alpha_{0}+\beta_{0})(X_{0})$, such that one of the three following cases occurs\vspace{2mm}
\begin{enumerate}
\item there exists $$\bar n\in B\left(Ce^{\alpha_0(a_{-t})},Ce^{\beta_0(a_{-t})},Ce^{(\alpha_0+\beta_0)(a_{-t})}\right)$$ such that $\bar n\cdot p$ is a rational point in $G/\Gamma$ with $$\frac1{\kappa''}e^{(\alpha_0+\beta_0)(a_t)-\gamma t}\leq d_{\bar n\cdot p}\leq\kappa'' e^{(\alpha_0+\beta_0)(a_t)-\gamma t}.$$\vspace{2mm}

\item there exists $\bar n\in N_+$ such that $\bar n\cdot p$ is a rational point in $G/\Gamma$, and 
\begin{align*}
\bar n\in & \bigg\{x\in N_+: x\in y\cdot B\left(0,Ce^{\beta_0(a_{-t})},Ce^{(\alpha_0+\beta_0)(a_{-t})}\right),y\in N_{\alpha_0},\\
&\left.\frac1{\kappa''}e^{-\gamma t}e^{\beta_0(a_t)}\leq d_G(y,e)\cdot d_{\bar n\cdot p}\leq\kappa'' e^{-\gamma t}e^{\beta_0(a_t)}\right\}
\end{align*}
with $$d_{\bar n\cdot p}\leq\kappa''e^{(\alpha_0+\beta_0)(a_t)-\gamma t}.$$\vspace{2mm}

\item there exists $\bar n\in N_+$ such that $\bar n\cdot p$ is a rational point in $G/\Gamma$, and 
\begin{align*}
\bar n\in\bigg\{x\in &N_+: x\in y\cdot B(Ce^{\alpha_0(a_{-t})},0,Ce^{(\alpha_0+\beta_0)(a_{-t})}), y\in N_{\beta_0},\\
&\left.\frac1{\kappa''}e^{-\gamma t}e^{\alpha_0(a_t)}\leq d_G(y,e)\cdot d_{\bar n\cdot p}\leq\kappa'' e^{-\gamma t}e^{\alpha_0(a_t)}\right\}
\end{align*}
with $$d_{\bar n\cdot p}\leq\kappa'' e^{(\alpha_0+\beta_0)(a_t)-\gamma t}.$$
\end{enumerate}
\end{lemma}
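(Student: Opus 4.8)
The plan is to treat the type-$w_{1}$ case by the same mechanism used for Lemmas \ref{l67} and \ref{l68}, the essential new feature being that the conjugating element now ranges over all of $N_{+}$ and may be unbounded; the three alternatives in the statement are precisely the regimes of that element. First I would set up the reduction. Since $v$ is of type $w_{1}$, Lemma \ref{l64} furnishes a \emph{unique} $n\in\mathscr F_{1}=N_{+}$ with $\Ad(n)\log v=x\in\mathfrak g_{\nu}$; write $n=\exp(u_{\alpha_{0}}+u_{\beta_{0}}+u_{\alpha_{0}+\beta_{0}})$ with $u_{\star}\in\mathfrak g_{\star}$. As in the proof of Lemma \ref{l67}, the point $na_{t}\cdot p$ is rational and, because $v$ is primitive, its denominator is $\|x\|_{\mathfrak g}$; setting $\bar n:=a_{-t}na_{t}\in N_{+}$, the point $\bar n\cdot p$ is again rational by Proposition \ref{p31}, and conjugating the polar component by $a_{-t}$ gives $d_{\bar n\cdot p}=\|x\|_{\mathfrak g}\,e^{(\alpha_{0}+\beta_{0})(a_{t})}$ (using $\nu(a_{-t})=(\alpha_{0}+\beta_{0})(a_{t})$). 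Since $p=\bar n^{-1}\cdot(\bar n\cdot p)$, the whole statement comes down to locating $\|x\|_{\mathfrak g}$ and the three components $u_{\alpha_{0}},u_{\beta_{0}},u_{\alpha_{0}+\beta_{0}}$.

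Next come the structural estimates, for which I would use two complementary descriptions of $\log v$. On the one hand, expanding $\log v=\Ad(n^{-1})x=e^{-\ad(u)}x$ and matching root spaces — a finite low-order computation, as $\mathfrak n_{+}$ is two-step nilpotent — shows that the $\mathfrak g_{\nu}$-part of $\log v$ equals $x$, its $\mathfrak g_{-\beta_{0}}$-part is $\pm[u_{\alpha_{0}},x]$, its $\mathfrak g_{-\alpha_{0}}$-part is $\pm[u_{\beta_{0}},x]$, and its $\mathfrak g_{0}$-part involves $[u_{\alpha_{0}+\beta_{0}},x]$ together with double brackets in $u_{\alpha_{0}},u_{\beta_{0}}$. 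On the other hand, since $v$ is conjugate to a generator of $N_{\nu}\cap\Gamma$, the matrix $\log v$ is a rank-one square-zero nilpotent $w\phi^{T}$ with $\phi^{T}w=0$, whence $x$, $u_{\alpha_{0}}$, $u_{\beta_{0}}$, $u_{\alpha_{0}+\beta_{0}}$ are explicit rational expressions in the entries of $w,\phi$ and products such as $\|x\|_{\mathfrak g}\,\|u_{\alpha_{0}}\|_{\mathfrak g}\,\|u_{\beta_{0}}\|_{\mathfrak g}$ reappear as single entries of $\log v$. Combining either description with the $\gamma$-condition (the $\mathfrak n_{-}$-part of $\log v$ has norm $\sim e^{-\gamma t}$, the $\mathfrak g_{0}$- and $\mathfrak n_{+}$-parts have norm $\lesssim e^{-\gamma t}$) and Lemma \ref{l63}, I would obtain, with constants depending only on $G$, $\gamma$, and $(\alpha_{0}+\beta_{0})(X_{0})$, that $\|x\|_{\mathfrak g}\lesssim e^{-\gamma t}$, that $\|u_{\alpha_{0}}\|_{\mathfrak g}\|x\|_{\mathfrak g}$, $\|u_{\beta_{0}}\|_{\mathfrak g}\|x\|_{\mathfrak g}$ and $\|u_{\alpha_{0}}\|_{\mathfrak g}\|u_{\beta_{0}}\|_{\mathfrak g}\|x\|_{\mathfrak g}$ are all $\lesssim e^{-\gamma t}$, and that $\|x\|_{\mathfrak g}\max\{1,\|u_{\alpha_{0}}\|_{\mathfrak g},\|u_{\beta_{0}}\|_{\mathfrak g}\}\gtrsim e^{-\gamma t}$.

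Then I would run the trichotomy. The bound on $\|u_{\alpha_{0}}\|_{\mathfrak g}\|u_{\beta_{0}}\|_{\mathfrak g}\|x\|_{\mathfrak g}$ together with the lower bound forces $\min\{\|u_{\alpha_{0}}\|_{\mathfrak g},\|u_{\beta_{0}}\|_{\mathfrak g}\}$ to be bounded by an absolute constant: if both were large, $\|u_{\alpha_{0}}\|\|u_{\beta_{0}}\|\|x\|$ would exceed $\|x\|\max\{1,\|u_{\alpha_{0}}\|,\|u_{\beta_{0}}\|\}\gtrsim e^{-\gamma t}$. Hence exactly one of three regimes occurs. If both $u_{\alpha_{0}},u_{\beta_{0}}$ are bounded, the lower bound gives $\|x\|_{\mathfrak g}\sim e^{-\gamma t}$, and the $\mathfrak g_{0}$-equation then forces $u_{\alpha_{0}+\beta_{0}}$ bounded as well, so $n$ — hence $\bar n=a_{-t}na_{t}$ — lies in $B(Ce^{\alpha_{0}(a_{-t})},Ce^{\beta_{0}(a_{-t})},Ce^{(\alpha_{0}+\beta_{0})(a_{-t})})$ with $d_{\bar n\cdot p}\sim e^{(\alpha_{0}+\beta_{0})(a_{t})-\gamma t}$: this is case (1). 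If $\|u_{\alpha_{0}}\|_{\mathfrak g}$ is large and $\|u_{\beta_{0}}\|_{\mathfrak g}$ bounded, then $\|u_{\alpha_{0}}\|_{\mathfrak g}\|x\|_{\mathfrak g}\sim e^{-\gamma t}$; I would factor $n=y\,n'$ with $y\in N_{\alpha_{0}}$ and $n'\in N_{\beta_{0}}N_{\alpha_{0}+\beta_{0}}$, conjugate by $a_{-t}$, and verify from the above estimates that the $N_{\beta_{0}}$- and $N_{\alpha_{0}+\beta_{0}}$-parts of $a_{-t}n'a_{t}$ fall in the asserted box while $d_{G}(a_{-t}ya_{t},e)\cdot d_{\bar n\cdot p}\sim e^{-\gamma t}e^{\beta_{0}(a_{t})}$: this is case (2), and case (3) is its mirror image under $\alpha_{0}\leftrightarrow\beta_{0}$. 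In all cases $d_{\bar n\cdot p}\le\kappa'' e^{(\alpha_{0}+\beta_{0})(a_{t})-\gamma t}$ because $\|x\|_{\mathfrak g}\lesssim e^{-\gamma t}$.

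The step I expect to be the main obstacle is the bookkeeping in the last two cases. The highest-root direction $\mathfrak g_{\alpha_{0}+\beta_{0}}$ is essentially invisible to the $\mathfrak n_{-}$-part of $\log v$, on which the $\gamma$-condition gives the only sharp two-sided control, so its contribution to $n$ has to be recovered from the $\mathfrak g_{0}$- and $\mathfrak n_{+}$-parts of $\log v$ together with the non-abelian law of $N_{+}$ and the identity $[N_{\alpha_{0}},N_{\beta_{0}}]=N_{\alpha_{0}+\beta_{0}}$; arranging the factorization $n=y\,n'$ so that the cancellations genuinely place $n'$ in the small box, while keeping every implied constant uniform (depending only on $G$, $\gamma$, and $(\alpha_{0}+\beta_{0})(X_{0})$), is where the real work lies. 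The explicit rank-one nilpotent description of $\log v$ is, I expect, what makes these cancellations transparent, in the spirit of the computation invoked from \cite{AES16}.
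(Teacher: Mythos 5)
Your proposal is correct and follows essentially the same route as the paper: conjugate $\log v$ into $\mathfrak g_{\nu}$ by the unique $n\in N_{+}$, expand $\Ad(n^{-1})x$ and read off each root component, apply the $\gamma$-condition to get the product bounds on $\|x\|_{\mathfrak g}$ and the $u$-components, and split into three cases according to which of $1,\|u_{\alpha_0}\|_{\mathfrak g},\|u_{\beta_0}\|_{\mathfrak g}$ dominates (equivalently, which negative root component of $\log v$ is of size $e^{-\gamma t}$), with the factorization $n=y\,n'$ handling cases (2) and (3) exactly as in the paper, where the bound on the $N_{\alpha_0+\beta_0}$-part of $n'$ is indeed extracted from the $\mathfrak n_{+}$-component of $\log v$ as you anticipate. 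The auxiliary rank-one nilpotent description of $\log v$ is not needed, but the main line of your argument is the paper's.
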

\begin{proof}
By Definition~\ref{d61}, $v$ is conjugate to an element in $N_\nu$ where $\nu$ is the lowest root in $\Phi_-$, and there exists $h\in N_+$ such that $$hvh^{-1}\in N_{\nu}.$$ Let $h^{-1}=\exp(u)$ for some $u\in\mathfrak n_+$ and $x=h\log(v)h^{-1}\in \mathfrak g_\nu$. Then, since $\ad(u)$ has order $5$, we have $\Ad(h^{-1})x=\log(v)$, whence 
\begin{align}\label{eq5}
x+\ad(u)(x)+\frac1{2!}\ad^2(u)(x)+\frac1{3!}\ad^3(u)(x)+\frac1{4!}\ad^4(u)(x)=\log(v).
\end{align}
 We choose a coordinate system in $\mathfrak g$ as follows: $$e_{-\alpha_0-\beta_0}=\left(\begin{array}{ccc} 0 & 0 & 0\\ 0& 0 & 0\\1 & 0 & 0\end{array}\right), e_{-\alpha_0}=\left(\begin{array}{ccc} 0 & 0 & 0\\ 1& 0 & 0\\0 & 0 & 0\end{array}\right), e_{-\beta_0}=\left(\begin{array}{ccc} 0 & 0 & 0\\ 0& 0 & 0\\0 & 1 & 0\end{array}\right)$$ $$e_{\alpha_0+\beta_0}=\left(\begin{array}{ccc} 0 & 0 & 1\\ 0& 0 & 0\\0 & 0 & 0\end{array}\right), e_{\alpha_0}=\left(\begin{array}{ccc} 0 & 1 & 0\\ 0& 0 & 0\\0 & 0 & 0\end{array}\right), e_{\beta_0}=\left(\begin{array}{ccc} 0 & 0 & 0\\ 0& 0 & 1\\0 & 0 & 0\end{array}\right)$$ $$e_1=\left(\begin{array}{ccc} 1 & 0 & 0\\ 0& 0 & 0\\0 & 0 & -1\end{array}\right), e_2=\left(\begin{array}{ccc} 1 & 0 & 0\\ 0& -2 & 0\\0 & 0 & 1\end{array}\right).$$ For $x\in\mathfrak g_\nu$ and $u\in\mathfrak n_+$, we write 
\begin{align*}
u=t_{\alpha_0}e_{\alpha_0}+t_{\beta_0}e_{\beta_0}+t_{\alpha_0+\beta_0}e_{\alpha_0+\beta_0}
\end{align*}
and $$x=x_\nu\cdot e_{\nu}$$ where $\nu=-\alpha_0-\beta_0$.
Now let $$\log(v)=v_0+\sum_{\alpha\in\Phi} v_\alpha\in\mathfrak g_0\oplus\bigoplus_{\alpha\in\Phi}\mathfrak g_{\alpha}.$$ From Equation~\eqref{eq5}, we deduce that $$\begin{cases}v_{-\alpha_0-\beta_0}=x_{-\alpha_0-\beta_0}e_{-\alpha_0-\beta_0}\\v_{-\beta_0}=-t_{\alpha_0}x_{-\alpha_0-\beta_0}e_{-\beta_0}\\ v_{-\alpha_0}=t_{\beta_0}x_{-\alpha_0-\beta_0}e_{-\alpha_0}\\v_0=t_{\alpha_0+\beta_0}x_{-\alpha_0-\beta_0}e_1+\frac12 t_{\alpha_0}t_{\beta_0}x_{-\alpha_0-\beta_0}e_2\\v_{\alpha_0}=(-t_{\alpha_0}t_{\alpha_0+\beta_0}-\frac12t_{\alpha_0}^2t_{\beta_0})x_{-\alpha_0-\beta_0}e_{\alpha_0}\\v_{\beta_0}=(-t_{\beta_0}t_{\alpha_0+\beta_0}+\frac12 t_{\beta_0}^2t_{\alpha_0})x_{-\alpha_0-\beta_0}e_{\beta_0}\\v_{\alpha_0+\beta_0}=(-t_{\alpha_0+\beta_0}^2+\frac14t_{\alpha_0}^2t_{\beta_0}^2)x_{-\alpha_0-\beta_0}e_{\alpha_0+\beta_0}\end{cases}.$$
Hence, since $t$ and $v$ satisfy the $\gamma$-condition, we have $$\frac1{\kappa'}e^{-\gamma t}\leq\|v_{-\alpha_0-\beta_0}+v_{-\beta_0}+v_{-\alpha_0}\|_{\mathfrak g}\leq\kappa' e^{-\gamma t}$$ $$\|v_0\|_{\mathfrak g}\leq\kappa' e^{-\gamma t},\;\|v_{\alpha_0}+v_{\beta_0}+v_{\alpha_0+\beta_0}\|_{\mathfrak g}\leq\kappa' e^{-\gamma t}.$$
Then there are three possibilities:\vspace{2mm}
\begin{enumerate}
\item $\frac1{3\kappa'}e^{-\gamma t}\leq\|v_{-\alpha_0-\beta_0}\|_{\mathfrak g}\leq\kappa\kappa'e^{-\gamma t}$;\vspace{2mm}
\item $\frac1{3\kappa'}e^{-\gamma t}\leq\|v_{-\beta_0}\|_{\mathfrak g}\leq\kappa\kappa' e^{-\gamma t}$;\vspace{2mm}
\item $\frac1{3\kappa'}e^{-\gamma t}\leq\|v_{-\alpha_0}\|_{\mathfrak g}\leq\kappa\kappa' e^{-\gamma t}$.
\end{enumerate}

Case 1: $\frac1{3\kappa'}e^{-\gamma t}\leq\|v_{-\alpha_0-\beta_0}\|_{\mathfrak g}\leq\kappa\kappa'e^{-\gamma t}.$ In this case, we have \begin{equation}
\label{eq:3}
\begin{cases}\frac1{3\kappa'}e^{-\gamma t}\leq|x_{-\alpha_0-\beta_0}|\leq\kappa\kappa'e^{-\gamma t}\\ |t_{\alpha_0}x_{-\alpha_0-\beta_0}|\leq\kappa\kappa'e^{-\gamma t}\\ |t_{\beta_0}x_{-\alpha_0-\beta_0}|\leq\kappa\kappa'e^{-\gamma t}\\ |t_{\alpha_0+\beta_0}x_{-\alpha_0-\beta_0}|\leq\kappa\kappa'e^{-\gamma t}\\ |\frac12t_{\alpha_0}t_{\beta_0}x_{-\alpha_0-\beta_0}|\leq\kappa\kappa'e^{-\gamma t}\\|(-t_{\alpha_0}t_{\alpha_0+\beta_0}-\frac12t_{\alpha_0}^2t_{\beta_0})x_{-\alpha_0-\beta_0}|\leq\kappa\kappa'e^{-\gamma t}\\ |(-t_{\beta_0}t_{\alpha_0+\beta_0}+\frac12 t_{\beta_0}^2t_{\alpha_0})x_{-\alpha_0-\beta_0}|\leq\kappa\kappa' e^{-\gamma t}\\ |(-t_{\alpha_0+\beta_0}^2+\frac14t_{\alpha_0}^2t_{\beta_0}^2)x_{-\alpha_0-\beta_0}|\leq\kappa\kappa' e^{-\gamma t}\end{cases}
\end{equation}
and we deduce that $$|t_{\alpha_0}|\leq 3\kappa\kappa'^{2}, |t_{\beta_0}|\leq3\kappa\kappa'^{2},|t_{\alpha_0+\beta_0}|\leq3\kappa\kappa'^{2}.$$
This implies that $\|u\|_{\mathfrak g}\leq C$ for some constant $C>0$ depending only on $G$, $\kappa,\kappa'$, and hence $d_G(h,e)\leq C$.

From the hypothesis, we know that $h\cdot a_tp$ is a rational point, as $\Stab(h\cdot a_t p)\cap N_{\nu}\neq\left\{e\right\}$. Then, by Proposition \ref{p31}, $a_{-t}ha_tp$ is also a rational point. Let $\bar n=a_{-t}ha_t$. Since $d_{G}(h,e)\leq C$, we have that $\bar n\cdot p$ is a rational point and $$\bar n\in B\left(Ce^{\alpha_0(a_{-t})},Ce^{\beta_0(a_{-t})},Ce^{(\alpha_0+\beta_0)(a_{-t})}\right).$$ We conclude by computing the denominator of $\bar n\cdot p$. Since $v$ is primitive, we have that
$$d_{h\cdot a_tp}=\|\Ad(\bar n)\log(v)\|_{\mathfrak g_{\nu}}=\|x\|_{\mathfrak g}=|x_{-\alpha_0-\beta_0}|.$$
Moreover, by (\ref{eq:3}), we know that
$$\frac1{3\kappa'}e^{-\gamma t}\leq|x_{-\alpha_0-\beta_0}|\leq\kappa\kappa'e^{-\gamma t}.$$ Since $d_{\bar n\cdot a_t p}=e^{(\alpha_0+\beta_0)(a_t)}d_{h\cdot a_tp}$, we conclude that $$\frac1{3\kappa'}e^{(\alpha_0+\beta_0)(a_t)-\gamma t}\leq d_{\bar n\cdot p}\leq\kappa\kappa' e^{(\alpha_0+\beta_0)(a_t)-\gamma t}.$$ This completes the proof of the first case.

Case 2: $\frac1{3\kappa'}e^{-\gamma t}\leq\|v_{-\beta_0}\|_{\mathfrak g}\leq\kappa\kappa' e^{-\gamma t}.$ In this case, we have $$\begin{cases}|x_{-\alpha_0-\beta_0}|\leq\kappa\kappa' e^{-\gamma t}\\ \frac1{3\kappa'}e^{-\gamma t}\leq|t_{\alpha_0}x_{-\alpha_0-\beta_0}|\leq\kappa\kappa'e^{-\gamma t}\\ |t_{\beta_0}x_{-\alpha_0-\beta_0}|\leq\kappa\kappa'e^{-\gamma t}\\ |t_{\alpha_0+\beta_0}x_{-\alpha_0-\beta_0}|\leq\kappa\kappa'e^{-\gamma t}\\ |\frac12t_{\alpha_0}t_{\beta_0}x_{-\alpha_0-\beta_0}|\leq\kappa\kappa'e^{-\gamma t}\\|(-t_{\alpha_0}t_{\alpha_0+\beta_0}-\frac12t_{\alpha_0}^2t_{\beta_0})x_{-\alpha_0-\beta_0}|\leq\kappa\kappa'e^{-\gamma t}\\ |(-t_{\beta_0}t_{\alpha_0+\beta_0}+\frac12 t_{\beta_0}^2t_{\alpha_0})x_{-\alpha_0-\beta_0}|\leq\kappa\kappa' e^{-\gamma t}\\ |(-t_{\alpha_0+\beta_0}^2+\frac14t_{\alpha_0}^2t_{\beta_0}^2)x_{-\alpha_0-\beta_0}|\leq\kappa\kappa' e^{-\gamma t}\end{cases}$$

We deduce that $$\begin{cases}|x_{-\alpha_0-\beta_0}|\leq\kappa\kappa' e^{-\gamma t}\\ \frac1{3\kappa'}e^{-\gamma t}\leq|t_{\alpha_0}x_{-\alpha_0-\beta_0}|\leq\kappa\kappa'e^{-\gamma t}\\
|t_{\beta_0}|\leq6\kappa\kappa'^{2}\\|t_{\alpha_0+\beta_0}+\frac12 t_{\alpha_0}t_{\beta_0}|\leq3\kappa\kappa'\end{cases}.$$

Moreover, we have
\begin{align*}
h^{-1}&=e^u=e^{t_{\alpha_0}e_{\alpha_0}+t_{\beta_0}e_{\beta_0}+t_{\alpha_0+\beta_0}e_{\alpha_0+\beta_0}}\\
&=\exp\left(\begin{array}{ccc} 0 & t_{\alpha_0} & t_{\alpha_0+\beta_0}\\ 0 & 0 & t_{\beta_0}\\ 0 & 0 & 0\end{array}\right)\\
&=I_3+\left(\begin{array}{ccc} 0 & t_{\alpha_0} & t_{\alpha_0+\beta_0}\\ 0 & 0 & t_{\beta_0}\\ 0 & 0 & 0\end{array}\right)+\frac1{2!}\left(\begin{array}{ccc} 0 & t_{\alpha_0} & t_{\alpha_0+\beta_0}\\ 0 & 0 & t_{\beta_0}\\ 0 & 0 & 0\end{array}\right)^2\\
&=\left(\begin{array}{ccc} 1 & t_{\alpha_0} & t_{\alpha_0+\beta_0}+\frac12 t_{\alpha_0}t_{\beta_0}\\ 0 & 1 & t_{\beta_0}\\ 0 & 0 & 1\end{array}\right)\\
&=\left(\begin{array}{ccc} 1 & 0 & t_{\alpha_0+\beta_0}+\frac12 t_{\alpha_0}t_{\beta_0}\\ 0 & 1 & t_{\beta_0}\\ 0 & 0 & 1\end{array}\right)\left(\begin{array}{ccc} 1 & t_{\alpha_0} & 0\\ 0 & 1 & 0\\ 0 & 0 & 1\end{array}\right)\\
&:= n_1\cdot n_2.
\end{align*}
In particular, we have that $d_G(n_1,e)\leq C$ for some constant $C>0$ depending only on $G$, and $n_2\in N_{\alpha_0}$ satisfies $$n_2=\left(\begin{array}{ccc} 1 & t_{\alpha_0} & 0\\ 0 & 1 & 0\\ 0 & 0 & 1\end{array}\right)\quad\mbox{and}\quad\frac1{3\kappa'}e^{-\gamma t}\leq|t_{\alpha_0}x_{-\alpha_0-\beta_0}|\leq\kappa\kappa' e^{-\gamma t},$$
with $|x_{-\alpha_0-\beta_0}|\leq\kappa\kappa' e^{-\gamma t}$.
We know that $h\cdot a_tp$ is a rational point as $\Stab(h\cdot a_tp)\cap N_{\nu}\neq\left\{e\right\}$. By Proposition \ref{p31}, $a_{-t}h\cdot a_tp$ is also a rational point. Let $\bar n=a_{-t}ha_t$. Then $\bar n\cdot p$ is a rational point, and 
\begin{align*}
\bar n\in & \left\{x\in N_+: x=y\cdot B\left(0,Ce^{\beta_0(a_{-t})},Ce^{(\alpha_0+\beta_0)(a_{-t})}\right), y\in N_{\alpha_0}\right.\\
&\left.\frac1{3\kappa'}e^{-\gamma t}\leq d_G(y,e)e^{\alpha_0(a_{t})}|x_{-\alpha_0-\beta_0}|\leq\kappa\kappa' e^{-\gamma t}\right\}.
\end{align*}

We conclude by computing the denominator of the rational point $\bar n\cdot p$. Since $v$ is primitive, the denominator of $h\cdot a_tp$ is equal to $\|x\|_{\mathfrak g_\nu}$. Hence, the denominator of $\bar n\cdot p$ is equal to $$d_{\bar n\cdot p}=e^{(\alpha_0+\beta_0)(a_t)}d_{h\cdot a_tp}=e^{(\alpha_0+\beta_0)(a_t)}\|x\|_{\mathfrak g_\nu}=e^{(\alpha_0+\beta_0)(a_t)}|x_{-\alpha_0-\beta_0}|\leq\kappa\kappa 'e^{(\alpha_0+\beta_0)(a_t)-\gamma t}.$$ In other words, we have that $\bar n\cdot p$ is a rational point, and 
\begin{align*}
\bar n\in&\left\{x\in N_+: x=y\cdot B\left(0,Ce^{\beta_0(a_{-t})},Ce^{(\alpha_0+\beta_0)(a_{-t})}\right),\;y\in N_{\alpha_0}\right.\\ 
&\left.\frac1{3\kappa'}e^{-\gamma t}e^{\beta_0(a_t)}\leq d_G(y,e)\cdot d_{\bar n\cdot p}\leq\kappa\kappa' e^{-\gamma t}e^{\beta_0(a_t)}\right\}
\end{align*} 
and $$d_{\bar n\cdot p}\leq\kappa\kappa' e^{(\alpha_0+\beta_0)(a_t)-\gamma t}.$$ This completes the proof of the second case.

Case 3: $\frac1{3\kappa'}e^{-\gamma t}\leq\|v_{-\alpha_0}\|_{\mathfrak g}\leq\kappa^3e^{-\gamma t}.$ The proof is completely analogous to that of Case 2.
\end{proof}

By Proposition~\ref{p61}, we know that if a point $p\in G/\Gamma$ is not Diophantine of type $\gamma$, there exist a sequence $t_n\to\infty$ and a sequence $v_n\in\Stab(a_{t_n}\cdot p)\setminus\left\{e\right\}$ such that $t_n$ and $v_n$ satisfy the $\gamma$-condition. By passing to a sub-sequence, we may assume that $v_n$ is of type $w_i$ for some $1\leq i\leq 6$ and all $n\in\mathbb N$. Then, Proposition \ref{p62} follows from Lemmas~\ref{l66}, \ref{l67}, \ref{l68} and \ref{l69}.

In light of Proposition \ref{p62}, we introduce five Diophantine subsets $\mathcal E_i$ $(i=1,2,3,4,5)$ in $G/\Gamma$, corresponding to points $p$ admitting sequences $t_{n},\ v_{n}$ satisfying the $\gamma$-condition, where $v_{n}$ are of type $w_{i}$.

\begin{definition}\label{d63}
Let $C,\kappa''>0$ be the constants in Proposition~\ref{p62}.\vspace{2mm}
\begin{enumerate}
\item We denote by $\mathcal E_1$ the subset of points $p\in G/\Gamma$ for which there exist sequences $t_n\to\infty$ of real numbers and $q_n\in G/\Gamma$ of rational points such that $$p\in B\left(0,Ce^{\beta_0(a_{-t_n})},Ce^{(\alpha_0+\beta_0)(a_{-t_n})}\right)w_3^{-1}\cdot q_n$$ and $$\frac1{\kappa''}e^{\beta_0(a_{t_n})-\gamma t_n}\leq d_{q_n}\leq\kappa'' e^{\beta_0(a_{t_n})-\gamma t_n}.$$\vspace{2mm}
\item We denote by $\mathcal E_2$ the subset of points $p\in G/\Gamma$ for which there exist sequences $t_n\to\infty$ of real numbers and $q_n\in G/\Gamma$ of rational points such that $$p\in B\left(Ce^{\alpha_0(a_{-t_n})},0,Ce^{(\alpha_0+\beta_0)(a_{-t_n})}\right)w_2^{-1}\cdot q_n$$ and $$\frac1{\kappa''}e^{\alpha_0(a_{t_n})-\gamma t_n}\leq d_{q_n}\leq\kappa'' e^{\alpha_0(a_{t_n})-\gamma t_n}.$$\vspace{2mm}
\item We denote by $\mathcal E_3$ the subset of points $p\in G/\Gamma$ for which there exist sequences $t_n\to\infty$ of real numbers and $q_n\in G/\Gamma$ of rational points such that $$p\in B\left(Ce^{\alpha_0(a_{-t_n})},Ce^{\beta_0(a_{-t_n})},Ce^{(\alpha_0+\beta_0)(a_{-t_n})}\right)\cdot q_n$$ and $$\frac1{3\kappa''}e^{(\alpha_0+\beta_0)(a_{t_n})-\gamma t_n}\leq d_{q_n}\leq\kappa'' e^{(\alpha_0+\beta_0)(a_{t_n})-\gamma t_n}.$$\vspace{2mm} 
\item we denote by $\mathcal E_4$ the subset of points $p\in G/\Gamma$ for which there exist sequences $t_n\to\infty$ of real numbers and $q_n\in G/\Gamma$ of rational points such that $p\in B_n\cdot q_n$, with
\begin{align*}
B_n=&\bigg\{x\in N_+:x\in B\left(0,Ce^{\beta_0(a_{-t_n})},Ce^{(\alpha_0+\beta_0)(a_{-t_n})}\right)\cdot y,y\in N_{\alpha_0},\\
&\left.\frac1{3\kappa''}e^{-\gamma t_n}e^{\beta_0(a_{t_n})}\leq d_G(y,e)\cdot d_{q_n}\leq\kappa'' e^{-\gamma t_n}e^{\beta_0(a_{t_n})}\right\}
\end{align*}
and $$d_{q_n}\leq\kappa''e^{(\alpha_0+\beta_0)(a_{t_n})-\gamma t_n}.$$\vspace{2mm}
\item We denote by $\mathcal E_5$ the subset of points $p\in G/\Gamma$ for which there exist sequences $t_n\to\infty$ of real numbers and $q_n\in G/\Gamma$ of rational points such that $p\in B_n\cdot q_n$, with
\begin{align*}
B_n=&\bigg\{x\in N_+: x\in B\left(Ce^{\alpha_0(a_{-t_n})},0,Ce^{(\alpha_0+\beta_0)(a_{-t_n})}\right)\cdot y,y\in N_{\beta_0},\\
&\left.\frac1{3\kappa''}e^{-\gamma t_n}e^{\alpha_0(a_{t_n})}\leq d_G(y,e)\cdot d_{q_n}\leq\kappa'' e^{-\gamma t_n}e^{\alpha_0(a_{t_n})}\right\}
\end{align*}
and $$d_{q_n}\leq\kappa''e^{(\alpha_0+\beta_0)(a_{t_n})-\gamma t_n}.$$
\end{enumerate}
\end{definition}
\begin{remark}
We will estimate the Hausdorff dimensions of the sets $\mathcal E_i$ $(1\leq i\leq5)$ in the next section.
\end{remark}

The converse of  Proposition \ref{p62} also holds up to constants. More precisely, we have the following result.

\begin{proposition}\label{p63}
Let $p\in G/\Gamma$, $\gamma>0$ and $\epsilon>0$ a sufficiently small number. Suppose that there exist constants $C,\kappa''\geq 1$, a sequence $t_n\to\infty$ of real numbers, and a sequence $q_n\in G/\Gamma$ of rational points such that one of the following Diophantine conditions holds for all pairs $(t_n,q_n)$ $(\forall n\in N)$:\vspace{2mm}
\begin{enumerate}
\item $p\in B\left(0,Ce^{\beta_0(a_{-t_n})},Ce^{(\alpha_0+\beta_0)(a_{-t_n})}\right)w_3^{-1}\cdot q_n$ and $$\frac12e^{\beta_0(a_{t_n})-(\gamma+\epsilon) t_n}\leq d_{q_n}\leq 2e^{\beta_0(a_{t_n})-(\gamma+\epsilon) t_n};$$\vspace{2mm}
\item $p\in B\left(Ce^{\alpha_0(a_{-t_n})},0,Ce^{(\alpha_0+\beta_0)(a_{-t_n})}\right)w_2^{-1}\cdot q_n$ and $$\frac12e^{\alpha_0(a_{t_n})-(\gamma+\epsilon) t_n}\leq d_{q_n}\leq 2e^{\alpha_0(a_{t_n})-(\gamma+\epsilon) t_n};$$\vspace{2mm}
\item $p\in B\left(Ce^{\alpha_0(a_{-t_n})},Ce^{\beta_0(a_{-t_n})},Ce^{(\alpha_0+\beta_0)(a_{-t_n})}\right)\cdot q_n$ and $$\frac12e^{(\alpha_0+\beta_0)(a_{t_n})-(\gamma+\epsilon) t_n}\leq d_{q_n}\leq 2e^{(\alpha_0+\beta_0)(a_{t_n})-(\gamma+\epsilon) t_n};$$\vspace{2mm}
\item $p\in B_n\cdot q_n$ with 
\begin{align*}
B_n=&\bigg\{x\in N_+: x\in B\left(0,Ce^{\beta_0(a_{-t_n})},Ce^{(\alpha_0+\beta_0)(a_{-t_n})}\right)\cdot y,y\in N_{\alpha_0},\\
&\left.\frac1{3\kappa''}e^{-(\gamma+\epsilon) t_n}e^{\beta_0(a_{t_n})}\leq d_G(y,e)\cdot d_{q_n}\leq\kappa'' e^{-(\gamma+\epsilon) t_n}e^{\beta_0(a_{t_n})}\right\}.
\end{align*}
and $$d_{q_n}\leq\kappa''e^{(\alpha_0+\beta_0)(a_{t_n})-(\gamma+\epsilon) t_n};$$\vspace{2mm}
\item $p\in B_n\cdot q_n$ with  
\begin{align*}
B_n=&\bigg\{x\in N_+: x\in B\left(Ce^{\alpha_0(a_{-t_n})},0,Ce^{(\alpha_0+\beta_0)(a_{-t_n})}\right)\cdot y,y\in N_{\alpha_0},\\
&\left.\frac1{3\kappa''}e^{-(\gamma+\epsilon) t_n}e^{\alpha_0(a_{t_n})}\leq d_G(y,e)\cdot d_{q_n}\leq\kappa'' e^{-(\gamma+\epsilon) t_n}e^{\alpha_0(a_{t_n})}\right\}.
\end{align*}
and $$d_{q_n}\leq\kappa''e^{(\alpha_0+\beta_0)(a_{t_n})-(\gamma+\epsilon) t_n}.$$
\end{enumerate}
Then, $p$ is not Diophantine of type $\gamma$.
\end{proposition}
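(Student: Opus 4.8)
The plan is to run the approximation argument in Proposition~\ref{p62} in reverse: from the proximity of $p$ to a rational point $q_n$ at the right scale, we reconstruct, for each $n$, a nontrivial element of $\Stab(a_{s_n}\cdot p)$ whose distance to $e$ is at most $\mathrm{const}\cdot e^{-\gamma s_n}$ for a suitable time $s_n\to\infty$, which forces $\eta(a_{s_n}\cdot p)\to 0$ faster than $e^{-\gamma s_n}$ along a subsequence, contradicting that $p$ is Diophantine of type $\gamma$ (if it were, one would have $\eta(a_t\cdot p)\ge C'e^{-\gamma t}$ for all $t$, and in particular $\eta(a_{s_n}\cdot p)\ge C'e^{-\gamma s_n}$, which fails once $n$ is large). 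Since the five cases are structurally parallel (and cases $(2)$, $(5)$ are the mirror images of $(1)$, $(4)$ under swapping the roles of $\alpha_0$ and $\beta_0$), I would treat case $(3)$ first as the cleanest model, then case $(1)$, then case $(4)$, indicating that $(2)$ and $(5)$ follow by the same computation.

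First I would fix $n$ and unwind the hypothesis. In case $(3)$, write $p=\bar n\cdot q_n$ with $\bar n\in B(Ce^{\alpha_0(a_{-t_n})},Ce^{\beta_0(a_{-t_n})},Ce^{(\alpha_0+\beta_0)(a_{-t_n})})$, and let $q_n=a_{q_n}m\Gamma$ with $m\in N_-$ be the $AN_-$-representative furnished by Proposition~\ref{p31}; by Lemma~\ref{l41} (normalised so $\lambda=1$), $\Stab(q_n)\cap N_\nu$ is generated by an element $x_n$ with $\|\log x_n\|_{\mathfrak g_\nu}=d_{q_n}$, where $\nu=-\alpha_0-\beta_0$. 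The natural time to choose is $s_n$ defined by $(\alpha_0+\beta_0)(a_{s_n})=(\gamma+\epsilon)s_n-\log d_{q_n}$, i.e.\ the time at which the contracted element $a_{s_n}$ applied to $\Stab(q_n)\cap N_\nu$ shrinks $d_{q_n}$ down to scale $e^{-\gamma s_n}$ (up to the $e^{-\epsilon s_n}$ and the $\kappa''$, $C$ constants). One checks $s_n\to\infty$ using $d_{q_n}\le 2e^{(\alpha_0+\beta_0)(a_{t_n})-(\gamma+\epsilon)t_n}$ and $t_n\to\infty$: this bound gives $\log d_{q_n}\le (\alpha_0+\beta_0)(a_{t_n})-(\gamma+\epsilon)t_n+\log 2$, hence, since $(\alpha_0+\beta_0)(X_0)-(\gamma+\epsilon)>0$ when $\epsilon$ is small (here one uses $\gamma<(\alpha_0+\beta_0)(X_0)$; for $\gamma\ge(\alpha_0+\beta_0)(X_0)$ the set $S_\gamma^c$ will be shown empty in \S7, so the proposition is vacuous), $s_n$ is forced to infinity. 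Then I would conjugate: the element $v_n:=a_{s_n}\,\bar n\, x_n\, \bar n^{-1}\,a_{-s_n}$ lies in $\Stab(a_{s_n}\cdot p)$ since $\bar n x_n\bar n^{-1}\in\Stab(p)$. Writing $\bar n x_n\bar n^{-1}=\exp\!\big(\Ad(\bar n)\log x_n\big)$ and expanding $\Ad(\bar n)\log x_n$ in root spaces — using that $\log x_n\in\mathfrak g_\nu$ and that the entries of $\bar n$ in $\mathfrak g_{\alpha_0}$, $\mathfrak g_{\beta_0}$, $\mathfrak g_{\alpha_0+\beta_0}$ are $O(e^{\alpha_0(a_{-t_n})})$, $O(e^{\beta_0(a_{-t_n})})$, $O(e^{(\alpha_0+\beta_0)(a_{-t_n})})$ respectively — every root-space component of $\log(a_{s_n}\bar n x_n\bar n^{-1}a_{-s_n})$ is bounded by a constant times $e^{(\alpha_0+\beta_0)(a_{s_n})}d_{q_n}\le \mathrm{const}\cdot e^{-(\gamma+\epsilon)s_n}\le \mathrm{const}\cdot e^{-\gamma s_n}$ (the worst component is the $\mathfrak g_\nu$-one, equal to $e^{(\alpha_0+\beta_0)(a_{s_n})}d_{q_n}$; the others carry extra contracting factors). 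This yields $d_G(v_n,e)\lesssim e^{-\gamma s_n}$, hence $\eta(a_{s_n}\cdot p)\le d_G(v_n,e)\lesssim e^{-\gamma s_n}$, and letting $n\to\infty$ contradicts the lower bound $\eta(a_{s_n}\cdot p)\ge C'e^{-\gamma s_n}$.

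For cases $(1)$ and $(2)$ the only change is that one works with $w_3^{-1}\cdot q_n$ (resp.\ $w_2^{-1}\cdot q_n$) in place of $q_n$: here $q_n$ has a generator of $\Stab(q_n)\cap N_\nu$ of size $d_{q_n}$, and conjugating by $w_3$ (resp.\ $w_2$) moves $N_\nu$ to $N_{-\beta_0}$ (resp.\ $N_{-\alpha_0}$), so the relevant element in $\Stab(w_3^{-1}q_n)$ lives, after conjugation, in $\mathfrak g_{-\beta_0}$ and the correct time is $s_n$ with $\beta_0(a_{s_n})=(\gamma+\epsilon)s_n-\log d_{q_n}$; the $\bar n$-ball in case $(1)$ has trivial $\mathfrak g_{\alpha_0}$-component, exactly matching the fact that there is no $\alpha_0$-direction to kill. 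For cases $(4)$ and $(5)$ the element $\bar n$ factors as (a bounded-direction ball) times $y\in N_{\alpha_0}$ (resp.\ $N_{\beta_0}$), and the controlling quantity is the product $d_G(y,e)\cdot d_{q_n}$, which is pinned between constant multiples of $e^{-(\gamma+\epsilon)t_n}e^{\beta_0(a_{t_n})}$; one picks $s_n$ so that $\beta_0(a_{s_n})\big(d_G(y,e)d_{q_n}\big)^{-1}$-type scaling lands the conjugated stabiliser element at size $e^{-\gamma s_n}$, and the same root-space bookkeeping — now with the extra $N_{\alpha_0}$-factor contributing an $e^{\alpha_0(a_{s_n})}d_G(y,e)$ term that is absorbed by the joint bound and the remaining bounded directions contributing only contracting factors — gives $d_G(v_n,e)\lesssim e^{-\gamma s_n}$. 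I expect the main obstacle to be purely bookkeeping: verifying in cases $(4)$ and $(5)$ that, after conjugating by $a_{s_n}$, \emph{all} root-space components of $\log v_n$ — including the cross terms produced by the non-commuting factors $y$ and the bounded-direction ball inside $\bar n$, and the Baker–Campbell–Hausdorff corrections from $\exp$ — remain $O(e^{-\gamma s_n})$, rather than picking up an expanding factor from some $\mathfrak g_{\alpha_0}$ or $\mathfrak g_{\alpha_0+\beta_0}$ direction; this is exactly the reason the hypotheses bundle $d_G(y,e)$ and $d_{q_n}$ together, and checking the estimate cleanly requires writing out $\Ad(\bar n)$ on $\mathfrak g_\nu$ in the chosen matrix coordinates and tracking each entry's $a_{s_n}$-weight. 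A secondary (minor) point is to make $\epsilon$ uniform: one fixes $\epsilon$ small enough that $(\gamma+\epsilon)<(\alpha_0+\beta_0)(X_0)$ at the outset, which is possible precisely in the regime where $S_\gamma^c$ is nonempty, so no generality is lost.
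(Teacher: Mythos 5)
Your overall strategy is the same as the paper's: conjugate a generator $x_n$ of $\Stab(q_n)\cap N_\nu$ (or its $w_i$-conjugate) by $\bar n$ to land in $\Stab(p)$, push forward by the diagonal flow, and bound the resulting element's distance to $e$; the case-(4)/(5) bookkeeping you flag as the main obstacle is handled in the paper by writing $y_nx_ny_n^{-1}$ out explicitly and using exactly the bundled bound on $d_G(y_n,e)\cdot d_{q_n}$, as you anticipate. One remark on your auxiliary time $s_n$: your defining relation has a sign slip (the $\mathfrak g_\nu$-component scales under $\Ad(a_s)$ by $e^{-(\alpha_0+\beta_0)(a_s)}$, not $e^{+(\alpha_0+\beta_0)(a_s)}$, so the correct matching condition is $(\alpha_0+\beta_0)(a_{s_n})=(\gamma+\epsilon)s_n+\log d_{q_n}$), and once corrected it forces $s_n=t_n$ up to bounded error. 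The paper simply uses $t_n$ itself; this also removes your need for the side condition $\gamma+\epsilon<(\alpha_0+\beta_0)(X_0)$, which the paper's proof does not require and which you cannot justify by appealing to \S7 without circularity (for $\gamma\geq(\alpha_0+\beta_0)(X_0)$ the proposition is not vacuous \emph{a priori}; it asserts membership in $S_\gamma^c$, which is exactly what \S7 rules out).

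The one genuine gap is your concluding step. Having derived $d_G(v_n,e)\leq M e^{-(\gamma+\epsilon)s_n}$, you weaken this to $d_G(v_n,e)\lesssim e^{-\gamma s_n}$ and claim that ``letting $n\to\infty$'' contradicts $\eta(a_{s_n}\cdot p)\geq C'e^{-\gamma s_n}$. It does not: by Definition \ref{d11}, $p$ being Diophantine of type $\gamma$ only asserts the existence of \emph{some} constant $C'>0$ with $\eta(a_t\cdot p)\geq C'e^{-\gamma t}$, and the two inequalities $C'e^{-\gamma s_n}\leq \eta(a_{s_n}\cdot p)\leq Me^{-\gamma s_n}$ are perfectly compatible for all $n$ whenever $C'\leq M$. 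The whole purpose of the $\epsilon$ in the hypotheses is to retain the factor $e^{-\epsilon s_n}$: one must conclude $\eta(a_{s_n}\cdot p)\leq (Me^{-\epsilon s_n})e^{-\gamma s_n}$ and observe that $Me^{-\epsilon s_n}\to 0$, so that for \emph{every} candidate constant $C'$ the lower bound eventually fails. You have the stronger estimate in hand one line earlier, so the repair is immediate, but as written the final deduction is invalid.
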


\begin{proof}
We prove only cases (3) and (4). The proofs of the remaining cases are similar.

Suppose that there exist a sequence $t_n\to\infty$ of real numbers and a sequence $q_n\in G/\Gamma$ of rational points such that $p\in B\left(Ce^{\alpha_0(a_{-t_n})},Ce^{\beta_0(a_{-t_n})},Ce^{(\alpha_0+\beta_0)(a_{-t_n})}\right)\cdot q_n$ and $$\frac12e^{(\alpha_0+\beta_0)(a_{t_n})-(\gamma+\epsilon) t_n}\leq d_{q_n}\leq 2e^{(\alpha_0+\beta_0)(a_{t_n})-(\gamma+\epsilon) t_n}.$$ Let $x_n$ be a generator of $\Stab(q_n)\cap N_{\nu}$ where $\nu$ is the lowest root in $\Phi_-$. Then, $$\frac12e^{(\alpha_0+\beta_0)(a_{t_n})-(\gamma+\epsilon) t_n}\leq d_G(x_n,e)\leq 2e^{(\alpha_0+\beta_0)(a_{t_n})-(\gamma+\epsilon) t_n}.$$ Now, by multiplying both $p$ and $q_n$ by $a_{t_n}$, we obtain $$a_{t_n}\cdot p\in B(C,C,C)\cdot(a_{t_n}\cdot q_n).$$ For the point $q_{n}$ it holds $$\eta(a_{t_n}\cdot q_n)\leq d_G(a_{t_n}x_na_{-t_n},e)\leq2e^{-(\gamma+\epsilon)t_n}.$$ It follows that there exists a constant $C'>0$ depending only on $C$ such that $$\eta(a_{t_n}\cdot p)\leq C'\cdot\eta(a_{t_n}\cdot q_n)\leq (2C'e^{-\epsilon\cdot t_n})e^{-\gamma t_n}.$$ By definition, $p$ is not Diophantine of type $\gamma$.

Now, suppose that there exists a sequence $t_n\to\infty$ of real numbers and a sequence $q_n\in G/\Gamma$ of rational points such that $p\in B_n\cdot q_n$, with
\begin{align*}
B_n=\bigg\{x\in &N_+: x\in B\left(0,Ce^{\beta_0(a_{-t_n})},Ce^{(\alpha_0+\beta_0)(a_{-t_n})}\right)\cdot y,y\in N_{\alpha_0},\\
&\left.\frac1{3\kappa'}e^{-(\gamma+\epsilon) t_n}e^{\beta_0(a_{t_n})}\leq d_G(y,e)\cdot d_{q_n}\leq\kappa^3 e^{-(\gamma+\epsilon) t_n}e^{\beta_0(a_{t_n})}\right\}
\end{align*}
and $$d_{q_n}\leq\kappa^3e^{(\alpha_0+\beta_0)(a_{t_n})-(\gamma+\epsilon) t_n}.$$ Let $x_n$ be a generator of $\Stab(q_n)\cap N_{\nu}$. By multiplying both $p$ and $B_n\cdot q_{n}$ by $a_{t_n}$, we deduce that $$a_{t_n}\cdot p\in B(0,C,C)a_{t_n}\cdot y_{n}\cdot q_n,$$ for some $y_n\in N_{\alpha_0}$ satisfying $$\frac1{3\kappa''}e^{-(\gamma+\epsilon) t_n}e^{\beta_0(a_{t_n})}\leq d_G(y_n,e)\cdot d_{q_n}\leq\kappa'' e^{-(\gamma+\epsilon) t_n}e^{\beta_0(a_{t_n})}.$$
Let $x_n$ be a generator of $\Stab(q_n)\cap N_{\nu}$ where $\nu$ is the lowest root in $\Phi_-$. The previous formula implies that there exists a constant $C'>0$ depending only on $G$ such that $$\eta(a_{t_n}\cdot p)\leq C'\cdot\eta(a_{t_n}\cdot y_n\cdot q_n)\leq C'\cdot d_G(a_{t_n}\cdot y_nx_ny_n^{-1}\cdot a_{t_n}^{-1},e).$$ Write $$x_n=\left(\begin{array}{ccc}1 & 0 & 0\\0 & 1 & 0\\ r_n & 0 & 1 \end{array}\right)\quad y_n=\left(\begin{array}{ccc}1 & s_n & 0\\ 0 & 1 & 0\\ 0 & 0 & 1\end{array}\right).$$ Then, we have $$y_nx_ny_n^{-1}=\left(\begin{array}{ccc}1 & 0 & 0\\0 & 1 & 0\\ r_n & -s_nr_n & 1 \end{array}\right).$$ Since $$|r_n|=d_{q_n}\leq\kappa''e^{(\alpha_0+\beta_0)(a_{t_n})-(\gamma+\epsilon) t_n}$$ $$|s_nr_n|=d_G(y_n,e)d_{q_n}\leq\kappa'' e^{-(\gamma+\epsilon) t_n}e^{\beta_0(a_{t_n})},$$ we obtain $$d_G(a_{t_n}\cdot (y_nx_ny_n^{-1})\cdot a_{t_n}^{-1},e)\leq\kappa^3e^{-(\gamma+\epsilon)t_n}$$ and hence, $$\eta(a_{t_n}\cdot p)\leq C'\cdot\kappa^3e^{-(\gamma+\epsilon)t_n}=(C'\cdot\kappa^3e^{-\epsilon t_n})e^{-\gamma t_n}.$$ Thus, by definition, $p$ is not Diophantine of type $\gamma$. This completes the proof of the proposition.
\end{proof}
\begin{remark}
We will use Proposition~\ref{p63} to construct Cantor-type subsets in $S_\gamma^c$, and to obtain a lower bound for $\dim_HS_\gamma^c$.
\end{remark}

\subsection{The case $\Stab(p)\cap N_-\neq\left\{e\right\}$}

The previous two subsections deal with points $p$ such that $\Stab(p)\cap N_-=\left\{e\right\}$. In this subsection, we study the case $\Stab(p)\cap N_-\neq\left\{e\right\}$.

\begin{lemma}\label{l610}
Let $p\in N_+\Gamma/\Gamma$ such that $\Stab(p)\cap N_-\neq\left\{e\right\}$. Then, one of the following two cases occurs:\vspace{2mm}
\begin{enumerate}
\item $p\in N_{\alpha_0}\cdot n\Gamma$ with $n\in N_+(\mathbb Q)$;\vspace{2mm}
\item $p\in N_{\beta_0}\cdot n\Gamma$ with $n\in N_+(\mathbb Q)$.\vspace{2mm}
\end{enumerate}
\end{lemma}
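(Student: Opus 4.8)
The plan is to extract from the hypothesis a rationality statement about a representative $g\in N_+$ of $p$, by identifying which $\mathbb Q$-subspaces of $\mathbb R^3$ are forced to occur, and then to read off the dichotomy from a short coordinate computation.

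First I would write $p=g\Gamma$ with
$$g=\begin{pmatrix}1 & x & z\\ 0 & 1 & y\\ 0 & 0 & 1\end{pmatrix}\in N_+,$$
so that $\Stab(p)=g\Gamma g^{-1}$. The assumption $\Stab(p)\cap N_-\neq\{e\}$ furnishes $\gamma\in\Gamma\setminus\{e\}$ with
$$u:=g\gamma g^{-1}=\begin{pmatrix}1 & 0 & 0\\ a & 1 & 0\\ b & c & 1\end{pmatrix}\in N_-\setminus\{e\};$$
being lower unitriangular, $u$ is unipotent, hence so is $\gamma$. From $\gamma-I=g^{-1}(u-I)g$ one gets $\ker(\gamma-I)=g^{-1}\ker(u-I)$, $\operatorname{im}(\gamma-I)=g^{-1}\operatorname{im}(u-I)$ and $\operatorname{im}\big((\gamma-I)^2\big)=g^{-1}\operatorname{im}\big((u-I)^2\big)$, and since $\gamma\in\SL_3(\mathbb Z)$ all three of these subspaces are defined over $\mathbb Q$. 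The underlying point is that $N_-$ stabilises the partial flag $\langle e_3\rangle\subset\langle e_2,e_3\rangle$, so the $u$-invariant subspaces relevant here are always among $\langle e_3\rangle$ and $\langle e_2,e_3\rangle$; pushing them back by $g^{-1}$ produces the two candidate rational subspaces $g^{-1}\langle e_3\rangle=\langle(xy-z,\,-y,\,1)^T\rangle$ and $g^{-1}\langle e_2,e_3\rangle=\{v\in\mathbb R^3:v_1+xv_2+zv_3=0\}$, whose defining data are exactly the coordinates of $g$.

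The key step is a case analysis on $(a,c)$, using $u\neq e$. If $a=0$ — so $(b,c)\neq(0,0)$ — then the first two rows of $u-I$ vanish, hence $\operatorname{im}(u-I)=\langle e_3\rangle$ and $g^{-1}\langle e_3\rangle=\operatorname{im}(\gamma-I)$ is rational. If $a\neq0$ and $c\neq0$, a one-line computation shows that $(u-I)^2$ has $(3,1)$-entry $ac$ and all other entries $0$, so $\operatorname{im}\big((u-I)^2\big)=\langle e_3\rangle$ and again $g^{-1}\langle e_3\rangle=\operatorname{im}\big((\gamma-I)^2\big)$ is rational. In either case, rationality of the line $g^{-1}\langle e_3\rangle$ forces $y\in\mathbb Q$ and $z-xy\in\mathbb Q$, and then
$$g=\begin{pmatrix}1 & x & 0\\ 0 & 1 & 0\\ 0 & 0 & 1\end{pmatrix}\begin{pmatrix}1 & 0 & z-xy\\ 0 & 1 & y\\ 0 & 0 & 1\end{pmatrix},$$
with the first factor in $N_{\alpha_0}$ and the second in $N_+(\mathbb Q)$, so $p\in N_{\alpha_0}\cdot n\Gamma$ with $n\in N_+(\mathbb Q)$, i.e.\ case $(1)$. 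If instead $a\neq0$ and $c=0$, then $u-I$ has rank $1$ with $\ker(u-I)=\langle e_2,e_3\rangle$, so $g^{-1}\langle e_2,e_3\rangle=\ker(\gamma-I)$ is rational; this plane is $\{v:v_1+xv_2+zv_3=0\}$, so rationality forces $x,z\in\mathbb Q$, and then
$$g=\begin{pmatrix}1 & 0 & 0\\ 0 & 1 & y\\ 0 & 0 & 1\end{pmatrix}\begin{pmatrix}1 & x & z\\ 0 & 1 & 0\\ 0 & 0 & 1\end{pmatrix}$$
gives $p\in N_{\beta_0}\cdot n\Gamma$ with $n\in N_+(\mathbb Q)$, i.e.\ case $(2)$. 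These three sub-cases exhaust all $u\neq e$.

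I do not expect a genuine obstacle here: the computations are elementary, and the only thing requiring care is keeping track of which $u$-invariant rational subspace ($\operatorname{im}(\gamma-I)$, $\operatorname{im}((\gamma-I)^2)$ or $\ker(\gamma-I)$) governs each case, and verifying that the resulting factorisation of $g$ genuinely lands in $N_{\alpha_0}N_+(\mathbb Q)$ (resp.\ $N_{\beta_0}N_+(\mathbb Q)$). A purely computational alternative would be to expand the nine entries of $\gamma=g^{-1}ug$ and chase the integrality conditions directly, but the subspace argument above seems cleaner.
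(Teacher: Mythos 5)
Your proof is correct, and it takes a genuinely different route from the paper's. The paper starts from the same point (a nontrivial $\gamma\in\Gamma$ with $g\gamma g^{-1}\in N_-$), but then expands all nine entries of the conjugated matrix explicitly and chases the integrality conditions through a two-level case analysis (first on whether the $(3,1)$-parameter $z$ of the $N_-$-element vanishes, then on sub-cases involving the remaining parameters), at one point re-parametrising $g$ via $\alpha\beta-\gamma\in\mathbb Q\beta+\mathbb Q$ to finish the $z\neq 0$ case. You replace this entry-chasing with the observation that $\ker(\gamma-I)$, $\operatorname{im}(\gamma-I)$ and $\operatorname{im}\bigl((\gamma-I)^2\bigr)$ are rational subspaces equal to $g^{-1}$ applied to the corresponding subspaces of $u$, and that for every $u\in N_-\setminus\{e\}$ one of these pulls back $\langle e_3\rangle$ or $\langle e_2,e_3\rangle$; the three sub-cases on $(a,c)$ do exhaust $u\neq e$, the rationality of $g^{-1}\langle e_3\rangle$ (resp.\ $g^{-1}\langle e_2,e_3\rangle$) correctly yields $y,\,z-xy\in\mathbb Q$ (resp.\ $x,z\in\mathbb Q$), and the two displayed factorisations of $g$ check out, with the residual possibility $g\in N_+(\mathbb Q)$ absorbed into either case since $e\in N_{\alpha_0}$. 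Your argument is shorter and more structural -- it makes visible \emph{why} exactly the two subgroups $N_{\alpha_0}$ and $N_{\beta_0}$ appear (they correspond to the two $u$-invariant rational subspaces of the standard flag) -- while the paper's computation is more elementary and self-contained but considerably longer.
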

\begin{proof}
Let $p=g\Gamma$ $(g\in N_+)$. We have that
\begin{align}\nonumber
\Stab(p)\cap N_-\neq\left\{e\right\}&\iff\exists n_-\in N_-\setminus\left\{e\right\}\textup{ such that }g^{-1}n_- g\in\Gamma\\ \nonumber
&\iff\exists(x,y,z)\neq(0,0,0)\textup{ such that }g^{-1}\left(\begin{array}{ccc} 1 & 0 & 0\\ x & 1 & 0\\ z & y & 1\end{array}\right)g\in\Gamma\\
&\iff\exists(x,y,z)\neq(0,0,0)\textup{ such that }g^{-1}\left(\begin{array}{ccc} 0 & 0 & 0\\ x & 0 & 0\\ z & y & 0\end{array}\right)g\in M_3(\mathbb Z).\label{eq6}
\end{align}
Let $g=\left(\begin{array}{ccc} 1 & \alpha & \gamma\\0 & 1 & \beta\\ 0 & 0 & 1\end{array}\right)\in N_+$. Then, Equation~\eqref{eq6} implies that  $$\begin{pmatrix} -\alpha x+(\alpha\beta-\gamma)z & y(\alpha\beta-\gamma)+\alpha((\alpha\beta-\gamma)z-\alpha x) & \beta y(\alpha\beta-\gamma)+\gamma(-\alpha x+(\alpha\beta-\gamma)z)\\ x-\beta z & \alpha(x-\beta z)-\beta y & -\beta^2y+\gamma(x-\beta z)\\ z & y+\alpha z & \beta y+\gamma z\end{pmatrix}$$ is an integral matrix in $M_3(\mathbb Z)$. The rest of the proof consists in analyzing this condition, based on which entries in the matrix are non-null.

Case 1: Assume $z=0$. Then, $$\left(\begin{array}{ccc} -\alpha x & y(\alpha\beta-\gamma)-\alpha^2 x & \beta y(\alpha\beta-\gamma)-\gamma\alpha x\\ x & \alpha x-\beta y & -\beta^2y+\gamma x\\ 0 & y & \beta y\end{array}\right)\in M_3(\mathbb Z).$$

If $x\neq0$ and $y\neq0$, we have $$\alpha=-\frac{-\alpha x}{x}\in\mathbb Q,\;\beta=\frac{\beta y}{y}\in\mathbb Q,$$ $$\gamma x=(-\beta^2y+\gamma x)+\beta^2 y\in\mathbb Q,\;\gamma=\frac{\gamma x}{x}\in\mathbb Q,$$ whence $g\in N_+(\mathbb Q)$. 

If $x\neq 0$ and $y=0$, we have $$\left(\begin{array}{ccc} -\alpha x & -\alpha^2 x & -\gamma\alpha x\\ x & \alpha x & \gamma x\\ 0 & 0 & 0\end{array}\right)\in M_3(\mathbb Z).$$ Then, $$\alpha=\frac{\alpha x}x\in\mathbb Q,\;\gamma=\frac{\gamma x}x\in\mathbb Q,$$ and $$g=\left(\begin{array}{ccc} 1 & 0 & 0\\0 & 1 & \beta\\ 0 & 0 & 1\end{array}\right)\left(\begin{array}{ccc} 1 & \alpha & \gamma\\0 & 1 & 0\\ 0 & 0 & 1\end{array}\right)\in N_{\beta_0}\cdot N_+(\mathbb Q).$$

If $x=0$ and $y\neq 0$, we have $$\left(\begin{array}{ccc} 0 & y(\alpha\beta-\gamma) & \beta y(\alpha\beta-\gamma)\\ 0 & -\beta y & -\beta^2y\\ 0 & y & \beta y\end{array}\right)\in M_3(\mathbb Z).$$ Then, $$\beta=\frac{\beta y}y\in\mathbb Q,\;\alpha\beta-\gamma=\frac{(\alpha\beta-\gamma)y}y\in\mathbb Q,$$ and $$g=\left(\begin{array}{ccc} 1 & \alpha & 0\\0 & 1 & 0\\ 0 & 0 & 1\end{array}\right)\left(\begin{array}{ccc} 1 & 0 & \alpha\beta-\gamma\\0 & 1 & -\beta\\ 0 & 0 & 1\end{array}\right)^{-1}\in N_{\alpha_0}\cdot N_+(\mathbb Q).$$

Case 2: Assume $z\neq 0$. Then,
$$(\alpha\beta-\gamma)z=\beta(y+\alpha z)-(\beta y+\gamma z)\in\mathbb Z\cdot\beta+\mathbb Z$$ and hence, $$\alpha\beta-\gamma\in\mathbb Q\cdot\beta+\mathbb Q.$$ Let $\alpha\beta-\gamma=u_0+v_0\cdot\beta$ $(u_0,v_0\in\mathbb Q)$, $a=\alpha-v_0$ and $b=\beta$. With this notation we have $$g=\left(\begin{array}{ccc} 1 & a & ab\\0 & 1 & b\\ 0 & 0 & 1\end{array}\right)\left(\begin{array}{ccc} 1 & v_0 & -u_0\\0 & 1 & 0\\ 0 & 0 & 1\end{array}\right)\in\left(\begin{array}{ccc} 1 & a & ab\\0 & 1 & b\\ 0 & 0 & 1\end{array}\right)\cdot N_+(\mathbb Q).$$ Equation~\eqref{eq6} then implies that $$\left(\begin{array}{ccc} 1 & -a & 0\\ 0 & 1 & -b\\ 0 & 0 & 1\end{array}\right)\left(\begin{array}{ccc} 0 & 0 & 0\\ x & 0 & 0\\ z & y & 0\end{array}\right)\left(\begin{array}{ccc} 1 & a & ab\\ 0 & 1 & b\\ 0 & 0 & 1\end{array}\right)\in M_3(\mathbb Q)$$ and consequently $$\left(\begin{array}{ccc} -ax & -a^2 x & -a^2b x\\ x-bz & a(x-bz)-by & -b^2y+ab(x-bz)\\ z & y+az & by+abz\end{array}\right)\in M_3(\mathbb Q).$$

If $y+az\neq0$, we have $$b=\frac{by+abz}{y+az}\in\mathbb Q$$ and 
\begin{align*}
g=&\left(\begin{array}{ccc} 1 & a & ab\\0 & 1 & b\\ 0 & 0 & 1\end{array}\right)\left(\begin{array}{ccc} 1 & v_0 & -u_0\\0 & 1 & 0\\ 0 & 0 & 1\end{array}\right)\\
=&\left(\begin{array}{ccc} 1 & a & 0\\0 & 1 & 0\\ 0 & 0 & 1\end{array}\right)\left(\begin{array}{ccc} 1 & 0 & 0\\ 0 & 1 & b\\ 0& 0& 1\end{array}\right)\left(\begin{array}{ccc} 1 & v_0 & -u_0\\0 & 1 & 0\\ 0 & 0 & 1\end{array}\right)\in N_{\alpha_0}\cdot N_+(\mathbb Q).
\end{align*}

If $y+az=0$, we have $$\left(\begin{array}{ccc} -ax & -a^2 x & -a^2b x\\ x-bz & ax & abx\\ z & 0 & 0\end{array}\right)\in M_3(\mathbb Q).$$ We now distinguish two more cases. If $ax\neq0$, then $$a=\frac{a^2x}{ax}\in\mathbb Q,\;b=\frac{abx}{ax}\in\mathbb Q,$$ and hence $g\in N_+(\mathbb Q)$. If $ax=0$, then $$\left(\begin{array}{ccc} 0 & 0 & 0\\ x-bz & 0 & 0\\ z & 0 & 0\end{array}\right)\in M_3(\mathbb Q).$$ If $a=0$, we deduce that $$g=\left(\begin{array}{ccc} 1 & 0 & 0\\0 & 1 & b\\ 0 & 0 & 1\end{array}\right)\left(\begin{array}{ccc} 1 & v_0 & -u_0\\0 & 1 & 0\\ 0 & 0 & 1\end{array}\right)\in N_{\beta_0}\cdot N_+(\mathbb Q).$$ If $a\neq0$, then $x=0$ and we have $$-bz\in\mathbb Q,\;b=-\frac{-bz}z\in\mathbb Q$$ and $$g=\left(\begin{array}{ccc} 1 & a & 0\\0 & 1 & 0\\ 0 & 0 & 1\end{array}\right)\left(\begin{array}{ccc} 1 & 0 & 0\\0 & 1 & b\\ 0 & 0 & 1\end{array}\right)\left(\begin{array}{ccc} 1 & v_0 & -u_0\\0 & 1 & 0\\ 0 & 0 & 1\end{array}\right)\in N_{\alpha_0}\cdot N_+(\mathbb Q).$$

In all the cases we discussed above, we have $p\in N_{\alpha_0}N_+(\mathbb Q)\Gamma$ or $N_{\beta_0}N_+(\mathbb Q)\Gamma$. This completes the proof of the lemma.
\end{proof}

Let $\mathcal E$ be the subset of points $p\in G/\Gamma$ such that $\Stab(p)\cap N_-\neq\left\{e\right\}$. Then, we have the following corollary.

\begin{corollary}
\label{cor:E}
The set $\mathcal E\cap N_{+}\Gamma/\Gamma$, where $\mathcal{E}=\left\{p\in G/\Gamma:\Stab(p)\cap N_{-}\neq\left\{e\right\}\right\}$, has Hausdorff dimension at most $1$.
\end{corollary}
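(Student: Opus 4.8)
The plan is to deduce the bound directly from Lemma~\ref{l610} together with the countable stability of Hausdorff dimension. First, Lemma~\ref{l610} asserts that every $p\in\mathcal E\cap N_{+}\Gamma/\Gamma$ lies either in $N_{\alpha_0}\cdot n\Gamma$ or in $N_{\beta_0}\cdot n\Gamma$ for some $n\in N_{+}(\mathbb Q)$. Consequently,
$$\mathcal E\cap N_{+}\Gamma/\Gamma\ \subseteq\ \bigcup_{n\in N_{+}(\mathbb Q)}\bigl(N_{\alpha_0}\cdot n\Gamma\ \cup\ N_{\beta_0}\cdot n\Gamma\bigr).$$
Since $N_{+}(\mathbb Q)=N_{+}\cap\SL_3(\mathbb Q)$ is a countable set, this realizes $\mathcal E\cap N_{+}\Gamma/\Gamma$ as a countable union of orbit pieces of the form $N_{\alpha_0}\cdot x$ and $N_{\beta_0}\cdot x$ with $x\in G/\Gamma$.

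Next I would bound the Hausdorff dimension of each of these pieces. Both $N_{\alpha_0}$ and $N_{\beta_0}$ are one-parameter unipotent subgroups, hence diffeomorphic to $\mathbb R$, and for any $x\in G/\Gamma$ the corresponding orbit map $\mathbb R\to G/\Gamma$, $s\mapsto \exp(sX)\,x$ (with $X$ spanning $\mathfrak g_{\alpha_0}$, resp.\ $\mathfrak g_{\beta_0}$), is smooth, hence Lipschitz on every compact interval with respect to the metric $d_{G/\Gamma}$ induced by $d_G$. Writing $\mathbb R=\bigcup_{k\in\mathbb N}[-k,k]$ and using that a Lipschitz map does not increase Hausdorff dimension, together with $\dim_H[-k,k]=1$, we obtain $\dim_H(N_{\alpha_0}\cdot x)\le 1$ and $\dim_H(N_{\beta_0}\cdot x)\le 1$ for every $x\in G/\Gamma$.

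Finally, applying the countable stability of Hausdorff dimension to the countable cover displayed above yields $\dim_H(\mathcal E\cap N_{+}\Gamma/\Gamma)\le 1$, which is the assertion of the corollary. No genuine difficulty arises in this last step: the substantive work --- namely confining the points of $\mathcal E\cap N_{+}\Gamma/\Gamma$ to countably many one-dimensional orbits --- is precisely the content of Lemma~\ref{l610}, and what remains is a routine invocation of the standard facts that Lipschitz images do not raise Hausdorff dimension and that Hausdorff dimension is stable under countable unions.
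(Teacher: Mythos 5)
Your proof is correct and follows essentially the same route as the paper: reduce via Lemma~\ref{l610} to the countably many orbits $N_{\alpha_0}\cdot n\Gamma$ and $N_{\beta_0}\cdot n\Gamma$ with $n\in N_+(\mathbb Q)$, bound each orbit's dimension by $1$, and conclude by countable stability. The only (minor) difference is in how the single-orbit bound is justified --- you use that the orbit map is locally Lipschitz and that Lipschitz images do not raise Hausdorff dimension, whereas the paper splits each orbit into countably many pieces on which the projection $G\to G/\Gamma$ is a homeomorphism; your version is, if anything, slightly cleaner since it needs no injectivity.
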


\begin{proof}
In view of Lemma \ref{l610}, it is enough to show that the sets $N_{\alpha_0}N_+(\mathbb Q)\Gamma$ and $N_{\beta_0}N_+(\mathbb Q)\Gamma$ have Hausdorff dimension at most $1$. Since the Hausdorff dimension is stable under countable unions, the thesis follows if we show that for each point $q\in N_{+}(\mathbb Q)\Gamma/\Gamma$, the sets $N_{\alpha_0}q\Gamma/\Gamma$ and $N_{\beta_0}q\Gamma/\Gamma$ have Hausdorff dimension at most $1$. This is clearly true for the sets $N_{\alpha_0}q$ and $N_{\beta_0}q$. Since the injectivity radius admits a minimum on $N_{+}\Gamma/\Gamma$, we may split the sets $N_{\alpha_0}q$ and $N_{\beta_0}q$ into countably many subsets homeomorphic to their projection onto $N_{+}\Gamma/\Gamma$. It follows that the projection of $N_{\alpha_0}q$ and $N_{\beta_0}q$ has at most dimension $1$.
\end{proof}

\section{Non-emptiness Conditions}

In this section, we analyze the range of admissible values for the exponents $\gamma$. We aim to prove the following result, which is part of Theorem \ref{mthm}.

\begin{proposition}\label{prop:allDioph}
All points $p$ in $N_{+}\Gamma/\Gamma$ are Diophantine of type $(\alpha_{0}+\beta_{0})(X_{0}).$
\end{proposition}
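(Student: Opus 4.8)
The plan is to give a short soft argument, by contradiction. Write $\gamma_{0}:=(\alpha_{0}+\beta_{0})(X_{0})=\lambda_{1}-\lambda_{3}$ and recall that $\nu=-(\alpha_{0}+\beta_{0})$ is the lowest root, so that $\gamma_{0}=-\nu(X_{0})=\max_{\alpha\in\Phi}\bigl(-\alpha(X_{0})\bigr)$; equivalently,
$$\gamma_{0}+\alpha(X_{0})\geq 0\qquad\text{for every }\alpha\in\Phi,$$
with equality only at $\alpha=\nu$. This single inequality is the whole content of the proposition: it expresses that $\gamma_{0}$ is exactly the fastest possible rate at which a non-trivial stabiliser element of $a_{t}\cdot p$ can approach the identity. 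Note that the argument below never uses $p\in N_{+}\Gamma/\Gamma$, so it simultaneously yields the case $\gamma\geq\gamma_{0}$ of Corollary~\ref{cor:mcor}.

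Write $p=g\Gamma$ with $g\in N_{+}$ and suppose, for contradiction, that $p$ is not Diophantine of type $\gamma_{0}$. Applying Definition~\ref{d11} with $C=1/n$ yields real numbers $t_{n}>0$ with $\eta(a_{t_{n}}\cdot p)<\frac{1}{n}e^{-\gamma_{0}t_{n}}$. Since $\Stab(a_{t_{n}}\cdot p)=a_{t_{n}}(g\Gamma g^{-1})a_{-t_{n}}$, the definition of $\eta$ as an infimum provides $w_{n}=g\gamma_{n}g^{-1}\in\Stab(p)\setminus\{e\}$, with $\gamma_{n}\in\Gamma\setminus\{e\}$, such that
$$d_{G}\bigl(a_{t_{n}}w_{n}a_{-t_{n}},e\bigr)<\frac{1}{n}e^{-\gamma_{0}t_{n}}<\frac{1}{n}.$$
For $n$ large this element lies in the neighbourhood $B(r_{0})$ of $e$ fixed in \S\ref{Diophantine}, so we may write $a_{t_{n}}w_{n}a_{-t_{n}}=\exp(Y_{n})$ with $Y_{n}\in\mathcal B(r_{0})$. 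Decomposing $Y_{n}=(Y_{n})_{0}+\sum_{\alpha\in\Phi}(Y_{n})_{\alpha}$ along $\mathfrak g=\mathfrak g_{0}\oplus\bigoplus_{\alpha\in\Phi}\mathfrak g_{\alpha}$ and invoking properties (2) and (3) of the constant $\kappa$, one obtains
$$\|(Y_{n})_{0}\|_{\mathfrak g},\ \|(Y_{n})_{\alpha}\|_{\mathfrak g}\ \leq\ \kappa^{2}\,d_{G}\bigl(a_{t_{n}}w_{n}a_{-t_{n}},e\bigr)\ <\ \frac{\kappa^{2}}{n}\,e^{-\gamma_{0}t_{n}}\qquad(\alpha\in\Phi).$$

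The next step is to pull $w_{n}$ back by $a_{-t_{n}}$: since $a_{-t_{n}}\exp(Y_{n})a_{t_{n}}=\exp\bigl(\Ad(a_{-t_{n}})Y_{n}\bigr)$ and $\Ad(a_{-t_{n}})$ scales $\mathfrak g_{\alpha}$ by $e^{-\alpha(a_{t_{n}})}=e^{-t_{n}\alpha(X_{0})}$ and fixes $\mathfrak g_{0}$, the element $Z_{n}:=\Ad(a_{-t_{n}})Y_{n}$, for which $w_{n}=\exp(Z_{n})$, satisfies
$$\|(Z_{n})_{\alpha}\|_{\mathfrak g}<\frac{\kappa^{2}}{n}\,e^{-(\gamma_{0}+\alpha(X_{0}))t_{n}}\leq\frac{\kappa^{2}}{n},\qquad \|(Z_{n})_{0}\|_{\mathfrak g}<\frac{\kappa^{2}}{n}\,e^{-\gamma_{0}t_{n}}\leq\frac{\kappa^{2}}{n},$$
where the bounds $e^{-(\gamma_{0}+\alpha(X_{0}))t_{n}}\leq 1$ and $e^{-\gamma_{0}t_{n}}\leq 1$ use $t_{n}>0$ together with $\gamma_{0}+\alpha(X_{0})\geq 0$ and $\gamma_{0}>0$. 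Summing over the finitely many components gives $\|Z_{n}\|_{\mathfrak g}\lesssim 1/n\to 0$, hence $w_{n}=\exp(Z_{n})\to e$ in $G$. But $w_{n}$ lies in the discrete subgroup $\Stab(p)=g\Gamma g^{-1}$ and $w_{n}\neq e$, so $d_{G}(w_{n},e)\geq\eta(p)>0$ for every $n$ — a contradiction. Therefore $p$ is Diophantine of type $\gamma_{0}$.

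The only step requiring genuine care is the root computation $\gamma_{0}+\alpha(X_{0})\geq 0$ for all $\alpha\in\Phi$: for $\SL_{3}$ this amounts to checking the six roots $\pm\alpha_{0},\pm\beta_{0},\pm(\alpha_{0}+\beta_{0})$ against $\gamma_{0}=\lambda_{1}-\lambda_{3}$, with the borderline case occurring at $\alpha=\nu$. Everything else — the passage to exponential coordinates, the scaling under $\Ad(a_{-t})$, and the discreteness of $\Stab(p)$ — is routine, and in particular the behaviour of the sequence $t_{n}$ (whether or not it diverges) plays no role.
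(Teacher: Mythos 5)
Your proof is correct, and it takes a genuinely different — and much more elementary — route than the paper's. The paper splits the proposition into three cases (rational points; points with $\Stab(p)\cap N_{-}=\{e\}$; points with $\Stab(p)\cap N_{-}\neq\{e\}$) and, in the main case, runs the full machinery of \S\ref{Diophantine}: it places $p$ in one of the Diophantine sets $\mathcal E_{i}$ via Proposition \ref{p61} and Proposition \ref{p62}, and then derives a contradiction from the integrality of denominators of rational points in $N_{+}\Gamma/\Gamma$ (Theorem \ref{thm51}), which forces $d_{q_{n}}\to 0$ unless $\gamma=(\alpha_{0}+\beta_{0})(X_{0})$ exactly, in which case $p$ would have to coincide with one of finitely many rational points. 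Your argument bypasses all of this: it uses only that the operator norm of $\Ad(a_{-t})$ on $\mathfrak g$ for $t>0$ is $\asymp e^{-t\nu(X_{0})}=e^{\gamma_{0}t}$ (your inequality $\gamma_{0}+\alpha(X_{0})\geq 0$ for all $\alpha\in\Phi$, which is correct: the values $-\alpha(X_{0})$ are $\pm(\lambda_{1}-\lambda_{2})$, $\pm(\lambda_{2}-\lambda_{3})$, $\pm(\lambda_{1}-\lambda_{3})$, with maximum $\lambda_{1}-\lambda_{3}=\gamma_{0}$), together with the discreteness of $\Stab(p)$. All the steps check out: $v_{n}=a_{t_{n}}w_{n}a_{-t_{n}}$ lies in $B(r_{0})$ for large $n$ so the logarithm and the $\kappa$-comparisons of \S\ref{Diophantine} apply, the componentwise contraction bound is exactly the scaling of $\Ad(a_{-t_{n}})$ on root spaces, and $d_{G}(w_{n},e)\to 0$ contradicts $d_{G}(w_{n},e)\geq\eta(p)>0$. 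As you note, nothing uses $p\in N_{+}\Gamma/\Gamma$ or the divergence of $t_{n}$, so you also get the emptiness clause of Corollary \ref{cor:mcor} for free. What the paper's longer route buys is structural information reused elsewhere (e.g.\ that rational points have $\eta(a_{t}q)\asymp e^{-\gamma_{0}t}$ exactly, and that the obstruction is the integrality of denominators), but for the proposition as stated your soft argument is complete.
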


We subdivide the proof of this result into three steps. We start by showing that Proposition \ref{prop:allDioph} holds for rational points.

\begin{lemma}\label{l71}
All rational points $q$ in $N_{+}\Gamma/\Gamma$ are Diohantine of type $(\alpha_{0}+\beta_{0})(X_{0})$. 
\end{lemma}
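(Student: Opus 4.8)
The plan is to compute, for a rational point $q=g\Gamma$ with $g\in N_+$, an explicit lower bound for $\eta(a_t\cdot q)$ in terms of the denominator $d_q$ and then optimize. By Proposition~\ref{p31} and Lemma~\ref{l41}, we may write $q=a_q n_-\Gamma$ with $a_q\in A$ the polar component, so that $d_q=e^{\nu(a_q)}$ where $\nu=-\alpha_0-\beta_0$ is the lowest root. First I would reduce to bounding the injectivity radius of $a_t a_q n_-\Gamma$; since $n_-$ lies in the fixed lattice point and conjugation by $a_t a_q$ only dilates the root-space components of any $v\in\Stab(q)\setminus\{e\}$, I can track how the three negative root spaces $\mathfrak g_{-\alpha_0}$, $\mathfrak g_{-\beta_0}$, $\mathfrak g_{\nu}$ (and the positive ones) get scaled along the flow.

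The key point is that $q$ being rational forces $\Stab(q)$ to contain a whole chain of structure governed by the rational flag appearing in Theorem~\ref{thm51}: after conjugating to put things in lower-triangular form, $\Stab(q)$ intersects $N_-$ in a cocompact lattice, and in particular it contains elements whose component in $\mathfrak g_{\nu}$ has size $\sim d_q$, elements whose projection to $\mathfrak g_{-\alpha_0}$ (resp.\ $\mathfrak g_{-\beta_0}$) is bounded, etc. More precisely, I would use that $\Stab(q)\cap N_-$ is a lattice of covolume comparable to $d_q$ in $N_-\cong\mathbb{R}^3$ (the Lie algebra $\mathfrak n_-$ with the Heisenberg bracket), so by Minkowski's theorem there is a nonzero $v\in\Stab(q)\cap N_-$ with $\|\log v\|_{\mathfrak g}\lesssim d_q^{1/3}$ or better along suitable directions. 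Conjugating such a $v$ by $a_t$ contracts it, so $\eta(a_t\cdot q)$ decays; the worst rate comes from the $\mathfrak g_{\nu}$-direction, which contracts like $e^{\nu(a_t)}=e^{-(\alpha_0+\beta_0)(a_t)}$. Combining the contraction with the initial size bound and using $\nu(a_t)=-(\alpha_0+\beta_0)(X_0)\,t$, one gets $\eta(a_t\cdot q)\geq C\, d_q^{?}\, e^{-(\alpha_0+\beta_0)(X_0)t}$ for an appropriate constant, which exhibits $q$ as Diophantine of type $(\alpha_0+\beta_0)(X_0)$.

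Concretely, the cleanest route is probably: pick a generator $x$ of $\Stab(q)\cap N_\nu$, so $d_q=\|\log x\|_{\mathfrak g_\nu}$ and $a_t a_q\, x\, (a_ta_q)^{-1}=e^{\nu(a_t)}e^{\nu(a_q)}x = e^{\nu(a_t)} d_q\, (x/\|\log x\|)$, giving $\eta(a_t\cdot q)\le d_G(\cdot)\sim d_q\,e^{-(\alpha_0+\beta_0)(X_0)t}$ as an upper bound — but for the Diophantine lower bound I need that no element of $\Stab(q)$ can get \emph{smaller} than this rate. For that I would invoke the compactness coming from the Siegel-domain description in \S\ref{Diophantine}: writing $a_t a_q n_-\Gamma = \ell\, a\,\Gamma$ with $\ell$ in a compact set $L_0$ and $a\in A_{t_0}$, the injectivity radius of $a\Gamma$ is bounded below by $e^{\nu(a)} = e^{-(\alpha_0+\beta_0)(a)}$ times a constant (since on $A_{t_0}$ the smallest translate of a lattice vector is in the $\mathfrak g_\nu$ direction), and one checks $e^{-(\alpha_0+\beta_0)(a)}\gtrsim d_q^{-1} e^{-(\alpha_0+\beta_0)(X_0)t}$ by explicitly computing $a$ from the Iwasawa/Bruhat coordinates of $a_t a_q n_-$.

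The main obstacle will be bookkeeping the relation between the abstract polar component $a_q$ (equivalently $d_q$) and the diagonal part $a$ that appears after reducing $a_t a_q n_-$ into the Siegel domain: one must show that along the whole trajectory $t\mapsto a_t\cdot q$ the "height" $e^{-(\alpha_0+\beta_0)(a(t))}$ never drops below a fixed multiple of $e^{-(\alpha_0+\beta_0)(X_0)t}$, uniformly in $t$. Since $d_q$ is a fixed finite number for a given $q$ and the constant $C$ in Definition~\ref{d11} is allowed to depend on $q$, this is a qualitative statement: it suffices to show the trajectory does not return to the cusp \emph{faster} than the universal rate $e^{-(\alpha_0+\beta_0)(X_0)t}$, which follows because the only lattice vectors in $\Stab(q)$ that can be contracted by $a_t$ lie in $N_-$, and the fastest-contracting direction there is exactly $\mathfrak g_\nu$ with eigenvalue $-(\alpha_0+\beta_0)(X_0)$. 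I would make this rigorous by writing an arbitrary $v\in\Stab(q)\setminus\{e\}$ in root coordinates, noting $d_G(a_t v a_{-t},e)\geq c\max_{\alpha}e^{\alpha(a_t)}\|v_\alpha\|$, and observing that if the maximum is achieved at a negative root $\alpha$ then $\alpha(a_t)\geq \nu(a_t)=-(\alpha_0+\beta_0)(X_0)t$, while $\|v_\alpha\|$ is bounded below by a constant depending on $q$ (discreteness of $\Stab(q)$), and if it is achieved at a non-negative root the quantity grows, so in all cases $\eta(a_t\cdot q)\gtrsim_q e^{-(\alpha_0+\beta_0)(X_0)t}$.
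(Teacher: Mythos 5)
Your operative argument is the one in your final paragraph, and it is correct, but it takes a genuinely different — and in fact more elementary and more general — route than the paper. The paper uses rationality structurally: by Proposition \ref{p31} it writes $q=an_-\Gamma$, observes that $a_tan_-a_{-t}$ stays in a fixed compact set for $t\geq0$, and concludes $\eta(a_tq)\asymp\eta(a_t\Gamma)=e^{-(\alpha_0+\beta_0)(tX_0)}$, reducing everything to the injectivity radius along the diagonal orbit of the standard lattice. You instead bound $d_G(a_tva_{-t},e)$ from below directly for an arbitrary $v\in\Stab(q)\setminus\{e\}$: the fastest contraction rate of $\Ad(a_t)$ on $\mathfrak g$ for $t>0$ is $e^{\nu(a_t)}=e^{-(\alpha_0+\beta_0)(tX_0)}$ (attained only on $\mathfrak g_\nu$), and discreteness of $\Stab(q)$ gives a uniform positive lower bound on the size of $v-e$, so every nontrivial stabilizer element is contracted at worst at rate $e^{\nu(a_t)}$. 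What your route buys is that it never uses rationality of $q$ at all: it proves Proposition \ref{prop:allDioph} for \emph{every} point of $G/\Gamma$ in one stroke, whereas the paper's argument is tailored to the form $an_{-}\Gamma$ and then needs further work for non-rational points.

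Two small repairs are needed. First, the claim that ``$\|v_\alpha\|$ is bounded below by a constant depending on $q$'' is not justified for the particular $\alpha$ at which $\max_\alpha e^{\alpha(a_t)}\|v_\alpha\|$ happens to be attained; the correct order of quantifiers is to pick $\alpha^*$ maximizing $\|v_{\alpha^*}\|$, so that $\|v_{\alpha^*}\|\gtrsim \epsilon_q>0$ by discreteness of $\Stab(q)$, and then use $e^{\alpha^*(a_t)}\geq e^{\nu(a_t)}$ for $t>0$. Second, $\log v$ need not exist for $v$ far from $e$; either restrict to those $v$ with $a_tva_{-t}\in B(r_0)$ (all other $v$ contribute at least $r_0$ to the infimum defining $\eta$), or work with the matrix entries of $v-I$, which transform under conjugation by $a_t$ exactly by the factors $e^{(\lambda_i-\lambda_j)t}\geq e^{(\lambda_3-\lambda_1)t}$. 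Finally, the Minkowski detour in your second paragraph should be dropped: producing a short vector in $\Stab(q)\cap N_-$ yields an \emph{upper} bound on $\eta(a_tq)$, which is irrelevant to the Diophantine (lower-bound) claim; and the Siegel-domain route of your third paragraph, while viable and close to the paper's Lemma \ref{l61}, is left with its key computation unexecuted and is superseded by your direct argument.
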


\begin{proof}
By Proposition \ref{p31}, we have that any rational point $q\in N_{+}\Gamma/\Gamma$ has the form $q=an_{-}\Gamma$ for some $a\in A$ and $n_{-}\in N_{-}$. It follows that
$$\eta(a_{t}q)=\eta(a_{t}an_{-}\Gamma)=\eta(a_{t}an_{-}a_{-t}\cdot a_{t}\Gamma).$$
Now, for all $t>0$ the elements $a_{t}an_{-}a_{-t}$ belong to a fixed compact set $L\subset G$. Since the map $x\mapsto \Ad(h)x$ from $L\times \left\{x\in\mathfrak g: \|x\|_{\mathfrak g}=1\right\}$ to $\mathfrak g$ is continuous, there exists a constant $C_{L}>0$ such that
$$\|\Ad(a_{t}an_{-}a_{-t})x\|_{\mathfrak g}\geq C_{L}^{-1}\|x\|_{\mathfrak g}$$
independently of $t$. It follows that
$$\eta(a_{t}an_{-}a_{-t}\cdot a_{t}\Gamma)\sim_{C_{L}}\eta(a_{t}\Gamma)=e^{-\alpha_{0}(tX_{0})-\beta_{0}(tX_{0})}.$$
This concludes the proof.
\end{proof}

\begin{remark}
\label{rmk:remainingdim}
Note that the proof of Lemma \ref{l71} shows also that if $p\in G/\Gamma$ is any point, then the Diophantine type of the point $an_{-}p$ is the same as the Diophantine type of the point $p$ for all $a\in A$ and $n_{-}\in N_{-}$.
\end{remark}

The next Lemma deals with the case $\Stab(p)\cap N_{-}=\left\{e\right\}$.

\begin{lemma}\label{l72}
All points $p\in N_{+}\Gamma/\Gamma$ such that $\Stab(p)\cap N_{-}= \left\{e\right\}$ are Diophantine of type $(\alpha_{0}+\beta_{0})(X_{0})$.
\end{lemma}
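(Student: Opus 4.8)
The plan is to argue by contradiction, leveraging the machinery built in \S\ref{Diophantine}. Suppose $p\in N_{+}\Gamma/\Gamma$ with $\Stab(p)\cap N_{-}=\{e\}$ is \emph{not} Diophantine of type $\gamma_{0}:=(\alpha_{0}+\beta_{0})(X_{0})$. By Proposition~\ref{p61}, there exist sequences $t_{n}\to\infty$ and $v_{n}\in\Stab(a_{t_{n}}\cdot p)\setminus\{e\}$ satisfying the $\gamma_{0}$-condition, with $a_{-t_{n}}v_{n}a_{t_{n}}$ pairwise distinct elements of $\Stab(p)$. Passing to a subsequence, all $v_{n}$ are of one fixed type $w_{i}$. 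By Lemma~\ref{l66}, the types $w_{4},w_{5},w_{6}$ occur only finitely often, so $i\in\{1,2,3\}$. The strategy is then to feed $\gamma=\gamma_{0}$ into each of Lemmas~\ref{l67}, \ref{l68}, \ref{l69} and extract, in each case, a rational point $q_{n}:=\bar n_{n}\cdot p$ (respectively $w_{3}\bar n_{n}\cdot p$, $w_{2}\bar n_{n}\cdot p$) whose denominator $d_{q_{n}}$ is forced to be \emph{bounded}: indeed, for types $w_{2},w_{3}$ one gets $d_{q_{n}}\sim e^{\beta_{0}(a_{t_{n}})-\gamma_{0}t_{n}}=e^{-\alpha_{0}(a_{t_{n}})-2\beta_{0}(a_{t_{n}})}$, wait—one should instead observe that for type $w_{3}$ the denominator is $\le\kappa' e^{\beta_{0}(a_{t_{n}})-\gamma_{0}t_{n}}$ and $\beta_{0}(a_{t})-\gamma_{0}t = -\alpha_{0}(a_{t})-2\beta_{0}(a_{t})<0$ grows negatively, so $d_{q_{n}}\to 0$. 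But denominators of rational points are bounded below by a positive constant (the generator of $N_{\nu}\cap\Gamma$ has a fixed norm, so by Lemma~\ref{l41} and the discreteness of $A\cap$ the relevant orbit, $d_{q}\gtrsim 1$). This is the desired contradiction for types $w_{2},w_{3}$.

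For type $w_{1}$ (Lemma~\ref{l69}) there are three sub-cases. In Case~1 the denominator satisfies $d_{q_{n}}\le\kappa\kappa' e^{(\alpha_{0}+\beta_{0})(a_{t_{n}})-\gamma_{0}t_{n}}=\kappa\kappa' e^{(\alpha_{0}+\beta_{0})(a_{t_{n}})-(\alpha_{0}+\beta_{0})(a_{t_{n}})}=\kappa\kappa'$, and the lower bound $d_{q_{n}}\ge\frac1{3\kappa'}e^{(\alpha_{0}+\beta_{0})(a_{t_{n}})-\gamma_{0}t_{n}}=\frac1{3\kappa'}$, so in fact $d_{q_{n}}\sim 1$ — this does \emph{not} immediately contradict boundedness. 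Here the contradiction must come from a different source: by Lemma~\ref{l69} Case~1, $\bar n_{n}\in B(Ce^{\alpha_{0}(a_{-t_{n}})},Ce^{\beta_{0}(a_{-t_{n}})},Ce^{(\alpha_{0}+\beta_{0})(a_{-t_{n}})})$, which tends to $\{e\}$ as $n\to\infty$, so $q_{n}=\bar n_{n}\cdot p\to p$ in $G/\Gamma$. Since the $q_{n}$ are rational points with bounded denominator lying in a fixed compact neighbourhood of $p$ in $N_{+}\Gamma/\Gamma$, there are only finitely many of them (rational points of bounded denominator are discrete — this follows from Proposition~\ref{p41} or directly from Theorem~\ref{thm51}); hence $q_{n}=q$ is eventually constant, forcing $\bar n_{n}\to e$ with $\bar n_{n}\cdot p = q$ fixed, so $p=q$ is rational. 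But then $\Stab(p)\cap N_{-}\neq\{e\}$ by Proposition~\ref{p31} and Lemma~\ref{l31}, contradicting the hypothesis. In Cases~2 and~3 of Lemma~\ref{l69}, the denominator bound $d_{q_{n}}\le\kappa''e^{(\alpha_{0}+\beta_{0})(a_{t_{n}})-\gamma_{0}t_{n}}=\kappa''$ again gives bounded denominators, and the same discreteness-plus-convergence argument (using that $\bar n_{n}$ lies in a shrinking box around a shrinking copy of $N_{\alpha_{0}}$ or $N_{\beta_{0}}$, so the $q_{n}$ accumulate at $p$) forces $q_{n}$ eventually constant; one then reads off from the defining inequalities for $B_{n}$ that the $N_{\alpha_{0}}$- (resp.\ $N_{\beta_{0}}$-) component of $\bar n_{n}$ must also stabilise, again making $p$ rational, a contradiction.

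The main obstacle I anticipate is organising the "bounded-denominator rational points near $p$ are finite in number, hence eventually constant" argument cleanly and uniformly across the sub-cases, and in particular verifying that when $q_{n}$ becomes constant the residual freedom in $\bar n_{n}$ (its component in $N_{\alpha_{0}}$ or $N_{\beta_{0}}$ for Cases~2,3, which a priori could be large by the product structure of $B_{n}$) is in fact pinned down — this requires combining $\bar n_{n}\cdot p = q$ (fixed) with $\Stab(p)\cap N_{-}=\{e\}$ to rule out a one-parameter family of solutions, i.e.\ using that the stabiliser of $p$ does not meet $N_{\alpha_{0}}$ or $N_{\beta_{0}}$. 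A clean way to package all of this is: combine $p\in B_{n}\cdot q_{n}$ with the denominator estimate to write, after applying $a_{t_{n}}$, that $a_{t_{n}}p$ lies within a \emph{bounded} set of the bounded-denominator rational point $a_{t_{n}}q_{n}$, and then note $\eta(a_{t_{n}}p)$ is bounded below, since $a_{t_{n}}q_{n}$ stays in a fixed compact part of $G/\Gamma$ (bounded denominator $\Leftrightarrow$ bounded injectivity radius from below, by Lemma~\ref{l41}); this directly contradicts $\eta(a_{t_{n}}p)\le e^{-\gamma_{0}t_{n}}\to 0$, which is how non-Diophantine-of-type-$\gamma_{0}$ manifests. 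I would present the proof in this last, most economical form, treating types $w_{1},w_{2},w_{3}$ uniformly via the observation that in every case Lemmas~\ref{l67}--\ref{l69} yield $d_{q_{n}}\lesssim 1$ together with $d_{G/\Gamma}(a_{t_{n}}p, a_{t_{n}}q_{n})\lesssim 1$.
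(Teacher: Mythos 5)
Your overall strategy coincides with the paper's: argue by contradiction, use Proposition \ref{p61} together with the type classification (Lemmas \ref{l66}--\ref{l69}, equivalently Proposition \ref{p62}) to place $p$ in one of the sets $\mathcal E_i$, and then read off a contradiction from the denominator estimates --- for types $w_2,w_3$ because $d_{q_n}\to 0$, and for type $w_1$ because $d_{q_n}$ stays bounded, which by discreteness forces $q_n$ to be eventually constant and hence $p$ to be rational. (The paper closes the $w_1$ case by contradicting Lemma \ref{l71} rather than the hypothesis $\Stab(p)\cap N_-=\{e\}$, but your variant is equally valid, since rationality of $p$ means $\Stab(p)\cap N_\nu\neq\{e\}$ by Definition \ref{d31}.)

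The gap is in the step everything hinges on: the lower bound $d_{q_n}\gtrsim 1$. Your justification ("the generator of $N_\nu\cap\Gamma$ has a fixed norm, so by Lemma \ref{l41} and the discreteness of $A\cap{}$the relevant orbit, $d_q\gtrsim 1$") is false for general rational points of $G/\Gamma$: by Lemma \ref{l41}, $d_{an_-\Gamma}=e^{\nu(a)}$, which is arbitrarily small for suitable $a\in A$. The correct argument --- and the one the paper uses --- exploits that $p\in N_+\Gamma/\Gamma$, so the rational points produced by Lemmas \ref{l67}--\ref{l69} lie in $w_iN_+\Gamma/\Gamma=(w_iN_+w_i^{-1})\Gamma/\Gamma$; one splits off the $N_-$-factor of $w_iN_+w_i^{-1}$ (which centralizes $N_\nu$ and therefore leaves the denominator unchanged) and applies Theorem \ref{thm51} to the remaining point of $N_+\Gamma/\Gamma$, whose denominator $bq^2/d$ is a positive integer. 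Your final "economical" packaging, which you say you would actually present, is worse: the claimed equivalence "bounded denominator $\Leftrightarrow$ injectivity radius bounded below" is false --- a rational point $a\Gamma$ with $\nu(a)=0$ but $a$ diverging inside $\ker\nu$ has denominator $1$ while $\eta(a\Gamma)\to 0$ --- so $d_{q_n}\lesssim 1$ alone does not put $a_{t_n}q_n$ in a fixed compact set, and the intended contradiction with $\eta(a_{t_n}p)\to 0$ evaporates. Keep the case-by-case version, with the lower bound on denominators supplied by Theorem \ref{thm51} as above.
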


\begin{proof}
Assume by contradiction that there exists a point $p\in G/\Gamma$ that is not Diophantine of type $\gamma$, with $\gamma\geq (\alpha_{0}+\beta_{0})(X_{0})$ and $\Stab(p)\cap N_{-}=\left\{e\right\}$. By Proposition \ref{p61}, we have $p\in\mathcal E_{i}$ for some $1\leq i\leq 6$. In the following, we will consider only the cases $i=1,3,4$. The proof for the remaining cases is similar.

Suppose that $p\in \mathcal E_1\cap U_0$. Then, there exist a sequence $t_n\to\infty$ of real numbers and a sequence $q_n\in G/\Gamma$ of rational points such that 
\begin{align}\label{eq7}
p\in B\left(0,Ce^{\beta_0(a_{-t_n})}, Ce^{(\alpha_0+\beta_0)(a_{-t_n})}\right)w_3^{-1}\cdot q_n
\end{align}
and 
\begin{align}\label{eq8}
\frac1{\kappa''}e^{\beta_0(a_{t_n})-\gamma t_n}\leq d_{q_n}\leq\kappa'' e^{\beta_0(a_{t_n})-\gamma t_n}.
\end{align}
Since $p\in N_+\Gamma/\Gamma$, we have $$q_n\in w_{3}N_{+}\Gamma/\Gamma=w_{3}N_{+}w_{3}^{-1}\Gamma/\Gamma= N_{-\alpha_0}N_{\beta_0}N_{\alpha_0+\beta_0}\Gamma/\Gamma.$$ Hence, we can write $q_n=x_n\cdot y_n$ with $x_n\in N_{-\alpha_0},$ and $y_n\in N_{\beta_0}N_{\alpha_0+\beta_0}\Gamma.$ We observe that, by Proposition \ref{p31} and Lemma \ref{l41}, acting by an element in $N_{-}$ preserves rationality and denominators. Thus, by Lemma~\ref{l32} and Theorem~\ref{thm51}, we have $$y_n\in N_{\beta_0}(\mathbb Q)N_{\alpha_0+\beta_0}(\mathbb Q)\Gamma,\quad\mbox{and}\quad d_{q_n}=d_{y_n}\in\mathbb N.$$
However, (\ref{eq8}) and the fact that $\gamma\geq (\alpha_{0}+\beta_{0})(X_{0})$ imply that $d_{q_{n}}\to 0$, whence a contradiction. 

Now, suppose that $p\in \mathcal E_3\cap U_0\neq\emptyset$. Then, there exist a sequence $t_n\to\infty$ of real numbers and a sequence $q_n\in G/\Gamma$ of rational points such that 
\begin{align}\label{eq9}
p\in B\left(Ce^{\alpha_0(a_{-t_n})},Ce^{\beta_0(a_{-t_n})},Ce^{(\alpha_0+\beta_0)(a_{-t_n})}\right)\cdot q_n
\end{align}
and 
\begin{align}\label{eq10}
\frac1{3\kappa''}e^{(\alpha_0+\beta_0)(a_{t_n})-\gamma t_n}\leq d_{q_n}\leq\kappa'' e^{(\alpha_0+\beta_0)(a_{t_n})-\gamma t_n}.
\end{align}
Since $p\in N_+\Gamma/\Gamma$, we have $q_n\in N_+\Gamma/\Gamma$ and thus $d_{q_n}\in\mathbb N$, by Theorem~\ref{thm51}. Equation~\eqref{eq10} then implies that $$\gamma=(\alpha_0+\beta_0)(X_{0})$$ and hence, $$\frac1{3\kappa''}\leq d_{q_n}\leq\kappa''.$$ By Theorem~\ref{thm51}, there are only finitely many rational points with this property. Therefore, after passing to a subsequence, we may assume that $q_n=q$ is a fixed rational point in $N_+\Gamma/\Gamma$. Then, Equation~\eqref{eq9} implies $p=q$, which, by Lemma \ref{l71}, contradicts the assumption that $p$ is not Diophantine of type $\gamma$. This completes the proof for $i=3$.

Finally, assume that $p\in\mathcal E_{4}$. Then, there exist a sequence $t_n\to\infty$ of real numbers and a sequence $q_n\in G/\Gamma$ of rational points such that $p\in B_n\cdot q_n$, with 
\begin{align}
\label{eq:11}
B_n=&\bigg\{x\in N_+: x\in B\left(0,Ce^{\beta_0(a_{-t_n})},Ce^{(\alpha_0+\beta_0)(a_{-t_n})}\right)\cdot y, y\in N_{\alpha_0},\nonumber\\
&\left.\frac1{3\kappa''}e^{-\gamma t_n}e^{\beta_0(a_{t_n})}\leq d_G(y,e)\cdot d_{q_n}\leq\kappa'' e^{-\gamma t_n}e^{\beta_0(a_{t_n})}\right\}
\end{align}
and
\begin{equation}
\label{eq:12}
d_{q_n}\leq\kappa''e^{(\alpha_0+\beta_0)(a_{t_n})-\gamma t_n}.
\end{equation}
Since $p\in N_{+}\Gamma/\Gamma$, we also have $q_{n}\in N_{+}\Gamma/\Gamma$. Hence, by Theorem \ref{thm51}, $d_{q_{n}}\in\mathbb{N}$. Then, it follows from (\ref{eq:12}) that $\gamma=(\alpha_{0}+\beta_{0})(X_{0})$, and, as for $i=3$, we may assume that $q_{n}=q$ for some fixed rational point $q\in N_{+}\Gamma/\Gamma$. By (\ref{eq:11}) we deduce that $p=q$ and this contradicts Lemma \ref{l71}, concluding the proof.
\end{proof}

The final lemma of this section deals with points $p$ such that $\Stab(p)\cap N_{-}\neq\left\{e\right\}$.

\begin{lemma}
All points $p\in N_{+}\Gamma/\Gamma$ such that $\Stab(p)\cap N_{-}\neq\left\{e\right\}$ are Diophantine of type $(\alpha_{0}+\beta_{0})(X_{0})$.
\end{lemma}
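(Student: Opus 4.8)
The plan is to reduce, via Lemma~\ref{l610}, to a concrete situation inside $N_{+}$ and then run a direct matrix computation. By Lemma~\ref{l610}, since $p\in N_{+}\Gamma/\Gamma$ and $\Stab(p)\cap N_{-}\neq\{e\}$, one can write $p=g\Gamma$ with $g=mn\in N_{+}$, where $n\in N_{+}(\mathbb{Q})$ and $m$ lies in $N_{\alpha_{0}}$ or in $N_{\beta_{0}}$; I would treat the case $m\in N_{\alpha_{0}}$, the case $m\in N_{\beta_{0}}$ being symmetric after interchanging the first and third coordinates. Since $\Stab(a_{t}\cdot p)=a_{t}g\Gamma g^{-1}a_{-t}$, I would set $h_{t}:=a_{t}ga_{-t}\in a_{t}N_{+}a_{-t}=N_{+}$ and rewrite $\eta(a_{t}\cdot p)=\inf_{\gamma\in\Gamma\setminus\{e\}}d_{G}\bigl(h_{t}(a_{t}\gamma a_{-t})h_{t}^{-1},e\bigr)$, using that the $(i,j)$-entry of $a_{t}\gamma a_{-t}$ is $\gamma_{ij}e^{(\lambda_{i}-\lambda_{j})t}$ (so the $(1,2),(2,3),(1,3)$ entries carry $e^{\alpha_{0}(a_{t})},e^{\beta_{0}(a_{t})},e^{(\alpha_{0}+\beta_{0})(a_{t})}$ and the transposed entries the reciprocals).

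The crux is that conjugation by an element of $N_{+}$ leaves the $(3,1)$-entry of any matrix untouched, leaves the $(2,1)$- and $(3,2)$-entries untouched once the $(3,1)$-entry vanishes, and leaves every remaining entry untouched once the $(2,1)$- and $(3,2)$-entries also vanish; so although $\|h_{t}\|\to\infty$, its large contributions cancel. Choosing a small absolute constant $c>0$ (smaller than the radius $r_{0}$ of \S\ref{Diophantine}, and smaller than the reciprocal of twice the constant comparing $d_{G}(\cdot,e)$ with the max-entry norm on $B(r_{0})$), I would assume $d_{G}\bigl(h_{t}(a_{t}\gamma a_{-t})h_{t}^{-1},e\bigr)<c\,e^{-(\alpha_{0}+\beta_{0})(a_{t})}$ for some $t>0$ and $\gamma\neq e$ and derive a contradiction. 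Reading off the entries of $h_{t}(a_{t}\gamma a_{-t})h_{t}^{-1}$ in the order $(3,1),(2,1),(3,2),(3,3),(2,2),(1,1),(2,3),(1,2),(1,3)$ — each of which, after the cancellations, equals $\gamma_{ij}e^{(\lambda_{i}-\lambda_{j})t}$ (off-diagonal) or $\gamma_{ii}$ (diagonal), hence has modulus $<c'\,e^{-(\alpha_{0}+\beta_{0})(a_{t})}$ with $c'<1/2$ — and using $\gamma_{ij}\in\mathbb{Z}$ together with $(\alpha_{0}+\beta_{0})(a_{t})+(\lambda_{i}-\lambda_{j})t\geq 0$ for each relevant pair (which holds for all $t>0$, as $\lambda_{1}>\lambda_{2}>\lambda_{3}$), one is forced successively into $\gamma_{31}=\gamma_{21}=\gamma_{32}=0$, then $\gamma_{33}=\gamma_{22}=\gamma_{11}=1$, then $\gamma_{23}=\gamma_{12}=\gamma_{13}=0$, i.e.\ $\gamma=e$. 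Hence $\eta(a_{t}\cdot p)\geq c\,e^{-(\alpha_{0}+\beta_{0})(X_{0})t}$ for all $t>0$, which is exactly the assertion. I would also observe that the hypothesis $\Stab(p)\cap N_{-}\neq\{e\}$ is not used in the computation itself — Lemma~\ref{l610} serves only to put the point in the displayed form — so this argument in fact reproves Proposition~\ref{prop:allDioph} for points of $N_{+}\Gamma/\Gamma$.

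I expect the only delicate part to be the bookkeeping in the middle step: one must verify precisely that the large entries of $h_{t}$ cancel out of each successive entry of $h_{t}(a_{t}\gamma a_{-t})h_{t}^{-1}$, which happens exactly because the lower-triangular entries of $a_{t}\gamma a_{-t}$ lying earlier in the chain have already been forced to $0$, and one must order the implications so that every integrality constraint becomes available at the moment it is used. Everything else is routine; in particular the metric/entry comparison is legitimately invoked since $c\,e^{-(\alpha_{0}+\beta_{0})(a_{t})}\leq c<r_{0}$ for every $t>0$, so the element whose entries are being read off already lies in $B(r_{0})$.
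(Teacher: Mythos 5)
Your argument is correct, but it is a genuinely different and considerably more elementary route than the paper's. The paper proves this lemma by contradiction, running the whole approximation machinery of \S\ref{Diophantine} and \S7: it invokes Lemma~\ref{l61} to produce elements $u_{n}\in\Stab(p)$, splits into the cases where the $u_{n}$ repeat or not, and in each case falls back on Propositions~\ref{p61}--\ref{p62} and Lemmas~\ref{l71}--\ref{l72}. You instead prove the inequality $\eta(a_{t}p)\geq c\,e^{-(\alpha_{0}+\beta_{0})(a_{t})}$ directly, for \emph{every} $g\in N_{+}$ and every $t>0$, by the entry-by-entry elimination $\gamma_{31}=0$, then $\gamma_{21}=\gamma_{32}=0$, then $\gamma_{ii}=1$, then $\gamma_{23}=\gamma_{12}=0$, then $\gamma_{13}=0$; this is the $\SL_{3}$ analogue of the classical computation showing that the closed horocycle of period $T$ in $\SL_2(\mathbb Z)\backslash\SL_2(\mathbb R)$ penetrates the cusp only to depth comparable with $1/T$. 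I checked the nine entries in your order and the cancellations are exact at each stage because the previously eliminated entries of $a_{t}\gamma a_{-t}$ vanish identically (integrality), and the exponent inequality $(\alpha_{0}+\beta_{0})(a_{t})+(\lambda_{i}-\lambda_{j})t\geq 0$ holds for every pair in the chain. Your closing observation is also right: the hypothesis $\Stab(p)\cap N_{-}\neq\{e\}$ and Lemma~\ref{l610} play no role, so the computation reproves all of Proposition~\ref{prop:allDioph}, and in fact strengthens it to a constant $c$ uniform over $N_{+}\Gamma/\Gamma$ — which is not absurd, since $N_{+}\Gamma/\Gamma\cong N_{+}/(N_{+}\cap\Gamma)$ is compact and $a_{t}N_{+}\Gamma/\Gamma=N_{+}a_{t}\Gamma/\Gamma$ is a periodic orbit whose deepest cusp excursion is governed by $\eta(a_{t}\Gamma)=e^{-(\alpha_{0}+\beta_{0})(a_{t})}$.

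One caveat on the wording of your ``crux'': the claim that conjugation by $N_{+}$ ``leaves every remaining entry untouched once the $(2,1)$- and $(3,2)$-entries also vanish'' is not literally true. For $M$ merely upper triangular, conjugation by $h=\exp(ae_{\alpha_0}+ce_{\beta_0}+be_{\alpha_0+\beta_0})$ preserves the diagonal but shifts $(hMh^{-1})_{12}$ by $a(M_{22}-M_{11})$, $(hMh^{-1})_{23}$ by $c(M_{33}-M_{22})$, and $(hMh^{-1})_{13}$ by $aM_{23}-cM_{12}$ plus diagonal terms. Your argument survives only because of the precise order you chose: the diagonal entries are pinned to $1$ before the strictly upper ones are read, and $(1,2)$ and $(2,3)$ are forced to vanish before $(1,3)$ is read. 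Since you flagged exactly this bookkeeping as the delicate step and your ordering is the correct one, I regard this as an imprecision of statement rather than a gap.
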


\begin{proof}
Assume by contradiction that there exists a point $p\in N_{+}\Gamma/\Gamma$ such that $\Stab(p)\cap N_{-}\neq\left\{e\right\}$ and $p$ is not Diophantine of type $\gamma$ for some $\gamma\geq (\alpha_{0}+\beta_{0})(X_{0})$. By the definition of Diophantine points and part $(2)$ of Lemma~\ref{l61}, for any $\varepsilon>0$ there exist a sequence $t_n\to +\infty$ of real numbers and elements $u_n\in\Stab(p)\setminus\left\{e\right\}$ such that\vspace{2mm}
\begin{enumerate}
\item $a_{t_n}u_na_{t_n}^{-1}\in\Stab(a_{t_n}\cdot p)$;\vspace{2mm}
\item $d_G(a_{t_n}u_na_{t_n}^{-1},e)\leq C\cdot \epsilon e^{-\gamma t_n}$;\vspace{2mm}
\item $u_n$ is primitive in $\textup{Stab}(p)$ and is conjugate to an element of $N_\nu$, where $\nu$ is the lowest root in $\Phi_-$.
\end{enumerate}
The proof of Lemma \ref{l62} breaks down here as the elements $u_{n}$ in the sequence above might be not pairwise distinct. If there are infinitely many such elements, up to passing to a sub-sequence we can still run the proof of Lemma \ref{l62} and deduce that, for the point $p$, the conclusion in Proposition \ref{p61} holds. On applying Proposition \ref{p62}, we deduce that $p\in \mathcal E_{i}$ for some $1\leq i\leq 6$ and this contradicts Lemma \ref{l72}. Therefore, we may assume, without loss of generality that $u_{n}=u$ for all $n$, with $u\in \Stab(a_{t_{1}}p)$. If this is the case, we deduce from part $(2)$ that it cannot be $\gamma>(\alpha_{0}+\beta_{0})(X_{0})$, as the maximal contraction rate of $\Ad(a_{t})$ is $\exp((-\alpha_{0}-\beta_{0})(tX_{0}))$. If $\gamma=(\alpha_{0}+\beta_{0})(X_{0})$, we also deduce that it must be $u\in N_{\nu}$ where $\nu=-\alpha_{0}-\beta_{0}$. However, by definition Definition \ref{d31}, this implies that $p$ itself is rational, in contradiction with Lemma \ref{l71}. This concludes the proof.
\end{proof}

\section{Upper Bounds for the Hausdorff dimension}\label{estimates}

In this section, we will estimate the Hausdorff dimension of Diophantine subsets defined in \S\ref{Diophantine}. The main result will be the following.

\begin{proposition}
\label{prop:upperbound}
For $1\leq i\leq 5$ it holds
$$\dim_{H}(\mathcal E_{i}\cap N_{+}\Gamma/\Gamma)\leq 3-\frac{2\gamma}{\alpha_0(X_{0})+\beta_0(X_{0})}.$$
\end{proposition}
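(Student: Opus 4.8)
The plan is to bound $\dim_H(\mathcal E_i\cap N_+\Gamma/\Gamma)$ for each $i$ by constructing, for every large $n$, an efficient cover of $\mathcal E_i\cap N_+\Gamma/\Gamma$ by the neighbourhoods appearing in Definition \ref{d63}, and then optimizing the resulting Hausdorff-measure sum. Fix $i$. A point $p\in\mathcal E_i\cap N_+\Gamma/\Gamma$ lies, for infinitely many $n$, in a set of the form $B_n\cdot q_n$ with $q_n$ a rational point whose denominator $d_{q_n}$ is controlled (up to multiplicative constants) by $e^{\rho(a_{t_n})-\gamma t_n}$, where $\rho$ is $\beta_0$, $\alpha_0$, or $\alpha_0+\beta_0$ depending on $i$, and $B_n$ is a box in $N_+$ whose side lengths in the three root directions $\mathfrak g_{\alpha_0},\mathfrak g_{\beta_0},\mathfrak g_{\alpha_0+\beta_0}$ are explicit powers of $e^{-t_n}$ (for $i=4,5$ the box is "sheared" by an element $y\in N_{\alpha_0}$ or $N_{\beta_0}$ whose size is also pinned down). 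So $\mathcal E_i\cap N_+\Gamma/\Gamma\subset\bigcap_{N}\bigcup_{n\geq N}\bigcup_{q}B_n(q)\cdot q$, the inner union being over rational points $q\in N_+\Gamma/\Gamma$ with $d_q$ in the prescribed dyadic-type range. This is a classical limsup-set covering setup.

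The first key step is the counting input: I need an upper bound for the number of rational points $q$ in a fixed bounded open set $U\subset N_+\Gamma/\Gamma$ with $d_q\leq l$ (equivalently in $[l/2,l]$, summed dyadically). Proposition \ref{p41} gives $|S_K(U\Gamma/\Gamma,l/2,l)|\sim l^2\mu_{N_+}(U)$, but that counts \emph{all} rational points with denominator in $[l/2,l]$, and here I instead need the count refined by the shape of the approximating box and — crucially for $i=4,5$ — the count of pairs $(q,y)$ with $d_G(y,e)\cdot d_q$ in a prescribed range, i.e. a count weighted by the "fibered" structure coming from Theorem \ref{thm51}. The explicit formula $d_p=bq^2/d$ from Theorem \ref{thm51} is what lets me translate these conditions into divisor/congruence conditions on integer triples $(a,b,q,p_1,p_2)$ and thus estimate the number of relevant rational points in $U$ with denominator $\leq l$ by something like $l^{2}$ (possibly with a log or a slightly different power in the fibered cases $i=4,5$, which I expect to need care). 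I would isolate this as a counting lemma before doing the dimension computation.

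The second key step is the covering computation itself. For a threshold $s\geq 0$, I estimate $\sum_q (\operatorname{diam} B_n(q)\cdot q)^s$. The diameter of $B_n(q)$ is comparable to the largest of its three side lengths, which are $e^{\alpha_0(a_{-t_n})}$, $e^{\beta_0(a_{-t_n})}$, $e^{(\alpha_0+\beta_0)(a_{-t_n})}$ (or products with $d_G(y,e)$ for $i=4,5$); since $\alpha_0(X_0),\beta_0(X_0)>0$ the dominant side is $e^{-\alpha_0(a_{t_n})}$ or $e^{-\beta_0(a_{t_n})}$, i.e.\ $e^{-\min(\alpha_0,\beta_0)(a_{t_n})}$, but actually the box is a \emph{rectangle} not a cube, so to get a sharp bound I should cover each rectangle $B_n(q)\cdot q$ by roughly $(\text{long side}/\text{short side})$ many cubes of the short side length — this is the standard trick to see the "effective dimension" of a thin rectangle. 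Plugging the count $\sim d_{q}^{-2}$-type estimate (number of $q$ with $d_q\sim l$ is $\sim l^2$, and $l\sim e^{\rho(a_{t_n})-\gamma t_n}$) together with the cube counts and the short-side length, I get a sum of the form $\sum_n e^{t_n\cdot f(s)}$ for an explicit affine function $f(s)$ of $s$; the sum is finite once $f(s)<0$, giving $\dim_H\leq s_0$ where $f(s_0)=0$. The arithmetic should produce exactly $s_0=3-\frac{2\gamma}{\alpha_0(X_0)+\beta_0(X_0)}$ in every case — I would verify this case by case ($i=1,2$ symmetric, $i=3$ the "main" case, $i=4,5$ symmetric and most delicate because of the extra parameter $y$).

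The main obstacle I anticipate is the fibered counting in cases $i=4$ and $i=5$: there I must count rational points $q\in N_+\Gamma/\Gamma$ together with $y\in N_{\alpha_0}$ (resp.\ $N_{\beta_0}$) subject to $d_G(y,e)\cdot d_q$ lying in a window of multiplicative width $O(1)$ around $e^{-\gamma t_n}e^{\beta_0(a_{t_n})}$ while also $d_q\leq\kappa'' e^{(\alpha_0+\beta_0)(a_{t_n})-\gamma t_n}$, and then cover the correspondingly sheared boxes efficiently. Getting the exponent bookkeeping right here — keeping track of which coordinate of $N_+$ is stretched by $d_G(y,e)$ versus by the flow, and making sure the number of covering cubes times their $s$-dimensional volume still sums to the same critical exponent — is where the computation is least routine, and I expect to lean on Theorem \ref{thm51} (the divisor $d=\gcd(q,bp_1+ap_2)$) to control how many $(q,y)$ pairs actually occur. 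Everything else (the dyadic decomposition in $t_n$ and in $d_q$, the $\liminf$/Borel–Cantelli style passage to the cover, the reduction via Lemma \ref{lem:transversality} that the boxes times $q$ genuinely inject into $N_+\Gamma/\Gamma$) is standard and I would treat it briskly.
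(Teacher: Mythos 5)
Your proposal is correct and follows essentially the same route as the paper: cover each $\mathcal E_i$ by the neighbourhoods from Definition \ref{d63} reparametrized in terms of $d_q$, use Theorem \ref{thm51} to reduce the counting of the relevant rational points to divisor-sum estimates (the paper gets $\lesssim l^2$ in the cases $i=1,2$ and $\lesssim l^{2+\epsilon}$ for the full set $N_+(\mathbb Q)\Gamma$, exactly the "slightly different power" you anticipate), chop each anisotropic box into cubes of the shortest side length, and sum dyadically in $d_q$ and $t_n$; for $i=4,5$ the paper likewise treats the union over admissible $y$ as a slab whose $\alpha_0$-extent is $\sim e^{\beta_0(a_{t_n})-\gamma t_n}/d_q$ and weights the dyadic sum by $1/d_q$, which is your fibered count. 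The only detail worth noting is that for $i=1,2$ the rational points live in $w_3N_+\Gamma/\Gamma=N_{-\alpha_0}N_{\beta_0}N_{\alpha_0+\beta_0}\Gamma/\Gamma$, so one fattens the covering sets along the full $N_{-\alpha_0}$ direction and counts only the countable $N_{\beta_0}(\mathbb Q)N_{\alpha_0+\beta_0}(\mathbb Q)\Gamma$ part.
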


As a corollary, we deduce an upper bound for the Hausdorff dimension of $S_{\gamma}^{c}\cap N_{+}\Gamma/\Gamma$.

\begin{corollary}
\label{cor:upperbound}
For any $\gamma< \alpha_0(X_{0})+\beta_0(X_{0})$ we have
$$\dim_{H}(S_{\gamma}^{c}\cap N_{+}\Gamma/\Gamma)\leq 3-\frac{2\gamma}{\alpha_0(X_{0})+\beta_0(X_{0})}.$$
\end{corollary}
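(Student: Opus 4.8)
The plan is to derive Corollary \ref{cor:upperbound} from Proposition \ref{prop:upperbound} by decomposing $S_{\gamma}^{c}\cap N_{+}\Gamma/\Gamma$ into the pieces studied in \S\ref{Diophantine}. First I would recall that any point $p\in N_{+}\Gamma/\Gamma$ either satisfies $\Stab(p)\cap N_{-}=\left\{e\right\}$ or lies in the set $\mathcal{E}=\left\{p\in G/\Gamma:\Stab(p)\cap N_{-}\neq\left\{e\right\}\right\}$. In the first case, if $p$ is not Diophantine of type $\gamma$, Proposition \ref{p61} provides sequences $t_{n}\to\infty$ and $v_{n}\in\Stab(a_{t_{n}}\cdot p)\setminus\left\{e\right\}$ satisfying the $\gamma$-condition; after passing to a subsequence we may assume all $v_{n}$ are of a fixed type $w_{i}$ with $1\leq i\leq 6$. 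By Lemma \ref{l66}, the types $i=4,5,6$ occur only finitely often, so in fact some type $i\in\left\{1,2,3\right\}$ occurs infinitely often, and by Lemmas \ref{l67}, \ref{l68}, \ref{l69} (equivalently, by Proposition \ref{p62}) the point $p$ belongs to one of the Diophantine sets $\mathcal{E}_{1},\dots,\mathcal{E}_{5}$. Hence
\begin{equation*}
S_{\gamma}^{c}\cap N_{+}\Gamma/\Gamma\subseteq\left(\bigcup_{i=1}^{5}\mathcal{E}_{i}\cap N_{+}\Gamma/\Gamma\right)\cup\left(\mathcal{E}\cap N_{+}\Gamma/\Gamma\right).
\end{equation*}

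Next I would invoke the two inputs that control the right-hand side. By Proposition \ref{prop:upperbound}, each of the five sets $\mathcal{E}_{i}\cap N_{+}\Gamma/\Gamma$ has Hausdorff dimension at most $3-2\gamma/(\alpha_{0}(X_{0})+\beta_{0}(X_{0}))$, and by Corollary \ref{cor:E} the exceptional set $\mathcal{E}\cap N_{+}\Gamma/\Gamma$ has Hausdorff dimension at most $1$. Since we are in the regime $\gamma<\alpha_{0}(X_{0})+\beta_{0}(X_{0})$, the quantity $3-2\gamma/(\alpha_{0}(X_{0})+\beta_{0}(X_{0}))$ is strictly greater than $1$, so the bound coming from Corollary \ref{cor:E} is subsumed. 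Using the countable (here finite) stability of Hausdorff dimension under unions, $\dim_{H}$ of the union of these six sets is the maximum of their dimensions, which is $3-2\gamma/(\alpha_{0}(X_{0})+\beta_{0}(X_{0}))$. Monotonicity of Hausdorff dimension under inclusion then gives the claimed bound for $S_{\gamma}^{c}\cap N_{+}\Gamma/\Gamma$.

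The only genuinely delicate point in this reduction — and the step I would write out most carefully — is the justification that a non-Diophantine point with trivial $N_{-}$-stabilizer really does land in one of the $\mathcal{E}_{i}$: this requires the subsequence argument to fix a single Weyl type, the observation (Lemma \ref{l66}) that the "large" types $w_{4},w_{5},w_{6}$ are excluded for all but finitely many $n$ so that the infinitely-occurring type is among $w_{1},w_{2},w_{3}$, and then the precise geometric conclusions of Lemmas \ref{l67}–\ref{l69} translated through Definition \ref{d63}. All of this is already done in \S\ref{Diophantine}, so at the level of this corollary it is a matter of quoting Proposition \ref{p62} correctly; the everything-else part (the dimension arithmetic and the union bound) is routine. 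I do not anticipate any serious obstacle here, since the heavy lifting is deferred to Proposition \ref{prop:upperbound}, whose proof occupies the remainder of \S\ref{estimates}.
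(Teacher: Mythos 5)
Your proposal is correct and follows essentially the same route as the paper: the inclusion $S_{\gamma}^{c}\cap N_{+}\Gamma/\Gamma\subset(\mathcal E\cap N_{+}\Gamma/\Gamma)\cup\bigcup_{i=1}^{5}\mathcal E_{i}$ via Proposition \ref{p62}, the bound $\dim_{H}(\mathcal E\cap N_{+}\Gamma/\Gamma)\leq 1$ from Corollary \ref{cor:E}, Proposition \ref{prop:upperbound} for each $\mathcal E_{i}$, and the observation that $3-2\gamma/(\alpha_{0}+\beta_{0})(X_{0})>1$ in the stated range. The extra detail you supply on how Proposition \ref{p62} is assembled from Lemmas \ref{l66}--\ref{l69} is accurate but already contained in \S\ref{Diophantine}.
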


\begin{proof}
By Proposition \ref{p62}, we have that
$$S_{\gamma}^{c}\cap N_{+}\Gamma/\Gamma\subset \mathcal E\cap N_{+}\Gamma/\Gamma \cup\bigcup_{i=1}^{5}\mathcal E_{i}\cup N_{+}\Gamma/\Gamma,$$
where $\mathcal E=\left\{p\in G/\Gamma:\Stab(p)\cap N_{-}\neq \left\{e\right\}\right\}$. Moreover, Corollary \ref{cor:E} implies that $\dim_{H}(E\cap N_{+}\Gamma/\Gamma)\leq 1$. Since the upper bound in Proposition \ref{prop:upperbound} is larger than $1$ when $\gamma< \alpha_0(X_{0})+\beta_0(X_{0})$, the claim follows.
\end{proof}

From now on, we will fix a small open box $U_0$ in $N_{+}\Gamma/\Gamma$ and discuss $\dim_H(S_\gamma^c\cap U_0)$. The proof of Proposition \ref{prop:upperbound} will be naturally subdivided into $5$ cases and will occupy the remainder of this section. 

\subsection{The Diophantine sets $\mathcal E_1$ and $\mathcal E_{2}$}
By definition, $\mathcal E_1\cap U_0$ is the subset of points $p\in U_0$ for which there exist a sequence $t_n\to\infty$ of real numbers and a sequence $q_n\in G/\Gamma$ of rational points such that 
\begin{equation}
\label{eq:upperbounds1}
p\in B\left(0,Ce^{\beta_0(a_{-t_n})}, Ce^{(\alpha_0+\beta_0)(a_{-t_n})}\right)w_3^{-1}\cdot q_n
\end{equation}
and 
\begin{equation}
\label{eq:upperbounds2}
\frac1{\kappa''}e^{\beta_0(a_{t_n})-\gamma t_n}\leq d_{q_n}\leq\kappa'' e^{\beta_0(a_{t_n})-\gamma t_n}.
\end{equation}

\begin{remark}
Note that we may always assume $\gamma<\beta_{0}(a_{1})$, since otherwise (\ref{eq:upperbounds1}) and (\ref{eq:upperbounds2}) imply that $w_{3}p$ is rational and thus $\Stab(p)\cap N_{-}\neq \left\{e\right\}$. In this case, the required upper bound follows from Corollary \ref{cor:E}. 
\end{remark}

It follows from (\ref{eq:upperbounds1}) and (\ref{eq:upperbounds2}) that there exists $C'>0$, depending only on $C$ and $\kappa''$, such that $$w_3\cdot p\in B\left(0,C'd_{q_n}^{\frac{(\alpha_0+\beta_0)(a_{-1})}{\beta_0(a_1)-\gamma}},C'd_{q_n}^{\frac{\beta_0(a_{-1})}{\beta_0(a_1)-\gamma}}\right)\cdot q_n.$$

If we write $p=g\Gamma$, where $$g=\left(\begin{array}{ccc} 1 & x & z\\ 0 & 1 & y\\ 0 & 0 & 1\end{array}\right)\in N_+,$$ we have $$w_{3}\cdot g\Gamma=w_{3}gw_{3}^{-1}\Gamma=\left(\begin{array}{ccc} 1 & 0 & -y\\ -x & 1 & z\\ 0 & 0 & 1\end{array}\right)\Gamma\in B\left(0,C'd_{q_n}^{\frac{(\alpha_0+\beta_0)(a_{-1})}{\beta_0(a_1)-\gamma}},C'd_{q_n}^{\frac{\beta_0(a_{-1})}{\beta_0(a_1)-\gamma}}\right)\cdot q_n.$$
Moreover, since $p\in N_{+}\Gamma/\Gamma$, we have
$$q_{n}\in w_{3}N_{+}\Gamma/\Gamma=w_{3}N_{+}w_{3}^{-1}\Gamma/\Gamma= N_{-\alpha_0}N_{\beta_0}N_{\alpha_0+\beta_0}\Gamma/\Gamma.$$
By Lemma~\ref{l32}, we deduce that $$q_n\in N_{-\alpha_0}\cdot N_{\beta_0}(\mathbb Q)N_{\alpha_0+\beta_0}(\mathbb Q)\Gamma$$
and, if $q_{n}=x_{n}\cdot y_{n}$ with $x_{n}\in N_{-\alpha_{0}}$ and $y_{n}\in N_{\beta_0}(\mathbb Q)N_{\alpha_0+\beta_0}(\mathbb Q)\Gamma$, we also have $d_{q_{n}}=d_{y_{n}}$.

This implies that the collection of sets
\begin{align*}
\left\{B\left(0,C'd_{q}^{\frac{(\alpha_0+\beta_0)(a_{-1})}{\beta_0(a_1)-\gamma}},C'd_{q}^{\frac{\beta_0(a_{-1})}{\beta_0(a_1)-\gamma}}\right)\cdot q: q\in(N_{-\alpha_0}\cdot N_{\beta_0}(\mathbb Q)N_{\alpha_0+\beta_0}(\mathbb Q)\Gamma)\right\}
\end{align*}
covers $w_3\cdot(\mathcal E_1\cap U_0)$.
These sets are in turn contained in the larger sets $$\left\{B\left(0,C'd_{q}^{\frac{(\alpha_0+\beta_0)(a_{-1})}{\beta_0(a_1)-\gamma}},C'd_{q}^{\frac{\beta_0(a_{-1})}{\beta_0(a_1)-\gamma}}\right)\cdot N_{-\alpha_0}\cdot q: q\in N_{\beta_0}(\mathbb Q)N_{\alpha_0+\beta_0}(\mathbb Q)\Gamma\right\},$$
which we will use as a covering of $w_3\cdot(\mathcal E_1\cap U_0)$.

We will now need the following lemma counting rational points with denominators less than $l>0$.

\begin{lemma}\label{l73}
For any sufficiently large $l>0$, we have $$|\left\{r\in N_{\beta_0}(\mathbb Q)N_{\alpha_0+\beta_0}(\mathbb Q)\Gamma:d_r\leq l\right\}|\lesssim l^2.$$ 
\end{lemma}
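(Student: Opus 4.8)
The plan is to count rational points in $N_{\beta_0}(\mathbb Q)N_{\alpha_0+\beta_0}(\mathbb Q)\Gamma/\Gamma$ with denominator at most $l$ by parametrising them concretely and comparing with the formula for the denominator. First I would observe that a coset in $N_{\beta_0}(\mathbb Q)N_{\alpha_0+\beta_0}(\mathbb Q)\Gamma/\Gamma$ has a representative of the form
\[
r=\begin{pmatrix}1&0&0\\0&1&\tfrac{p_2}{q}\\0&0&1\end{pmatrix}\begin{pmatrix}1&0&\tfrac{p_1}{q'}\\0&1&0\\0&0&1\end{pmatrix},
\]
and (after clearing common factors) such a representative is determined, modulo $\Gamma$, by the reduced fractions of its entries; so the point is genuinely parametrised by finitely many integer data with bounded denominators. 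The key step is then to invoke Theorem~\ref{thm51} (and the remark that acting by $N_-$ preserves both rationality and the denominator, so that one may read off $d_r$ from the $N_+$-part) to express $d_r$ explicitly in terms of those integers: up to the cancellation factor $d=\gcd(q,\dots)$, the denominator is essentially a monomial like $b q^2/d$ in the denominators appearing in the two unipotent blocks.

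With this explicit formula in hand, the count reduces to a lattice-point estimate: the number of admissible tuples of integers $(a,b,p_1,p_2,q,\dots)$, in a fixed bounded box coming from $U_0$ (or from the small set in which the covering lives), for which the monomial expression for $d_r$ is at most $l$. I would bound this crudely: for each value of the "outer" denominator $q$ there are $O(q^2)$ choices of the numerators $p_1,p_2$ lying in a bounded box (since they range over residues mod $q$ up to the box size), and the remaining data $a,b$ contribute only an extra $O(q)$ or $O(1)$ factor which, because of the constraint $d_r\lesssim l$ forcing $q\lesssim l^{1/2}$ roughly, is absorbed. Summing a quantity of size $O(q^{2})$ (or $O(q^3)$ before using the gcd saving) over $q\lesssim \sqrt l$ and carefully tracking how the divisor $d$ enters yields a total of $O(l^2)$; the divisor-sum manipulations (e.g.\ $\sum_{d\mid q}$ type sums) only cost logarithmic or constant factors that are harmless since we only want the bound up to implied constants. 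It may be cleaner to avoid the explicit formula and instead argue dynamically, as in \S\ref{counting}: the set $\{r: d_r\le l\}$ is, after translating by $a_\tau$ with $e^{-\nu(a_\tau)}=l$, the intersection of an expanding translate of a bounded set with a fixed compact piece $\exp(\mathfrak a_{K,I})N_-\Gamma/\Gamma$, and the transversality/mixing machinery of Proposition~\ref{p41} gives that this intersection has cardinality $\lesssim l^2$; summing the dyadic bound $|S(\cdot,l/2,l)|\sim l^2$ over dyadic scales $\le l$ again gives $O(l^2)$.

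The main obstacle I expect is bookkeeping rather than anything deep: one must be careful that the points being counted live in $N_{\beta_0}(\mathbb Q)N_{\alpha_0+\beta_0}(\mathbb Q)\Gamma$ and not in all of $N_+(\mathbb Q)\Gamma$, so the relevant denominator $d_r$ is not the "full" denominator of a $3\times 3$ rational matrix but the specific quantity from Theorem~\ref{thm51}; getting the exponent of $l$ exactly right (rather than, say, $l^3$) hinges on correctly using the $\gcd$-cancellation $d$ in that formula, which is exactly the subtlety flagged in \S\ref{denominators}. I would therefore lead with the explicit formula from Theorem~\ref{thm51}, reduce to counting lattice points under a monomial constraint, and then perform the elementary divisor-sum estimate, noting at the end that all implied constants depend only on the fixed box $U_0$ (equivalently on $G$ and $\Gamma$).
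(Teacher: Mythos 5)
Your primary route --- writing $r$ with a common denominator, reading off $d_r$ from Theorem~\ref{thm51} (here the $(1,2)$-entry vanishes, so with $a/b=0$ one gets $d_r=q^2/\gcd(q,p_1)$ for $\gcd(p_1,p_2,q)=1$), and then performing an elementary divisor-sum count over numerators in a bounded box --- is exactly the paper's proof. Two cautions. First, the constraint $q^2/\gcd(q,p_1)\le l$ does \emph{not} force $q\lesssim l^{1/2}$: when $d=\gcd(q,p_1)$ is large, $q$ can be as large as $l$, so literally summing $O(q^2)$ over $q\le\sqrt l$ misses most of the range. The correct bookkeeping (which is what the paper does, and which you flag but should actually carry out) is the substitution $p_1=ad$, $q=bd$ with $\gcd(a,b)=1$: the constraint becomes $b^2d\le l$, the number of admissible numerators for fixed $(b,d)$ is at most $b\cdot bd=b^2d\le l$, and the number of pairs $(b,d)$ with $b^2d\le l$ is $\lesssim l$, whence the total $\lesssim l^2$. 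Second, the ``dynamical'' alternative via Proposition~\ref{p41} does not apply here: that proposition counts rational points in open subsets of the three-dimensional group $N_+$ (and only those with polar component in a fixed compact $K\subset\ker\nu$), whereas Lemma~\ref{l73} counts rational points confined to the two-dimensional subgroup $N_{\beta_0}N_{\alpha_0+\beta_0}$, whose $a_{-\tau}$-conjugation contracts area at the rate $e^{(\alpha_0+2\beta_0)(a_{-\tau})}$ rather than $e^{2(\alpha_0+\beta_0)(a_{-\tau})}=l^{-2}$; the exponent $2$ would not come out of that argument, and the equidistribution machinery is not stated for translates of proper subgroups of the expanding horospherical group. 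Stick with the explicit formula and the divisor sum.
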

\begin{proof}
For any $r\in N_{\beta_0}(\mathbb Q)N_{\alpha_0+\beta_0}(\mathbb Q)\Gamma$, we may write $$q=\left(\begin{array}{ccc} 1 & 0 & \frac{p_1}q\\ 0 & 1 & \frac{p_2}q\\ 0 & 0 & 1\end{array}\right)\Gamma$$ where $\gcd(p_1,p_2,q)=1$. By Theorem~\ref{thm51}, we know that $d_r=q^2/d$ where $d=\gcd(q,p_1)$. Let $p_1=ad$ and $q=bd$ $(\gcd(a,b)=1,\gcd(d,p_2)=1)$. Then 
\begin{align*}
&|\left\{r\in N_{\beta_0}(\mathbb Q)N_{\alpha_0+\beta_0}(\mathbb Q)\Gamma:d_r\leq l\right\}|\\
=&\sum_{(p_1/q,p_2/q)\in[0,1]^2,q^2/d\leq l}1\\
=&\sum_{(a/b,p_2/bd)\in[0,1]^2,b^2d\leq l}1\\
\leq&\sum_{b^2d\leq l}b^2d\\
\leq&l\cdot\sum_{b\leq\sqrt{l}}\sum_{d\leq l/b^2}1\lesssim l^2.
\end{align*}
This completes the proof of the lemma.
\end{proof}

We can now estimate the Hausdorff dimension of the set $\mathcal E_1\cap U_0$. We know that $$\dim_H(\mathcal E_1\cap U_0)=\dim_H(w_3\cdot(\mathcal E_1\cap U_0))$$ and $$\left\{B\left(0,C'd_{q}^{\frac{(\alpha_0+\beta_0)(a_{-1})}{\beta_0(a_{1})-\gamma}},C'd_{q}^{\frac{\beta_0(a_{-1})}{\beta_0(a_{1})-\gamma}}\right)\cdot\Upsilon_{-\alpha_0}\cdot q: q\in N_{\beta_0}(\mathbb Q)N_{\alpha_0+\beta_0}(\mathbb Q)\Gamma\right\}$$ is a collection which covers $w_3\cdot(\mathcal E_1\cap U_0).$ We subdivide each set $$B\left(0,C'd_{q}^{\frac{(\alpha_0+\beta_0)(a_{-1})}{\beta_0(a_{1})-\gamma}},C'd_{q}^{\frac{\beta_0(a_{-1})}{\beta_0(a_{1})-\gamma}}\right)\cdot\Upsilon_{-\alpha_0}\cdot q$$ in this collection into cubes of side length $$C'd_{q}^{\frac{(\alpha_0+\beta_0)(a_{-1})}{\beta_0(a_{1})-\gamma}}$$ and denote the family of cubes thus obtained by $\mathcal F$. Note that $\mathcal F$ still covers the set $w_3\cdot(\mathcal E_1\cap U_0)$. In order to estimate the Hausdorff dimension of $w_3\cdot(\mathcal E_1\cap U_0)$, we study the convergence of the series

\begin{align*}
&\sum_{B\in\mathcal F}(\diam B)^s\\
\lesssim &\sum_{q\in N_{\beta_0}(\mathbb Q)N_{\alpha_0+\beta_0}(\mathbb Q)\Gamma}\left(d_{q}^{\frac{(\alpha_0+\beta_0)(a_{-1})}{\beta_0(a_{1})-\gamma}}\right)^sd_{q}^{\frac{\alpha_0(a_{1})}{\beta_0(a_{1})-\gamma}}d_{q}^{\frac{(\alpha_0+\beta_0)(a_{1})}{\beta_0(a_{1})-\gamma}}\\
\leq &\sum_{n=0}^\infty\sum_{2^n\leq d_{q}\leq 2^{n+1}}d_{q}^{-\frac{(\alpha_0+\beta_0)(a_{1})}{\beta_0(a_{1})-\gamma}s+\frac{2\alpha_0(a_{1})+\beta_0(a_1)}{\beta_0(a_{1})-\gamma}}\\
\leq&\sum_{n=0}^\infty(2^n)^{-\frac{(\alpha_0+\beta_0)(a_{1})}{\beta_0(a_{1})-\gamma}s+\frac{2\alpha_0(a_{1})+\beta_0(a_1)}{\beta_0(a_{1})-\gamma}}(2^{n+1})^2\\
\sim& \sum_{n=0}^\infty2^{\left(-\frac{(\alpha_0+\beta_0)(a_{1})}{\beta_0(a_{1})-\gamma}s+\frac{2\alpha_0(a_{1})+\beta_0(a_1)}{\beta_0(a_{1})-\gamma}+2\right)n},
\end{align*}
where we have used Lemma~\ref{l73} in the third inequality. This series converges if $$s>\frac{\frac{2\alpha_0(a_{1})+\beta_0(a_1)}{\beta_0(a_{1})-\gamma}+2}{\frac{(\alpha_0+\beta_0)(a_{1})}{\beta_0(a_{1})-\gamma}}=\frac{2\alpha_0(a_{1})+3\beta_0(a_{1})-2\gamma}{\alpha_0(a_1)+\beta_0(a_{1})}.$$ This implies that $$\dim_H(w_3\cdot(\mathcal E_1\cap U_0))\leq\frac{2\alpha_0(a_{1})+3\beta_0(a_{1})-2\gamma}{\alpha_0(a_1)+\beta_0(a_{1})}$$ and hence, $$\dim_H(\mathcal E_1\cap U_0)=\dim_H(w_3\cdot(\mathcal E_1\cap U_0))\leq\frac{2\alpha_0(a_{1})+3\beta_0(a_{1})-2\gamma}{\alpha_0(a_1)+\beta_0(a_{1})}.$$ This completes the computation for the upper bound of the Hausdorff dimension of $\mathcal E_1\cap U_0$.

The proof for the set $\mathcal E_{2}\cap U_{0}$ is completely analogous. We report here the related counting result for completeness.

\begin{lemma}\label{l74}
For any sufficiently large $l>0$, we have $$|\left\{r\in N_{\alpha_0}(\mathbb Q)N_{\alpha_0+\beta_0}(\mathbb Q)\Gamma:d_r\leq l\right\}|\lesssim l^2.$$
\end{lemma}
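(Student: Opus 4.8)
The plan is to mirror exactly the counting argument used in Lemma \ref{l73}, with the roles of the upper-right and the middle entries swapped. Concretely, a point $r\in N_{\alpha_0}(\mathbb Q)N_{\alpha_0+\beta_0}(\mathbb Q)\Gamma$ has a $\Gamma$-representative of the form
\begin{equation*}
\left(\begin{array}{ccc} 1 & \frac{a}{b} & 0\\ 0 & 1 & 0\\ 0 & 0 & 1\end{array}\right)\left(\begin{array}{ccc} 1 & 0 & \frac{p_1}{q}\\ 0 & 1 & 0\\ 0 & 0 & 1\end{array}\right),
\end{equation*}
i.e.\ the special case of Theorem \ref{thm51} with $p_2=0$. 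Then $d=\gcd(q,bp_1+ap_2)=\gcd(q,bp_1)$, and since $\gcd(a,b)=1$, $\gcd(p_1,q)$ dictates the size of $d$.

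First I would substitute $p_2=0$ into the formula of Theorem \ref{thm51}, obtaining $d_r=bq^2/\gcd(q,bp_1)$. Note that $\gcd(a,b)=1$ forces the relevant gcd to be $\gcd(q,bp_1)$; writing $d_0=\gcd(q,p_1)$ one gets $\gcd(q,bp_1)=d_0\cdot\gcd(q/d_0,b)$, so $d_r\geq bq^2/(d_0\cdot\gcd(q/d_0,b))\gtrsim q^2/d_0$ after absorbing bounded factors coming from the fixed small box $U_0$ (which bounds $a/b$ and hence $b$). Then I would count: the number of triples $(a/b, p_1/q)$ in the relevant bounded region with $q^2/d_0\leq l$ is, writing $p_1=d_0 a'$, $q=d_0 b'$ with $\gcd(a',b')=1$, bounded by $\sum_{b'^2 d_0\leq l} b'^2 d_0 \cdot O(1)$, exactly the sum that appears in Lemma \ref{l73}. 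Summing over $b'\leq\sqrt l$ and $d_0\leq l/b'^2$ gives $\sum_{b'\leq\sqrt l}\sum_{d_0\leq l/b'^2} l \lesssim l\cdot\sum_{b'\leq\sqrt l} l/b'^2\lesssim l^2$, as required.

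There is essentially no obstacle here: the statement is a direct corollary of Theorem \ref{thm51} specialized to $p_2=0$, combined with the same elementary divisor sum already carried out in the proof of Lemma \ref{l73}. The only point requiring a line of care is that $N_{\alpha_0}(\mathbb Q)$ contributes the extra parameter $a/b$, which ranges over a bounded set (since we work inside the fixed box $U_0$), so the number of admissible $b$ for each denominator is $O(1)$ and does not affect the exponent. I would therefore write the proof as a one-line reduction: "Arguing exactly as in the proof of Lemma \ref{l73}, using Theorem \ref{thm51} with $p_2=0$, one obtains the claimed bound," and then reproduce the three-line divisor-sum estimate for completeness.

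\begin{proof}
The argument is identical to that of Lemma \ref{l73}, with the entries $p_{2}/q$ and $a/b$ interchanged. For $r\in N_{\alpha_0}(\mathbb Q)N_{\alpha_0+\beta_0}(\mathbb Q)\Gamma$ we may write a $\Gamma$-representative as
\begin{equation*}
\left(\begin{array}{ccc} 1 & \frac{a}{b} & 0\\ 0 & 1 & 0\\ 0 & 0 & 1\end{array}\right)\left(\begin{array}{ccc} 1 & 0 & \frac{p_1}{q}\\ 0 & 1 & 0\\ 0 & 0 & 1\end{array}\right)
\end{equation*}
with $\gcd(a,b)=1$ and $\gcd(p_1,q)=1$ after clearing common factors. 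By Theorem~\ref{thm51} (applied with $p_{2}=0$), one has $d_{r}=bq^{2}/d$ with $d=\gcd(q,bp_{1})$. Since $a/b$ ranges over a bounded region, the value of $b$ is bounded, so the number of admissible $a$ for each $b$ is $O(1)$, and $d_{r}\gtrsim q^{2}/\gcd(q,p_{1})$. Writing $d_{0}=\gcd(q,p_{1})$, $p_{1}=d_{0}a'$, $q=d_{0}b'$ with $\gcd(a',b')=1$, we obtain
\begin{align*}
|\left\{r\in N_{\alpha_0}(\mathbb Q)N_{\alpha_0+\beta_0}(\mathbb Q)\Gamma:d_r\leq l\right\}|
&\lesssim\sum_{b'^{2}d_{0}\leq l}b'^{2}d_{0}\\
&\leq l\cdot\sum_{b'\leq\sqrt{l}}\ \sum_{d_{0}\leq l/b'^{2}}1\\
&\lesssim l\cdot\sum_{b'\leq\sqrt{l}}\frac{l}{b'^{2}}\lesssim l^{2}.
\end{align*}
This completes the proof.
\end{proof}
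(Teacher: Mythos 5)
Your reduction to Theorem \ref{thm51} with $p_2=0$ is the right starting point, and the formula $d_r=bq^2/\gcd(q,bp_1)$ is correct; since you normalize $\gcd(p_1,q)=1$, this simplifies to $d_r=bq^2/\gcd(q,b)$. But the counting step that follows is broken, for two reasons.

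First, the claim that ``$a/b$ ranges over a bounded region, so the value of $b$ is bounded'' is false: boundedness of the rational number $a/b$ says nothing about the size of its denominator. Modulo $\Gamma$ one may take $a/b\in[0,1)$, but $b$ is unbounded, and for each $b$ there are $\varphi(b)\asymp b$ admissible numerators $a$, not $O(1)$. This is not a removable technicality: already the points with $p_1=0$, $q=1$ give $d_r=b$, so the set being counted contains $\sum_{b\le l}\varphi(b)\asymp l^2$ points coming from the $N_{\alpha_0}$ factor alone. Any argument that treats $b$ as $O(1)$ necessarily undercounts and cannot prove (or even be consistent with) the sharp bound $l^2$.

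Second, the gcd you extract is the wrong one. Because $\gcd(p_1,q)=1$, your $d_0=\gcd(q,p_1)$ is identically $1$, so the substitution $p_1=d_0a'$, $q=d_0b'$ is vacuous and your final sum $\sum_{b'^2d_0\le l}b'^2d_0$ collapses to $\sum_{q^2\le l}q^2\lesssim l^{3/2}$ --- which, as noted above, is strictly smaller than the actual count and therefore cannot be an upper bound for it. The arithmetic interaction that governs $d_r$ here is between the \emph{two denominators} $b$ and $q$, not between $q$ and the numerator $p_1$. The paper's proof sets $d=\gcd(q,b)$, writes $b=b_1d$, $q=q_1d$ with $\gcd(b_1,q_1)=1$, so that the constraint $d_r\le l$ becomes $b_1q^2\le l$, counts the $\lesssim bq$ choices of numerators $(a,p_1)$ for each fixed $(b,q)$, and then bounds $\sum_{b_1q^2\le l}bq\le\sum_{b_1q^2\le l}b_1q^2\le l\cdot\#\{(b_1,q):b_1q^2\le l\}\lesssim l^2$. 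Your proof is missing exactly this bookkeeping of the pair $(b,q)$ and of the $bq$ numerators attached to it.
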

\begin{proof}
For any $r\in N_{\alpha_0}(\mathbb Q)N_{\alpha_0+\beta_0}(\mathbb Q)\Gamma$, we may write $$r=\left(\begin{array}{ccc} 1 & \frac ab & \frac{p}q\\ 0 & 1 & 0\\ 0 & 0 & 1\end{array}\right)\Gamma,$$ where $\gcd(p,q)=1$ and $\gcd(a,b)=1$. By Theorem~\ref{thm51}, we know that $d_r=bq^2/d$, where $d=\gcd(q,b)$. Let $b=b_1d$ and $q=q_1d$ $(\gcd(b_1,q_1)=1)$. Then, we have
\begin{align*}
&|\left\{r\in N_{\alpha_0}(\mathbb Q)N_{\alpha_0+\beta_0}(\mathbb Q)\Gamma:d_r\leq l\right\}|\\
=&\sum_{(a/b,p/q)\in[0,1]^2,bq^2/d\leq l}1\\
\leq&\sum_{b_1q^2\leq l}bq\\
\leq&\sum_{b_1q^2\leq l}b_1qd\\
\leq&\sum_{b_1q^2\leq l}b_1q^{2}\\
\leq &l\cdot\sum_{q\leq \sqrt{l}}\sum_{b_{1}\leq l/q^{2}}1\lesssim l^2.
\end{align*}
This completes the proof of the lemma.
\end{proof}

\subsection{The Diophantine set $\mathcal E_3\cap U_0$}

By definition, $\mathcal E_3\cap U_0$ is the subset of points $p\in U_0$ such that there exist a sequence $t_n\to\infty$ of real numbers and a sequence $q_n\in G/\Gamma$ of rational points such that $$p\in B\left(Ce^{\alpha_0(a_{-t_n})},Ce^{\beta_0(a_{-t_n})},Ce^{(\alpha_0+\beta_0)(a_{-t_n})}\right)\cdot q_n$$ and $$\frac1{3\kappa''}e^{(\alpha_0+\beta_0)(a_{t_n})-\gamma t_n}\leq d_{q_n}\leq\kappa'' e^{(\alpha_0+\beta_0)(a_{t_n})-\gamma t_n}.$$ This implies that there exists a constant $C'>0$, depending only $C$ and $\kappa''$, such that the following collection of subsets covers $\mathcal E_3\cap U_0$ $$\left\{B\left(C'd_q^{\frac{\alpha_0(a_{-1})}{(\alpha_0+\beta_0)(a_1)-\gamma}},C'd_q^{\frac{\beta_0(a_{-1})}{(\alpha_0+\beta_0)(a_1)-\gamma}},C'd_q^{\frac{(\alpha_0+\beta_0)(a_{-1})}{(\alpha_0+\beta_0)(a_1)-\gamma}}\right)\cdot q: q\in N_{+}(\mathbb Q)\Gamma\right\}.$$ 
 
In the following lemma we count rational points with denominator less than $l>0$.

\begin{lemma}\label{l75}
For any $\epsilon>0$, and for any sufficiently large $l>0$, we have $$|\left\{r\in N_{+}(\mathbb Q)\Gamma:d_r\leq l\right\}|\lesssim_{\varepsilon} l^{2+\epsilon}$$ where the implicit constant depends only on $\epsilon>0$.
\end{lemma}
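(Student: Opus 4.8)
The plan is to count rational points $r\in N_+(\mathbb Q)\Gamma$ with $d_r\leq l$ by writing them in the normal form furnished by Theorem~\ref{thm51} and summing over the resulting arithmetic data. First I would note that by Lemma~\ref{l34} every such point has a representative
$$g=\left(\begin{array}{ccc}1 & \frac ab & 0\\ 0 & 1 & 0\\ 0 & 0 & 1\end{array}\right)\left(\begin{array}{ccc}1 & 0 & \frac{p_1}q\\ 0 & 1 & \frac{p_2}q\\ 0 & 0 & 1\end{array}\right)$$
with $\gcd(a,b)=1$, $\gcd(p_1,p_2,q)=1$, and the fractions $a/b$, $p_1/q$, $p_2/q$ ranging over representatives in $[0,1)$ (so that distinct choices of $(a,b,p_1,p_2,q)$ within this fundamental domain give distinct cosets). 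By Theorem~\ref{thm51}, $d_r=bq^2/d$ where $d=\gcd(q,\,bp_1+ap_2)$. In particular $d\mid q$, so writing $q=dq_1$ we get $d_r=bq^2/d=bdq_1^2$, and the condition $d_r\leq l$ becomes $bdq_1^2\leq l$.

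Next I would bound the number of admissible tuples. For fixed $b$ and $q=dq_1$, the pair $a/b$ contributes at most $b$ choices, and the pair $(p_1/q,p_2/q)$ with $\gcd(p_1,p_2,q)=1$ contributes at most $q^2=d^2q_1^2$ choices (ignoring the coprimality and the constraint $d=\gcd(q,bp_1+ap_2)$, both of which only cut down the count). Hence
$$|\{r\in N_+(\mathbb Q)\Gamma:d_r\leq l\}|\;\lesssim\;\sum_{bdq_1^2\leq l} b\cdot d^2 q_1^2.$$
Since $b\cdot d^2q_1^2=(bdq_1^2)\cdot(d q_1^0\cdots)$—more carefully, $b\,d^2\,q_1^2\le (bdq_1^2)\cdot d\le l\cdot d$—I would bound the summand by $l\cdot d$ and sum: for each fixed $d$ and $q_1$ with $dq_1^2\le l$ the number of valid $b$ is $\le l/(dq_1^2)$, so
$$\sum_{bdq_1^2\leq l} l\cdot d\;\leq\; l\sum_{dq_1^2\le l} d\cdot\frac{l}{q_1^2}\;=\;l^2\sum_{q_1\le\sqrt l}\frac1{q_1^2}\sum_{d\le l/q_1^2} d\;\lesssim\; l^2\sum_{q_1\le\sqrt l}\frac{l^2}{q_1^6}\cdot q_1^{?}.$$
That last line is too lossy; instead I would organize the triple sum directly as $\sum_{d,q_1,b}$ with $b\le l/(dq_1^2)$, giving $\sum_{d,q_1} b\,d^2q_1^2$ summed over $b\le l/(dq_1^2)$, i.e. $\sum_{d,q_1}\frac{(l/(dq_1^2))^2}{2}d^2q_1^2=\frac{l^2}{2}\sum_{dq_1^2\le l}\frac{1}{q_1^2}\lesssim l^2\sum_{q_1\le\sqrt l}\frac{1}{q_1^2}\cdot(\text{number of }d\le l/q_1^2)\lesssim l^2\sum_{q_1\le\sqrt l}\frac{l}{q_1^4}\lesssim l^3$, which is again off. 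The clean bookkeeping is: $\sum_{b}1\le l/(dq_1^2)$ and the summand $b\,d^2q_1^2\le (l/(dq_1^2))\,d^2q_1^2=ld$, so $\sum_{b\le l/(dq_1^2)} b\,d^2q_1^2\le (l/(dq_1^2))\cdot ld=l^2/q_1^2$, whence the total is $\lesssim l^2\sum_{q_1\le\sqrt l}q_1^{-2}\cdot\#\{d:dq_1^2\le l\}$—so I must instead keep the $d$-sum inside: $\sum_d\sum_{b\le l/(dq_1^2)}b\,d^2q_1^2\le \sum_d (l/(dq_1^2))^2 d^2 q_1^2/2=(l^2/(2q_1^2))\sum_{d\le l/q_1^2}1=l^3/(2q_1^4)$. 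Summing over $q_1\le\sqrt l$ gives $\lesssim l^3$. The honest conclusion is that the crude bound loses an $l^{\epsilon}$ at best when one is careful, and the right move is to exploit the coprimality conditions ($\gcd(a,b)=1$ contributes $\varphi(b)\le b$, $\gcd(p_1,p_2,q)=1$ and $d=\gcd(q,bp_1+ap_2)$ force $p_1,p_2$ into a sublattice of index $\asymp d$ in $(\mathbb Z/q)^2$). Using that last point, the count of valid $(p_1,p_2)$ mod $q$ with the prescribed $\gcd$ is $\lesssim q^2/d\cdot d^{\epsilon}=dq_1^2\cdot(dq_1)^{\epsilon}$ roughly, which kills a full factor of $d$ and yields $\sum_{bdq_1^2\le l} b\,dq_1^2\,(\cdots)^\epsilon\lesssim_\epsilon l^{2+\epsilon}$.

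So the key steps, in order, are: (i) invoke Theorem~\ref{thm51} to replace "$r$ rational, $d_r\le l$" by the explicit diophantine condition $bq^2/d\le l$ with $d=\gcd(q,bp_1+ap_2)$; (ii) substitute $q=dq_1$ to rewrite this as $bdq_1^2\le l$; (iii) count tuples $(a,b,p_1,p_2,q)$ in a fundamental domain, crucially using that the number of $(p_1,p_2)$ mod $q$ with $\gcd(q,bp_1+ap_2)=d$ is $O_\epsilon(q^{2+\epsilon}/d)$ (this follows since for fixed $b,a,q$ the map $(p_1,p_2)\mapsto bp_1+ap_2\bmod q$ has fibers of size $\asymp q$, and the number of residues divisible by $d$ is $q/d$, times a divisor-function factor $O_\epsilon(q^\epsilon)$ to handle the $\gcd(p_1,p_2,q)=1$ constraint and the discrepancy between $\gcd=d$ and $d\mid$); and (iv) carry out the resulting sum $\sum_{bdq_1^2\le l}b\cdot dq_1^2\cdot (dq_1)^\epsilon\lesssim_\epsilon l^{2+\epsilon}$ by bounding $b\,dq_1^2\le l$ and summing $\sum_{dq_1^2\le l}1\lesssim l^{1+\epsilon}$ over the remaining variables, or more efficiently by summing the geometric-type series in $b$ first. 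The main obstacle is step (iii): getting the count of $(p_1,p_2)$ with a prescribed gcd condition sharp enough that no extra power of $l$ (only an $l^\epsilon$) is lost; this is exactly where the $\epsilon$ in the statement comes from, and it is handled by the standard bound $\sum_{d\mid n}1=O_\epsilon(n^\epsilon)$ together with a lattice-point count for the linear congruence $bp_1+ap_2\equiv 0\pmod d$.
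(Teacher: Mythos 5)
Your proposal is correct and follows essentially the same route as the paper: both rest on Theorem~\ref{thm51} together with the observation that, since $\gcd(a,b)=1$, the congruence $bp_1+ap_2\equiv 0\pmod d$ has $O(q^2/d)$ solutions with $0\le p_1,p_2\le q$, after which the sum over the remaining parameters gives $l^{2+\epsilon}$ (the paper organizes this as $\sum_{q\le l}\sum_{d\mid q}$ and invokes $\sigma(q)\lesssim q\log\log q$, while you substitute $q=dq_1$ — a purely cosmetic difference, and your extra $d^{\epsilon}$ in the congruence count is unnecessary since passing from $\gcd=d$ to $d\mid(bp_1+ap_2)$ and dropping coprimality only enlarges the set). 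The false starts in the middle of your write-up should be excised, but the final version of the argument is sound.
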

\begin{proof}
For any $r\in N_{\alpha_0}(\mathbb Q)N_{\alpha_0+\beta_0}(\mathbb Q)\Gamma$, we may write $$q=\left(\begin{array}{ccc} 1 & \frac ab & 0\\ 0 & 1 & 0\\ 0 & 0 & 1\end{array}\right)\left(\begin{array}{ccc} 1 & 0 & \frac{p_1}q\\ 0 & 1 & \frac{p_2}q\\ 0 & 0 & 1\end{array}\right)\Gamma,$$ where $\gcd(p_1,p_2,q)=1$ and $\gcd(a,b)=1$. By Theorem~\ref{thm51}, we know that $$d_r=\frac{bq^2}{d},$$ where $d=\gcd(q, bp_1+ap_2)$. We observe that for fixed $a$ and $b$ the congruence
$$bp_{1}+ap_{2}\equiv 0\pmod{d}$$
has at most $q^{2}/d$ solutions $0\leq p_{1},p_{2}\leq q$. This follows from the fact that $(a,b)=1$ implies $(b,d)|p_{2}$ and, once the value of $p_{2}$ is fixed, the value of $p_{1}$ is uniquely determined modulo $d/(b,d)$. In view of this observation, we deduce
Then, we have
\begin{align*}
&|\left\{r\in N_{+}(\mathbb Q)\Gamma:d_r\leq l\right\}|\\
\leq&\sum_{(a,b)=1,(p_1,p_2,q)=1,bq^2/d\leq l, d=(bp_1+ap_2,q)}1\\
\leq &\sum_{q\leq l}\sum_{d|q}\sum_{bq^{2}/d\leq l}\frac{bq^{2}}{d}\\
\lesssim &\sum_{q\leq l}\sum_{d|q}\frac{dl^{2}}{q^{2}}\\
\leq&l^2\sum_{q\leq l}\frac{\sigma(q)}{q^2}\sim l^{2+\epsilon},
\end{align*}
where $\sigma(q)$ is the sum of divisors of $q$, and we have used the inequalities
$$\sigma(q)\lesssim q\log\log q$$
and
$$\sum_{q\leq l}\frac{\log\log q}{q}\lesssim \log l\log\log l.$$
For a proof of these, see \cite[Theorems 422 and 323]{HW60} respectively. This completes the proof of the lemma.
\end{proof}

Now, we proceed to estimate the Hausdorff dimension of $\mathcal E_3\cap U_0$. We know that $$\left\{B\left(C'd_q^{\frac{\alpha_0(a_{-1})}{(\alpha_0+\beta_0)(a_1)-\gamma}},C'd_q^{\frac{\beta_0(a_{-1})}{(\alpha_0+\beta_0)(a_1)-\gamma}},C'd_q^{\frac{(\alpha_0+\beta_0)(a_{-1})}{(\alpha_0+\beta_0)(a_1)-\gamma}}\right)\cdot q: q\in N_{+}(\mathbb Q)\Gamma\right\}$$ is a covering of $\mathcal E_3\cap U_0.$ We subdivide each set $$B\left(Cd_q^{\frac{\alpha_0(a_{-1})}{(\alpha_0+\beta_0)(a_1)-\gamma}},Cd_q^{\frac{\beta_0(a_{-1})}{(\alpha_0+\beta_0)(a_1)-\gamma}},Cd_q^{\frac{(\alpha_0+\beta_0)(a_{-1})}{(\alpha_0+\beta_0)(a_1)-\gamma}}\right)\cdot q$$ into cubes of side length $C'd_q^{\frac{(\alpha_0+\beta_0)(a_{-1})}{(\alpha_0+\beta_0)(a_1)-\gamma}}$ and we denote the family of such cubes by $\mathcal F$. In order to estimate the Hausdorff dimension of $\mathcal E_3\cap U_0$, we need to study the series
\begin{align*}
&\sum_{B\in\mathcal F}\diam(B)^s\\
\leq&\sum_{q\in N_+(\mathbb Q)/\Gamma}\left(d_{q}^{\frac{(\alpha_0+\beta_0)(a_{-1})}{(\alpha_0+\beta_0)(a_{1})-\gamma}}\right)^sd_{q}^{\frac{(\alpha_0+\beta_0)(a_{1})}{(\alpha_0+\beta_0)(a_{1})-\gamma}}\\
\leq&\sum_{n=0}^\infty\sum_{2^n\leq d_{q}\leq 2^{n+1}}d_{q}^{-\frac{(\alpha_0+\beta_0)(a_{1})}{(\alpha_0+\beta_0)(a_{1})-\gamma}s+\frac{(\alpha_0+\beta_0)(a_{1})}{(\alpha_0+\beta_0)(a_{1})-\gamma}}\\
\leq&\sum_{n=0}^\infty(2^n)^{-\frac{(\alpha_0+\beta_0)(a_{1})}{(\alpha_0+\beta_0)(a_{1})-\gamma}s+\frac{(\alpha_0+\beta_0)(a_{1})}{(\alpha_0+\beta_0)(a_{1})-\gamma}}(2^{n+1})^{2+\epsilon}\\
\sim &\sum_{n=0}^\infty2^{\left(-\frac{(\alpha_0+\beta_0)(a_{1})}{(\alpha_0+\beta_0)(a_{1})-\gamma}s+\frac{(\alpha_0+\beta_0)(a_{1})}{(\alpha_0+\beta_0)(a_{1})-\gamma}+2+\epsilon\right)n},
\end{align*}
where we have used Lemma~\ref{l75}, in the third inequality. This series converges if $$s>\frac{\frac{(\alpha_0+\beta_0)(a_{1})}{(\alpha_0+\beta_0)(a_{1})-\gamma}+2+\epsilon}{\frac{(\alpha_0+\beta_0)(a_{1})}{(\alpha_0+\beta_0)(a_{1})-\gamma}}=\frac{(\alpha_0+\beta_0)(a_{1})+(2+\epsilon)((\alpha_0+\beta_0)(a_{1})-\gamma)}{(\alpha_0+\beta_0)(a_{1})}$$ and hence, $$\dim_H(\mathcal E_3\cap U_0)\leq\frac{(\alpha_0+\beta_0)(a_{1})+(2+\epsilon)((\alpha_0+\beta_0)(a_{1})-\gamma)}{(\alpha_0+\beta_0)(a_{1})}.$$ Since $\epsilon$ can be chosen arbitrarily small, we conclude that $$\dim_H(\mathcal E_3\cap U_0)\leq\frac{3(\alpha_0+\beta_0)(a_{1})-2\gamma}{(\alpha_0+\beta_0)(a_{1})}.$$ This completes the computation of the Hausdorff dimension of $\mathcal E_3\cap U_0$.

\subsection{The Diophantine sets $\mathcal E_4$ and $\mathcal E_{5}$}
Here we will consider the Diophantine subset $\mathcal E_4\cap U_0$. By definition, we know that $\mathcal E_4\cap U_0$ is the subset of points $p\in U_0$ such that there exist a sequence $t_n\to\infty$ of real numbers and a sequence $q_n\in G/\Gamma$ of rational points such that $p\in B_n\cdot q_n$, with
\begin{align*}
B_n=&\bigg\{x\in N_+:x\in B\left(0,Ce^{\beta_0(a_{-t_n})},Ce^{(\alpha_0+\beta_0)(a_{-t_n})}\right)\cdot y,y\in N_{\alpha_0},\\
&\left.\frac1{3\kappa''}e^{-\gamma t_n}e^{\beta_0(a_{t_n})}\leq d_G(y,e)\cdot d_{q_n}\leq\kappa'' e^{-\gamma t_n}e^{\beta_0(a_{t_n})}\right\}
\end{align*}
and $$d_{q_n}\leq\kappa''e^{(\alpha_0+\beta_0)(a_{t_n})-\gamma t_n}.$$ Without loss of generality, we may assume that $t_n\in\mathbb N$. Then we have $$\frac1{3\kappa''}\frac{e^{-\gamma t_n}e^{\beta_0(a_{t_n})}}{d_{q_n}}\leq d_G(y,e)\leq\frac{\kappa'' e^{-\gamma t_n}e^{\beta_0(a_{t_n})}}{d_{q_n}}$$ and $$1\leq d_{q_n}\leq\kappa^3e^{(\alpha_0+\beta_0)(a_{t_n})-\gamma t_n}.$$ For any $n\in\mathbb N$, define $\mathcal G_n$ to be the collection of subsets of the form 
\begin{align*}
\Bigg\{B\left(0,Ce^{\beta_0(a_{-n})},Ce^{(\alpha_0+\beta_0)(a_{-n})}\right)&\cdot y\cdot q: y\in N_{\alpha_0},\\
&\left.\frac1{3\kappa''}\frac{e^{-\gamma n}e^{\beta_0(a_{n})}}{d_{q}}\leq d_G(y,e)\leq\frac{\kappa'' e^{-\gamma n}e^{\beta_0(a_{n})}}{d_{q}}\right\},
\end{align*}
where $q$ is a rational point in $N_+(\mathbb Q)/\Gamma$ with $$1\leq d_{q}\leq\kappa'' e^{(\alpha_0+\beta_0)(a_{n})-\gamma n}.$$ Then, the union $\mathcal G=\bigcup_{n\in\mathbb N}\mathcal G_n$ covers $\mathcal E_4\cap U_0$. 

For each $n\in\mathbb N$ we subdivide each set in the collection
\begin{align*}
\Bigg\{B\left(0,Ce^{\beta_0(a_{-n})},Ce^{(\alpha_0+\beta_0)(a_{-n})}\right)&\cdot y\cdot q: y\in N_{\alpha_0},\\
&\left.\frac1{3\kappa''}\frac{e^{-\gamma n}e^{\beta_0(a_{n})}}{d_{q}}\leq d_G(y,e)\leq\frac{\kappa'' e^{-\gamma n}e^{\beta_0(a_{n})}}{d_{q}}\right\}
\end{align*}
into cubes of side length $Ce^{(\alpha_0+\beta_0)(a_{-n})}$. Thus, we obtain a family of cubes, which we denote by $\mathcal F$, covering $\mathcal E_4\cap U_0$. In order to estimate the Hausdorff dimension of $\mathcal E_4\cap U_0$, we need to study the series
\begin{align*}
&\sum_{B\in\mathcal F}(\diam B)^s\\
\lesssim&\sum_{n\in\mathbb N}\sum_{1\leq d_q\leq \kappa''e^{(\alpha_0+\beta_0)(a_n)-\gamma n}}\frac{e^{\beta_0(a_n)-\gamma n}}{d_q}\cdot  e^{(\alpha_0+\beta_0)(a_{n})}\cdot e^{\alpha_0(a_n)}e^{s(\alpha_0+\beta_0)(a_{-n})}\\
\sim &\sum_{n=0}^\infty\sum_{l=0}^{n-1}\sum_{\kappa^{''}e^{(\alpha_0+\beta_0)(a_l)-\gamma\cdot l}\leq  d_q\leq \kappa''e^{(\alpha_0+\beta_0)(a_{l+1})-\gamma\cdot(l+1)}}\frac{e^{2(\alpha_0+\beta_0)(a_n)-\gamma n-s(\alpha_0+\beta_0)(a_n)}}{d_q}\\
\lesssim&\sum_{n=0}^\infty\sum_{l=0}^{n-1}\frac{e^{2(\alpha_0+\beta_0)(a_n)-\gamma n-s(\alpha_0+\beta_0)(a_n)}}{e^{(\alpha_0+\beta_0)(a_l)-\gamma\cdot l}}\left(e^{(\alpha_0+\beta_0)(a_{l+1})-\gamma\cdot(l+1)}\right)^{2+\epsilon}\\
\sim&\sum_{n=0}^\infty\sum_{l=0}^{n-1}e^{2(\alpha_0+\beta_0)(a_n)-\gamma n-s(\alpha_0+\beta_0)(a_n)}\left(e^{((\alpha_0+\beta_0)(a_{1})-\gamma)l}\right)^{1+\epsilon}\\
\sim&\sum_{n=0}^\infty e^{2(\alpha_0+\beta_0)(a_n)-\gamma n-s(\alpha_0+\beta_0)(a_n)}\left(e^{((\alpha_0+\beta_0)(a_{1})-\gamma)n}\right)^{1+\epsilon},
\end{align*}
where we have used Lemma~\ref{l75} in the third inequality. This series converges if $$2(\alpha_0+\beta_0)(a_1)-\gamma-s(\alpha_0+\beta_0)(a_1)+((\alpha_0+\beta_0)(a_{1})-\gamma)(1+\epsilon)<0,$$ or equivalently if $$s>\frac{2(\alpha_0+\beta_0)(a_1)-\gamma+((\alpha_0+\beta_0)(a_{1})-\gamma)(1+\epsilon)}{(\alpha_0+\beta_0)(a_1)}.$$ This implies that $$\dim_H(\mathcal E_4\cap U_0)\leq\frac{2(\alpha_0+\beta_0)(a_1)-\gamma+((\alpha_0+\beta_0)(a_{1})-\gamma)(1+\epsilon)}{(\alpha_0+\beta_0)(a_1)}.$$ Since $\epsilon>0$ is arbitrarily small, we conclude that $$\dim_H(\mathcal E_4\cap U_0)\leq\frac{3(\alpha_0+\beta_0)(a_1)-2\gamma}{(\alpha_0+\beta_0)(a_1)}.$$ This completes the computation for the upper bound of the Hausdorff dimension of $\mathcal E_4\cap U_0$. The computation for the set $\mathcal E_{5}$ is analogous.

\section{Lower Bounds for the Hausdorff dimension}

In this section, we will prove a lower bound for the Hausdorff dimension of $S^c_\gamma\cap U_0$. The main result of this section reads as follows.

\begin{proposition}
\label{prop:lowerbound}
For all $\gamma<(\alpha_{0}+\beta_{0})(X_{0})$ and all open sets $U_{0}\subset N_{+}\Gamma/\Gamma$ we have
$$\dim_{H}(S^c_\gamma\cap U_0)\geq 3-\frac{2\gamma}{(\alpha_0+\beta_0)(X_{0})}.$$
\end{proposition}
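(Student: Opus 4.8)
The plan is to construct a Cantor-type subset of $S^c_\gamma\cap U_0$ whose Hausdorff dimension is at least $3-\frac{2\gamma}{(\alpha_0+\beta_0)(X_0)}$, exploiting the converse direction established in Proposition~\ref{p63}(3) and the counting asymptotics of Proposition~\ref{p41}. First I would fix $\gamma<(\alpha_0+\beta_0)(X_0)$, a small $\epsilon>0$, and pass to a chart identifying a neighbourhood in $N_+\Gamma/\Gamma$ with an open box in $\mathbb R^3$ via the coordinates $(x,y,z)$ of Theorem~\ref{thm51}; I would shrink $U_0$ to a small box $U$ on which the transversality of Lemma~\ref{lem:transversality} and the denominator formula $d_p=bq^2/d$ are available. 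The idea is to build a nested sequence $\{U_0\supset\mathcal C_1\supset\mathcal C_2\supset\cdots\}$ where $\mathcal C_k$ is a disjoint union of boxes: inside each box of $\mathcal C_{k-1}$ I would place many small boxes of the form $B\bigl(C'd_q^{\frac{\alpha_0(a_{-1})}{(\alpha_0+\beta_0)(a_1)-\gamma-\epsilon}},C'd_q^{\frac{\beta_0(a_{-1})}{(\alpha_0+\beta_0)(a_1)-\gamma-\epsilon}},C'd_q^{\frac{(\alpha_0+\beta_0)(a_{-1})}{(\alpha_0+\beta_0)(a_1)-\gamma-\epsilon}}\bigr)\cdot q$, centred at rational points $q\in N_+(\mathbb Q)\Gamma$ whose denominators $d_q$ lie in a dyadic window $[l_k/2,l_k]$ with $l_k\to\infty$ very rapidly. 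By Proposition~\ref{p63}(3), every point lying in infinitely many of these boxes (i.e.\ every point of the limiting Cantor set) fails to be Diophantine of type $\gamma$, hence belongs to $S^c_\gamma\cap U$.

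The key quantitative input is the count of admissible centres. By Proposition~\ref{p41}, the number of rational points in a fixed sub-box with denominator in $[l/2,l]$ is $\asymp l^2\cdot(\text{measure of the box})$; moreover these points are $\gtrsim 1/l$-separated in each coordinate in suitable directions (this separation should be read off from Theorem~\ref{thm51}, comparing two rational matrices with denominators of size $l$). Inside a parent box of sidelength $\rho_{k-1}$ we can therefore fit $\asymp \rho_{k-1}^3\cdot l_k^{\,\ast}$ children of sidelength $\rho_k\asymp l_k^{-\frac{(\alpha_0+\beta_0)(a_1)}{(\alpha_0+\beta_0)(a_1)-\gamma-\epsilon}}$, where the exponent $\ast$ on $l_k$ records the dimensions needed to pass from the $1/l$-separation scale to the smaller scale $\rho_k$ in the $\alpha_0$-, $\beta_0$-, and $(\alpha_0+\beta_0)$-directions; a bookkeeping of these three exponents yields a local ``branching exponent'' which, after the mass-distribution computation, produces the dimension $3-\frac{2\gamma+O(\epsilon)}{(\alpha_0+\beta_0)(a_1)}$. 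I would then invoke the mass distribution principle in the form given in \cite{M87} (or the version for Cantor constructions with variable branching in \cite{U91}) to conclude $\dim_H\mathcal C_\infty\geq 3-\frac{2\gamma}{(\alpha_0+\beta_0)(X_0)}-O(\epsilon)$, and let $\epsilon\to 0$.

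Two technical points require care. First, one must ensure that the small boxes around distinct rational centres at level $k$ are genuinely disjoint and each contained in a single parent box; this is where the separation estimate from Theorem~\ref{thm51} and the transversality Lemma~\ref{lem:transversality} are used, together with choosing $l_k$ growing fast enough that the child scale $\rho_k$ is much smaller than the separation $1/l_k$ of the centres. Second, the centres produced by Proposition~\ref{p41} have polar component with $\pi_{\ker\nu}$-part in a prescribed compact set $K$, and one must check that the resulting boxes $B(\cdots)\cdot q$ do in fact satisfy the full hypothesis of Proposition~\ref{p63}(3) — i.e.\ the $x_n$-generator of $\Stab(q)\cap N_\nu$ has the right size and the displacement $p-q$ lies in the stated anisotropic box; this is a direct matching of the definition of $d_q$ via Lemma~\ref{l41} with the shape of $B(\cdots)$. \textbf{The main obstacle} I anticipate is getting the anisotropic box-counting exponent right: unlike a self-similar Cantor set, the children here are boxes of three different sidelengths packed into a cube, and the effective branching number depends on how the $1/l_k$-spacing of rational points interacts with the three contraction rates $\alpha_0(a_{-1}),\beta_0(a_{-1}),(\alpha_0+\beta_0)(a_{-1})$; I would need to set up the mass distribution / Frostman functional carefully (assigning mass to each level-$k$ box proportional to its relative count, as in \cite{U91}) so that the lower bound matches the upper bound of Proposition~\ref{prop:upperbound} exactly, rather than losing a factor. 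Once the combinatorics of one generic branch is pinned down, iterating and taking the limit is routine.
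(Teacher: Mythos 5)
Your plan is essentially the paper's proof: a Cantor-type set of anisotropic boxes centred at rational points with denominators in rapidly growing windows $[l_k/2,l_k]$, counted by Proposition~\ref{p41}, placed inside $S_\gamma^c$ via Proposition~\ref{p63}(3), with the dimension bound coming from the McMullen--Urba\'nski tree-like/mass-distribution theorem (the paper's Theorem~\ref{thm72}), and the anisotropy handled by subdividing each box into cubes of the smallest side length. The one step where your justification would not go through as written is disjointness: you propose to deduce it from a ``$1/l$-separation in each coordinate'' read off from Theorem~\ref{thm51} together with $\rho_k\ll 1/l_k$, but the boxes are anisotropic and their \emph{largest} side, $\asymp d_q^{-\alpha_0(a_1)/((\alpha_0+\beta_0)(a_1)-\gamma-\epsilon)}$, exceeds $1/l_k$ whenever $\gamma<\beta_0(a_1)$ (and $d_q=bq^2/d$ is in any case not the naive denominator of the matrix entries, so $1/l$-separation of the coordinates is not automatic). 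The paper's Lemma~\ref{l76} repairs exactly this: one conjugates by $a_\tau$ with $\nu(a_\tau)=-\ln l$, so that each anisotropic box becomes a copy of a fixed ball $B(\delta)$ around the compact set $\exp(\mathfrak a_{K,I_0})N_-\Gamma/\Gamma$, and disjointness follows from Lemma~\ref{lem:transversality} applied there; you gesture at transversality but the $a_\tau$-conjugation is the missing ingredient that matches the box shape to the separation of the rational points.
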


To this end, for any $\varepsilon>0$ we will construct a Cantor-type subset in $S_\gamma^c$ and prove that this subset has Hausdorff dimension bounded below by $3-2\gamma/(\alpha_0+\beta_0)(X_{0})-\varepsilon$. By letting $\varepsilon\to 0$, we will conclude the proof of our main theorem.

We start by recalling a dimensional result for Cantor-type subsets of a Riemannian manifold. One can refer to \cite{KM96,M87,U91} for more details. Let $X$ be a Riemannian manifold, $m$ a volume form on $X$, and $E$ a compact subset of $X$. Let $\diam(S)$ denote the diameter of a set $S\subset X$. A collection $\mathcal A$ of compact subsets of $E$ is said to be tree-like if $\mathcal A$ is the union of finite sub-collections $\mathcal A_j$ such that\vspace{2mm}
\begin{enumerate}
\item $\mathcal A_0=\left\{E\right\}$;\vspace{2mm}
\item For any $j$ and $S_1,S_2\in\mathcal A_j$, either $S_1=S_2$ or $S_1\cap S_2=\emptyset$;\vspace{2mm}
\item For any $j$ and $S_1\in\mathcal A_{j+1}$, there exists $S_2\in\mathcal A_j$ such that $S_1\subset S_2$; \vspace{2mm}
\item $d_j(\mathcal A):=\sup_{S\in\mathcal A_j}\diam(S)\to0$ as $j\to\infty$.\vspace{2mm}
\end{enumerate}

We write $\mathbf A_j:=\bigcup_{A\in\mathcal A_j}A$ and $\mathbf A_\infty:=\bigcap_{j\in\mathbb N}\mathbf A_j$. We also define $$\Delta_j(\mathcal A):=\inf_{S\in\mathcal A_j}\frac{m(\mathbf A_{j+1}\cap S)}{m(S)}.$$

\begin{theorem}\label{thm72}
Let $(X,m)$ be a Riemannian manifold, where $m$ is the volume form on $X$. Then for any tree-like collection $\mathcal A$ of subsets of $E$ $$\dim_H(\mathbf A_\infty)\geq\dim_{H} X-\limsup_{j\to\infty}\frac{\sum_{i=0}^j\log(\Delta_i(\mathcal A))}{\log(d_{j+1}(\mathcal A))}.$$ 
\end{theorem}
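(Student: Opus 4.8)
The statement to prove is Theorem~\ref{thm72}, a lower bound on the Hausdorff dimension of the limit set of a tree-like collection. This is a known result (attributed in the excerpt to \cite{KM96,M87,U91}), so the plan is essentially to reconstruct the standard mass-distribution argument adapted to Riemannian manifolds.

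\textbf{Overall approach.} The plan is to build a probability measure $\mu$ supported on $\mathbf A_\infty$ and then apply the mass distribution principle: if every ball $B(x,r)$ of small radius satisfies $\mu(B(x,r))\lesssim r^s$, then $\dim_H(\mathbf A_\infty)\geq s$. The measure is constructed inductively along the levels $\mathcal A_j$ of the tree: starting from $\mu_0$ = normalized volume $m$ restricted to $E$, we refine at each step by conditioning proportionally to volume inside the children. Concretely, for $S\in\mathcal A_j$ and a child $S'\in\mathcal A_{j+1}$ with $S'\subset S$, set the transition weight to $m(S')/m(\mathbf A_{j+1}\cap S)$; this guarantees $\mu(S')/\mu(S) = m(S')/m(\mathbf A_{j+1}\cap S) \leq m(S')/(\Delta_j(\mathcal A)\, m(S))$. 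Iterating, for any $S\in\mathcal A_j$ one obtains
\[
\mu(S)\;\leq\; \frac{m(S)}{m(E)}\cdot\prod_{i=0}^{j-1}\frac1{\Delta_i(\mathcal A)}.
\]

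\textbf{Key steps, in order.} First, verify the construction produces a consistent sequence of Borel probability measures $\mu_j$ (each supported on $\mathbf A_j$) whose weak-$*$ limit $\mu$ is supported on $\mathbf A_\infty$; this uses properties (2) and (3) of tree-like collections (disjointness within a level, nesting between levels) and property (4) ($d_j\to 0$) to see the limit is carried by the intersection. Second, establish the volume bound $\mu(S)\leq \frac{m(S)}{m(E)}\prod_{i<j}\Delta_i(\mathcal A)^{-1}$ for $S\in\mathcal A_j$ by the telescoping product above, and combine it with the trivial estimate $m(S)\lesssim d_j(\mathcal A)^{\dim X}$ valid on a Riemannian manifold (volume of a set of diameter $\delta$ is $\asymp \delta^{\dim X}$, up to constants depending on the local geometry of the relatively compact region where $E$ lives). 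Third, the ball estimate: given a small ball $B=B(x,r)$ with $\mu(B)>0$, pick the largest $j$ with $r\leq d_j(\mathcal A)$ — wait, rather the $j$ such that $d_{j+1}(\mathcal A)< r\leq d_j(\mathcal A)$; then $B$ meets at most a bounded number of sets in $\mathcal A_{j+1}$ (here one needs a bounded-overlap/separation input, or one argues that $B$ is covered by controllably many level-$(j+1)$ pieces using that each has diameter $\leq d_{j+1}(\mathcal A)<r$ and they are disjoint inside their parent), so $\mu(B)\lesssim \max_{S\in\mathcal A_{j+1}}\mu(S)\lesssim d_{j+1}(\mathcal A)^{\dim X}\prod_{i=0}^{j}\Delta_i(\mathcal A)^{-1}$. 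Fourth, take logarithms: $\frac{\log\mu(B)}{\log r}\geq \dim X - \frac{\sum_{i=0}^{j}\log\Delta_i(\mathcal A)^{-1}}{-\log d_{j+1}(\mathcal A)} + o(1)$ as $r\to 0$ (using $\log r \asymp \log d_{j+1}(\mathcal A)$, since $d_{j+2}<r\cdot(\text{const})$ would need monotone control — one reduces to a subsequence where $d_{j+1}/d_j$ is controlled, or simply notes $r\le d_j$ and $r> d_{j+1}$ suffice because $d_j\to 0$ geometrically is not assumed, so care is needed here), giving exactly $\dim_H(\mathbf A_\infty)\geq \dim X - \limsup_{j\to\infty}\frac{\sum_{i=0}^j\log\Delta_i(\mathcal A)}{\log d_{j+1}(\mathcal A)}$ after noting $\sum \log\Delta_i^{-1} = -\sum\log\Delta_i$ and the sign of $\log d_{j+1}$.

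\textbf{Main obstacle.} The delicate point is the ball-counting step: bounding the number of level-$(j+1)$ sets a ball of radius $r\in(d_{j+1},d_j]$ can intersect, and relating $\log r$ to $\log d_{j+1}(\mathcal A)$ cleanly. Without an a priori separation hypothesis between distinct sets of $\mathcal A_{j+1}$ sitting in different parents, a ball could in principle meet many pieces; the standard fix is that the theorem as used downstream comes with such separation built into the construction of the Cantor set (the $\mathcal A_j$ are genuinely separated), so one states the estimate for the intended application, or one invokes the cited references \cite{M87,U91} for the precise form. I would handle this by either (a) citing \cite{KM96} for the statement essentially verbatim and only sketching the measure construction, or (b) adding the mild separation assumption that is automatically satisfied in \S9 and carrying out the four steps above. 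Given the paper already says ``One can refer to \cite{KM96,M87,U91} for more details,'' the cleanest route is to present the mass-distribution construction and the volume/product bound explicitly, then defer the routine ball-overlap bookkeeping to those references, since reproducing it in full would be lengthy and is not the novel content of this paper.
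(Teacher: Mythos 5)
The paper itself offers no proof of Theorem~\ref{thm72} --- it is quoted from \cite{KM96,M87,U91} --- so your argument has to stand on its own, and the step you yourself flag as delicate is not routine bookkeeping: it is exactly where the argument breaks. Your measure construction and the bound $\mu(S)\leq m(S)\,m(E)^{-1}\prod_{i=0}^{j}\Delta_i(\mathcal A)^{-1}$ for $S\in\mathcal A_{j+1}$ are fine, and the ball estimate needs no counting: every $S\in\mathcal A_{j+1}$ meeting $B(x,r)$ with $r\geq d_{j+1}(\mathcal A)$ lies in $B(x,2r)$, so by disjointness $\sum_S m(S)\leq m(B(x,2r))\lesssim r^{\dim X}$ and hence $\mu(B(x,r))\lesssim r^{\dim X}\prod_{i=0}^{j}\Delta_i^{-1}$ (your ``$\mu(B)\lesssim\max_S\mu(S)$'' is wrong when many $S$ meet $B$, but this is the easy fix). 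The real problem comes next: for $d_{j+1}\leq r<d_j$ this gives $\frac{\log\mu(B(x,r))}{\log r}\geq\dim X-\frac{\sum_{i=0}^{j}\log(1/\Delta_i)}{|\log r|}+o(1)$, and the only available lower bound for $|\log r|$ on this range is $|\log d_j|$, not $|\log d_{j+1}|$. The mass-distribution argument therefore yields the McMullen-type bound with $\log(1/d_j)$ in the denominator, which is strictly weaker and does not suffice for \S 9 of this paper, where $\log(1/d_{j+1})/\log(1/d_j)\to\infty$ because $l_{j+1}\geq l_j^{j^2}$.

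Moreover, the $d_{j+1}$-version cannot be recovered from the four tree-like axioms alone, so deferring to the references will not close the gap. Take $E=[0,1]$, $d_j=2^{-k_j}$ with $k_{j+1}/k_j\to\infty$, and let the children of each interval $S\in\mathcal A_j$ be $N_{j+1}=d_j^2/(2d_{j+1})$ intervals of length $d_{j+1}$ packed with density $1/2$ into the left subinterval of $S$ of length $d_j^2$. Then $\Delta_j=d_j/2$, so $\sum_{i=0}^{j}\log(1/\Delta_i)\sim|\log d_j|=o(|\log d_{j+1}|)$ and the stated theorem would give $\dim_H(\mathbf A_\infty)\geq 1$; yet covering $\mathbf A_\infty$ by the $|\mathcal A_j|\leq d_j^{-1}$ concentration intervals of length $d_j^2$ shows $\dim_H(\mathbf A_\infty)\leq 1/2$. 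What makes the application in \S 9 legitimate is an extra spreading property not contained in the axioms: by Proposition~\ref{p41} the level-$(j+1)$ pieces are equidistributed inside each parent at intermediate scales, which restores the estimate $\mu(B(x,r))\lesssim\mu(S')\,m(B(x,r))/m(S')$ (with $S'$ the level-$j$ set containing $x$) for $d_{j+1}\ll r\ll d_j$. A correct write-up must either add such a two-scale density hypothesis to the statement and verify it for the construction of \S 9, or prove the lower bound of \S 9 directly; as stated, the theorem is not provable because it is not true.
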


The next result will be crucial in the construction of the Cantor-type set. 

\begin{lemma}\label{l76}
Let $\nu$ be the lowest root in $\Phi_-$ and let $K$ be a compact subset of $\ker\nu$. Let $U$ be an open subset of $N_+$ and $\gamma>0$. Then, there exists $\epsilon_0>0$ such that for all sufficiently large $l\in\mathbb N$ the sets in the collection
$$\left\{B\left(\epsilon_0 d_q^{\frac{\alpha_0(a_{-1})}{(\alpha_0+\beta_0)(a_1)-\gamma}},\epsilon_0 d_q^{\frac{\beta_0(a_{-1})}{(\alpha_0+\beta_0)(a_1)-\gamma}},\epsilon_0 d_q^{\frac{(\alpha_0+\beta_0)(a_{-1})}{(\alpha_0+\beta_0)(a_1)-\gamma}}\right)\cdot q:q\in S_{K}(U/\Gamma,l,l/2)\right\}$$ are pairwise disjoint.
\end{lemma}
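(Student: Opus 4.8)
The statement is a ``separation lemma'': rational points with denominator in a dyadic range $[l/2,l]$ are spaced far enough apart that the shrinking boxes attached to them (of the sizes prescribed by the Diophantine set $\mathcal E_3$) do not overlap. My plan is to transport the separation statement from the rational points themselves to the boxes, using the contraction dynamics of $\{a_t\}$ together with the transversality machinery already developed in \S\ref{counting}. The key point is that a box of side lengths $d_q^{\alpha_0(a_{-1})/((\alpha_0+\beta_0)(a_1)-\gamma)}$ etc.\ around $q$ is, up to bounded distortion, the image under $a_{-\tau}$ of a box of \emph{fixed} size around $a_\tau\cdot q$, where $\tau=\tau(l)$ is chosen so that $e^{(\alpha_0+\beta_0)(a_\tau)-\gamma\tau}\sim l$, i.e.\ so that the denominators get normalized to a fixed compact range. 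This is exactly the change of variables used in the proof of Proposition~\ref{p41}.

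\textbf{Main steps.} First I would fix $\tau=\tau(l)$ by the relation $(\alpha_0+\beta_0)(a_\tau)-\gamma\tau = \ln l$ (or rather choose $\tau$ so that, under $a_\tau$, a point $q$ with $d_q\in[l/2,l]$ gets sent to $\exp(\mathfrak a_{K,I_0})N_-\Gamma/\Gamma$ for a fixed compact $I_0$ — note that the polar component of $q$ is normalized, using Lemma~\ref{l41}). Then I would observe that the box $B\big(\epsilon_0 d_q^{\alpha_0(a_{-1})/((\alpha_0+\beta_0)(a_1)-\gamma)},\epsilon_0 d_q^{\beta_0(a_{-1})/((\alpha_0+\beta_0)(a_1)-\gamma)},\epsilon_0 d_q^{(\alpha_0+\beta_0)(a_{-1})/((\alpha_0+\beta_0)(a_1)-\gamma)}\big)$ satisfies, for $d_q\in[l/2,l]$ and up to the constants $2^{\pm\bullet}$, the inclusion
$$B\left(\epsilon_0 d_q^{\frac{\alpha_0(a_{-1})}{(\alpha_0+\beta_0)(a_1)-\gamma}},\epsilon_0 d_q^{\frac{\beta_0(a_{-1})}{(\alpha_0+\beta_0)(a_1)-\gamma}},\epsilon_0 d_q^{\frac{(\alpha_0+\beta_0)(a_{-1})}{(\alpha_0+\beta_0)(a_1)-\gamma}}\right)\subset a_{-\tau}B(C\epsilon_0)a_\tau$$
for a constant $C$ depending only on $G$; this is because conjugating $B(\delta)\subset N_+$ by $a_{-\tau}$ multiplies the $\alpha_0$-component by $e^{\alpha_0(a_{-\tau})}$, the $\beta_0$-component by $e^{\beta_0(a_{-\tau})}$, the $(\alpha_0+\beta_0)$-component by $e^{(\alpha_0+\beta_0)(a_{-\tau})}$, and $e^{-(\alpha_0+\beta_0)(a_\tau)}\sim l^{-1}e^{-\gamma\tau}$, matching the exponents $d_q^{\,\cdot/((\alpha_0+\beta_0)(a_1)-\gamma)}$. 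Hence the box around $q$ is contained in $a_{-\tau}B(C\epsilon_0)a_\tau\cdot q$, and applying $a_\tau$ sends this into $B(C\epsilon_0)\cdot(a_\tau\cdot q)$. Now I invoke exactly the disjointness statement proved in \S\ref{counting}: by Lemma~\ref{lem:transversality} applied to $H_1=N_+$ and $H_2$ the (closed-orbit) group $\exp(\ker\nu\oplus\mathfrak g_\nu)N_-$ whose orbit contains the compact set $\exp(\mathfrak a_{K,I_0})N_-\Gamma/\Gamma$, there is $\delta>0$ such that the map $B_{N_+}(\delta)\times\exp(\mathfrak a_{K,I_0})N_-\Gamma/\Gamma\to G/\Gamma$ is a homeomorphism; so for distinct $q,q'\in S_K(U/\Gamma,l,l/2)$ the sets $B(\delta)\cdot(a_\tau\cdot q)$ and $B(\delta)\cdot(a_\tau\cdot q')$ are disjoint. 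Choosing $\epsilon_0$ so small that $C\epsilon_0<\delta$ gives the claim after applying $a_{-\tau}$ (a homeomorphism), and the ``sufficiently large $l$'' quantifier absorbs the requirement that $a_\tau\cdot q$ actually lands in the fixed compact set $\exp(\mathfrak a_{K,I_0})N_-\Gamma/\Gamma$.

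\textbf{Main obstacle.} The delicate point is not the transversality input — that is essentially quoted from \S\ref{counting} — but verifying that, with a \emph{single} choice of $\epsilon_0$ independent of $l$, all the boxes for $q$ with $d_q\in[l/2,l]$ simultaneously fit inside $a_{-\tau(l)}B(\delta)a_{\tau(l)}\cdot q$ once $l$ is large. This requires tracking the dyadic slack: $d_q$ ranges over $[l/2,l]$ rather than being exactly $l$, so the exponents $d_q^{\alpha_0(a_{-1})/((\alpha_0+\beta_0)(a_1)-\gamma)}$ vary by a bounded multiplicative factor across the range, which is harmless, but one must make sure the normalization $\tau=\tau(l)$ (chosen from $l$, not from each individual $d_q$) still lands every $a_\tau\cdot q$ inside the \emph{same} fixed compact piece $\exp(\mathfrak a_{K,I_0})N_-\Gamma/\Gamma$ — this is where the constraint $\pi_{\ker\nu}(a_q)\in K$ in the definition of $S_K$ is used, and why the compact set $K$ appears in the hypothesis. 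Once this bookkeeping is done, the disjointness is immediate from the homeomorphism property, and I would write the argument in a few lines paralleling the upper-bound covering computation in \S\ref{counting}.
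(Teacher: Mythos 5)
Your proposal is correct and follows essentially the same route as the paper's proof: normalize by $a_{\tau(l)}$ with $(\alpha_0+\beta_0)(a_\tau)=\ln l$ so that every $a_\tau\cdot q$ with $q\in S_K(U/\Gamma,l/2,l)$ lands in the fixed compact set $\exp(\mathfrak a_{K,I_0})N_{-}\Gamma/\Gamma$, invoke Lemma \ref{lem:transversality} to separate the translates $B(\delta)\cdot a_\tau\cdot q$, and note that since $\gamma>0$ the prescribed boxes are contained in $a_{-\tau}B(\delta)a_\tau$ once $\epsilon_0$ is small and $l$ is large. Only note that your first displayed normalization $(\alpha_0+\beta_0)(a_\tau)-\gamma\tau=\ln l$ would send the denominators of $a_\tau\cdot q$ to $0$ rather than into a fixed compact range, so the parenthetical alternative you give (which is the paper's choice) is the one that must be used, with the resulting box containment then following from the inequality $l\geq e^{((\alpha_0+\beta_0)(a_1)-\gamma)\tau}$ rather than from an exact matching of exponents.
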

\begin{proof}
We will use the notation in the proof of Proposition~\ref{p41}. For any $l>1$, define $\tau:=\tau(l)>0$ such that $$\nu(a_\tau)=-\ln l.$$ Note that for the compact subset $K$ in $\ker\nu$, we have $$a_{\tau}\cdot S_{K}(U,l/2,l)=a_{\tau}\cdot U/\Gamma\cap\exp(\mathfrak a_{K,I_0})N_{-}\Gamma,$$ where $$I_0=\left\{x\in\mathbb R\cdot X_0:-\ln2\leq\nu(x)\leq0\right\}.$$ Since $\exp(\mathfrak a_{K,I_0})N_{-}\Gamma$ is a compact subset in $G/\Gamma$, by Lemma \ref{lem:transversality}, there exists a small neighborhood of identity $$B(\delta)=\left\{\exp(x): x=\sum_{\alpha\in\Phi_+} x_\alpha,\ x_\alpha\in\mathfrak g_\alpha,\ \|x_\alpha\|_{\mathfrak g}<\delta\right\}$$ in $N_+$ such that $$B(\delta)\times\exp(\mathfrak a_{K,I_0})N_{-}\Gamma\to B(\delta)\exp(\mathfrak a_{K,I_0})N_{-}\Gamma$$ is a homeomorphism. In view of this, we can conclude that for any two rational points $p,q$ in $S_K(U,l/2,l)$, the sets $$B(\delta)\cdot a_{\tau}\cdot p\textrm{ and }B(\delta)\cdot a_{\tau}\cdot q$$ are disjoint. Consequently, also the sets $B_{\delta,\tau}\cdot p$ and $B_{\delta,\tau}\cdot q$ are disjoint, where $$B_{\delta,\tau}=a_{-\tau}\cdot B(\delta)\cdot a_{\tau}=\left\{\exp(x): x=\sum_{\alpha\in\Phi_+} x_\alpha, x_\alpha\in\mathfrak g_\alpha, \|x_\alpha\|_{\mathfrak g}<e^{\alpha(a_{-\tau})}\cdot\delta\right\}.$$ Note that for any $x\in S_K(U,l/2,l)$ $$d_x\sim l=e^{(\alpha_0+\beta_0)(a_\tau)}\geq e^{((\alpha_0+\beta_0)(a_1)-\gamma)\tau}.$$ Hence, if $\epsilon_0>0$ is chosen to be sufficiently small, then $$B\left(\epsilon_0 d_x^{\frac{\alpha_0(a_{-1})}{(\alpha_0+\beta_0)(a_1)-\gamma}},\epsilon_0 d_x^{\frac{\beta_0(a_{-1})}{(\alpha_0+\beta_0)(a_1)-\gamma}},\epsilon_0 d_x^{\frac{(\alpha_0+\beta_0)(a_{-1})}{(\alpha_0+\beta_0)(a_1)-\gamma}}\right)$$ is contained in $B_{\delta,\tau}$. This implies that for any $p,q\in S_K(U,l/2,l)$ the sets $$B\left(\epsilon_0 d_p^{\frac{\alpha_0(a_{-1})}{(\alpha_0+\beta_0)(a_1)-\gamma}},\epsilon_0 d_p^{\frac{\beta_0(a_{-1})}{(\alpha_0+\beta_0)(a_1)-\gamma}},\epsilon_0 d_p^{\frac{(\alpha_0+\beta_0)(a_{-1})}{(\alpha_0+\beta_0)(a_1)-\gamma}}\right)p$$ and $$B\left(\epsilon_0 d_q^{\frac{\alpha_0(a_{-1})}{(\alpha_0+\beta_0)(a_1)-\gamma}},\epsilon_0 d_q^{\frac{\beta_0(a_{-1})}{(\alpha_0+\beta_0)(a_1)-\gamma}},\epsilon_0 d_q^{\frac{(\alpha_0+\beta_0)(a_{-1})}{(\alpha_0+\beta_0)(a_1)-\gamma}}\right)q$$ are disjoint, proving the thesis.
\end{proof}

We will now construct a Cantor-type subset in $S_\gamma^c\cap U_0$ and estimate its Hausdorff dimension. This will prove Proposition \ref{prop:lowerbound}. We will use Theorem~\ref{thm72}.

We choose a compact subset $K$ in $\ker\nu$ and $\epsilon_0$ as in Lemma~\ref{l76}. We also choose a sufficiently small number $\epsilon>0$.  For $j=0$, we set $\mathcal A_0=\left\{U_0\right\}$. For $j=1$, we choose a sufficiently large number $l_1>0$ such that\vspace{2mm}
\begin{enumerate}
\item the subsets in the collection 
\begin{align*}
\left\{B\left(\epsilon_0 d_q^{\frac{\alpha_0(a_{-1})}{(\alpha_0+\beta_0)(a_1)-(\gamma+\epsilon)}},\epsilon_0 d_q^{\frac{\beta_0(a_{-1})}{(\alpha_0+\beta_0)(a_1)-(\gamma+\epsilon)}},\epsilon_0 d_q^{\frac{(\alpha_0+\beta_0)(a_{-1})}{(\alpha_0+\beta_0)(a_1)-(\gamma+\epsilon)}}\right)\cdot q: q\in S_{K}(U_0,l_1/2,l_1)\right\}
\end{align*}
are disjoint (see Lemma \ref{l76});\vspace{2mm}
\item $|S_K(U_0,l_1/2,l_1)|\sim l_1^2\cdot\mu_{N_+}(U_0)$, where the implicit constants depend only on $K$, $G$ and $\Gamma$ (see Proposition \ref{p41}).\vspace{2mm}
\end{enumerate}
We denote the above collection of sets by $\mathcal F_1(U_0)$. We subdivide each set in the collection $\mathcal F_1(U_0)$ into cubes of side length $$\epsilon_0 d_q^{\frac{(\alpha_0+\beta_0)(a_{-1})}{(\alpha_0+\beta_0)(a_1)-(\gamma+\epsilon)}}.$$ Thus, we obtain a family of cubes constructed from sets in $\mathcal F_1(U_0)$, which we denote by $\mathcal A_1$. 

For $j>1$ we choose a sufficiently large number $l_j>0$ such that for each $S\in\mathcal A_{j-1}$,\vspace{2mm}
\begin{enumerate}
\item the subsets in the collection $$\left\{B\left(\epsilon_0 d_q^{\frac{\alpha_0(a_{-1})}{(\alpha_0+\beta_0)(a_1)-(\gamma+\epsilon)}},\epsilon_0 d_q^{\frac{\beta_0(a_{-1})}{(\alpha_0+\beta_0)(a_1)-(\gamma+\epsilon)}},\epsilon_0 d_q^{\frac{(\alpha_0+\beta_0)(a_{-1})}{(\alpha_0+\beta_0)(a_1)-(\gamma+\epsilon)}}\right)\cdot q: q\in S_{K}(S,l_j/2,l_j)\right\}$$ are disjoint;\vspace{2mm}
\item $|S_K(S,l_j/2,l_j)|\sim l_j^2\cdot\mu_{N_+}(S)$, where the implicit constants depend only on $K$, $G$ and $\Gamma$;\vspace{2mm}
\item $l_j\geq l_{j-1}^{(j-1)^{2}}$.\vspace{2mm}
\end{enumerate}
We denote the above collection of sets by $\mathcal F_j(S)$. We subdivide each set in $\mathcal F_j(S)$ $(S\in\mathcal A_{j-1})$ into cubes of side length $$\epsilon_0 d_q^{\frac{(\alpha_0+\beta_0)(a_{-1})}{(\alpha_0+\beta_0)(a_1)-(\gamma+\epsilon)}}.$$ Thus, we have obtained a sequence $\left\{l_j\right\}_{j\in\mathbb N}$ with $$l_{j+1}\geq l_{j-1}^{(j-1)^{2}}\;(\forall j\in\mathbb N)$$ and a tree-like sequence $\left\{\mathcal A_j\right\}_{j\in\mathbb N}$ of finite collections of cubes.
By using the notation introduced in Proposition~\ref{p41}, we have $$d_j(\mathcal A)\sim l_j^{\frac{(\alpha_0+\beta_0)(a_{-1})}{(\alpha_0+\beta_0)(a_1)-(\gamma+\epsilon)}}\quad\mbox{and}\quad\Delta_j(\mathcal A)\sim l_{j+1}^2\cdot l_{j+1}^{\frac{2(\alpha_0+\beta_0)(a_{-1})}{(\alpha_0+\beta_0)(a_1)-(\gamma+\epsilon)}}\;(\forall j\in\mathbb N),$$
where the second approximate equality follows from Proposition \ref{p41}. Now let $\mathbf A_\infty=\bigcap_{j\in\mathbb N}\mathbf A_j$ and
$$b:=\frac{2(\alpha_0+\beta_0)(a_{-1})}{(\alpha_0+\beta_0)(a_1)-(\gamma+\epsilon)}.$$
By applying Theorem~\ref{thm72}, we find that
\begin{multline}\dim_H(\mathbf A_\infty)\geq \dim_{H} X-\limsup_{j\to\infty}\frac{\sum_{i=0}^j\log(\Delta_i(\mathcal A))}{\log(d_{j+1}(\mathcal A))} \\
\geq 3-\limsup_{j\to\infty}\frac{(b+2)\sum_{i=0}^j\log l_{j+1}}{b\log l_{j+1}}\geq 3-\frac{b+2}{b}\left(1+\limsup_{j\to\infty}\frac{j\log l_{j}}{\log l_{j+1}}\right).
\end{multline}
Since $l_{j+1}\geq l_{j}^{j^{2}}$, the last term is bounded above by $1/j$. It follows that
$$\dim_H(\mathbf A_\infty)\geq 3-\frac{b+2}{b}= 3-\frac{2+\frac{2(\alpha_0+\beta_0)(a_{-1})}{(\alpha_0+\beta_0)(a_1)-(\gamma+\epsilon)}}{\frac{(\alpha_0+\beta_0)(a_{-1})}{(\alpha_0+\beta_0)(a_1)-(\gamma+\epsilon)}}=3-\frac{2(\gamma+\epsilon)}{(\alpha_0+\beta_0)(a_1)}.$$ Note that by applying Proposition~\ref{p63} part $(3)$ with
$$t_{n}:=l_{n}^{\frac{1}{(\alpha_0+\beta_0)(a_1)-(\gamma+\epsilon)}},$$
we have that $\mathbf A_\infty\subset S^c_\gamma\cap U_0$. Hence, $$\dim_H(S^c_\gamma\cap U_0)\geq 3-\frac{2(\gamma+\epsilon)}{(\alpha_0+\beta_0)(a_1)}.$$ We complete the computation by taking $\epsilon\to0$.

\section{Conclusion}

The proof of Theorem \ref{mthm} follows from Proposition \ref{prop:allDioph}, Corollary \ref{cor:upperbound}, and Proposition \ref{prop:lowerbound}.

We conclude this section by proving Corollary \ref{cor:mcor}. Let $U$ be an open subset in $G/\Gamma$. Without loss of generality, we may assume that $$U=V_-V_0V_+\Gamma,$$ where $V_-$, $V_0$, and $V_+$ are open subsets in $N_-$, $A$ and $N_+$ respectively. Note that, by Remark \ref{rmk:remainingdim}, for any $g\in N_-A$ we have $p\in S_\gamma$ if and only if $g\cdot p\in S_\gamma$. Since for sufficiently small sets $V_{-},V_{0},V_{+}$, we have that the map $(v_{-}v_{0},v_{+}\Gamma)\mapsto v_{-}v_{0}v_{+}\Gamma$ is a homeomorphism (see Lemma \ref{lem:transversality}), we deduce that
$$\dim_H(S_\gamma^c\cap U)=\dim_{H} V_-V_0+\dim_H(S_\gamma^c\cap V_+\Gamma).$$
Here we used the fact that if $A\subset \mathbb{R}^{n}$ is any set and $B\subset\mathbb{R}^{m}$ is an open set, then
$$\dim_{H}A\times B=\dim_{H}A+m.$$
See \cite{Ha71} for a proof. By Theorem \ref{mthm}, we conclude that
$$\dim_H(S_\gamma^c\cap U)=5+\frac{3(\alpha_0+\beta_0)(a_1)-2\gamma}{(\alpha_0+\beta_0)(a_1)}.$$

\bibliographystyle{plain}


\begin{thebibliography}{10}

\bibitem{AES16}
Menny Aka, Manfred Einsiedler, and Uri Shapira.
\newblock Integer points on spheres and their orthogonal lattices.
\newblock {\em Invent. Math.}, 206(2):379--396, 2016.

\bibitem{AM09}
Jayadev~S. Athreya and Gregory~A. Margulis.
\newblock Logarithm laws for unipotent flows. {I}.
\newblock {\em J. Mod. Dyn.}, 3(3):359--378, 2009.

\bibitem{B34}
A.~S. Besicovitch.
\newblock Sets of {F}ractional {D}imensions ({IV}): {O}n {R}ational
  {A}pproximation to {R}eal {N}umbers.
\newblock {\em J. London Math. Soc.}, 9(2):126--131, 1934.

\bibitem{B69}
Armand Borel.
\newblock {\em Introduction aux groupes arithm\'{e}tiques}.
\newblock Publications de l'Institut de Math\'{e}matique de l'Universit\'{e} de
  Strasbourg, XV. Actualit\'{e}s Scientifiques et Industrielles, No. 1341.
  Hermann, Paris, 1969.

\bibitem{Bo91}
Armand Borel.
\newblock {\em Linear algebraic groups}, volume 126 of {\em Graduate Texts in
  Mathematics}.
\newblock Springer-Verlag, New York, second edition, 1991.

\bibitem{BH62}
Armand Borel and Harish-Chandra.
\newblock Arithmetic subgroups of algebraic groups.
\newblock {\em Ann. of Math. (2)}, 75:485--535, 1962.

\bibitem{D85}
S.~G. Dani.
\newblock Divergent trajectories of flows on homogeneous spaces and
  {D}iophantine approximation.
\newblock {\em J. Reine Angew. Math.}, 359:55--89, 1985.

\bibitem{D92}
M.~M. Dodson.
\newblock Hausdorff dimension, lower order and {K}hintchine's theorem in metric
  {D}iophantine approximation.
\newblock {\em J. Reine Angew. Math.}, 432:69--76, 1992.

\bibitem{FMSU15}
Lior Fishman, Bill Mance, David Simmons, and Mariusz Urba\'{n}ski.
\newblock Shrinking targets for nonautonomous dynamical systems corresponding
  to {C}antor series expansions.
\newblock {\em Bull. Aust. Math. Soc.}, 92(2):205--213, 2015.

\bibitem{GK17}
Anish Ghosh and Dubi Kelmer.
\newblock Shrinking targets for semisimple groups.
\newblock {\em Bull. Lond. Math. Soc.}, 49(2):235--245, 2017.

\bibitem{HW60}
Hardy~G. H. and Wright~E. M.
\newblock {\em An Introduction to the Theory of Numbers}.
\newblock Oxford University Press, 1960.

\bibitem{HP01}
Sa'ar Hersonsky and Fr\'{e}d\'{e}ric Paulin.
\newblock Hausdorff dimension of {D}iophantine geodesics in negatively curved
  manifolds.
\newblock {\em J. Reine Angew. Math.}, 539:29--43, 2001.

\bibitem{HP}
Sa'ar Hersonsky and Fr\'{e}d\'{e}ric Paulin.
\newblock Diophantine approximation for negatively curved manifolds.
\newblock {\em Math. Z.}, 241(1):181--226, 2002.

\bibitem{HP02}
Sa'ar Hersonsky and Fr\'{e}d\'{e}ric Paulin.
\newblock Diophantine approximation in negatively curved manifolds and in the
  {H}eisenberg group.
\newblock In {\em Rigidity in dynamics and geometry ({C}ambridge, 2000)}, pages
  203--226. Springer, Berlin, 2002.

\bibitem{HV95}
Richard Hill and Sanju~L. Velani.
\newblock The ergodic theory of shrinking targets.
\newblock {\em Invent. Math.}, 119(1):175--198, 1995.

\bibitem{HV97}
Richard Hill and Sanju~L. Velani.
\newblock Metric {D}iophantine approximation in {J}ulia sets of expanding
  rational maps.
\newblock {\em Inst. Hautes \'{E}tudes Sci. Publ. Math.}, (85):193--216, 1997.

\bibitem{J31}
Vojt\v{e}ch Jarn\'{\i}k.
\newblock \"{U}ber die simultanen diophantischen {A}pproximationen.
\newblock {\em Math. Z.}, 33(1):505--543, 1931.

\bibitem{Ha71}
Hatano Kaoru.
\newblock Notes on {H}ausdorff dimensions of {C}artesian product sets.
\newblock {\em Hiroshima Math. J.}, 1(1):17--25, 1971.

\bibitem{KO21}
Dubi Kelmer and Hee Oh.
\newblock Shrinking targets for the geodesic flow on geometrically finite
  hyperbolic manifolds.
\newblock {\em J. Mod. Dyn.}, 17:401--434, 2021.

\bibitem{KY19}
Dubi Kelmer and Shucheng Yu.
\newblock Shrinking target problems for flows on homogeneous spaces.
\newblock {\em Trans. Amer. Math. Soc.}, 372(9):6283--6314, 2019.

\bibitem{K24}
A.~Khintchine.
\newblock Einige {S}\"{a}tze \"{u}ber {K}ettenbr\"{u}che, mit {A}nwendungen auf
  die {T}heorie der {D}iophantischen {A}pproximationen.
\newblock {\em Math. Ann.}, 92(1-2):115--125, 1924.

\bibitem{K26}
A.~Khintchine.
\newblock Zur metrischen {T}heorie der diophantischen {A}pproximationen.
\newblock {\em Math. Z.}, 24(1):706--714, 1926.

\bibitem{KM96}
D.~Y. Kleinbock and G.~A. Margulis.
\newblock Bounded orbits of nonquasiunipotent flows on homogeneous spaces.
\newblock In {\em Sina\u{\i}'s {M}oscow {S}eminar on {D}ynamical {S}ystems},
  volume 171 of {\em Amer. Math. Soc. Transl. Ser. 2}, pages 141--172. Amer.
  Math. Soc., Providence, RI, 1996.

\bibitem{KM99}
D.~Y. Kleinbock and G.~A. Margulis.
\newblock Logarithm laws for flows on homogeneous spaces.
\newblock {\em Invent. Math.}, 138(3):451--494, 1999.

\bibitem{KZ18}
Dmitry Kleinbock and Xi~Zhao.
\newblock An application of lattice points counting to shrinking target
  problems.
\newblock {\em Discrete Contin. Dyn. Syst.}, 38(1):155--168, 2018.

\bibitem{LWWX14}
Bing Li, Bao-Wei Wang, Jun Wu, and Jian Xu.
\newblock The shrinking target problem in the dynamical system of continued
  fractions.
\newblock {\em Proc. Lond. Math. Soc. (3)}, 108(1):159--186, 2014.

\bibitem{M87}
Curt McMullen.
\newblock Area and {H}ausdorff dimension of {J}ulia sets of entire functions.
\newblock {\em Trans. Amer. Math. Soc.}, 300(1):329--342, 1987.

\bibitem{MP93}
Mar\'{\i}a~V. Meli\'{a}n and Domingo Pestana.
\newblock Geodesic excursions into cusps in finite-volume hyperbolic manifolds.
\newblock {\em Michigan Math. J.}, 40(1):77--93, 1993.

\bibitem{R72}
M.~S. Raghunathan.
\newblock {\em Discrete subgroups of {L}ie groups}.
\newblock Ergebnisse der Mathematik und ihrer Grenzgebiete, Band 68.
  Springer-Verlag, New York-Heidelberg, 1972.

\bibitem{HR11}
Henry~WJ Reeve.
\newblock Shrinking targets for countable markov maps.
\newblock https://arxiv.org/abs/1107.4736.

\bibitem{SW13}
LuMing Shen and BaoWei Wang.
\newblock Shrinking target problems for beta-dynamical system.
\newblock {\em Sci. China Math.}, 56(1):91--104, 2013.

\bibitem{S82}
Dennis Sullivan.
\newblock Disjoint spheres, approximation by imaginary quadratic numbers, and
  the logarithm law for geodesics.
\newblock {\em Acta Math.}, 149(3-4):215--237, 1982.

\bibitem{T08}
Jimmy Tseng.
\newblock On circle rotations and the shrinking target properties.
\newblock {\em Discrete Contin. Dyn. Syst.}, 20(4):1111--1122, 2008.

\bibitem{U91}
Mariusz Urba\'{n}ski.
\newblock The {H}ausdorff dimension of the set of points with nondense orbit
  under a hyperbolic dynamical system.
\newblock {\em Nonlinearity}, 4(2):385--397, 1991.

\bibitem{U02}
Mariusz Urba\'{n}ski.
\newblock Diophantine analysis of conformal iterated function systems.
\newblock {\em Monatsh. Math.}, 137(4):325--340, 2002.

\bibitem{Z21}
Cheng Zheng.
\newblock On the denseness of some sparse horocycles.
\newblock https://arxiv.org/abs/2108.08567.

\bibitem{Z16}
Cheng Zheng.
\newblock Sparse equidistribution of unipotent orbits in finite-volume
  quotients of {$\text{PSL}(2,\Bbb{R})$}.
\newblock {\em J. Mod. Dyn.}, 10:1--21, 2016.

\bibitem{Z19}
Cheng Zheng.
\newblock A shrinking target problem with target at infinity in rank one
  homogeneous spaces.
\newblock {\em Monatsh. Math.}, 189(3):549--592, 2019.

\end{thebibliography}








\end{document}